\documentclass[12pt]{amsart}%
\usepackage{color}
\usepackage{placeins}
\usepackage{graphicx}
\usepackage{subcaption}
\usepackage{amssymb}
\usepackage{amsmath,amsxtra}
\usepackage{multirow}
\usepackage{enumerate}
\usepackage{epstopdf}
\usepackage{cite,stmaryrd,txfonts}
\usepackage{tikz}
\usepackage{pgf}
\usetikzlibrary{snakes}
\usetikzlibrary{angles,calc,patterns}
\usetikzlibrary{shapes,arrows,automata}
\usepackage{bbm}
\usepackage{dsfont}
\usepackage{accents}
 \usepackage{float}

\usepackage{epic}
\usepackage{empheq}
\usepackage{cases}
\usepackage{diagbox}
\newcommand{\AppendixRef}[1]{Appendix~\ref{#1}}
\newcommand{\AppendicesRef}[2]{Appendices~\ref{#1} and~\ref{#2}}

\def\dx{\,{\rm dx}}

\newtheorem{theorem}{Theorem}[section]
\newtheorem{remark}[theorem]{Remark}
\newtheorem{proposition}[theorem]{Proposition}
\newtheorem{lemma}[theorem]{Lemma}

\usepackage{footnote}
\usepackage{booktabs}
\usepackage{rotating}
\numberwithin{equation}{section}

\usepackage{cite}

\renewcommand{\dfrac}[2]{{
\renewcommand{\arraystretch}{1.375}
\begingroup\displaystyle
\rule[0pt]{0pt}{11pt}#1\endgroup
\over\displaystyle\rule[-3pt]{0pt}{11pt}#2
}}

\usepackage[colorlinks,linkcolor=black,
anchorcolor=black,
citecolor=black]{hyperref}
\usepackage[shortlabels]{enumitem}
\setlist[enumerate]{nosep}
\usepackage{color, soul}
\soulregister\cite7
\usepackage{setspace}

\def\uV{\undertilde{V}}

\def\utau{\undertilde{\tau}}
\def\ualpha{\undertilde{\alpha}}

\def\ux{\undertilde{x}}
\def\rot{{\rm rot}}
\def\curl{{\rm curl}}
\def\grad{{\rm grad}}
\def\dv{{\rm div}}

\def\omp{\ominus^\perp}
\def\opp{\oplus^\perp}

\def\od{\mathbf{d}}
\def\oT{\mathbf{T}}

\def\odelta{\boldsymbol{\delta}}
\def\okappa{\boldsymbol{\kappa}}

\def\icr{{\sf pic}}

\def\xA{\boldsymbol{\mathbf{A}}}
\def\xB{\boldsymbol{\mathbf{B}}}
\def\xD{\boldsymbol{\mathbf{D}}}
\def\xH{{\boldsymbol{\mathbf{H}}}}
\def\xX{{\boldsymbol{\mathbf{X}}}}
\def\xv{\boldsymbol{\mathbf{v}}}
\def\xw{\boldsymbol{\mathbf{w}}}
\def\yT{\boldsymbol{\mathbbm{T}}}
\def\yY{{\boldsymbol{\mathbbm{Y}}}}

\def\aoT{\yT}

\def\hf{\boldsymbol{\mathfrak{H}}}

\def\R{\mathcal{R}}
\def\N{\mathcal{N}}

\def\fomega{\boldsymbol{\omega}}
\def\fmu{\boldsymbol{\mu}}

\def\fzeta{\boldsymbol{\zeta}}
\def\feta{\boldsymbol{\eta}}

\def\fsigma{\boldsymbol{\sigma}}
\def\ftau{\boldsymbol{\tau}}

\def\fvartheta{\boldsymbol{\vartheta}}
\def\fvarsigma{\boldsymbol{\varsigma}}

\def\fW{\boldsymbol{W}}
\def\fV{\boldsymbol{V}}

\def\ff{\boldsymbol{f}}

\def\ixalpha{\boldsymbol{\alpha}} 
\def\ixve{\boldsymbol{\varepsilon}}

\def\fvarrho{\boldsymbol{\varrho}}
\def\fvarsigma{\boldsymbol{\varsigma}}

\def\pddeL{\mathbf{P}_{\od\cap\odelta}\Lambda}

\def\ppdd{\mathbf{P}_{\od\cap\odelta}\Lambda^k(\mathcal{G}_h)}

\def\rppdd{\mathring{\mathbf{P}}_{\odelta\times\od}^k(\mathcal{G}_h)}

\usetikzlibrary{arrows.meta, calc, positioning, shapes.geometric}
\usepackage{pgfplots}

\begin{document}

\title{Primal finite element scheme of the Hodge-Laplace problem}

\author{Wenyu Dong}
\address{Institute of Computational Mathematics and Scientific/Engineering Computing, Academy of Mathematics and System Sciences, Chinese Academy of Sciences, Beijing 100190, China}
\email{dongwenyu@lsec.cc.ac.cn}

\author{Shuo Zhang$^{\ast}$}
\thanks{*Corresponding author.}
\address{State Key Laboratory of Mathematical Sciences (SKLMS) and State Key Laboratory of Scientific and Engineering Computing (LSEC), Institute of
Computational Mathematics and Scientific/Engineering Computing, Academy of Mathematics and Systems Science, Chinese Academy of Sciences,
100190, Beijing, China}
\address{School of Mathematical Sciences, University of Chinese Academy of Sciences, 100049, Beijing, China}
\email{szhang@lsec.cc.ac.cn}

\thanks{The research is partially supported by NSFC (12271512, 11871465).}

\subjclass[2020]{Primary 65N30; Secondary 65N12, 65N15, 65N22} 

%
%
%
%
%
%
%

\keywords{Hodge--Laplace problem, finite element exterior calculus, primal formulation, nonconforming finite element, discrete harmonic forms, discrete Poincar\'e inequality, mixed finite element method}

\begin{abstract}
In this paper, we construct nonconforming finite element spaces $\fV^{\od\cap\mathring{\odelta}}_h\Lambda^k$ for the approximation of $H\Lambda^k\cap H^*_0\Lambda^k$ on simplicial meshes, for $n\geqslant 2$ and $1\leqslant k\leqslant n-1$, by enforcing adjoint continuity against piecewise Whitney spaces rather than trace matching. It holds, with $\od^k_h$ and $\odelta_{k,h}$ denoting respectively the piecewise action of differential and codifferential operators, and $\hf_h\Lambda^k$ being the discrete harmonic forms in the FEEC sense, that
$$
\hf_h\Lambda^k=\{\fmu_h\in \fV^{\od\cap\mathring{\odelta}}_h\Lambda^k:\od^k_h\fmu_h=0,\ \odelta_{k,h}\fmu_h=0\},
$$
which mirrors the continuous Hodge--Laplace kernel on domains with nontrivial topology. The space is not a classical Ciarlet-type finite element space; though, a uniform discrete Poincar\'e inequality and locally supported basis functions (supported on at most two cells) are guaranteed.  The resulting primal scheme yields an $\mathcal{O}(h)$ error bound for smooth data and $\mathcal{O}(h^s)$ on $s$-regular domains ($0<s\leqslant 1$), nontrivial topology admitted. Two- and three-dimensional eigenvalue tests agree with the mixed method on perforated domains, which are given to verify the validity of the scheme. 

\end{abstract}

\maketitle

\tableofcontents

\section{Introduction}
\label{sec:intro}


Let $\Omega\subset\mathbb{R}^n$ be a bounded Lipschitz domain. The Hodge-Laplace problem in its primal weak formulation reads: find $\fomega\in H\Lambda^k(\Omega)\cap H^*_0\Lambda^k(\Omega)$, orthogonal to the harmonic forms $\hf\Lambda^k$, such that
\begin{equation}\label{eq:modelhlori}
\langle\od^k\fomega,\od^k\fmu\rangle_{L^2\Lambda^{k+1}}+\langle\odelta_k\fomega,\odelta_k\fmu\rangle_{L^2\Lambda^{k-1}}=\langle\ff-\mathbf{P}_{\boldsymbol{\mathfrak{H}}}\ff,\fmu\rangle_{L^2\Lambda^k},
\end{equation}
for all $\fmu\in H\Lambda^k(\Omega)\cap H^*_0\Lambda^k(\Omega)$, where $\mathbf{P}_{\boldsymbol{\mathfrak{H}}}$ denotes the $L^2\Lambda^k$ projection onto $\hf\Lambda^k$. The model problem \eqref{eq:modelhlori} corresponds to the strong form that 
\begin{equation}
\od^k\fomega\in H^*_0\Lambda^{k+1}(\Omega),\ \ \ \odelta_k\fomega\in H\Lambda^{k-1}(\Omega),
\end{equation}
and
\begin{equation}
\fomega\perp\hf\Lambda^k(\Omega),\quad\mbox{and}\quad
\odelta_{k+1}\od^k\fomega+\od^{k-1}\odelta_k\fomega=\ff-\mathbf{P}_{\boldsymbol{\mathfrak{H}}}\ff. 
\end{equation}
The Hodge-Laplace problem arises in many applied sciences, including electromagnetics\cite{Monk.P2003mono,Hiptmair.R2002acta}, fluid-structure interaction \cite{Bathe.K;Nitikitpaiboon.C;Wang.X1995cs,Bermudez.A;Rodriguez.R1994cmame,Hamdi.M;Ousset.Y;Verchery.G1978ijnme}, and others. 

The standard remedy for the Hodge-Laplace problem in the FEEC literature is to adopt a mixed method. Particularly, the numerical solution of the Hodge-Laplace problem is a central subject of the theory of finite element exterior calculus (FEEC), and we refer to \cite{Arnold.D;Falk.R;Winther.R2006acta,Arnold.D;Falk.R;Winther.R2010bams,Arnold.D2018feec} for a thorough introduction to FEEC. This is due to the well known difficulty that a \emph{conforming} discretization of \eqref{eq:modelhlori} requires finite element subspaces of $H\Lambda^k\cap H^*_0\Lambda^k$ and a piecewise polynomial subspace of this intersection is necessarily contained in $H^1\Lambda^k\cap H^*_0\Lambda^k$, which is a nontrivial closed subspace. --- Consequently, when the exact solution possesses a singular component (as occurs on non-convex domains or domains with non-trivial topology), the conforming finite element solution may converge to a wrong limit. The mixed approach decomposes the second-order problem into first-order equations, discretizes each with conforming Whitney forms, and relies on the de~Rham complex structure to guarantee stability. A key ingredient is that spaces of discrete harmonic forms $\hf_h\Lambda^k$ are established isomorphic to the space of continuous harmonic forms $\hf\Lambda^k$, which is essential for problems on domains with non-trivial topology. 

The primal formulation involves only a single unknown field, and for problems coupled with other physics that also admit primal variational formulations, a primal discretization of the Hodge-Laplace operator can be integrated more naturally. Meanwhile, the mixed approach introduces \emph{nonlocal} discretization of the codifferentials~\cite{Lee.J;Winther.R2018local}. These considerations have motivated continuing efforts to discretize the primal formulation directly. So far, several methods have been proposed for the two-dimensional $H(\curl)\cap H(\dv)$ problem: interior penalty methods~\cite{Brenner.S;Sung.L;Cui.J2008}, quadratic nonconforming elements~\cite{brenner2009quadratic}, nonconforming elements with inter-element penalties~\cite{Brenner.S;Cui.J;Li.F;Sung.L2008nm}, and more recent constructions~\cite{Barker.M2022thesis,Barker.M;Cao.S;Stern.A2024SMAIJCM,Mirebeau.J2012aml}. Virtual element methods have also been designed for the three-dimensional vector potential formulation~\cite{Beiroa.L;Brezzi.F;Marini.D;Alessandro.R2018}, though the analysis there is restricted to domains without re-entrant corners, where harmonic forms vanish and the solution is smooth.

A crucial limitation by all existing primal discretizations is that they have not reconstructed the fundamental identity
\begin{equation}\label{eq:intro2:harmonic}
\hf\Lambda^k=\left\{\fmu\in H\Lambda^k\cap H^*_0\Lambda^k:\od^k\fmu=0,\ \odelta_k\fmu=0\right\}
\end{equation}
at the discrete level. Namely the joint kernel of the discrete exterior derivative and codifferential cannot be characterized as a space of discrete harmonic forms. Without this property, the method cannot correctly capture the topology of the domain.

The construction of nonconforming finite element spaces in this paper is inspired by a reinterpretation (\cite{Zhang.S2026IMA}) of the classical Crouzeix-Raviart (CR) nonconforming element (\cite{Crouzeix.M;Raviart.P1973}). Let $V^{\rm CR}_h$ be the space of piecewise linear polynomials with continuity at edge midpoints, and let $\uV{}^{\rm RT}_{h0}$ be the lowest-degree Raviart-Thomas space with vanishing normal traces on the boundary. On a triangulation $\mathcal{G}_h$, it is well known that these spaces satisfy the integration-by-parts identity
\begin{equation}\label{eq:intro2:crrt}
\sum_{T\in\mathcal{G}_h}\int_T\nabla v_h\cdot\utau{}_h\,\dx=- \sum_{T\in\mathcal{G}_h}\int_T v_h\,\dv\utau{}_h\,\dx,\qquad v_h\in V^{\rm CR}_h,\ \utau{}_h\in\uV{}^{\rm RT}_{h0}.
\end{equation}
The standard perspective treats \eqref{eq:intro2:crrt} as a {\bf consequence} of the definitions of $V^{\rm CR}_h$ and $V^{\rm RT}_{h0}$. A crucial observation, however, is that the identity also serves as a {\bf sufficient} condition, namely, \emph{a piecewise linear function $v_h$ belongs to $V^{\rm CR}_h$ {\bf if and only if} \eqref{eq:intro2:crrt} holds for every $\utau{}_h\in\uV{}^{\rm RT}_{h0}$} (\cite{Zhang.S2026IMA}). In other words, the nonconforming space $V^{\rm CR}_h$ is completely characterized by its adjoint relationship with a conforming space $\uV{}^{\rm RT}_{h0}$. ---- Simultaneously, $\uV{}^{\rm RT}_{h0}$ is also characterized by $V^{\rm CR}_h$. Namely, the pairing of $V^{\rm CR}_h$ and $\uV{}^{\rm RT}_{h0}$ inherits the adjoint relation between $(\nabla,H^1(\Omega))$ and $(\dv,H_0(\dv,\Omega))$. This perspective shifts the problem of constructing a nonconforming finite element space from \emph{imposing trace continuity conditions} (which may be difficult or impossible to formulate for the target function space) to \emph{enforcing an adjoint identity against an already-available conforming space}. Following this idea, a unified family of nonconforming finite element spaces $\fW^{\rm nc}_h\Lambda^k$ ($\fW^{*,\rm nc}_h\Lambda^k$ simultaneously) by piecewise Whitney forms has been established in \cite{Zhang.S2026IMA} for $H\Lambda^k$ ($H^*\Lambda^k$, respectively); the Poincar\'e-Hodge decomposition for $L^2\Lambda^k$ and the Poincar\'e-Lefschetz dualities as equalities were established at the discrete level for the first time therein. 

For the primal Hodge--Laplace problem, in this paper, we construct a nonconforming trial space for the intersection $H\Lambda^k\cap H^*_0\Lambda^k$. The space of piecewise polynomials is $\pddeL^k(\mathcal{G}_h)$, precisely given later, built from a local enrichment of the minimal trimmed space $\mathcal{P}_0\Lambda^k+\okappa(\mathcal{P}_0\Lambda^{k+1})+\star\okappa\star(\mathcal{P}_0\Lambda^{k-1})$; global membership is imposed through simultaneous adjoint continuity against the two spaces $\fW^{*,\rm nc}_{h0}\Lambda^{k+1}$ established in~\cite{Zhang.S2026IMA} for $H^*_0\Lambda^{k+1}$ and $\fW_h\Lambda^{k-1}$ the standard conforming finite element space by piecewise Whitney forms for $H\Lambda^{k-1}$:
\begin{multline}\label{eq:intro2:fems}
\fV^{\od\cap\mathring{\odelta}}_h\Lambda^k:=\Big\{\fmu_h\in\pddeL^k(\mathcal{G}_h):
\langle\od^k_h\fmu_h,\feta_h\rangle-\langle\fmu_h,\odelta_{k+1,h}\feta_h\rangle=0,\ \forall\,\feta_h\in\fW^{*,\rm nc}_{h0}\Lambda^{k+1},
\\
\mbox{and}\ \ \langle\odelta_{k,h}\fmu_h,\ftau_h\rangle-\langle\fmu_h,\od^{k-1}\ftau_h\rangle=0,\ \forall\,\ftau_h\in\fW_h\Lambda^{k-1}
\Big\}.
\end{multline}
It is this pairing of adjoint partners---one nonconforming for the codifferential side and one conforming Whitney for the differential side---that transfers the topology-sensitive discrete Hodge structure of~\cite{Zhang.S2026IMA} to the primal setting and makes nontrivial topology tractable. Particularly, it can be proved that
\begin{equation}\label{eq:intro2:harmonicdis}
\hf_h\Lambda^k=\left\{\fmu_h\in\fV^{\od\cap\mathring{\odelta}}_h\Lambda^k:\od^k_h\fmu_h=0,\ \odelta_{k,h}\fmu_h=0\right\},
\end{equation}
which mirrors the continuous identity \eqref{eq:intro2:harmonic}. Here $\hf_h\Lambda^k$ denotes the space of discrete harmonic forms in the standard FEEC theory, and $\od^k_h$ and $\odelta_{k,h}$ denote the piecewise action of the differential and codifferential operators, respectively. Crucial for correct approximation on domains with non-trivial topology, the identity \eqref{eq:intro2:harmonicdis} is not automatic for a primal discretization; it follows just from taking $\fW^{*,\rm nc}_{h0}\Lambda^{k+1}$ and $\fW_h\Lambda^{k-1}$ as the adjoint partners in~\eqref{eq:intro2:fems}, so that the trial space is aligned with the discrete cohomology already built into these spaces.

The conditions in \eqref{eq:intro2:fems} are precisely the adjoint relations that the function in $H\Lambda^k\cap H^*_0\Lambda^k$ satisfies when tested against functions in the corresponding partner spaces. In this sense, inter-cell continuity is imposed through adjoint relationships rather than through pointwise or tracewise matching on the mesh skeleton; we refer to this strategy as ``adjoint continuity.'' Moreover, at the continuous level, the primal formulation~\eqref{eq:modelhlori} and mixed Hodge--Laplace formulations are equivalent, and a legitimate primal discretization should preserve this link at the discrete level at least approximately, which is necessary for the convergence of the scheme. The construction above is designed so that the primal solution is closed to the solution of a classical mixed discretization based on conforming Whitney forms. This makes it possible---similarly to \cite{Arnold.D;Brezzi.F1985}---to establish the convergence of the primal finite element scheme by the aid of the classical mixed finite element scheme. In particular, Section~\ref{sec:fescheme} shows that the discrete primal and mixed solutions agree up to $\mathcal{O}(h\|\ff\|_{L^2})$ (Proposition~\ref{prop:primal-mixed-chain}). Theorem~\ref{thm:convprimsch}, proved in Section \ref{sec:fescheme}, records the resulting error bounds, including $\mathcal{O}(h)$ convergence in the $L^2\Lambda^k$ and broken $H\Lambda^k$, and $H^*\Lambda^k$ norms for smooth data and $\mathcal{O}(h^s)$ on $s$-regular domains ($0<s\leqslant 1$), independently of the topology; on the perforated domains emphasized below, the rate is regularity-limited by~$s$.


The space $\fV^{\od\cap\mathring{\odelta}}_h\Lambda^k$ does not correspond to a classical finite element space of Ciarlet's type~\cite{Ciarlet.P1978book}, because the adjoint continuity conditions \eqref{eq:intro2:fems} couple degrees of freedom across neighboring cells in a manner that cannot be reduced to matching nodal values on the mesh skeleton. Nevertheless, uniform discrete Poincar\'e inequalities can be proved for $\fV^{\od\cap\mathring{\odelta}}_h\Lambda^k$, therefore the well-posedness of the discretization follows. Implementably, we can construct a set of globally defined, locally supported basis functions; each is supported on a vertex patch spanning at most two cells, not necessarily adjacent. The existence of these basis functions is rigorously guaranteed, and once computed, they can be used in a standard cell-by-cell assembly routine. The construction is presented in a unified manner for all $n$ and $k$, and is illustrated concretely for the two- and three-dimensional $H(\curl)\cap H(\dv)$ cases in the numerical section.

The remaining of the paper is organized as follows. Section~\ref{sec:pre} collects preliminaries and notation. Section~\ref{sec:mainresults} presents the construction of the finite element spaces and schemes and states the main theorems of the paper. Section~\ref{sec:pfspace} proves the discrete Poincar\'e inequality and the existence of locally supported basis functions, and Section~\ref{sec:fescheme} establishes error estimates by the aid of several mixed FEEC schemes. Section~\ref{sec:numer} reports numerical experiments to verify the validity of the schemes by eigenvalue tests, especially on domains with nontrivial topology. Additional numerical and implementation material is collected in the appendices: manufactured-solution boundary-value tests (Appendix~\ref{app:bvp-results}), explicit basis functions (Appendices~\ref{sec:app:basis-2d} and~\ref{sec:app:basis-3d}), and eigenvalue counterexamples with vector Lagrange and vector Crouzeix--Raviart elements (Appendices~\ref{app:lagrange-eigenvalues} and~\ref{app:cr-eigenvalues}). Section~\ref{sec:conc} concludes with remarks and prospects.

\section{Preliminaries}

\label{sec:pre}

\subsection{Preliminary notations} Throughout the paper, we use $\N$ and $\R$ to denote the null space and the range of certain operators. For example, $\N(\oT,\xD)$ denotes $\left\{\xv\in\xD:\oT\xv=0\right\}$, and $\R(\oT,\xD)$ denotes $\left\{\oT\xv:\xv\in\xD\right\}.$ For a Hilbert space $\xH$, we use $\opp_\xH$ and $\omp_\xH$ to denote orthogonal summation and orthogonal difference; namely, for two spaces $\xA$ and $\xB$ in $\xH$, the notation $\xA\opp_\xH\xB$ means that $\xA$ and $\xB$ are orthogonal in $\xH$ and their sum is direct; for $\xA\subset\xB\subset \xH$, $\xB\omp_\xH\xA$ denotes the orthogonal complement of $\xA$ in $\xB$. The subscript $\xH$ may occasionally be dropped.

We use $\od^k$ and $\odelta_k$ for the exterior \emph{differential} and \emph{codifferential} operators on $k$-forms $\Lambda^k$; $\odelta_k=(-1)^{kn}\star\od^{n-k}\star$, with $\star$ the Hodge star operator. We use $\okappa$ for the Koszul operator defined by
\[
\okappa(\dx^{\ixalpha_1}\wedge\dots\wedge\dx^{\ixalpha_k}):=\sum_{j=1}^k(-1)^{j+1}x^{\ixalpha_j}\dx^{\ixalpha_1}\wedge\dots\wedge\dx^{\ixalpha_{j-1}}\wedge\dx^{\ixalpha_{j+1}}\wedge\dots\wedge\dx^{\ixalpha_k},\ \mbox{for}\  \ixalpha\in\mathbb{IX}_{k,n},
\]
where $\mathbb{IX}_{k,n}:=\left\{\ixalpha=(\ixalpha_1,\dots,\ixalpha_k)\in\mathbb{Z}^k:1\leqslant \ixalpha_1<\ixalpha_2<\dots<\ixalpha_k\leqslant n\right\}$ is the set of $k$-indices, $k\leqslant n$.  Then for $\ixalpha\in \mathbb{IX}_{k+1,n}$, $\od^k(\okappa(\dx^{\ixalpha_1}\wedge\dots\wedge\dx^{\ixalpha_{k+1}}))=(k+1)\dx^{\ixalpha_1}\wedge\dots\wedge\dx^{\ixalpha_{k+1}},$ and for $\ixalpha\in \mathbb{IX}_{k-1,n}$, $\odelta_k(\star\okappa\star(\dx^{\ixalpha_1}\wedge\dots\wedge \dx^{\ixalpha_{k-1}}))=(-1)^{kn-n-1} (n-k+1)(\dx^{\ixalpha_1}\wedge\dots\wedge\dx^{\ixalpha_{k-1}}).$ The homotopy formula $(\od^{k-1}\okappa+\okappa\od^k)\fmu=(k+r)\fmu$ holds for $\fmu\in\mathcal{H}_r\Lambda^k$, where $\mathcal{H}_r\Lambda^k$ denotes the space of $k$-forms with homogeneous polynomial coefficients of degree $r$. In the sequel, denote $\okappa^\delta:=\star\circ\okappa\circ\star$ for short.

Denote, on a domain $\Xi$,
$$
H\Lambda^k(\Xi):=\left\{\fomega\in L^2\Lambda^k(\Xi):\od^k\fomega\in L^2\Lambda^{k+1}(\Xi)\right\},\ \ \ 0\leqslant k\leqslant n-1,
$$
and by $H_0\Lambda^k(\Xi)$ the closure of $\mathcal{C}_0^\infty\Lambda^k(\Xi)$ in $H\Lambda^k(\Xi)$. Denote 
$$
H^*\Lambda^k(\Xi):=\left\{\fmu\in L^2\Lambda^k(\Xi):\odelta_k\fmu\in L^2\Lambda^{k-1}(\Xi)\right\},\ \ \ 1\leqslant k\leqslant n,
$$
and $H^*_0\Lambda^k(\Xi)$ the closure of $\mathcal{C}_0^\infty\Lambda^k(\Xi)$ in $H^*\Lambda^k(\Xi)$. $\Xi$ can occasionally be dropped. $H^*\Lambda^k=\star H\Lambda^{n-k}$ and $H^*_0\Lambda^k=\star H_0\Lambda^{n-k}$. The spaces of harmonic forms are $\hf\Lambda^k:=\N(\od^k,H\Lambda^k)\omp\R(\od^{k-1},H\Lambda^{k-1})$, $\hf_0\Lambda^k:=\N(\od^k,H_0\Lambda^k)\omp\R(\od^{k-1},H_0\Lambda^{k-1})$, $\hf^*\Lambda^k:=\N(\odelta_k,H^*\Lambda^k)\omp\R(\odelta_{k+1},H^*\Lambda^{k+1})$, and $\hf^*_0\Lambda^k:=\N(\odelta_k,H^*_0\Lambda^k)\omp\R(\odelta_{k+1},H^*_0\Lambda^{k+1})$. As the Helmholtz decompositions hold that
$$
\N(\od^k,H\Lambda^k)\opp \R(\odelta_{k+1},H^*_0\Lambda^{k+1})=L^2\Lambda^k=\R(\od^{k-1},H\Lambda^{k-1})\opp\N(\odelta_k,H^*_0\Lambda^k),
$$
it follows that $\hf\Lambda^k=\hf^*_0\Lambda^k$ and $\hf_0\Lambda^k=\hf^*\Lambda^k$. This is the Poincar\'e-Lefschetz duality (cf.\ \cite[Section 4.5.5]{Arnold.D2018feec}), which links the two dual complexes connected respectively by $\od^k$ and $\odelta_k$.

For $\Omega$ a domain and $\Xi$ a subdomain of $\Omega$, we denote by $E_\Xi^\Omega$ the extension-by-zero operator from $L^1_{\rm loc}(\Xi)$ to $L^1_{\rm loc}(\Omega)$. Namely,
\[
E_\Xi^\Omega: L^1_{\rm loc}(\Xi)\to L^1_{\rm loc}(\Omega),\qquad
E_\Xi^\Omega v=\begin{cases}
v,&\text{on }\Xi,\\[2pt]
0,&\text{else},
\end{cases}
\quad\text{for }v\in L^1_{\rm loc}(\Xi).
\]
For $V_T\subset L^1(T)$, we write $E_T^\Omega V_T$ to denote $\R(E_T^\Omega,V_T)$. We use the same notation $L^1_{\rm loc}$ for scalar and non-scalar locally integrable functions, and $E_\Xi^\Omega$ for the extension of both. 

Let $\mathcal{F}^\mathcal{G}$ be a set of shape regular simplicial subdivisions of $\Omega$. On a subdivision $\mathcal{G}_h\in \mathcal{F}^\mathcal{G}$, define formally the product of a set of function spaces $\left\{\Upsilon(T)\right\}_{T\in\mathcal{G}_h}$ defined cell by cell such that $\displaystyle E_T^\Omega\Upsilon(T)$ for all $T\in\mathcal{G}_h$ are compatible, $\displaystyle \prod_{T\in\mathcal{G}_h}\Upsilon(T):=\sum_{T\in\mathcal{G}_h} E_T^\Omega\Upsilon(T)$. The summation is direct.

\subsection{Finite element spaces for $H\Lambda^k$ by Whitney forms}
Following \cite{Arnold.D;Falk.R;Winther.R2006acta,Arnold.D;Falk.R;Winther.R2010bams,Arnold.D2018feec}, the space of Whitney forms is denoted by $\mathcal{P}^-_1\Lambda^k=\mathcal{P}_0\Lambda^k+\okappa(\mathcal{P}_0\Lambda^{k+1})$. The operator $\od^k$ maps $\okappa(\mathcal{P}_0\Lambda^{k+1})$ bijectively onto $\mathcal{P}_0\Lambda^{k+1}$. Note that $\mathcal{P}^-_1\Lambda^0=\mathcal{P}_1\Lambda^0$ and $\mathcal{P}^-_1\Lambda^n=\mathcal{P}_0\Lambda^n$. The Whitney forms associated with the operator $\odelta_k$ are denoted by $\mathcal{P}^{*,-}_1\Lambda^k:=\star(\mathcal{P}^-_1\Lambda^{n-k})=\mathcal{P}_0\Lambda^k+\okappa^\delta(\mathcal{P}_0\Lambda^{k-1})$. The operator $\odelta_k$ maps $\okappa^\delta(\mathcal{P}_0\Lambda^{k-1})$ bijectively onto $\mathcal{P}_0\Lambda^{k-1}$. Note that
\begin{equation}\label{eq:n=r=c}
\N(\od^k,\mathcal{P}^-_1\Lambda^k)=\R(\od^{k-1},\mathcal{P}^-_1\Lambda^{k-1})=\mathcal{P}_0\Lambda^k=\R(\odelta_{k+1},\mathcal{P}^{*,-}_1\Lambda^{k+1})=\N(\odelta_k,\mathcal{P}^{*,-}_1\Lambda^k).
\end{equation}

Denote, on a simplicial subdivision $\mathcal{G}_h$ of $\Omega$, for $0\leqslant k\leqslant n$,
\begin{equation}
\mathcal{P}^-_1\Lambda^k(\mathcal{G}_h):=\prod_{T\in\mathcal{G}_h} \mathcal{P}^-_1\Lambda^k(T), \qquad
\mathcal{P}^{*,-}_1\Lambda^k(\mathcal{G}_h):=\prod_{T\in\mathcal{G}_h} \mathcal{P}^{*,-}_1\Lambda^k(T).
\end{equation}
Here and in the sequel, the subscript $\cdot_h$ denotes mesh dependence. In particular, an operator bearing the subscript $\cdot_h$ is understood to act cell by cell. Particularly, throughout, $\od^k_h$ and $\odelta_{k,h}$ denote the cell-by-cell exterior differential and codifferential.

The conforming finite element spaces with Whitney forms are denoted by
\begin{align*}
\fW_h\Lambda^k&:=\mathcal{P}^-_1\Lambda^k(\mathcal{G}_h)\cap H\Lambda^k, &
\fW_{h0}\Lambda^k&:=\mathcal{P}^-_1\Lambda^k(\mathcal{G}_h)\cap H_0\Lambda^k,\\
\fW^*_h\Lambda^k&:=\mathcal{P}^{*,-}_1\Lambda^k(\mathcal{G}_h)\cap H^*\Lambda^k, &
\fW^*_{h0}\Lambda^k&:=\mathcal{P}^{*,-}_1\Lambda^k(\mathcal{G}_h)\cap H^*_0\Lambda^k.
\end{align*}
Then $\fW^*_h\Lambda^k=\star \fW_h\Lambda^{n-k}$ and $\fW^*_{h0}\Lambda^k=\star \fW_{h0}\Lambda^{n-k}$. These spaces coincide with the finite element spaces of piecewise Whitney forms defined by continuity of the nodal parameters~\cite{Arnold.D2018feec}. Denote the spaces of discrete harmonic forms by, orthogonally in $L^2\Lambda^k$,
\begin{align*}
\hf_h\Lambda^k&:=\N(\od^k,\fW_h\Lambda^k)\omp \R(\od^{k-1},\fW_h\Lambda^{k-1}),\\
\hf_{h0}\Lambda^k&:=\N(\od^k,\fW_{h0}\Lambda^k)\omp \R(\od^{k-1},\fW_{h0}\Lambda^{k-1}),\\
\hf^*_h\Lambda^k&:=\N(\odelta_k,\fW^*_h\Lambda^k)\omp \R(\odelta_{k+1},\fW^*_h\Lambda^{k+1}),\\
\hf^*_{h0}\Lambda^k&:=\N(\odelta_k,\fW^*_{h0}\Lambda^k)\omp \R(\odelta_{k+1},\fW^*_{h0}\Lambda^{k+1}).
\end{align*}
Then $\hf_h\Lambda^k=\star\hf^*_h\Lambda^{n-k}$ and $\hf_{h0}\Lambda^k=\star\hf^*_{h0}\Lambda^{n-k}$.

\begin{lemma}[\cite{Arnold.D2018feec}]
$\hf_h\Lambda^k$ and $\hf_{h0}\Lambda^k$ are isomorphic to $\hf\Lambda^k$ and $\hf_0\Lambda^k$, respectively.
\end{lemma}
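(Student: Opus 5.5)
The plan is to identify both spaces with cohomology groups and pass through the de~Rham theorem; this is the standard FEEC argument. First, since $\fW_h\Lambda^\bullet$ is a subcomplex of $H\Lambda^\bullet$ and finite dimensional, $\hf_h\Lambda^k$ is isomorphic to the cohomology $\N(\od^k,\fW_h\Lambda^k)/\R(\od^{k-1},\fW_h\Lambda^{k-1})$. This holds because the orthogonal complement of the range inside the kernel is a set of representatives of the quotient. Likewise, because the $L^2$ de~Rham complex has closed ranges on a bounded Lipschitz domain, $\hf\Lambda^k$ is isomorphic to the $L^2$ cohomology $\N(\od^k,H\Lambda^k)/\R(\od^{k-1},H\Lambda^{k-1})$. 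The same two statements hold with the homogeneous boundary conditions.

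Second, I would use the bounded cochain projections $\pi_h$ of Arnold--Falk--Winther (smoothed or Sch\"oberl-type), which map $H\Lambda^k$ onto $\fW_h\Lambda^k$, commute with $\od$, and restrict to the identity on $\fW_h\Lambda^k$. Let $\iota$ be the inclusion $\fW_h\Lambda^\bullet\hookrightarrow H\Lambda^\bullet$. Then $\pi_h\circ\iota=\mathrm{id}$ as cochain maps, so $\iota$ induces an injection on cohomology. The standard consequence is that the map $\fmu\mapsto \mathbf{P}_{\hf}\fmu$ is injective from $\hf_h\Lambda^k$ into $\hf\Lambda^k$. Concretely, if $\fmu_h\in\hf_h\Lambda^k$ is exact in $H\Lambda^k$, say $\fmu_h=\od\fsigma$, then $\fmu_h=\pi_h\fmu_h=\od\pi_h\fsigma\in\R(\od^{k-1},\fW_h\Lambda^{k-1})$, and hence $\fmu_h=0$.

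Third, for surjectivity, i.e.\ equality of dimensions, I would argue in either of two ways. One option is to invoke the de~Rham theorem twice: the cohomology of the Whitney complex equals simplicial cohomology of $\mathcal{G}_h$ via the de~Rham map (integration over $k$-simplices), which is an isomorphism of cochain complexes onto simplicial cochains. The $L^2$ cohomology equals smooth de~Rham cohomology, hence singular cohomology $H^k(\Omega;\mathbb{R})$, which equals simplicial cohomology of the triangulation. The other option avoids topology: take $\fmu\in\hf\Lambda^k$; then $\pi_h\fmu$ is closed, and $\mathbf{P}_{\hf}\pi_h\fmu\to\fmu$ as $h\to0$ by approximation properties of $\pi_h$. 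Together with injectivity and the fixed finite dimension of $\hf\Lambda^k$, this gives surjectivity for small $h$; but it only covers small $h$, so I prefer the topological route. For $\hf_{h0}\Lambda^k$ and $\hf_0\Lambda^k$, I would repeat the argument with the complexes $\fW_{h0}\Lambda^\bullet$ and $H_0\Lambda^\bullet$ and cohomology relative to $\partial\Omega$. Alternatively, I would use the Hodge star and Poincar\'e--Lefschetz duality $\hf_0\Lambda^k=\hf^*\Lambda^k=\star\hf\Lambda^{n-k}$, noting that the discrete analogue (via $\hf_{h0}\Lambda^k=\star\hf^*_{h0}\Lambda^{n-k}$) needs its own argument.

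The main obstacle is the identification of $L^2$ cohomology with singular or simplicial cohomology on a merely Lipschitz domain, which requires regularized de~Rham arguments (smoothing that commutes with $\od$ and respects boundary conditions). Since the lemma is quoted from \cite{Arnold.D2018feec}, I would rely on that reference for the existence of bounded commuting projections and for the de~Rham isomorphism, rather than reprove them.
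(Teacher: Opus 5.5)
Your argument is correct and is the standard FEEC proof in the cited monograph \cite{Arnold.D2018feec}; the paper gives no proof of its own and simply cites that reference. The argument runs as follows: identify $\hf_h\Lambda^k$ and $\hf\Lambda^k$ with the corresponding cohomology groups, obtain injectivity of $\mathbf{P}_{\hf}$ on $\hf_h\Lambda^k$ from a bounded commuting projection, and match dimensions through the de~Rham map onto (relative) simplicial cochains together with de~Rham's theorem. The dimension count alone already gives the abstract isomorphism the lemma claims, so the cochain-projection step is only needed if you want the isomorphism realized by $\mathbf{P}_{\hf}$.
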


Following \cite{Zhang.S2026IMA}, we denote the nonconforming finite element spaces for $H\Lambda^k$, $H_0\Lambda^k$, $H^*\Lambda^k$, and $H^*_0\Lambda^k$, respectively, with piecewise Whitney forms by
\begin{align*}
\fW^{\rm nc}_h\Lambda^k&:=\Big\{\fmu_h\in \mathcal{P}^-_1\Lambda^k(\mathcal{G}_h):\sum_{T\in\mathcal{G}_h}\langle\od^k\fmu_h,\feta_h\rangle_{L^2\Lambda^{k+1}(T)}=\sum_{T\in\mathcal{G}_h}\langle\fmu_h,\odelta_{k+1}\feta_h\rangle_{L^2\Lambda^k(T)},\ \forall\,\feta_h\in \fW^*_{h0}\Lambda^{k+1}\Big\},\\
\fW^{\rm nc}_{h0}\Lambda^k&:=\Big\{\fmu_h\in \mathcal{P}^-_1\Lambda^k(\mathcal{G}_h):\sum_{T\in\mathcal{G}_h}\langle\od^k\fmu_h,\feta_h\rangle_{L^2\Lambda^{k+1}(T)}=\sum_{T\in\mathcal{G}_h}\langle\fmu_h,\odelta_{k+1}\feta_h\rangle_{L^2\Lambda^k(T)},\ \forall\,\feta_h\in \fW^*_h\Lambda^{k+1}\Big\},\\
\fW^{*,\rm nc}_h\Lambda^k&:=\Big\{\fmu_h\in \mathcal{P}^{*,-}_1\Lambda^k(\mathcal{G}_h):\sum_{T\in\mathcal{G}_h}\langle\odelta_k\fmu_h,\ftau_h\rangle_{L^2\Lambda^{k-1}(T)}=\sum_{T\in\mathcal{G}_h}\langle\fmu_h,\od^{k-1}\ftau_h\rangle_{L^2\Lambda^k(T)},\ \forall\,\ftau_h\in \fW_{h0}\Lambda^{k-1}\Big\},\\
\fW^{*,\rm nc}_{h0}\Lambda^k&:=\Big\{\fmu_h\in \mathcal{P}^{*,-}_1\Lambda^k(\mathcal{G}_h):\sum_{T\in\mathcal{G}_h}\langle\odelta_k\fmu_h,\ftau_h\rangle_{L^2\Lambda^{k-1}(T)}=\sum_{T\in\mathcal{G}_h}\langle\fmu_h,\od^{k-1}\ftau_h\rangle_{L^2\Lambda^k(T)},\ \forall\,\ftau_h\in \fW_h\Lambda^{k-1}\Big\}.
\end{align*}
Note that $\fW^{\rm nc}_{h(0)}\Lambda^0$ and $\fW^{*,\rm nc}_{h(0)}\Lambda^n$ coincide with the classical lowest-degree Crouzeix-Raviart element spaces~\cite{Crouzeix.M;Raviart.P1973}. Moreover, the conforming spaces admit a dual characterization; i.e., for example,
\[
\fW_h\Lambda^k=\left\{\fmu_h\in\mathcal{P}^-_1\Lambda^k(\mathcal{G}_h):\sum_{T\in\mathcal{G}_h}\langle\od^k\fmu_h,\feta_h\rangle_{L^2\Lambda^{k+1}(T)}=\sum_{T\in\mathcal{G}_h}\langle\fmu_h,\odelta_{k+1}\feta_h\rangle_{L^2\Lambda^k(T)},\ \forall\,\feta_h\in \fW^{*,\rm nc}_{h0}\Lambda^{k+1}\right\}.
\]
Hence, the adjoint relation between $H\Lambda^k$ and $H^*_0\Lambda^{k+1}$ is this way discretely inherited.
\begin{theorem}[{\cite[Theorem~18]{Zhang.S2026IMA}}]
The spaces $\fW^{\rm nc}_h\Lambda^k$ and $\fW^{\rm nc}_{h0}\Lambda^k$ each admit a set of linearly independent basis functions that are each supported on at most two simplices.
\end{theorem}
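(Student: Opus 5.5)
We plan to construct the basis explicitly, one function per $(k)$-dimensional subsimplex $f$ of $\mathcal{G}_h$ (interior ones for $\fW^{\rm nc}_{h0}\Lambda^k$), plus a correction that accounts for the dimension count. The first step is to localize the adjoint condition. For $\fmu_h\in\mathcal{P}^-_1\Lambda^k(\mathcal{G}_h)$ and $\feta_h\in\mathcal{P}^{*,-}_1\Lambda^{k+1}(\mathcal{G}_h)$, apply the cellwise Stokes formula. This gives
$$\sum_T\langle\od^k\fmu_h,\feta_h\rangle_T-\langle\fmu_h,\odelta_{k+1}\feta_h\rangle_T=\sum_T\int_{\partial T}{\rm tr}\,\fmu_h\wedge{\rm tr}\star\feta_h .$$
Since ${\rm tr}\star\feta_h$ on each facet $F$ lies in the trace of $\mathcal{P}^-_1\Lambda^{n-k-1}$, the defining condition of $\fW^{\rm nc}_h\Lambda^k$ reduces to finitely many facet-local moment conditions. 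These are indexed by the degrees of freedom of $\fW^*_{h0}\Lambda^{k+1}=\star\fW_{h0}\Lambda^{n-k-1}$, namely by interior $(n-k-1)$-subsimplices $e$. For each such $e$, the test function $\feta_e$ is supported on the star of $e$. The resulting condition therefore involves only the cells containing $e$: a weighted sum over facets $F\supset e$ of jump moments of ${\rm tr}_F\fmu_h$ must vanish. For $\fW^{\rm nc}_{h0}\Lambda^k$ we also test with boundary $e$, which imposes the analogous conditions on boundary facets.

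Next we identify candidate functions supported on one or two cells. On a single cell $T$, $E_T^\Omega\fmu$ with $\fmu\in\mathcal{P}^-_1\Lambda^k(T)$ belongs to the space if and only if all the local moment conditions vanish for it. Comparing dimensions, $\dim\mathcal{P}^-_1\Lambda^k(T)=\binom{n+1}{k+1}$ on the one hand, while the conditions indexed by $(n-k-1)$-faces of $T$ number $\binom{n+1}{n-k}=\binom{n+1}{k+1}$ on the other. So one-cell functions generally vanish, and this count suggests pairing. For two cells $T_1,T_2$ sharing a common $(n-k-1)$-face $e$ (not necessarily adjacent), we seek $\fmu_1+\fmu_2$. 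It should satisfy, in each $T_i$, all the conditions except the one tied to $e$, and the two contributions at $e$ should cancel. Each cell then offers a one-dimensional family, namely the dual Whitney function $\phi^{T}_e$ biorthogonal to the face functionals. We take $\fmu=\phi^{T_1}_e-c\,\phi^{T_2}_e$, with $c$ a ratio of the $e$-moments, which is generically nonzero. We must verify that the functionals involving $e$ from other cells in the star of $e$ are untouched; they are, since the condition at $e$ is a single sum over the whole star.

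The key count and independence come next. Let $N_e$ be the number of cells in the star of $e$. The space of piecewise Whitney forms has dimension $\sum_T\binom{n+1}{k+1}=\sum_e N_e$. The kernel of the surjective map to the constraints then has dimension $\sum_e(N_e-1)$ over interior $e$, plus $\sum_e N_e$ over boundary $e$ in the non-$0$ case. For each $e$ we order the cells of its star as $T_1,\dots,T_{N_e}$ and take the $N_e-1$ chain functions $\phi^{T_1}_e-c_j\phi^{T_j}_e$. For boundary $e$ in $\fW^{\rm nc}_h\Lambda^k$ there is no constraint, so the single-cell functions $\phi^{T}_e$ themselves qualify. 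Linear independence follows because the functions attached to different $e$ are biorthogonal to distinct local functionals. Together with the dimension count, this gives a basis.

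The main obstacle is the localization claim in the second step. We need that the local functionals of each cell are exactly dual to the face basis, and that the coupling constraint at $e$ is a single scalar condition. That requires checking that $\star\feta_e$ restricted to the cells reproduces the Whitney face functional on $e$, that is, a duality between $\mathcal{P}^-_1\Lambda^k(T)$ and $\mathcal{P}^{*,-}_1\Lambda^{k+1}(T)$ under the boundary pairing. We also need surjectivity of the constraint map, equivalently that no nontrivial $\feta_h\in\fW^*_{h0}\Lambda^{k+1}$ annihilates all piecewise forms, so that the dimension count is exact. We would establish both by a direct computation with barycentric Whitney forms on a reference simplex, showing that the matrix of cell-local boundary pairings is nonsingular and in fact a multiple of the identity.
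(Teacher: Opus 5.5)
Your construction is correct, and it is the natural proof. The paper only quotes this result from \cite{Zhang.S2026IMA} and gives no proof, but its remark that the two support cells need not be adjacent points to the same mechanism. That mechanism is yours: a cellwise dual basis $\phi^T_e$ for the adjoint pairing between $\mathcal{P}^-_1\Lambda^k(T)$ and $\mathcal{P}^{*,-}_1\Lambda^{k+1}(T)$. In the local coordinates $c_{T,e}$ the space is then cut out by $\sum_{T\ni e}c_{T,e}=0$ for the constrained $(n-k-1)$-faces $e$. The functions $\phi^{T_1}_e-\phi^{T_j}_e$ over the star of each constrained $e$, plus the single-cell $\phi^T_e$ for unconstrained boundary $e$ in $\fW^{\rm nc}_h\Lambda^k$, form the basis. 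Your dimension counts for both $\fW^{\rm nc}_h\Lambda^k$ and $\fW^{\rm nc}_{h0}\Lambda^k$ are right. Your opening sentence about ``one function per $k$-subsimplex'' does not match this indexing and should be dropped.

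One claim in your last paragraph is false, though you do not need it. The matrix of cellwise pairings in the barycentric Whitney bases is not a multiple of the identity.
\begin{itemize}
\item For $n=2$, $k=0$, normalize the Raviart--Thomas functions $\phi_e$ by $\phi_e\cdot\mathbf{n}=1/|e|$ on $e$. Then $\int_{\partial T}\lambda_i\,\phi_e\cdot\mathbf{n}$ is $0$ for the edge opposite vertex $i$ and $\frac12$ for the other two edges. The matrix is $\frac12(J-I)$, with $J$ the all-ones matrix, and the dual functions are the Crouzeix--Raviart functions $1-2\lambda_i$.
\item In general the entry for a $k$-face $f$ and an $(n-k-1)$-face $e$ is, up to sign, $\int_{\partial T}{\rm tr}(\phi_f\wedge\phi_e)$. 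This vanishes whenever no facet contains $f\cup e$, in particular for complementary faces.
\end{itemize}

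Only nonsingularity is needed, and it follows without computation from \eqref{eq:n=r=c}. Use the splittings $\mathcal{P}^-_1\Lambda^k(T)=\mathcal{P}_0\Lambda^k(T)\oplus\okappa_T(\mathcal{P}_0\Lambda^{k+1}(T))$ and $\mathcal{P}^{*,-}_1\Lambda^{k+1}(T)=\mathcal{P}_0\Lambda^{k+1}(T)\oplus\okappa^\delta_T(\mathcal{P}_0\Lambda^{k}(T))$. The blocks of the pairing are then:
\begin{itemize}
\item constants against constants give zero, since $\od^k$ and $\odelta_{k+1}$ kill them;
\item the two mixed blocks are the $L^2$ pairing of constants composed with the bijections $\od^k$ on $\okappa_T(\mathcal{P}_0\Lambda^{k+1})$ and $\odelta_{k+1}$ on $\okappa^\delta_T(\mathcal{P}_0\Lambda^{k})$;
\item the Koszul--Koszul block vanishes, because the centered linear forms $\okappa_T(\cdot)$ and $\okappa^\delta_T(\cdot)$ have zero mean on $T$.
\end{itemize}
So the matrix is block anti-diagonal with invertible blocks. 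Consequently your constant $c$ is exactly $\pm1$, fixed only by orientation conventions, not ``generically nonzero''. Surjectivity of the constraint map is immediate, so the separate dimension count is superfluous.
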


\begin{theorem}[Discrete Helmholtz decomposition\cite{Zhang.S2026IMA}]\label{thm:dishd}
The following decompositions are orthogonal in $L^2\Lambda^k(\Omega)$:
\begin{align*}
\mathcal{P}_0\Lambda^k(\mathcal{G}_h)&=\R(\od^{k-1}_h,\fW^{\rm nc}_h\Lambda^{k-1}) \opp \N(\odelta_{k,h},\fW_{h0}^*\Lambda^k)=\R(\od^{k-1}_h,\fW^{\rm nc}_{h0}\Lambda^{k-1}) \opp \N(\odelta_{k,h},\fW_h^*\Lambda^k),\\
\mathcal{P}_0\Lambda^k(\mathcal{G}_h)&=\N(\od^k_h,\fW^{\rm nc}_h\Lambda^k) \opp \R(\odelta_{k+1,h},\fW_{h0}^*\Lambda^{k+1})=\N(\od^k_h,\fW^{\rm nc}_{h0}\Lambda^k) \opp \R(\odelta_{k+1,h},\fW_h^*\Lambda^{k+1}),
\end{align*}
where the first line holds for $1\leqslant k\leqslant n$, and the second line holds for $0\leqslant k\leqslant n-1$.
\end{theorem}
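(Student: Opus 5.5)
It suffices to prove the first decomposition in each line; the second follows by the same argument with $\fW^{\rm nc}_{h0}$ and $\fW^*_h$ interchanged with $\fW^{\rm nc}_h$ and $\fW^*_{h0}$. The second line for $k$ is also the Hodge-star image of the first line for $n-k$, after applying $\star$ and using $\fW^*_{h0}\Lambda^{k+1}=\star\fW_{h0}\Lambda^{n-k-1}$. So I concentrate on
\[
\mathcal{P}_0\Lambda^k(\mathcal{G}_h)=\R(\od^{k-1}_h,\fW^{\rm nc}_h\Lambda^{k-1}) \opp \N(\odelta_{k,h},\fW_{h0}^*\Lambda^k),
\]
and note first that both summands lie in $\mathcal{P}_0\Lambda^k(\mathcal{G}_h)$. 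For the first summand this holds because $\od^{k-1}$ maps $\mathcal{P}^-_1\Lambda^{k-1}(T)$ into $\mathcal{P}_0\Lambda^k(T)$. For the second it follows from \eqref{eq:n=r=c} applied cellwise: $\N(\odelta_k,\mathcal{P}^{*,-}_1\Lambda^k(T))=\mathcal{P}_0\Lambda^k(T)$.

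\textbf{Orthogonality.} Take $\fmu_h\in\fW^{\rm nc}_h\Lambda^{k-1}$ and $\feta_h\in\fW^*_{h0}\Lambda^k$ with $\odelta_{k,h}\feta_h=0$. The defining adjoint identity of $\fW^{\rm nc}_h\Lambda^{k-1}$, tested with this admissible $\feta_h$, gives
\[
\sum_T\langle\od^{k-1}\fmu_h,\feta_h\rangle_T=\sum_T\langle\fmu_h,\odelta_k\feta_h\rangle_T=0.
\]
This step is immediate.

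\textbf{Completeness.} The sum of the two subspaces is direct by orthogonality, so it remains to show it is all of $\mathcal{P}_0\Lambda^k(\mathcal{G}_h)$. Let $\fsigma\in\mathcal{P}_0\Lambda^k(\mathcal{G}_h)$ be orthogonal to $\N(\odelta_{k,h},\fW^*_{h0}\Lambda^k)$; I must produce $\fmu_h\in\fW^{\rm nc}_h\Lambda^{k-1}$ with $\od^{k-1}_h\fmu_h=\fsigma$.

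I would construct $\fmu_h$ by a variational problem in which $\fW^*_{h0}\Lambda^k$ acts as a multiplier space. Pick a piecewise Whitney form $\fmu_h^0\in\mathcal{P}^-_1\Lambda^{k-1}(\mathcal{G}_h)$ with $\od^{k-1}_h\fmu_h^0=\fsigma$; this is possible cellwise, since $\od$ maps $\okappa(\mathcal{P}_0\Lambda^k)$ onto $\mathcal{P}_0\Lambda^k$. It then suffices to add a correction $\fvarrho_h$ from the piecewise kernel $\mathcal{P}_0\Lambda^{k-1}(\mathcal{G}_h)$ so that $\fmu_h=\fmu_h^0+\fvarrho_h$ satisfies the adjoint constraints. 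Since $\od_h\fvarrho_h=0$, these constraints read
\[
\langle\fvarrho_h,\odelta_{k,h}\feta_h\rangle=\langle\fsigma,\feta_h\rangle-\langle\fmu_h^0,\odelta_{k,h}\feta_h\rangle,\qquad \forall\,\feta_h\in\fW^*_{h0}\Lambda^k.
\]
The left side depends on $\feta_h$ only through $\odelta_{k,h}\feta_h$, and the range $\odelta_{k,h}\fW^*_{h0}\Lambda^k$ lies in $\mathcal{P}_0\Lambda^{k-1}(\mathcal{G}_h)$. Hence the system is solvable exactly when the right side vanishes for every $\feta_h$ with $\odelta_{k,h}\feta_h=0$. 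For such $\feta_h$ the right side reduces to $\langle\fsigma,\feta_h\rangle$, which is zero by the choice of $\fsigma$. When the compatibility condition holds, I would take $\fvarrho_h$ in $\R(\odelta_{k,h},\fW^*_{h0}\Lambda^k)$; it is then determined by the Riesz representation in that finite-dimensional space.

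The main obstacle is this solvability step. It requires that the right-hand side functional factor through $\odelta_{k,h}$, which is a finite-dimensional rank–nullity argument I expect to go through cleanly. I must also confirm that the constraints defining $\fW^{\rm nc}_h\Lambda^{k-1}$ involve only the test space $\fW^*_{h0}\Lambda^k$, which matches the definition. For the variant with $\fW^{\rm nc}_{h0}$ and $\fW^*_h$, the same argument runs with $\fW^*_h\Lambda^k$ as the multiplier space.
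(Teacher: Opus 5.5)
Your treatment of the first line is correct and self-contained. The paper only cites \cite{Zhang.S2026IMA} for this theorem, so there is no in-paper proof to compare against. Orthogonality is the defining identity of $\fW^{\rm nc}_h\Lambda^{k-1}$. The correction by $\fvarrho_h\in\R(\odelta_{k,h},\fW^*_{h0}\Lambda^k)\subset\mathcal{P}_0\Lambda^{k-1}(\mathcal{G}_h)$ leaves $\od^{k-1}_h$ unchanged. It can absorb exactly those functionals on $\fW^*_{h0}\Lambda^k$ that vanish on $\N(\odelta_{k,h},\fW^*_{h0}\Lambda^k)$, and it works as you describe.

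The gap is your reduction of the second line: the Hodge star does not carry the first line onto it. Since $\star\fW^{\rm nc}_h\Lambda^{j}=\fW^{*,\rm nc}_h\Lambda^{n-j}$ and $\star\fW^*_{h0}\Lambda^{j}=\fW_{h0}\Lambda^{n-j}$, applying $\star$ to the first line at degree $n-k$ yields
\[
\mathcal{P}_0\Lambda^k(\mathcal{G}_h)=\N(\od^k,\fW_{h0}\Lambda^k)\opp\R(\odelta_{k+1,h},\fW^{*,\rm nc}_h\Lambda^{k+1}).
\]
Here the conforming space sits on the $\od$ side and the nonconforming one on the $\odelta$ side. That is the reverse of the stated $\N(\od^k_h,\fW^{\rm nc}_h\Lambda^k)\opp\R(\odelta_{k+1,h},\fW^*_{h0}\Lambda^{k+1})$. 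The identity $\fW^*_{h0}\Lambda^{k+1}=\star\fW_{h0}\Lambda^{n-k-1}$ you quote never enters, because the first line at degree $n-k$ involves $\fW^*_{h0}\Lambda^{n-k}$. So, as written, the second line is not proved; the displayed identity is true but is a different decomposition.

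The missing piece is easy, and is in fact the trivial half of the theorem. The second line at degree $k$ and the first line at degree $k+1$ are the two splittings attached to the same pair $(\od^k_h,\fW^{\rm nc}_h\Lambda^k)$, $(\odelta_{k+1,h},\fW^*_{h0}\Lambda^{k+1})$, and $\fW^{\rm nc}_h\Lambda^k$ is by definition an annihilator. Orthogonality is again the defining identity. For completeness, let $\fsigma\in\mathcal{P}_0\Lambda^k(\mathcal{G}_h)$ be orthogonal to $\R(\odelta_{k+1,h},\fW^*_{h0}\Lambda^{k+1})$. Since $\od^k_h\fsigma=0$, we get $\langle\od^k_h\fsigma,\feta_h\rangle=0=\langle\fsigma,\odelta_{k+1,h}\feta_h\rangle$ for every $\feta_h\in\fW^*_{h0}\Lambda^{k+1}$. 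Hence $\fsigma\in\fW^{\rm nc}_h\Lambda^k$ with $\od^k_h\fsigma=0$, and no construction is needed. Replacing the test space by $\fW^*_h\Lambda^{k+1}$ gives the $\fW^{\rm nc}_{h0}$ variant. With this direct argument in place of the star reduction, your proof covers the whole statement.
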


Denote
\begin{align*}
\hf^{\rm nc}_h\Lambda^k&:=\N(\od^k_h,\fW^{\rm nc}_h\Lambda^k)\omp \R(\od^{k-1}_h,\fW^{\rm nc}_h\Lambda^{k-1}),\\
\hf^{\rm nc}_{h0}\Lambda^k&:=\N(\od^k_h,\fW^{\rm nc}_{h0}\Lambda^k)\omp \R(\od^{k-1}_h,\fW^{\rm nc}_{h0}\Lambda^{k-1}),\\
\hf^{*,\rm nc}_h\Lambda^k&:=\N(\odelta_{k,h},\fW^{*,\rm nc}_h\Lambda^k)\omp \R(\odelta_{k+1,h},\fW^{*,\rm nc}_h\Lambda^{k+1}),\\
\hf^{*,\rm nc}_{h0}\Lambda^k&:=\N(\odelta_{k,h},\fW^{*,\rm nc}_{h0}\Lambda^k)\omp \R(\odelta_{k+1,h},\fW^{*,\rm nc}_{h0}\Lambda^{k+1}).
\end{align*}

Then
$\hf^{\rm nc}_h\Lambda^k=\star\hf^{*,\rm nc}_h\Lambda^{n-k}$, and $\hf^{\rm nc}_{h0}\Lambda^k=\star\hf_{h0}^{*,\rm nc}\Lambda^{n-k}.$ Further, the discrete Poincar\'e-Lefschetz dualities and discrete Hodge decompositions hold.
\begin{lemma}\cite{Zhang.S2026IMA}$\hf_h\Lambda^k=\hf^{*,\rm nc}_{h0}\Lambda^k$, $\hf_{h0}\Lambda^k = \hf^{*, \rm nc}_h\Lambda^k$, $\hf^*_h\Lambda^k=\hf^{\rm nc}_{h0}\Lambda^k$ and $\hf^*_{h0}\Lambda^k = \hf^{\rm nc}_h\Lambda^k$. 
\end{lemma}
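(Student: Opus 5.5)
By Hodge-star symmetry it suffices to prove $\hf_h\Lambda^k=\hf^{*,\rm nc}_{h0}\Lambda^k$ and $\hf_{h0}\Lambda^k=\hf^{*,\rm nc}_h\Lambda^k$. The other two identities then follow by applying $\star$: we have $\hf^*_h\Lambda^k=\star\hf_h\Lambda^{n-k}$, $\hf^{\rm nc}_{h0}\Lambda^k=\star\hf^{*,\rm nc}_{h0}\Lambda^{n-k}$, and the conjugation $\star\,\od\,\star\sim\odelta$ exchanges the roles of $\fW$ and $\fW^*$. I also treat only the first identity; the second is identical with boundary conditions swapped, because Theorem~\ref{thm:dishd} is stated in both versions.

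\textbf{Step 1: both spaces lie in the piecewise constants.} We have $\N(\od^k,\fW_h\Lambda^k)\subset\mathcal{P}_0\Lambda^k(\mathcal{G}_h)$ by \eqref{eq:n=r=c} applied cellwise. Likewise $\N(\odelta_{k,h},\fW^{*,\rm nc}_{h0}\Lambda^k)\subset\mathcal{P}_0\Lambda^k(\mathcal{G}_h)$, since $\odelta_k$ is injective on $\okappa^\delta(\mathcal{P}_0\Lambda^{k-1})$.

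\textbf{Step 2: identify the range parts.}
\begin{itemize}
\item We have $\R(\od^{k-1},\fW_h\Lambda^{k-1})\subset\N(\od^k,\fW_h\Lambda^k)$.
\item For $\feta=\odelta_{k+1,h}\fzeta$ with $\fzeta\in\fW^{*,\rm nc}_{h0}\Lambda^{k+1}$, test the defining adjoint relation of $\fzeta$ with $\ftau=\od^{k-1}$-exact forms in $\fW_h\Lambda^{k}$. This gives $\langle\odelta_{k+1,h}\fzeta,\od^{k-1}\ftau\rangle=\langle\fzeta,\od\od\ftau\rangle=0$, so $\R(\odelta_{k+1,h},\fW^{*,\rm nc}_{h0}\Lambda^{k+1})\perp\R(\od^{k-1},\fW_h\Lambda^{k-1})$.
\item Inside $\mathcal{P}_0\Lambda^k(\mathcal{G}_h)$, the form $\odelta_{k,h}$ vanishes on constants. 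So for $\fmu\in\mathcal{P}_0\Lambda^k(\mathcal{G}_h)$, membership in $\fW^{*,\rm nc}_{h0}\Lambda^k$ means exactly $\langle\fmu,\od^{k-1}\ftau_h\rangle=0$ for all $\ftau_h\in\fW_h\Lambda^{k-1}$. Hence
\[\N(\odelta_{k,h},\fW^{*,\rm nc}_{h0}\Lambda^k)=\mathcal{P}_0\Lambda^k(\mathcal{G}_h)\omp\R(\od^{k-1},\fW_h\Lambda^{k-1}).\]
\end{itemize}

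\textbf{Step 3: dual description of the kernel.} By the dual characterization of $\fW_h\Lambda^k$ quoted above, a piecewise constant $\fmu$ lies in $\fW_h\Lambda^k$ iff $\langle\fmu,\odelta_{k+1,h}\feta_h\rangle=0$ for all $\feta_h\in\fW^{*,\rm nc}_{h0}\Lambda^{k+1}$. Here $\od\fmu=0$ automatically. Therefore
\[\N(\od^k,\fW_h\Lambda^k)=\mathcal{P}_0\Lambda^k(\mathcal{G}_h)\omp\R(\odelta_{k+1,h},\fW^{*,\rm nc}_{h0}\Lambda^{k+1}).\]
This is the step needing care. If I prefer to avoid the dual characterization, I can instead invoke the $\star$-image of Theorem~\ref{thm:dishd}, namely $\mathcal{P}_0\Lambda^k=\R(\odelta_{k+1,h},\fW^{*,\rm nc}_{h0}\Lambda^{k+1})\opp\N(\od^k,\fW_h\Lambda^k)$, obtained from the first line of that theorem applied to $n-k$ and conjugated by $\star$.

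\textbf{Step 4: conclude.} Write $P=\mathcal{P}_0\Lambda^k(\mathcal{G}_h)$, $A=\R(\od^{k-1},\fW_h\Lambda^{k-1})$ and $B=\R(\odelta_{k+1,h},\fW^{*,\rm nc}_{h0}\Lambda^{k+1})$. Then $A\perp B$, and
\[\hf_h\Lambda^k=(P\ominus B)\ominus A=P\ominus(A\oplus B)=(P\ominus A)\ominus B=\hf^{*,\rm nc}_{h0}\Lambda^k,\]
where the last equality uses Step 2 together with the definition of $\hf^{*,\rm nc}_{h0}$.

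The main obstacle is Step 3: justifying that discrete $\od$-closedness of a piecewise constant form is characterized by orthogonality to $\R(\odelta_{k+1,h},\fW^{*,\rm nc}_{h0}\Lambda^{k+1})$. This is exactly where the cited decomposition from \cite{Zhang.S2026IMA} is used.
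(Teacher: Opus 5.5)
Your argument is correct. The paper gives no proof of its own here; it cites \cite{Zhang.S2026IMA}. Your reduction of both spaces to $\mathcal{P}_0\Lambda^k(\mathcal{G}_h)\omp\bigl(\R(\od^{k-1},\fW_h\Lambda^{k-1})\opp\R(\odelta_{k+1,h},\fW^{*,\rm nc}_{h0}\Lambda^{k+1})\bigr)$ is the natural route through the ingredients the paper quotes. On one side you use the definition of $\fW^{*,\rm nc}_{h0}\Lambda^k$; on the other, the quoted dual characterization of $\fW_h\Lambda^k$ or, equivalently, the $\star$-image of Theorem~\ref{thm:dishd}. The resulting description of $\hf_h\Lambda^k$ is the same one the paper uses at the start of its proof of Theorem~\ref{thm:disharm}, where it reads it off from Lemma~\ref{lem:hoddecconst}.

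The only blemish is wording in the orthogonality bullet of Step~2. The test form in the adjoint relation of $\fzeta$ should be $\od^{k-1}\ftau$ with $\ftau\in\fW_h\Lambda^{k-1}$, which indeed lies in $\fW_h\Lambda^k$.
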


\begin{lemma}\cite{Zhang.S2026IMA}\label{lem:hoddecconst}
The Hodge decompositions hold: with $1\leqslant k\leqslant n-1$,
\begin{multline*}
\qquad\mathcal{P}_0\Lambda^k(\mathcal{G}_h)=\R(\od^{k-1},\fW_h\Lambda^{k-1})\opp\hf_h\Lambda^k \opp \R(\odelta_{k+1,h},\fW^{*,\rm nc}_{h0}\Lambda^{k+1})
\\
=\R(\od^{k-1},\fW_{h0}\Lambda^{k-1})\opp\hf_{h0}\Lambda^k \opp \R(\odelta_{k+1,h},\fW^{*,\rm nc}_h\Lambda^{k+1}).\qquad
\end{multline*}
\end{lemma}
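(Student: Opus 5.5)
The plan is to obtain both decompositions from the discrete Helmholtz decomposition of Theorem~\ref{thm:dishd} by Hodge duality. After that, the kernel of $\od^k$ on the conforming Whitney space is split further using the definition of $\hf_h\Lambda^k$ (respectively $\hf_{h0}\Lambda^k$). I describe the first equality; the second is obtained the same way with the roles of the boundary conditions exchanged.

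\textbf{Step 1: dualizing Theorem~\ref{thm:dishd}.} Apply the first line of Theorem~\ref{thm:dishd} at index $n-k$ (admissible since $1\leqslant n-k\leqslant n-1$), in its second form:
\[
\mathcal{P}_0\Lambda^{n-k}(\mathcal{G}_h)=\R(\od^{n-k-1}_h,\fW^{\rm nc}_{h0}\Lambda^{n-k-1}) \opp \N(\odelta_{n-k,h},\fW_h^*\Lambda^{n-k}).
\]
Now apply $\star$ cell by cell. The following facts are used.
\begin{itemize}
\item $\star$ is an $L^2$-isometry, so it preserves orthogonality.
\item $\star\mathcal{P}_0\Lambda^{n-k}(\mathcal{G}_h)=\mathcal{P}_0\Lambda^k(\mathcal{G}_h)$.
\item $\star\fW^*_h\Lambda^{n-k}=\fW_h\Lambda^k$.
\item $\star\,\od_h=\pm\odelta_h\,\star$ cellwise, with a sign depending only on $n,k$; the sign does not affect ranges or kernels.
\item $\star\fW^{\rm nc}_{h0}\Lambda^{n-k-1}=\fW^{*,\rm nc}_{h0}\Lambda^{k+1}$.
\end{itemize}
The last identity needs a short check. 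The space $\fW^{\rm nc}_{h0}\Lambda^{n-k-1}$ consists of piecewise $\mathcal{P}^-_1$ forms satisfying adjoint continuity against $\fW^*_h\Lambda^{n-k}=\star\fW_h\Lambda^{k}$. Applying $\star$ to both the form and the test functions, and using the isometry and the commutation relation, turns this into adjoint continuity of a piecewise $\mathcal{P}^{*,-}_1\Lambda^{k+1}$ form against $\fW_h\Lambda^k$. That is exactly the definition of $\fW^{*,\rm nc}_{h0}\Lambda^{k+1}$. Putting these together gives the orthogonal splitting
\[
\mathcal{P}_0\Lambda^k(\mathcal{G}_h)=\N(\od^k,\fW_h\Lambda^k)\opp \R(\odelta_{k+1,h},\fW^{*,\rm nc}_{h0}\Lambda^{k+1}).
\]

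\textbf{Step 2: splitting the kernel.} By definition,
\[
\hf_h\Lambda^k=\N(\od^k,\fW_h\Lambda^k)\omp\R(\od^{k-1},\fW_h\Lambda^{k-1}),
\]
and $\R(\od^{k-1},\fW_h\Lambda^{k-1})\subset\N(\od^k,\fW_h\Lambda^k)$ because $\od^k\circ\od^{k-1}=0$. Hence
\[
\N(\od^k,\fW_h\Lambda^k)=\R(\od^{k-1},\fW_h\Lambda^{k-1})\opp\hf_h\Lambda^k.
\]
These spaces indeed lie in $\mathcal{P}_0\Lambda^k(\mathcal{G}_h)$: by \eqref{eq:n=r=c}, applied cellwise, the kernel of $\od^k$ on $\mathcal{P}^-_1\Lambda^k(T)$ consists of constant forms. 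Substituting this splitting into Step~1 yields the first claimed decomposition, with all three summands mutually orthogonal.

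\textbf{Second decomposition.} Start instead from the first form of the first line of Theorem~\ref{thm:dishd} at index $n-k$, namely the one involving $\fW^{\rm nc}_h$ and $\fW^*_{h0}$. The same star argument shows $\star\fW^{\rm nc}_h\Lambda^{n-k-1}=\fW^{*,\rm nc}_h\Lambda^{k+1}$ (adjoint continuity now against $\fW_{h0}\Lambda^k$) and $\star\fW^*_{h0}\Lambda^{n-k}=\fW_{h0}\Lambda^k$. The kernel is then split with $\hf_{h0}\Lambda^k$ in place of $\hf_h\Lambda^k$.

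\textbf{Main obstacle.} The only delicate point is the bookkeeping in Step~1. One must verify that $\star$ carries each nonconforming space onto the correct partner space, with the right boundary condition ($h$ versus $h0$). This comes down to matching which conforming test space appears in each definition. The signs produced by $\star\star$ and by $\star\od$ versus $\odelta\star$ are harmless, since they multiply entire subspaces by a nonzero constant.
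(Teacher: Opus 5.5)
Your argument is correct. The paper gives no proof of this lemma; it is quoted from \cite{Zhang.S2026IMA}, so there is no in-paper argument to match. Your route has two steps. First, you star-dualize the first line of Theorem~\ref{thm:dishd} at index $n-k$ to obtain $\mathcal{P}_0\Lambda^k(\mathcal{G}_h)=\N(\od^k,\fW_h\Lambda^k)\opp\R(\odelta_{k+1,h},\fW^{*,\rm nc}_{h0}\Lambda^{k+1})$. Second, you split the conforming kernel using the definition of $\hf_h\Lambda^k$. This shows the lemma carries no information beyond Theorem~\ref{thm:dishd}, the Hodge star, and the identities $\fW^*_{h(0)}\Lambda^{n-k}=\star\fW_{h(0)}\Lambda^k$.

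One justification should be sharpened. When you identify $\star\fW^{\rm nc}_{h0}\Lambda^{n-k-1}$ with $\fW^{*,\rm nc}_{h0}\Lambda^{k+1}$, it is not enough that signs rescale whole subspaces. The adjoint identity has two terms, so you need $\star\od^{n-k-1}=c\,\odelta_{k+1}\star$ and $\star\odelta_{n-k}=c\,\od^k\star$ with the \emph{same} constant $c$. Otherwise the transformed condition would read $\langle\odelta\nu,\tau\rangle=-\langle\nu,\od\tau\rangle$. It does hold, either by a direct check from $\star\star=(-1)^{j(n-j)}$ on $j$-forms and the formula for $\odelta$, or because $\odelta$ is the formal adjoint of $\od$ in every degree. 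So nothing breaks.

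A shorter alternative to your Step~1, closer to how the paper uses the lemma in the proof of Theorem~\ref{thm:disharm}, is the dual characterization of $\fW_h\Lambda^k$ stated in Section~\ref{sec:pre}. For piecewise constant $\fmu_h$ one has $\od^k_h\fmu_h=0$. Hence $\fmu_h\perp\R(\odelta_{k+1,h},\fW^{*,\rm nc}_{h0}\Lambda^{k+1})$ is exactly the adjoint identity against $\fW^{*,\rm nc}_{h0}\Lambda^{k+1}$, which means $\fmu_h\in\fW_h\Lambda^k$. The second decomposition follows in the same way from the $h0$ analogue. This avoids all star bookkeeping, but it rests on a different cited fact, the dual characterization, whereas your version needs only Theorem~\ref{thm:dishd}.
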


\subsection{A pair of adjoint operators associated with the Hodge-Laplacian}

Define
\[
\oT_k:\Lambda^k\to \Lambda^{k+1}\times \Lambda^{k-1},\qquad \mbox{by}\ \oT_k\fmu:=\begin{pmatrix}\od^k\fmu \\ \odelta_k\fmu\end{pmatrix},
\]
and
\[
\aoT_k:\Lambda^{k+1}\times \Lambda^{k-1}\to \Lambda^k,\qquad \mbox{by}\  \aoT_k\begin{pmatrix}\feta \\ \ftau\end{pmatrix}:=\odelta_{k+1}\feta+\od^{k-1}\ftau.
\]
Formally, we have $\odelta_{k+1}\od^k+\od^{k-1}\odelta_k=\aoT_k\circ\oT_k$.

\begin{lemma}
The operator $(\oT_k,H\Lambda^k\cap H^*_0\Lambda^k)$ is closed, and its range $\R(\oT_k,H\Lambda^k\cap H^*_0\Lambda^k)$ is closed.
\end{lemma}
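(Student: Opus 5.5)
Closedness of the operator follows from the closedness of $\od^k$ and $\odelta_k$ separately. For the range, the plan is to establish a Poincar\'e-type inequality on the orthogonal complement of the kernel, via the Hodge decomposition.

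\emph{Closedness of the operator.} Let $\fmu_j\in H\Lambda^k\cap H^*_0\Lambda^k$ with $\fmu_j\to\fmu$ in $L^2\Lambda^k$ and $\oT_k\fmu_j\to(\feta,\ftau)$ in $L^2\Lambda^{k+1}\times L^2\Lambda^{k-1}$. Since $(\od^k,H\Lambda^k)$ is a closed operator, we get $\fmu\in H\Lambda^k$ and $\od^k\fmu=\feta$. Since $(\odelta_k,H^*_0\Lambda^k)$ is closed (it is the adjoint of $(\od^{k-1},H\Lambda^{k-1})$, or equivalently $\star$ of a closed $\od$), we get $\fmu\in H^*_0\Lambda^k$ and $\odelta_k\fmu=\ftau$. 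Hence $(\oT_k,H\Lambda^k\cap H^*_0\Lambda^k)$ is closed.

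\emph{Closedness of the range.} The kernel of $\oT_k$ is $\N(\od^k,H\Lambda^k)\cap\N(\odelta_k,H^*_0\Lambda^k)$. By the Helmholtz decompositions recalled in Section~\ref{sec:pre}, this intersection equals $\hf\Lambda^k$. By the closed range theorem, it suffices to prove
\[
\|\fmu\|_{L^2\Lambda^k}\leqslant C\big(\|\od^k\fmu\|+\|\odelta_k\fmu\|\big)\quad\text{for all }\fmu\in H\Lambda^k\cap H^*_0\Lambda^k,\ \fmu\perp\hf\Lambda^k.
\]
Take such a $\fmu$ and decompose it as $\fmu=\od^{k-1}\ftau+\odelta_{k+1}\feta$, with $\ftau\in H\Lambda^{k-1}$ and $\feta\in H^*_0\Lambda^{k+1}$. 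This uses the Hodge decomposition $L^2\Lambda^k=\R(\od^{k-1})\opp\hf\Lambda^k\opp\R(\odelta_{k+1},H^*_0\Lambda^{k+1})$, which follows from the two Helmholtz decompositions once $\R(\od^{k-1},H\Lambda^{k-1})$ is known to be closed.

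Set $\fmu_1:=\odelta_{k+1}\feta$. It lies in $\N(\odelta_k,H^*_0\Lambda^k)$, which contains $\R(\odelta_{k+1},H^*_0\Lambda^{k+1})$. It is also orthogonal to $\N(\od^k,H\Lambda^k)$, and $\od^k\fmu_1=\od^k\fmu$. The standard Poincar\'e inequality for the closed-range operator $\od^k$ on $\N(\od^k)^\perp$ then gives $\|\fmu_1\|\leqslant c_P\|\od^k\fmu\|$.

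Set $\fmu_0:=\od^{k-1}\ftau$. Since $\fmu_1\in H^*_0\Lambda^k$, we have $\fmu_0=\fmu-\fmu_1\in H^*_0\Lambda^k$. It lies in $\R(\od^{k-1})$, which is orthogonal to $\N(\odelta_k,H^*_0\Lambda^k)$, and $\odelta_k\fmu_0=\odelta_k\fmu$. The Poincar\'e inequality for the closed-range operator $(\odelta_k,H^*_0\Lambda^k)$, obtained from the $\od$ case by applying $\star$, gives $\|\fmu_0\|\leqslant c_P^*\|\odelta_k\fmu\|$. Summing the two bounds yields the claimed inequality.

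\emph{Main obstacle.} The only nontrivial input is that $\R(\od^{k-1},H\Lambda^{k-1})$ and $\R(\odelta_{k+1},H^*_0\Lambda^{k+1})$ are closed, equivalently the Poincar\'e inequalities used above. On a bounded Lipschitz domain these follow from the compactness of $H\Lambda^k\cap H^*_0\Lambda^k\hookrightarrow L^2\Lambda^k$ (Picard's theorem) or from the regular decomposition and the Rellich theorem. I would simply cite this, e.g.\ \cite{Arnold.D2018feec}.
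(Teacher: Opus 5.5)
Your proof is correct, but it reaches closedness of the range by a different route from the paper. The paper's proof contains no estimate. It uses the Helmholtz decompositions to show that the part of $H\Lambda^k$ orthogonal to $\N(\od^k,H\Lambda^k)$ already lies in $H\Lambda^k\cap\N(\odelta_k,H^*_0\Lambda^k)$, and symmetrically for $H^*_0\Lambda^k$. Hence $\od^k\fmu$ and $\odelta_k\fmu$ can be prescribed independently, and
\[
\R(\oT_k,H\Lambda^k\cap H^*_0\Lambda^k)=\R(\od^k,H\Lambda^k)\times\R(\odelta_k,H^*_0\Lambda^k),
\]
which is closed as a product of closed ranges.

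You instead split $\fmu\perp\hf\Lambda^k$ along the Hodge decomposition into $\fmu_0+\fmu_1$. Since $\od^k$ sees only $\fmu_1$ and $\odelta_k$ sees only $\fmu_0$, you obtain a Poincar\'e inequality for $\oT_k$ on the complement of its kernel. You then use that, for a closed operator, such an inequality implies closed range; only this easy direction is needed. Both arguments rest on the same inputs: closedness of $\R(\od^k,H\Lambda^k)$ and $\R(\odelta_k,H^*_0\Lambda^k)$, together with the Helmholtz splittings. The paper uses these tacitly, whereas you flag them explicitly.

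What the paper's version buys is the explicit description of the range. That is exactly what the next lemma reuses to obtain $L^2\Lambda^{k+1}\times L^2\Lambda^{k-1}=\R(\oT_k)\opp\N(\oT^*_k)$.

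What yours buys is a quantitative constant. Since $\fmu_0\perp\fmu_1$, you may use $\|\fmu\|^2=\|\fmu_0\|^2+\|\fmu_1\|^2$ instead of the triangle inequality. This gives $\icr(\oT_k,H\Lambda^k\cap H^*_0\Lambda^k)\leqslant\max(c_P,c_P^*)$, with equality for optimal constants. This is precisely the continuous counterpart of \eqref{eq:pictstar} and of the argument used for Theorem~\ref{thm:pifes}.

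A small refinement: you do not need $\R(\od^{k-1})$ or $\R(\odelta_{k+1})$ to be closed. If you work with their closures, $\fmu_0$ still lies in the closed space $\N(\od^k,H\Lambda^k)$ and $\fmu_1$ in $\N(\odelta_k,H^*_0\Lambda^k)$. So only the closed ranges of $\od^k$ and $\odelta_k$ actually enter.
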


\begin{proof}
We analyze the structure of $H\Lambda^k\cap H^*_0\Lambda^k$. First, decompose
\[
H\Lambda^k=\N(\od^k,H\Lambda^k)\opp (\N(\od^k,H\Lambda^k))^\perp
=\R(\od^{k-1},H\Lambda^{k-1})\opp\hf\Lambda^k\opp (\N(\od^k,H\Lambda^k))^\perp.
\]
By the Helmholtz decomposition, $(\N(\od^k,H\Lambda^k))^\perp\subset H\Lambda^k\cap \N(\odelta_k,H^*_0\Lambda^k)$. Hence
\[
\R(\od^k,(\N(\od^k,H\Lambda^k))^\perp)=\R(\od^k,H\Lambda^k),\ \mbox{and}\  \R(\odelta_k,(\N(\od^k,H\Lambda^k))^\perp)=\{0\}.
\]
Similarly, decompose $H^*_0\Lambda^k=\N(\odelta_k,H^*_0\Lambda^k)\opp (\N(\odelta_k,H^*_0\Lambda^k))^\perp$, giving
\[
\R(\od^k,(\N(\odelta_k,H^*_0\Lambda^k))^\perp)=\{0\},\ \mbox{and}\  \R(\odelta_k,(\N(\odelta_k,H^*_0\Lambda^k))^\perp)=\R(\odelta_k,H^*_0\Lambda^k).
\]
It follows that
\[
\R(\oT_k,H\Lambda^k\cap H^*_0\Lambda^k)=\R(\od^k,H\Lambda^k)\times\R(\odelta_k,H^*_0\Lambda^k).
\]
Simultaneously,
\begin{multline*}
\N(\oT_k,H\Lambda^k\cap H^*_0\Lambda^k)=\N(\od^k,H\Lambda^k)\cap \N(\odelta_k,H^*_0\Lambda^k)
\\
=[\R(\od^{k-1},H\Lambda^{k-1})\opp \hf\Lambda^k]\cap [\R(\odelta_{k+1},H^*_0\Lambda^{k+1})\opp\hf^*_0\Lambda^k]=\hf\Lambda^k.
\end{multline*}
Both $\od^k$ and $\odelta_k$ are closed operators on their respective domains; consequently $\oT_k$ is closed on $H\Lambda^k\cap H^*_0\Lambda^k$ equipped with the intersection graph norm, and the explicit description of its range shows that the range is closed as the product of two closed ranges.
\end{proof}

\begin{lemma}
The adjoint operator of $(\oT_k,H\Lambda^k\cap H^*_0\Lambda^k)$ is $(\aoT_k,H^*_0\Lambda^{k+1}\times H\Lambda^{k-1})$.
\end{lemma}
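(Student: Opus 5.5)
The adjoint is computed directly from the definition. Set $D:=H\Lambda^k\cap H^*_0\Lambda^k$. This is dense in $L^2\Lambda^k$ because it contains $\mathcal{C}^\infty_0\Lambda^k$, so the adjoint $\oT_k^*$ is well defined. By definition, its domain consists of the pairs $(\feta,\ftau)\in L^2\Lambda^{k+1}\times L^2\Lambda^{k-1}$ for which there is $\fvarsigma\in L^2\Lambda^k$ with
\[
\langle\od^k\fmu,\feta\rangle+\langle\odelta_k\fmu,\ftau\rangle=\langle\fmu,\fvarsigma\rangle\quad\forall\,\fmu\in D ,
\]
and then $\oT_k^*(\feta,\ftau)=\fvarsigma$. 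We prove two inclusions.

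\emph{Step 1: $(\aoT_k,H^*_0\Lambda^{k+1}\times H\Lambda^{k-1})\subset\oT_k^*$.} Let $\feta\in H^*_0\Lambda^{k+1}$ and $\ftau\in H\Lambda^{k-1}$. For $\fmu\in H\Lambda^k$, the adjointness of $(\od^k,H\Lambda^k)$ and $(\odelta_{k+1},H^*_0\Lambda^{k+1})$ gives $\langle\od^k\fmu,\feta\rangle=\langle\fmu,\odelta_{k+1}\feta\rangle$. This is the standard Green formula, obtained by density of smooth compactly supported forms in $H^*_0$. For $\fmu\in H^*_0\Lambda^k$, the same adjointness one degree lower gives $\langle\odelta_k\fmu,\ftau\rangle=\langle\fmu,\od^{k-1}\ftau\rangle$. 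Adding the two identities for $\fmu\in D$ shows that $(\feta,\ftau)\in\mathrm{dom}(\oT_k^*)$ with $\oT_k^*(\feta,\ftau)=\aoT_k(\feta,\ftau)$.

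\emph{Step 2: the reverse inclusion.} This is the main obstacle, because the single identity above has to be split into two separate ones. Let $(\feta,\ftau)\in\mathrm{dom}(\oT_k^*)$ with image $\fvarsigma$. We use the structure found in the proof of the preceding lemma. Take the Helmholtz decomposition $\feta=\feta_1+\feta_2$ with $\feta_1\in\R(\od^k,H\Lambda^k)$ and $\feta_2\perp\R(\od^k,H\Lambda^k)$. Then $\feta_2\in\N(\odelta_{k+1},H^*_0\Lambda^{k+1})\subset H^*_0\Lambda^{k+1}$, since the orthogonal complement of $\R(\od^k,H\Lambda^k)$ is exactly the kernel of its adjoint. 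So it suffices to handle $\feta_1$, and similarly $\ftau=\ftau_1+\ftau_2$ with $\ftau_1\in\R(\odelta_k,H^*_0\Lambda^k)$ and $\ftau_2\in\N(\od^{k-1},H\Lambda^{k-1})$.

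\emph{Step 3: testing with the two complements.} Now test the defining identity with $\fmu\in(\N(\od^k,H\Lambda^k))^\perp\cap H\Lambda^k$. By the previous proof this lies in $D$ and has $\odelta_k\fmu=0$. The identity becomes
\[
\langle\od^k\fmu,\feta\rangle=\langle\fmu,\fvarsigma\rangle .
\]
The same identity holds trivially, with both sides zero, for $\fmu\in\N(\od^k,H\Lambda^k)$ provided we replace $\fvarsigma$ by its projection $\fvarsigma'$ onto $(\N(\od^k))^\perp$. Hence $\langle\od^k\fmu,\feta\rangle=\langle\fmu,\fvarsigma'\rangle$ for all $\fmu\in H\Lambda^k$. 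This means $\feta\in\mathrm{dom}((\od^k)^*)=H^*_0\Lambda^{k+1}$, with $\odelta_{k+1}\feta=\fvarsigma'$. Symmetrically, testing with $\fmu\in(\N(\odelta_k,H^*_0\Lambda^k))^\perp\subset D$ gives $\ftau\in H\Lambda^{k-1}$. Here we use that the adjoint of $(\odelta_k,H^*_0\Lambda^k)$ is $(\od^{k-1},H\Lambda^{k-1})$, and that $\N(\odelta_k)^\perp$ consists of forms with $\od^k\fmu=0$.

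\emph{Conclusion.} Once $\feta\in H^*_0\Lambda^{k+1}$ and $\ftau\in H\Lambda^{k-1}$, Step 1 gives $\fvarsigma=\aoT_k(\feta,\ftau)$. The only delicate points are the inclusions $(\N(\od^k,H\Lambda^k))^\perp\subset D$ and $(\N(\odelta_k,H^*_0\Lambda^k))^\perp\subset D$, together with the vanishing of the complementary operator on them. These are exactly the facts established in the preceding lemma's proof, so we invoke them directly.
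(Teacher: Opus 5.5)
Your proof is correct, but it takes a different route from the paper. The paper also starts from the Green identity, which is your Step 1 and gives $\aoT_k\subset\oT_k^*$. It does not then characterize $\mathrm{dom}(\oT_k^*)$ directly. Instead it computes $\R(\aoT_k)=\R(\odelta_{k+1},H^*_0\Lambda^{k+1})\oplus\R(\od^{k-1},H\Lambda^{k-1})$ and $\N(\aoT_k)=\N(\odelta_{k+1},H^*_0\Lambda^{k+1})\times\N(\od^{k-1},H\Lambda^{k-1})$. Using the Hodge/Helmholtz decompositions, it shows these are exactly $\N(\oT_k)^\perp$ and $\R(\oT_k)^\perp$, i.e.\ the range and kernel of $\oT_k^*$. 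Equality then follows, implicitly from the fact that an extension with the same kernel and range coincides with the operator. That route needs closed ranges. In return it produces the orthogonal decompositions of $L^2\Lambda^k$ and $L^2\Lambda^{k+1}\times L^2\Lambda^{k-1}$, which the paper later mirrors discretely in Lemma~\ref{lem:htd}.

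Your route splits the test space $D$ into pieces on which one component of $\oT_k$ vanishes, and reduces to the two known adjoint pairs. This avoids the implicit kernel/range step. Step 3 in fact needs only those adjoint pairs together with $\od\circ\od=0$ and $\odelta\circ\odelta=0$. For example, $\N(\od^k,H\Lambda^k)^\perp=\overline{\R(\odelta_{k+1},H^*_0\Lambda^{k+1})}\subset\N(\odelta_k,H^*_0\Lambda^k)$ because kernels are closed. So closed ranges are not essential to your argument.

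Two small points:
\begin{itemize}
\item Step 2 is never used, since Step 3 treats $\feta$ and $\ftau$ directly. You can drop it, and it is the only place where closed ranges enter.
\item The inclusions $(\N(\od^k,H\Lambda^k))^\perp\subset D$ and $(\N(\odelta_k,H^*_0\Lambda^k))^\perp\subset D$ in your conclusion hold only if the complement is taken inside $H\Lambda^k$, resp.\ $H^*_0\Lambda^k$, as in the paper's notation. In $L^2\Lambda^k$ they are false. In the second test you should therefore write the intersection with $H^*_0\Lambda^k$ explicitly, as you already did with $H\Lambda^k$ in the first.
\end{itemize}
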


\begin{proof}
For any $\fmu\in H\Lambda^k\cap H^*_0\Lambda^k$, $\feta\in H^*_0\Lambda^{k+1}$, and $\ftau\in H\Lambda^{k-1}$, integration by parts gives
\begin{equation}\label{eq:adjointidentity}
\langle\od^k\fmu,\feta\rangle_{L^2\Lambda^{k+1}}+\langle\odelta_k\fmu,\ftau\rangle_{L^2\Lambda^{k-1}}=\langle\fmu,\odelta_{k+1}\feta+\od^{k-1}\ftau\rangle_{L^2\Lambda^k}.
\end{equation}
Moreover,
\[
\R(\aoT_k, H^*_0\Lambda^{k+1}\times H\Lambda^{k-1})=\R(\odelta_{k+1},H^*_0\Lambda^{k+1})\opp \R(\od^{k-1},H\Lambda^{k-1}),
\]
where the orthogonality follows from $\langle\odelta_{k+1}\feta,\od^{k-1}\ftau\rangle_{L^2\Lambda^k}=\langle\feta,\od^k\od^{k-1}\ftau\rangle_{L^2\Lambda^{k+1}}=0$, and
\[
\N(\aoT_k, H^*_0\Lambda^{k+1}\times H\Lambda^{k-1})=\N(\odelta_{k+1},H^*_0\Lambda^{k+1})\times\N(\od^{k-1},H\Lambda^{k-1}).
\]
Consequently,
\begin{align*}
L^2\Lambda^k&=\N(\oT_k,H\Lambda^k\cap H^*_0\Lambda^k)\opp \R(\aoT_k, H^*_0\Lambda^{k+1}\times H\Lambda^{k-1}),\\
L^2\Lambda^{k+1}\times L^2\Lambda^{k-1}&=\R(\oT_k,H\Lambda^k\cap H^*_0\Lambda^k)\opp \N(\aoT_k, H^*_0\Lambda^{k+1}\times H\Lambda^{k-1}).
\end{align*}
Thus $(\aoT_k,H^*_0\Lambda^{k+1}\times H\Lambda^{k-1})$ shares the same range and kernel as the adjoint of $(\oT_k,H\Lambda^k\cap H^*_0\Lambda^k)$. Together with \eqref{eq:adjointidentity}, this establishes that it is indeed the adjoint operator.
\end{proof}

\begin{remark}
In the sequel we write $\oT^*_k$ for $\aoT_k$; that is,
\[
\oT^*_k\begin{pmatrix}\feta \\ \ftau\end{pmatrix}=\odelta_{k+1}\feta+\od^{k-1}\ftau,\qquad  \mbox{for}\  \begin{pmatrix}\feta \\ \ftau\end{pmatrix}\in \Lambda^{k+1}\times \Lambda^{k-1}.
\]
In the discrete setting, $\oT^*_{k,h}$ denotes the piecewise application of $\oT^*_k$.
\end{remark}

%



%
%
%
\section{The main results of this paper}
\label{sec:mainresults}

In this section, we present the main results of this paper; the technical proofs are postponed to Sections~\ref{sec:pfspace} and~\ref{sec:fescheme}. Basic polynomial shape function spaces are introduced in Subsection~\ref{sec:polys}, followed by the global finite element spaces and primal discretization schemes.

\subsection{Shape function spaces on a single simplex}\label{sec:polys}

Given a simplex $T$, let $\tilde{x}^j=x^j-c_j$ where the constants $c_j$ are chosen so that $\int_T\tilde{x}^j=0$. 
Replacing the coordinate functions in the definition of the standard Koszul operator by their centered counterparts yields the simplex-dependent Koszul operator $\okappa_T$ defined by
\[
\okappa_T(\dx^{\ixalpha_1}\wedge\dots\wedge\dx^{\ixalpha_k}):=\sum_{j=1}^k(-1)^{j+1}\tilde{x}^{\ixalpha_j}\dx^{\ixalpha_1}\wedge\dots\wedge\dx^{\ixalpha_{j-1}}\wedge\dx^{\ixalpha_{j+1}}\wedge\dots\wedge\dx^{\ixalpha_k},\ \mbox{for}\  \ixalpha\in\mathbb{IX}_{k,n}.
\]
Then $\od^{k-1}\okappa_T(\dx^{\ixalpha_1}\wedge\dots\wedge\dx^{\ixalpha_k})=k\,\dx^{\ixalpha_1}\wedge\dots\wedge\dx^{\ixalpha_k}$. With the aid of $\okappa_T$, the Whitney forms admit the orthogonal decomposition $\mathcal{P}^-_1\Lambda^k(T)=\mathcal{P}_0\Lambda^k(T)\opp\okappa_T(\mathcal{P}_0\Lambda^{k+1}(T))$ in $L^2\Lambda^k(T)$. We use $\okappa_h$ to denote the cell-wise application of $\okappa_T$. Define $\okappa^\delta_T:=\star\circ\okappa_T\circ\star$ and $\okappa^\delta_h:=\star\circ\okappa_h\circ\star$.

From now on, we make the convention that, for $\ixalpha\in\mathbb{IX}_{k,n}$, we use $\ixalpha^\mathsf{C}$ for one in $\mathbb{IX}_{n-k,n}$, such that $\ixalpha$ and $\ixalpha^\mathsf{C}$ partition $\{1,2,\dots,n\}$. For $\ixalpha\in\mathbb{IX}_{k,n}$, denote 
$$
\tilde\fmu_{\odelta,T}^{\ixalpha}=\sum_{j=1}^k[(\tilde{x}^{\ixalpha_j})^2-c^{\ixalpha_j}]\dx^{\ixalpha_1}\wedge\dots\wedge\dx^{\ixalpha_k}, 
$$
and
$$
\tilde\fmu_{\od,T}^{\ixalpha}=\sum_{j=1}^{n-k}[(\tilde{x}^{\ixalpha^\mathsf{C}_j})^2-c^{\ixalpha^\mathsf{C}_j}]\dx^{\ixalpha_1}\wedge\dx^{\ixalpha_2}\wedge\dots\wedge\dx^{\ixalpha_k}, 
$$
where $c^{\ixalpha_j}$ and $c^{\ixalpha^\mathsf{C}_j}$ are constants such that $\int_T [(\tilde x^{\ixalpha_j})^2-c^{\ixalpha_j}]=0$ and $\int_T[(\tilde x^{\ixalpha^\mathsf{C}_j})^2-c^{\ixalpha^\mathsf{C}_j}]=0$.  Denote 
\begin{equation}
\tilde\fmu_{\od\cap\odelta,T}^{\ixalpha}= \frac{-k}{n-k}\tilde\fmu_{\od,T}^{\ixalpha} + \tilde\fmu_{\odelta,T}^{\ixalpha}.
\end{equation}

\begin{remark}
Direct calculation confirms that, with $\ixalpha\in\mathbb{IX}_{k,n}$,
$$
\od^k\tilde\fmu_{\odelta,T}^{\ixalpha}=0,\quad \odelta_k\tilde\fmu_{\odelta,T}^{\ixalpha}=(-1)^n\cdot 2\okappa_T(\dx^{\ixalpha_1}\wedge\dx^{\ixalpha_2}\wedge\dots\wedge \dx^{\ixalpha_k}),
$$
$$
\odelta_k\tilde\fmu_{\od,T}^{\ixalpha}=0,\quad \quad \od^k\tilde\fmu_{\od,T}^{\ixalpha}=2(-1)^{n(1+k)+1}\okappa_T^\delta (\dx^{\ixalpha_1}\wedge\dots\dx^{\ixalpha_k}),
$$
and 
$$
\oT_k\tilde\fmu_{\od\cap\odelta,T}^{\ixalpha}=2(-1)^n\left(\begin{array}{c} \displaystyle(-1)^{kn}\frac{k}{n-k}\okappa^\delta_T(\dx^{\ixalpha_1}\wedge\dots\wedge\dx^{\ixalpha_k})\\ \displaystyle\okappa_T(\dx^{\ixalpha_1}\wedge\dots\wedge\dx^{\ixalpha_k})\end{array}\right).
$$
\end{remark}

Denote 
\begin{equation}\label{eq:defhd}
\mathcal{H}^2_{\od}\Lambda^k(T):={\rm span}\{\tilde\fmu_{\od,T}^{\ixalpha}:\ixalpha\in\mathbb{IX}_{k,n}\},
\end{equation}
\begin{equation}\label{eq:defhdelta}
\mathcal{H}^2_{\odelta}\Lambda^k(T):={\rm span}\{\tilde\fmu_{\odelta,T}^{\ixalpha}:\ixalpha\in\mathbb{IX}_{k,n}\},
\end{equation}
and
\begin{equation}\label{eq:defhddelta}
\mathcal{H}^2_{\od\cap\odelta}\Lambda^k(T):={\rm span}\left\{\tilde\fmu_{\od\cap\odelta,T}^{\ixalpha}:\ixalpha\in\mathbb{IX}_{k,n}\right\},
\end{equation}

\begin{remark}
$\odelta_k$ is bijective from $\mathcal{H}^2_{\odelta}\Lambda^k(T)$ onto $\okappa_T(\mathcal{P}_0\Lambda^k(T))$, $\od^k$ is bijective from $\mathcal{H}^2_{\od}\Lambda^k(T)$ onto $\okappa^\delta_T(\mathcal{P}_0\Lambda^k(T))$, and both $\od^k$ and $\odelta_k$ are bijective from $\mathcal{H}^2_{\od\cap\odelta}\Lambda^k(T)$ onto $\okappa^\delta_T(\mathcal{P}_0\Lambda^k(T))$ and $\okappa_T(\mathcal{P}_0\Lambda^k(T))$, respectively. 
\end{remark}

Now we introduce, for the $H\Lambda^k\cap H^*\Lambda^k$ space, an enriched trimmed space, defined by
\begin{equation}
\pddeL^k(T):=\mathcal{P}_0\Lambda^k(T)\oplus\okappa_T(\mathcal{P}_0\Lambda^{k+1}(T))\oplus\okappa^\delta_T(\mathcal{P}_0\Lambda^{k-1}(T))\oplus{\rm span}\left\{\tilde\fmu_{\od\cap\odelta,T}^{\ixalpha}:\ixalpha\in\mathbb{IX}_{k,n}\right\},
\end{equation}
and denote
\begin{equation}
\mathbf{P}_{\odelta\times\od}^k(T):=\mathcal{P}^{*,-}_1\Lambda^{k+1}(T)\times \mathcal{P}^-_1\Lambda^{k-1}(T).
\end{equation}

\begin{remark}\label{rem:kernel=range}
The following identities hold:
\begin{equation}
\N(\oT_k,\pddeL^k(T))=\R(\oT^*_k,\mathbf{P}_{\odelta\times\od}^k(T)),\ \ \mbox{and}\ \ \R(\oT_k,\pddeL^k(T))=\N(\oT^*_k,\mathbf{P}_{\odelta\times\od}^k(T)).
\end{equation} 

Actually, for the first item, 
$$
\N(\oT_k,\pddeL^k(T))=\mathcal{P}_0\Lambda^k(T)=\R(\oT^*_k,\mathbf{P}_{\odelta\times\od}^k(T)),
$$
and for the second 
\begin{multline*}
\N(\oT^*_k,\mathbf{P}_{\odelta\times\od}^k(T))=\mathcal{P}_0\Lambda^{k+1}(T)\times\mathcal{P}_0\Lambda^{k-1}(T)
\\
\oplus{\rm span}\left\{\left(\begin{array}{c}\displaystyle (-1)^{kn}\frac{k}{n-k}\okappa^\delta_T(\dx^{\ixalpha_1}\wedge\dots\wedge\dx^{\ixalpha_k})\\ \displaystyle\okappa_T(\dx^{\ixalpha_1}\wedge\dots\wedge\dx^{\ixalpha_k})\end{array}\right),\ \ixalpha\in\mathbb{IX}_{k,n}\right\}=\R(\oT_k,\pddeL^k(T)).
\end{multline*}
\end{remark}

\begin{remark}\label{rem:cell1-1}
Direct calculation leads to that, given $\fmu\in \pddeL^k(T)$, $\fmu=0$ if and only if, for any $\left(\begin{array}{c}\feta \\ \ftau\end{array}\right)\in \mathbf{P}_{\odelta\times\od}^k(T)$, 
\begin{equation}\label{eq:dual-dcde-detd}
\langle\od^k\fmu,\feta\rangle_{L^2\Lambda^{k+1}(T)}+\langle\odelta_k\fmu,\ftau\rangle_{L^2\Lambda^{k-1}(T)}=\langle\fmu,(\odelta_{k+1}\feta+\od^{k-1}\ftau)\rangle_{L^2\Lambda^k(T)}.
\end{equation}
Meanwhile, given $\left(\begin{array}{c}\feta \\ \ftau\end{array}\right)\in \mathbf{P}_{\odelta\times\od}^k(T)$, $\left(\begin{array}{c}\feta \\ \ftau\end{array}\right)=0$ if and only if \eqref{eq:dual-dcde-detd} holds for any $\fmu\in \pddeL^k(T)$.
\end{remark}

%


%
%
\subsection{Finite element space for $H\Lambda^k\cap H^*_0\Lambda^k$}
\label{sec:fes}

On the subdivision $\mathcal{G}_h$ of $\Omega$, define 
\begin{multline}
\fV^{\od\cap\mathring{\odelta}}_h\Lambda^k:=\left\{\fmu_h\in\pddeL^k(\mathcal{G}_h),\ \mbox{such\ that}\ \left\langle\oT_{k,h}\fmu_h,\left(\begin{array}{c}\feta_h \\ \ftau_h\end{array}\right)\right\rangle=\left\langle\fmu_h,\oT^*_{k,h}\left(\begin{array}{c}\feta_h \\ \ftau_h\end{array}\right)\right\rangle_{L^2\Lambda^k},\right.
\\ 
\left.\forall\,\left(\begin{array}{c}\feta_h \\ \ftau_h\end{array}\right)\in \fW^{*,\rm nc}_{h0}\Lambda^{k+1}\times\fW_h\Lambda^{k-1}\right\}.\qquad
\end{multline}
Here, we use $\left\langle \left(\begin{array}{c}\fmu \\ \fvarsigma\end{array}\right),\left(\begin{array}{c}\feta \\ \ftau\end{array}\right)\right\rangle=\langle \fmu,\feta\rangle_{L^2\Lambda^{k+1}}+\langle \fvarsigma,\ftau\rangle_{L^2\Lambda^{k-1}}$for $\left(\begin{array}{c}\fmu \\ \fvarsigma\end{array}\right),\left(\begin{array}{c}\feta \\ \ftau\end{array}\right)\in \Lambda^{k+1}\times \Lambda^{k-1}$  to denote the $L^2$ inner product. 
Namely, 
\begin{multline}\label{eq:deffems}
\fV^{\od\cap\mathring{\odelta}}_h\Lambda^k=\Big\{\fmu_h\in\pddeL^k(\mathcal{G}_h),\ \mbox{such\ that}\ 
\left\langle\od^k_h\fmu_h,\feta_h\right\rangle_{L^2\Lambda^{k+1}}=\left\langle\fmu_h,\odelta_{k+1,h}\feta_h\right\rangle_{L^2\Lambda^k},
\\
\forall\,\feta_h\in\fW^{*,\rm nc}_{h0}\Lambda^{k+1},\ \mbox{and}\ \ \left\langle\odelta_{k,h}\fmu_h,\ftau_h\right\rangle_{L^2\Lambda^{k-1}}=\left\langle\fmu_h,\od^{k-1}\ftau_h\right\rangle_{L^2\Lambda^k},\ \forall\,\ftau_h\in\fW_h\Lambda^{k-1}
\Big\}.
\end{multline}

Therefore, on the other hand, according to Remark \ref{rem:cell1-1}, 
$$
\fW^{*,\rm nc}_{h0}\Lambda^{k+1}\times\fW_h\Lambda^{k-1}=\left\{\left(\begin{array}{c}\feta_h \\ \ftau_h\end{array}\right)\in \mathbf{P}_{\odelta\times\od}^k(\mathcal{G}_h):
\left\langle\oT_{k,h}\fmu_h,\left(\begin{array}{c}\feta_h \\ \ftau_h\end{array}\right)\right\rangle=\left\langle\fmu_h,\oT^*_{k,h}\left(\begin{array}{c}\feta_h \\ \ftau_h\end{array}\right)\right\rangle_{L^2\Lambda^k},\ \fmu_h\in\fV^{\od\cap\mathring{\odelta}}_h\Lambda^k\right\}.
$$

The space $\fV^{\od\cap\mathring{\odelta}}_h\Lambda^k$ possesses these characteristics. 
%
\begin{theorem}\label{thm:disharm}
$\N(\oT_{k,h},\fV^{\od\cap\mathring{\odelta}}_h\Lambda^k)=\hf_h\Lambda^k$.
\end{theorem}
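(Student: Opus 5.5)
We prove the two inclusions separately. The kernel of $\oT_{k,h}$ on $\pddeL^k(\mathcal{G}_h)$ is, cellwise by Remark~\ref{rem:kernel=range}, the space $\mathcal{P}_0\Lambda^k(\mathcal{G}_h)$. Hence
\[
\N(\oT_{k,h},\fV^{\od\cap\mathring{\odelta}}_h\Lambda^k)
=\Big\{\fmu_h\in\mathcal{P}_0\Lambda^k(\mathcal{G}_h):\langle\fmu_h,\odelta_{k+1,h}\feta_h\rangle=0\ \forall\,\feta_h\in\fW^{*,\rm nc}_{h0}\Lambda^{k+1},\ \ \langle\fmu_h,\od^{k-1}\ftau_h\rangle=0\ \forall\,\ftau_h\in\fW_h\Lambda^{k-1}\Big\}.
\]
To see this, note that for a piecewise constant $\fmu_h$ the left-hand sides of the two adjoint conditions in \eqref{eq:deffems} vanish. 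The conditions therefore reduce exactly to orthogonality of $\fmu_h$ to $\R(\odelta_{k+1,h},\fW^{*,\rm nc}_{h0}\Lambda^{k+1})$ and to $\R(\od^{k-1},\fW_h\Lambda^{k-1})$. So the kernel equals the $L^2$-orthogonal complement, inside $\mathcal{P}_0\Lambda^k(\mathcal{G}_h)$, of the sum $\R(\od^{k-1},\fW_h\Lambda^{k-1})\opp\R(\odelta_{k+1,h},\fW^{*,\rm nc}_{h0}\Lambda^{k+1})$.

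Next we invoke the discrete Hodge decomposition of Lemma~\ref{lem:hoddecconst}, whose first line reads
\[
\mathcal{P}_0\Lambda^k(\mathcal{G}_h)=\R(\od^{k-1},\fW_h\Lambda^{k-1})\opp\hf_h\Lambda^k\opp\R(\odelta_{k+1,h},\fW^{*,\rm nc}_{h0}\Lambda^{k+1}).
\]
Taking the orthogonal complement in $\mathcal{P}_0\Lambda^k(\mathcal{G}_h)$ of the two range spaces gives exactly $\hf_h\Lambda^k$, which yields the identity. For this we need $\hf_h\Lambda^k\subset\mathcal{P}_0\Lambda^k(\mathcal{G}_h)$. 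This holds because $\N(\od^k,\fW_h\Lambda^k)$ consists of piecewise constants by \eqref{eq:n=r=c} applied cellwise, and it is implicit in the lemma anyway.

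The only step requiring care is the first one. One must check that the kernel computation of Remark~\ref{rem:kernel=range} is genuinely cellwise: $\oT_{k,h}\fmu_h=0$ forces each restriction $\fmu_h|_T$ into $\N(\oT_k,\pddeL^k(T))=\mathcal{P}_0\Lambda^k(T)$. This uses the bijectivity statements of the remark preceding the definition of $\pddeL^k(T)$. In particular, $\oT_k$ is injective on $\okappa_T(\mathcal{P}_0\Lambda^{k+1})\oplus\okappa^\delta_T(\mathcal{P}_0\Lambda^{k-1})\oplus\mathcal{H}^2_{\od\cap\odelta}\Lambda^k(T)$.

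The potential obstacle is that the $\od$-components and the $\odelta$-components of these three pieces might combine to give a zero image. We handle this by splitting into the two components. The $\od$-image of the $\okappa_T$ part lies in $\mathcal{P}_0\Lambda^{k+1}$, while the $\od$-image of the $\mathcal{H}^2$ part lies in $\okappa^\delta_T(\mathcal{P}_0\Lambda^k)$, which consists of linear, mean-zero forms. Symmetrically, the $\odelta$-image of the $\okappa^\delta_T$ part is constant, while the $\odelta$-image of the $\mathcal{H}^2$ part is mean-zero linear. The constant and mean-zero parts separate, so by these degree and mean considerations the only element with zero image is the zero element. Hence the kernel on each cell is exactly the constants.
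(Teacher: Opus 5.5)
Your proposal is correct and follows essentially the same route as the paper. You use Remark~\ref{rem:kernel=range} cellwise to reduce the kernel of $\oT_{k,h}$ to piecewise constants, then observe that a piecewise constant lies in $\fV^{\od\cap\mathring{\odelta}}_h\Lambda^k$ exactly when it is orthogonal to $\R(\od^{k-1},\fW_h\Lambda^{k-1})$ and $\R(\odelta_{k+1,h},\fW^{*,\rm nc}_{h0}\Lambda^{k+1})$, which gives $\hf_h\Lambda^k$ by Lemma~\ref{lem:hoddecconst}. Your additional check that $\oT_k$ is injective on the non-constant part of $\pddeL^k(T)$, by separating constant images from mean-zero linear ones, fills in a detail the paper only asserts in that remark.
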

\begin{proof}
Firstly, by Lemma \ref{lem:hoddecconst} and by \eqref{eq:deffems},
\begin{multline*}
\hf_h\Lambda^k=\Big\{\fmu_h\in\mathcal{P}_0\Lambda^k(\mathcal{G}_h): \langle\fmu_h,\odelta_{k+1,h}\feta_h\rangle_{L^2\Lambda^k}=0,\ \forall\,\feta_h\in\fW^{*,\rm nc}_{h0}\Lambda^{k+1},
\\ 
\mbox{and}\ \langle\fmu_h,\od^{k-1}\ftau_h\rangle_{L^2\Lambda^k}=0,\ \forall\,\ftau_h\in\fW_h\Lambda^{k-1}\Big\}=\left\{\fmu_h\in\mathcal{P}_0\Lambda^k(\mathcal{G}_h): \fmu_h\in \fV^{\od\cap\mathring{\odelta}}_h\Lambda^k\right\}.
\end{multline*}
Each $\fmu_h\in\hf_h\Lambda^k$ is piecewise constant and satisfies $\od^k_h\fmu_h=\odelta_{k,h}\fmu_h=0$, hence $\fmu_h\in\mathcal{N}(\oT_{k,h},\fV^{\od\cap\mathring{\odelta}}_h\Lambda^k)$. On the other hand, let $\fmu_h\in\fV^{\od\cap\mathring{\odelta}}_h\Lambda^k$ with $\od^k_h\fmu_h=0$ and $\odelta_{k,h}\fmu_h=0$.
On each cell $T$, $\fmu_h|_T\in\pddeL^k(T)$ and $\oT_k(\fmu_h|_T)=0$, so Remark~\ref{rem:kernel=range} gives $\fmu_h|_T\in\mathcal{P}_0\Lambda^k(T)$; thus $\fmu_h\in\mathcal{P}_0\Lambda^k(\mathcal{G}_h)$.
Therefore, $\fmu_h\in\hf_h\Lambda^k$. This completes the proof.
\end{proof}

\begin{theorem}\label{thm:pifes}
There exists a constant $C_{k,n}$, depending on the regularity of $\mathcal{G}_h$, such that 
\begin{equation}
\|\fmu_h\|_{L^2\Lambda^k}\leqslant C_{k,n}(\|\od^k_h\fmu_h\|_{L^2\Lambda^{k+1}}+\|\odelta_{k,h}\fmu_h\|_{L^2\Lambda^{k-1}}),\ \ \forall\,\fmu_h\in \fV^{\od\cap\mathring{\odelta}}_h\Lambda^k\omp \hf_h\Lambda^k.
\end{equation}
\end{theorem}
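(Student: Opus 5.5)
We prove the inequality by a duality argument through the adjoint continuity conditions, splitting $\fmu_h$ into its piecewise constant part and its higher-order part. Write $\fmu_h=\fmu_h^0+\fmu_h^1$, where $\fmu_h^0\in\mathcal{P}_0\Lambda^k(\mathcal{G}_h)$ is the cellwise $L^2$ projection and $\fmu_h^1$ is the cellwise mean-zero remainder. On each $T$ the remainder lies in $\okappa_T(\mathcal{P}_0\Lambda^{k+1})\oplus\okappa^\delta_T(\mathcal{P}_0\Lambda^{k-1})\oplus\mathcal{H}^2_{\od\cap\odelta}\Lambda^k(T)$. By Remark~\ref{rem:kernel=range}, $\oT_k$ is injective on this complement of $\mathcal{P}_0\Lambda^k(T)$. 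A scaling argument on the reference simplex, together with shape regularity, then gives
\[
\|\fmu_h^1\|_{L^2\Lambda^k(T)}\leqslant Ch_T\big(\|\od^k\fmu_h\|_{L^2\Lambda^{k+1}(T)}+\|\odelta_k\fmu_h\|_{L^2\Lambda^{k-1}(T)}\big).
\]
So it remains to bound $\|\fmu_h^0\|$.

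For $\fmu_h^0$ we use the discrete Hodge decomposition of Lemma~\ref{lem:hoddecconst}:
\[
\fmu_h^0=\od^{k-1}\ftau_h+\fvartheta_h+\odelta_{k+1,h}\feta_h,
\]
with $\ftau_h\in\fW_h\Lambda^{k-1}$, $\fvartheta_h\in\hf_h\Lambda^k$ and $\feta_h\in\fW^{*,\rm nc}_{h0}\Lambda^{k+1}$. We take $\ftau_h$ orthogonal to $\N(\od^{k-1},\fW_h\Lambda^{k-1})$ and $\feta_h$ orthogonal to $\N(\odelta_{k+1,h},\fW^{*,\rm nc}_{h0}\Lambda^{k+1})$.

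\begin{enumerate}
\item \emph{Range of $\od^{k-1}$.} Since $\od^{k-1}\ftau_h$ is piecewise constant, $\langle\fmu_h^1,\od^{k-1}\ftau_h\rangle=0$. The second adjoint condition in \eqref{eq:deffems} then gives
\[
\|\od^{k-1}\ftau_h\|^2=\langle\fmu_h,\od^{k-1}\ftau_h\rangle=\langle\odelta_{k,h}\fmu_h,\ftau_h\rangle\leqslant\|\odelta_{k,h}\fmu_h\|\,\|\ftau_h\|.
\]
The standard discrete Poincar\'e inequality for conforming Whitney forms, $\|\ftau_h\|\leqslant C\|\od^{k-1}\ftau_h\|$, closes this estimate.
\item \emph{Range of $\odelta_{k+1,h}$.} The same argument with the first adjoint condition gives
\[
\|\odelta_{k+1,h}\feta_h\|^2=\langle\od^k_h\fmu_h,\feta_h\rangle.
\]
Here we need a uniform discrete Poincar\'e inequality $\|\feta_h\|\leqslant C\|\odelta_{k+1,h}\feta_h\|$ on the orthogonal complement of the kernel in $\fW^{*,\rm nc}_{h0}\Lambda^{k+1}$.
\item \emph{Harmonic part.} We have $\langle\fmu_h,\fvartheta_h\rangle=0$ by assumption, and $\langle\fmu_h^1,\fvartheta_h\rangle=0$ because $\fvartheta_h$ is piecewise constant. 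Hence $\langle\fmu_h^0,\fvartheta_h\rangle=0$, and since the decomposition is orthogonal, $\fvartheta_h=0$.
\end{enumerate}
Combining the three pieces with the bound on $\fmu_h^1$ (using $h\leqslant\operatorname{diam}\Omega$) yields the theorem, with the constant depending on the mesh regularity.

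The main obstacle is the uniform Poincar\'e inequality for the nonconforming space $\fW^{*,\rm nc}_{h0}\Lambda^{k+1}$ in step~2; it is not stated among the earlier results. We intend to obtain it by Hodge duality $\star$ from the corresponding inequality for $\fW^{\rm nc}_h\Lambda^{n-k-1}$, taken from \cite{Zhang.S2026IMA}.

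If that is unavailable, we would prove it directly. Given $\feta_h$ orthogonal to the kernel, we take the continuous potential $\fzeta\in H^*_0\Lambda^{k+1}$ solving $\odelta\fzeta=\odelta_{k+1,h}\feta_h$ with $\fzeta\perp\N(\odelta_{k+1})$. We then compare it with $\feta_h$ through the discrete Helmholtz decomposition of Theorem~\ref{thm:dishd} and a bounded commuting-type interpolation into $\fW^{*,\rm nc}_{h0}\Lambda^{k+1}$, for instance by averaging moments on faces. Making this interpolation stable without extra regularity of $\fzeta$ is the delicate point. A fallback is to use a smoothed projection of $\fzeta$ and to exploit that $\odelta_{k+1,h}\feta_h$ is piecewise constant.
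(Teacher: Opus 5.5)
Your proof is correct and is essentially the paper's proof. The paper uses the same split into the cellwise-constant part and the cellwise mean-zero remainder, and bounds the remainder by the local estimate of Lemma~\ref{lem:hfonT}. It handles the constant part by duality against $\R(\oT^*_{k,h},\fW^{*,\rm nc}_{h0}\Lambda^{k+1}\times\fW_h\Lambda^{k-1})$, via Lemma~\ref{lem:htd} and the bound \eqref{eq:pictstar} on $\icr(\oT^*_{k,h},\cdot)$; this is just a packaged form of your component-wise treatment of the three-term Hodge decomposition.

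The Poincar\'e inequality you flag in step~2 is obtained in the paper exactly as you intend: by Hodge duality from \cite{Zhang.S2026IMA}, via Lemma~\ref{lem:piwfs} and the remark after it. So no fallback is needed. One small correction: the relevant dual space is $\fW^{\rm nc}_{h0}\Lambda^{n-k-1}$, not $\fW^{\rm nc}_h\Lambda^{n-k-1}$. This is harmless, since Lemma~\ref{lem:piwfs} covers both.
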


\begin{theorem}\label{thm:csbasis}
The space $\fV^{\od\cap\mathring{\odelta}}_h\Lambda^k$ admits a set of basis functions, which is each supported on no more than two cells. 
\end{theorem}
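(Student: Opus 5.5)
We construct the basis by splitting $\fmu_h\in\pddeL^k(\mathcal{G}_h)$ into its piecewise-constant part and its ``bubble'' part. On each cell, $\oT_k$ maps the non-constant part
\[
\okappa_T(\mathcal{P}_0\Lambda^{k+1})\oplus\okappa^\delta_T(\mathcal{P}_0\Lambda^{k-1})\oplus\mathcal{H}^2_{\od\cap\odelta}\Lambda^k(T)
\]
bijectively onto its image; this follows from the remarks on $\mathcal{H}^2_{\od\cap\odelta}$ and from Remark~\ref{rem:kernel=range}. The constraints in \eqref{eq:deffems} can then be rewritten cellwise. For $\feta_h\in\fW^{*,\rm nc}_{h0}\Lambda^{k+1}$ and $\ftau_h\in\fW_h\Lambda^{k-1}$, use the $L^2(T)$-orthogonality $\mathcal{P}^-_1=\mathcal{P}_0\opp\okappa_T(\cdot)$ and its starred analogue. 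The condition becomes a relation between three groups of quantities: the cellwise constants of $\oT_{k,h}\fmu_h$ paired with the cellwise constants of $(\feta_h,\ftau_h)$; the $\okappa$-parts of $\oT_{k,h}\fmu_h$ paired with the $\okappa$-parts of $(\feta_h,\ftau_h)$; and the constant part of $\fmu_h$ paired with $\oT^*_{k,h}(\feta_h,\ftau_h)$, which is piecewise constant.

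First I treat the bubble-type functions. For a single cell $T$, take $\fmu_h=E^\Omega_T\fmu_T$, where $\fmu_T$ lies in the non-constant part and $\oT_k\fmu_T$ is $L^2(T)$-orthogonal to every restriction to $T$ of elements of the partner space. I count dimensions to show how many such functions exist. Where they fall short, I pair a non-constant function on $T$ with a function on a second cell $T'$. The second cell is chosen so that the functionals $\fmu\mapsto\langle\oT_k\fmu,(\feta_h,\ftau_h)\rangle_T-\langle\fmu,\oT_k^*(\feta_h,\ftau_h)\rangle_T$ restricted to the two cells are linearly dependent with matching coefficients. This mirrors \cite[Theorem~18]{Zhang.S2026IMA}, where $\fW^{\rm nc}_h$ was handled the same way.

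Second, I treat the piecewise-constant part, for which $\oT_{k,h}\fmu_h=0$ cellwise. Here the constraint reduces to
\[
\langle\fmu_h,\oT^*_{k,h}(\feta_h,\ftau_h)\rangle=0 .
\]
For general $\fmu_h$, the constant part must be coupled to non-constant parts so as to cancel this term. For each cell $T$ and each constant $k$-form $\fmu_c$ on $T$, I look for a correction $\fvarsigma_T$ in the non-constant part on the same cell $T$ such that $E_T^\Omega(\fmu_c+\fvarsigma_T)$ satisfies all constraints. By Remark~\ref{rem:cell1-1}, the local pairing between $\pddeL^k(T)$ and $\mathbf{P}^k_{\odelta\times\od}(T)$ is nondegenerate. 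So the local functional $(\feta,\ftau)\mapsto\langle\fmu_c,\odelta\feta+\od\ftau\rangle_T$ can be represented by some element of $\pddeL^k(T)$ through the pairing. What remains is to show that the representer can be taken of the form $\fmu_c+\fvarsigma_T$. This gives functions supported on one cell, and linear combinations supported on two cells fill in the remaining directions.

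Finally, I check spanning by a dimension count. The dimension of $\fV^{\od\cap\mathring{\odelta}}_h\Lambda^k$ equals
\[
\dim\pddeL^k(\mathcal{G}_h)-\operatorname{rank}\ \text{of the constraint map induced by } \fW^{*,\rm nc}_{h0}\Lambda^{k+1}\times\fW_h\Lambda^{k-1}.
\]
By the reverse characterization displayed after \eqref{eq:deffems}, the annihilator relationship is symmetric, so the rank can be computed. I then compare this with the number of independent local functions produced.

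The main obstacle is the second step. The test spaces are not local: $\fW_h\Lambda^{k-1}$ functions are conforming and span vertex or sub-simplex patches. Showing that the correction stays within one or two cells needs an explicit choice of which cells to pair. My plan is to pair cells sharing a Whitney degree of freedom, following \cite{Zhang.S2026IMA}, and to verify that the induced two-cell system is consistent using the discrete Helmholtz decompositions of Theorem~\ref{thm:dishd}.
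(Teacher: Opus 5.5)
There is a genuine gap, in the single-cell functions your first two steps rely on: on a cell $T$ not touching $\partial\Omega$ they do not exist. The space $\fW_h\Lambda^{k-1}$ restricts onto all of $\mathcal{P}^-_1\Lambda^{k-1}(T)$. The space $\fW^{*,\rm nc}_{h0}\Lambda^{k+1}\supset\fW^*_{h0}\Lambda^{k+1}$ restricts onto all of $\mathcal{P}^{*,-}_1\Lambda^{k+1}(T)$. Hence an element of $\fV^{\od\cap\mathring{\odelta}}_h\Lambda^k$ supported on $T$ is annihilated by the whole local pairing of Remark~\ref{rem:cell1-1}, and so it vanishes.

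This breaks both steps. In your second step, the condition on $\fmu_c+\fvarsigma_T$ forces $\fmu_c+\fvarsigma_T=0$, i.e.\ $\fmu_c=0$. In your first step, $\oT_k\fmu_T\in\R(\oT_k,\pddeL^k(T))\subset\mathbf{P}^k_{\odelta\times\od}(T)$ would be orthogonal to itself, so $\fmu_T=0$. The whole burden therefore falls on the two-cell pairing, which you only announce. The split into constant and non-constant parts is also the wrong one, because the constant part couples to both families of constraints at once.

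The idea you are missing is to split $\pddeL^k(T)$ by which half of the local boundary functionals vanishes:
$\pddeL^k(T)=\R(\mathbb{I}^{\od\backslash\odelta}_T,\mathcal{P}^-_1\Lambda^k(T))\oplus\R(\mathbb{I}^{\odelta\backslash\od}_T,\mathcal{P}^{*,-}_1\Lambda^k(T))$.
With this split, the $\fW^{*,\rm nc}_{h0}\Lambda^{k+1}$-constraints see only the first component, and the $\fW_h\Lambda^{k-1}$-constraints see only the second. Pull back cellwise and use the dual characterization of $\fW_h\Lambda^k$ by $\fW^{*,\rm nc}_{h0}\Lambda^{k+1}$ (and the definition of $\fW^{*,\rm nc}_{h0}\Lambda^k$). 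This gives Theorem~\ref{thm:decvddelta}: $\fV^{\od\cap\mathring{\odelta}}_h\Lambda^k=\mathbb{I}^{\od\backslash\odelta}_h\fW_h\Lambda^k\oplus\mathbb{I}^{\odelta\backslash\od}_h\fW^{*,\rm nc}_{h0}\Lambda^k$, with injective, support-preserving cellwise maps. Bases of the two known spaces then transfer directly, with no dimension count and no choice of cell pairs.

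The same decomposition shows that your planned pairing cannot succeed for every $k$. The first-component map sends $\fV^{\od\cap\mathring{\odelta}}_h\Lambda^k$ onto $\fW_h\Lambda^k$ without enlarging supports. So a two-cell spanning set of the former would yield one of the latter.

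For $k=n-1$ the claim is fine. The face functions give a two-cell basis of $\fW_h\Lambda^{n-1}$, and $\fW^{*,\rm nc}_{h0}\Lambda^{n-1}$ has one by \cite[Theorem~18]{Zhang.S2026IMA}. These are the cases $n=2,k=1$ and $n=3,k=2$ used numerically.

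For $1\leqslant k\leqslant n-2$ it fails. A conforming Whitney $k$-form supported on at most two cells vanishes on every support cell with no face on $\partial\Omega$. Its trace must vanish on at least $n$ faces of that cell, and any $n$ faces of an $n$-simplex together contain every $k$-subsimplex when $k+1\leqslant n-1$. So on any mesh containing such a cell, no basis of $\fV^{\od\cap\mathring{\odelta}}_h\Lambda^k$ with two-cell supports exists. For these $k$, the paper's argument itself only yields supports equal to stars of $k$-simplices.
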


\subsubsection{Two- and three-dimensional realizations of $\fV^{\od\cap\mathring{\odelta}}_h\Lambda^k$}\label{sec:dim-realizations}

The construction~\eqref{eq:deffems} is illustrated below in vector calculus notation relevant to the cases tested in Section~\ref{sec:numer}. The local enriched space $\pddeL^k(T)$ from Section~\ref{sec:polys} specializes to $P_{\rm rd}(T)$ in 2D and $P_{\rm cd}(T)$ in 3D.

\paragraph{\bf Two dimensions ($n=2$, $k=1$).}
On a polygon $\Omega\subset\mathbb{R}^2$, we use $\nabla$, $\dv$, and $\rot$ in the usual way. We identify $H\Lambda^1(\Omega)$ with $H(\rot,\Omega)$ and $H^*_0\Lambda^1(\Omega)$ with $H_0(\dv,\Omega)$. The the corresponding shape function spaces are:
\begin{itemize}
\item $P_{\rm rd}(T)={\rm span}\left\{\left(\begin{array}{c} 1\\ 0\end{array}\right), \left(\begin{array}{c}0\\ 1\end{array}\right), \left(\begin{array}{c}x\\ y\end{array}\right), \left(\begin{array}{c}-y\\ x\end{array}\right),\left(\begin{array}{c}x^2-y^2\\ 0\end{array}\right),\ \left(\begin{array}{c} 0\\ x^2-y^2\end{array}\right)\right\}$;

\item $\mathbf{P}_{\rm c\times g}=\mathcal{P}_1(T)\times\mathcal{P}_1(T)={\rm span}\left\{1,x,y\}\times{\rm span}\{1,x,y\right\}$.
\end{itemize}
Denote $P_{\rm rd}(\mathcal{G}_h):=\prod_{T\in\mathcal{G}_h}P_{\rm rd}(T)$. With $\mathbb{V}^1_h$ the continuous piecewise linear space and $V^{\rm CR}_{h0}$ the homogeneous Crouzeix--Raviart space, the space~\eqref{eq:deffems} becomes
\begin{multline}\label{eq:deffems2d}
\fV^{\rm r\mathring{d}}_h:=\Big\{\fmu_h\in P_{\rm rd}(\mathcal{G}_h):
(\rot_h\fmu_h,\feta_h)=(\fmu_h,\curl_h\feta_h),\ \forall\,\feta_h\in V^{\rm CR}_{h0},
\\
\mbox{and}\ \ (\dv_h\fmu_h,\ftau_h)=-\langle\fmu_h,\nabla_h\ftau_h),\ \forall\,\ftau_h\in \mathbb{V}^1_h\Big\}.
\end{multline}
The local shape functions and the global basis functions are given explicitly in \AppendixRef{sec:app:basis-2d}.

\paragraph{\bf Three dimensions ($n=3$, $k=2$).}
On a polyhedral domain $\Omega\subset\mathbb{R}^3$, we use $\nabla$, $\dv$, and $\curl$ in the usual way. The formulation for $H(\dv,\Omega)\cap H_0(\curl,\Omega)$ corresponds to $H\Lambda^2(\Omega)\cap H^*_0\Lambda^2(\Omega)$. 
The corresponding shape function spaces are 
\begin{itemize}
\item $P_{\rm cd}(T)={\rm span}\left\{\begin{pmatrix} 1\\ 0\\0 \end{pmatrix}, \begin{pmatrix}0\\ 1\\0\end{pmatrix}, \begin{pmatrix}0\\ 0\\1\end{pmatrix}, \begin{pmatrix}-y\\ x\\0\end{pmatrix}, \begin{pmatrix}z\\0\\ -x\end{pmatrix},\begin{pmatrix}0\\-z\\ y\end{pmatrix},\begin{pmatrix}2x^2-y^2-z^2\\ 0\\0\end{pmatrix}, \begin{pmatrix} 0\\ 2y^2-x^2-z^2\\0\end{pmatrix}, \begin{pmatrix} 0\\ 0\\2z^2-x^2-y^2\end{pmatrix}\right\}$;

\item $P_{g\times c}=P_1(T)\times {\rm Ned}(T)={\rm span}\{1,x,,y,z\}\times {\rm span}\left\{\begin{pmatrix} 1\\ 0\\0 \end{pmatrix}, \begin{pmatrix}0\\ 1\\0\end{pmatrix}, \begin{pmatrix}0\\ 0\\1\end{pmatrix}, \begin{pmatrix}-y\\ x\\0\end{pmatrix}, \begin{pmatrix}z\\0\\ -x\end{pmatrix},\begin{pmatrix}0\\-z\\ y\end{pmatrix}\right\}$, where ${\rm Ned}(T)$ stands for the lowest-degree Nedelec element shape function space on $T$. 
\end{itemize}
Denote $P_{\rm cd}(\mathcal{G}_h):=\prod_{T\in\mathcal{G}_h}P_{\rm rd}(T)$.  With $V^{\rm CR}_{h0}$ and $\mathbb{V}^{\rm Ned}_h$ the lowest-order Crouzeix--Raviart and Nedelec spaces, the specialization of~\eqref{eq:deffems} is
\begin{multline}\label{eq:deffems3d}
\fV^{\mathring{\rm c} \rm d}_h:=\Big\{\fmu_h\in P_{\rm cd}(\mathcal{G}_h):
(\dv_h\fmu_h,\ftau_h)+(\fmu_h,\nabla_h\ftau_h)=0,\ \forall\,\ftau_h\in \mathbb{V}^{\rm CR}_{h0}, 
\\
\mbox{and}\ \  
(\rot_h\fmu_h,\feta_h)-\langle\fmu_h,\curl\feta_h)=0,\ \forall\,\feta_h\in \mathbb{V}^{\rm Ned}_{h} \Big\}.
\end{multline}
The local shape functions and global basis functions are reported in \AppendixRef{sec:app:basis-3d}.

\subsection{A primal finite element scheme for the Hodge-Laplace problem}

We consider the finite element problem: find $\fomega_h\in \fV^{\od\cap\mathring{\odelta}}_h\Lambda^k$, such that 
\begin{equation}\label{eq:modelhldisonefield}
\left\{
\begin{array}{rll}
\langle\fomega_h,\fvarsigma_h\rangle_{L^2\Lambda^k}&=0, &\forall\,\fvarsigma_h\in \hf_h\Lambda^k,
\\  
\langle\od^k_h\fomega_h,\od^k_h\fmu_h\rangle_{L^2\Lambda^{k+1}}+\langle\odelta_{k,h}\fomega_h,\odelta_{k,h}\fmu_h\rangle_{L^2\Lambda^{k-1}}&=\langle\ff-\mathbf{P}_{\hf_h\Lambda^k}\ff,\mathbf{P}^k_h\fmu_h\rangle_{L^2\Lambda^k},& \forall\,\fmu_h\in \fV^{\od\cap\mathring{\odelta}}_h\Lambda^k.
\end{array}\right.
\end{equation}
Here $\mathbf{P}_{\hf_h\Lambda^k}$ denotes the projection to $\hf_h\Lambda^k$, and $\mathbf{P}^k_h$ denotes the projection to $\mathcal{P}_0\Lambda^k(\mathcal{G}_h)$. 

\begin{remark}\label{rem:Pk-projection}
The use of $\mathbf{P}^k_h$ on the test functions in the load term is not necessary, but it includes the influence of numerical quadrature and is frequently encountered since no later than \cite{Arnold.D;Brezzi.F1985}. The projector leads to a stable modification: for $\ff\in L^2\Lambda^k$, replacing $\langle\ff,\fmu_h\rangle$ by $\langle\ff,\mathbf{P}^k_h\fmu_h\rangle$ introduces an $\mathcal{O}(h)\|\ff\|_{L^2\Lambda^k}$ perturbation, which is absorbed in the error analysis of Section~\ref{sec:fescheme} (cf.\ Lemma~\ref{lem:Pk-load}).
\end{remark}

By Theorem \ref{thm:pifes} the discrete Poincar\'e inequality, the system \eqref{eq:modelhldisonefield} is well posed. 

The main result of this paper is the theorem below.
\begin{theorem}\label{thm:convprimsch}
Let $\fomega$ and $\fomega_h$ be the solutions of \eqref{eq:modelhlori} and \eqref{eq:modelhldisonefield}, respectively. Then, 
\begin{multline*}
\|\fomega-\fomega_h\|_{L^2\Lambda^k}+\|\od^k\fomega-\od^k_h\fomega_h\|_{L^2\Lambda^{k+1}}+\|\odelta_k\fomega-\odelta_{k,h}\fomega_h\|_{L^2\Lambda^{k-1}}+\|\mathbf{P}_{\hf_h\Lambda^k}\ff-\breve\fvartheta_h\|_{L^2\Lambda^k}
\\
\leqslant C(\inf_{\ftau_h\in\fW_h\Lambda^{k-1}}\|\odelta_k\fomega-\ftau_h\|_{\od^{k-1}}+\inf_{\fmu_h\in\fW_h\Lambda^k}\|\fomega-\fmu_h\|_{\od^k}+\inf_{\fvarsigma_h\in\hf_h\Lambda^k}\|\mathbf{P}_{\hf_h\Lambda^k}\ff-\fvarsigma_h\|_{L^2\Lambda^k}+h\|\ff\|_{L^2\Lambda^k});
\end{multline*}
and further, with $\Omega$ being $s$-regular,
\begin{equation}
\|\fomega-\fomega_h\|_{L^2\Lambda^k}+\|\od^k\fomega-\od^k_h\fomega_h\|_{L^2\Lambda^{k+1}}+\|\odelta_k\fomega-\odelta_{k,h}\fomega_h\|_{L^2\Lambda^{k-1}}\leqslant Ch^s\|\ff\|_{L^2\Lambda^k}.
\end{equation}
\end{theorem}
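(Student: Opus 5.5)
We prove Theorem~\ref{thm:convprimsch} by comparing the primal discrete solution with a classical mixed discretization built on conforming Whitney forms, in the spirit of \cite{Arnold.D;Brezzi.F1985}. Let $(\breve\fsigma_h,\breve\fomega_h,\breve\fvartheta_h)\in\fW_h\Lambda^{k-1}\times\fW_h\Lambda^k\times\hf_h\Lambda^k$ be the standard FEEC mixed solution of the Hodge--Laplace problem with data $\ff$. Its error is bounded, by the FEEC theory (\cite{Arnold.D;Falk.R;Winther.R2010bams,Arnold.D2018feec}), by exactly the first three best-approximation terms on the right-hand side; on $s$-regular domains it is bounded by $Ch^s\|\ff\|_{L^2\Lambda^k}$. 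It therefore suffices to prove the analogue of Proposition~\ref{prop:primal-mixed-chain}:
\[
\|\fomega_h-\breve\fomega_h\|_{L^2\Lambda^k}+\|\od^k_h\fomega_h-\od^k\breve\fomega_h\|+\|\odelta_{k,h}\fomega_h-\breve\fsigma_h\|\leqslant Ch\|\ff\|_{L^2\Lambda^k}.
\]
Here $\breve\fsigma_h$ plays the role of a discrete codifferential, and the triangle inequality then finishes the argument.

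\textbf{Step 1 (piecewise-constant reformulation).} The mixed system only sees $\mathbf{P}_h^k$-type information, so we introduce an intermediate mixed scheme. Use the nonconforming spaces $\fW^{*,\rm nc}_{h0}\Lambda^{k+1}$ and $\fW_h\Lambda^{k-1}$, the partners in \eqref{eq:deffems}. Set $\feta_h$ to be a representative of $\od^k_h\fomega_h$ and $\ftau_h$ one of $\odelta_{k,h}\fomega_h$. The key observation is as follows. By Remark~\ref{rem:kernel=range}, $\oT_{k,h}\fomega_h$ lies cellwise in $\N(\oT^*_k,\mathbf{P}_{\odelta\times\od}^k(T))$. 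The adjoint-continuity conditions in \eqref{eq:deffems} then say that $\mathbf{P}^k_h\fomega_h$ is tested exactly against $\oT^*_{k,h}$ of the partner spaces. Combined with the discrete Hodge decomposition of Lemma~\ref{lem:hoddecconst}, this gives the splitting
\[
\mathbf{P}_h^k\fomega_h=\od^{k-1}\fvarrho_h+\fvartheta_h+\odelta_{k+1,h}\fvarsigma_h ,
\]
with $\fvarrho_h\in\fW_h\Lambda^{k-1}$, $\fvartheta_h\in\hf_h\Lambda^k$ and $\fvarsigma_h\in\fW^{*,\rm nc}_{h0}\Lambda^{k+1}$. Testing the second equation of \eqref{eq:modelhldisonefield} with suitable $\fmu_h$, one then shows that the pair $(\odelta_{k,h}\fomega_h,\mathbf{P}_h^k\fomega_h)$ satisfies a mixed system of the form ``find $\fsigma\in\fW_h\Lambda^{k-1}$ with $\langle\fsigma,\ftau\rangle=\langle\fomega,\od\ftau\rangle$''. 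This system is perturbed only by terms involving $(I-\mathbf{P}^k_h)$, which are of size $\mathcal{O}(h)$ times the broken norms.

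\textbf{Step 2 (the chain).} We compare this intermediate problem with the conforming mixed problem. This uses two facts. First, the Poincar\'e--Lefschetz identity $\hf_h\Lambda^k=\hf^{*,\rm nc}_{h0}\Lambda^k$. Second, the fact (Theorem~\ref{thm:dishd}) that $\od^k$ of $\fW_h\Lambda^k$ and $\odelta_{k+1,h}$ of $\fW^{*,\rm nc}_{h0}\Lambda^{k+1}$ induce dual pieces of the same piecewise-constant decomposition. Stability of the mixed systems (uniform discrete Poincar\'e inequalities for Whitney forms, and Theorem~\ref{thm:pifes} for $\fV^{\od\cap\mathring{\odelta}}_h\Lambda^k$) converts consistency residuals into error bounds. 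The residuals are controlled in three ways. The load perturbation from $\mathbf{P}^k_h$ (Lemma~\ref{lem:Pk-load}) is $\mathcal{O}(h)\|\ff\|$. The deviation $\|(I-\mathbf{P}_h^k)\fmu_h\|\leqslant Ch(\|\od^k_h\fmu_h\|+\|\odelta_{k,h}\fmu_h\|)$ holds, since the non-constant part of $\pddeL^k(T)$ is controlled by $\oT_k$ via the bijectivity in the remark on $\mathcal{H}^2_{\od\cap\odelta}$ and scaling. Finally, the discrete solutions satisfy a priori bounds $\lesssim\|\ff\|$.

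\textbf{Step 3 and obstacle.} The harmonic part is handled by the best-approximation term for $\mathbf{P}_{\hf_h\Lambda^k}\ff$. Together with the first-equation orthogonality $\fomega_h\perp\hf_h\Lambda^k$ and Theorem~\ref{thm:disharm}, this shows that the kernel components match, so the discrete problems are uniquely solvable on the orthogonal complement. The $s$-regular estimate then follows from the standard FEEC estimates $\|\odelta_k\fomega-\Pi_h\odelta_k\fomega\|_{\od}+\dots\lesssim h^s\|\ff\|$, together with the fact that the harmonic approximation gap is $\mathcal{O}(h^s)$. The main obstacle is Step~1: showing rigorously that the adjoint-continuity constraints in \eqref{eq:deffems} make $\od^k_h\fomega_h$ and $\odelta_{k,h}\fomega_h$ behave, up to $\mathcal{O}(h)$, like $\od^k\breve\fomega_h$ and $\breve\fsigma_h$. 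Concretely, one must construct, for each mixed test pair, a primal test $\fmu_h\in\fV^{\od\cap\mathring{\odelta}}_h\Lambda^k$ with prescribed $\oT_{k,h}\fmu_h$. The plan is to do this using the surjectivity in Remark~\ref{rem:kernel=range} cellwise, and then to glue the local pieces via the two-cell basis of Theorem~\ref{thm:csbasis}.
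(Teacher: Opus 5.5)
Your overall architecture matches the paper's: reduce to Proposition~\ref{prop:primal-mixed-chain} through a chain of auxiliary mixed problems, then combine it with the Arnold--Falk--Winther estimate (Lemma~\ref{lem:classicalmix}) via the triangle inequality. The gap is Step~1, which you yourself flag as the main obstacle. The route you sketch for it does not work, for two reasons.

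First, $\odelta_{k,h}\fomega_h$ lies only in the broken space $\mathcal{P}^-_1\Lambda^{k-1}(\mathcal{G}_h)$, not in $\fW_h\Lambda^{k-1}$. So it cannot itself be the unknown of a mixed system posed on $\fW_h\Lambda^{k-1}$.

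Second, you cannot produce $\fmu_h\in\fV^{\od\cap\mathring{\odelta}}_h\Lambda^k$ with a prescribed $\oT_{k,h}\fmu_h$ by cellwise surjectivity and gluing. By Lemma~\ref{lem:htd}, $\R(\oT_{k,h},\fV^{\od\cap\mathring{\odelta}}_h\Lambda^k)$ is the orthogonal complement in $\rppdd$ of $\N(\odelta_{k+1,h},\fW^{*,\rm nc}_{h0}\Lambda^{k+1})\times\N(\od^{k-1},\fW_h\Lambda^{k-1})$. That subspace is far from trivial; for example, its second factor contains $\R(\od^{k-2},\fW_h\Lambda^{k-2})$ when $k\geqslant 2$. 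Mixed test pairs generally have components in it, so the targets your plan needs are not attainable. The two-cell basis of Theorem~\ref{thm:csbasis} does not help either: its elements are images of global Whitney or nonconforming basis functions under the partial interpolators, not freely prescribable local pieces.

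The missing idea is to treat the adjoint-continuity conditions in \eqref{eq:deffems} as constraints and introduce their Lagrange multiplier. The residual $\fmu_h\mapsto\langle\ff,\mathbf{P}^k_h\fmu_h\rangle-\langle\fvartheta_h,\fmu_h\rangle-\langle\oT_{k,h}\fomega_h,\oT_{k,h}\fmu_h\rangle$ vanishes on $\fV^{\od\cap\mathring{\odelta}}_h\Lambda^k$. That space is the common kernel of the functionals $\fmu_h\mapsto\langle\oT_{k,h}\fmu_h,(\feta_h,\ftau_h)\rangle-\langle\fmu_h,\oT^*_{k,h}(\feta_h,\ftau_h)\rangle$, indexed by $(\feta_h,\ftau_h)\in\fW^{*,\rm nc}_{h0}\Lambda^{k+1}\times\fW_h\Lambda^{k-1}$. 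By finite-dimensional duality there is a pair $(\fzeta_h,\fsigma_h)$ in this partner space, unique by Remark~\ref{rem:cell1-1}, such that the discrete equation augmented by the corresponding functional holds for every $\fmu_h\in\pddeL^k(\mathcal{G}_h)$.

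\begin{itemize}
\item Testing with $\fmu_h\in\mathcal{P}_0\Lambda^k(\mathcal{G}_h)$ gives $\oT^*_{k,h}(\fzeta_h,\fsigma_h)=\pm(\mathbf{P}^k_h\ff-\mathbf{P}_{\hf_h\Lambda^k}\ff)$.
\item Testing with the $L^2$-complement of the piecewise constants, which $\oT_{k,h}$ maps cellwise onto $\rppdd$ and on which $\langle\ff,\mathbf{P}^k_h\cdot\rangle$ vanishes, gives the exact identity $\oT_{k,h}\fomega_h=\mathbf{P}_{\rppdd}(\fzeta_h,\fsigma_h)$, up to the sign convention.
\end{itemize}

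Together with the adjoint continuity of $\fomega_h$, this shows that $(\mathbf{P}^k_h\fomega_h,\fzeta_h,\fsigma_h,\fvartheta_h)$ solves \eqref{eq:equimodelhldis}; this is Lemma~\ref{lem:p2d}. It is the multiplier $\fsigma_h\in\fW_h\Lambda^{k-1}$, not $\odelta_{k,h}\fomega_h$, that is eventually compared with $\breve\fsigma_h$.

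Once the multiplier is in place, the rest of the chain becomes concrete:
\begin{itemize}
\item Replacing $\mathbf{P}_{\rppdd}$ by piecewise-constant projections costs $\mathcal{O}(h)\|\ff\|_{L^2\Lambda^k}$. The non-constant parts of the multipliers lie in $\okappa^\delta_h(\mathcal{P}_0\Lambda^k(\mathcal{G}_h))$ and $\okappa_h(\mathcal{P}_0\Lambda^k(\mathcal{G}_h))$, and are bounded by $h$ times their $\oT^*_{k,h}$-image (Lemma~\ref{lem:pikappa}).
\item The resulting system is exactly equivalent to a conforming mixed problem, with $\od^k\hat\fomega_h=\mathbf{P}^{k+1}_h\tilde\fzeta_h$ coming from the discrete Helmholtz decomposition (Theorem~\ref{thm:dishd}).
\item Removing the remaining projections again costs $\mathcal{O}(h)\|\ff\|_{L^2\Lambda^k}$.
\end{itemize}

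Without the multiplier, your Step~2 has no well-defined intermediate problem to compare against.
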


\section{Proofs of Theorems \ref{thm:pifes} and \ref{thm:csbasis}} 
\label{sec:pfspace}

\subsection{Some technical preparation}

\begin{lemma}\label{lem:pikappa}(\cite{Zhang.S2026IMA})
There exists a constant $C_{k,n}$, depending on the regularity of $T$, such that 
\begin{equation}\label{eq:pimud}
\|\fmu\|_{L^2\Lambda^k(T)}\leqslant C_{k,n}h_T\|\od^k\fmu\|_{L^2\Lambda^{k+1}(T)},\ \ \mbox{for}\ \ \fmu\in\okappa_T(\mathcal{P}_0\Lambda^{k+1}(T)),
\end{equation}
and
\begin{equation}\label{eq:pimude}
\|\fmu\|_{L^2\Lambda^k(T)}\leqslant C_{k,n}h_T\|\odelta_k\fmu\|_{L^2\Lambda^{k+1}(T)}\ \ \mbox{for}\ \fmu\in\okappa^\delta_T(\mathcal{P}_0\Lambda^{k-1}(T)).  
\end{equation}
\end{lemma}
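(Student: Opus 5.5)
The statement is Lemma~\ref{lem:pikappa}: a local inverse-type Poincar\'e bound on the Koszul parts of the Whitney spaces. I will prove it by a scaling argument reduced to finite dimensionality on a reference simplex. The key structural fact is that $\od^k$ is injective on $\okappa_T(\mathcal{P}_0\Lambda^{k+1}(T))$. Indeed, for a constant $(k+1)$-form $\feta$, the homotopy formula gives $\od^k\okappa_T\feta=(k+1)\feta$. This is because $\okappa_T$ differs from $\okappa$ only by replacing $x^j$ with $\tilde x^j=x^j-c_j$, and the constant shifts are annihilated by $\od$. So $\od^k\fmu=0$ with $\fmu=\okappa_T\feta$ forces $\feta=0$, hence $\fmu=0$. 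Consequently, on any fixed simplex, $\fmu\mapsto\|\od^k\fmu\|_{L^2\Lambda^{k+1}(T)}$ is a norm on the finite-dimensional space $\okappa_T(\mathcal{P}_0\Lambda^{k+1}(T))$. Norm equivalence then gives $\|\fmu\|_{L^2(T)}\leqslant C_T\|\od^k\fmu\|_{L^2(T)}$.

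Next I make the constant explicit in $h_T$. I map a reference simplex $\hat T$ to $T$ by an affine map $F(\hat x)=B\hat x+b$ chosen so that $F$ sends the barycenter to the barycenter. Under pullback $F^*$, centered coordinates map to linear functions of centered reference coordinates. Moreover, $F^*$ commutes with $\od$ and sends $\okappa_T(\mathcal{P}_0\Lambda^{k+1}(T))$ onto $\okappa_{\hat T}(\mathcal{P}_0\Lambda^{k+1}(\hat T))$. The reason is that, as a space, $\okappa_T(\mathcal{P}_0\Lambda^{k+1})$ consists of the contractions $\iota_{X}\feta$ of constant forms with the radial field $X=x-x_T$ centered at the barycenter $x_T$. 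This description is invariant under affine maps fixing the barycenter correspondence. The standard pullback estimates then give
\[
\|\fmu\|_{L^2\Lambda^k(T)}\simeq |\det B|^{1/2}\|B^{-1}\|^{k}\text{-type factors}\,\|\hat\fmu\|_{L^2(\hat T)},
\]
\[
\|\od\fmu\|_{L^2\Lambda^{k+1}(T)}\gtrsim |\det B|^{1/2}\|B\|^{-(k+1)}\|\widehat{\od\fmu}\|_{L^2(\hat T)}.
\]
The form-degree factors differ by exactly one power of $\|B\|\sim h_T$, up to constants controlled by $\|B\|\|B^{-1}\|$, which is bounded by shape regularity. Combined with the reference bound $\|\hat\fmu\|\leqslant \hat C\|\od\hat\fmu\|$, this yields~\eqref{eq:pimud}.

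The second estimate~\eqref{eq:pimude} follows by Hodge duality. The map $\star$ is an $L^2$ isometry, $\okappa^\delta_T=\star\okappa_T\star$, and $\odelta_k=\pm\star\od^{n-k}\star$. Set $\fnu=\star\fmu$; then $\fnu\in\okappa_T(\mathcal{P}_0\Lambda^{n-k+1}(T))$. Applying~\eqref{eq:pimud} in degree $n-k$ gives $\|\fmu\|=\|\fnu\|\leqslant C h_T\|\od\fnu\|=Ch_T\|\odelta_k\fmu\|$. The norm on the right should of course be $L^2\Lambda^{k-1}$.

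The main obstacle is bookkeeping the pullback scaling for $k$-forms, with the correct powers of $B$ for each form degree, and checking that the Koszul space is preserved under affine maps. If that proves awkward, I will use a simpler route. Compactness over the shape-regular family of normalized simplices of diameter one gives a uniform constant, since the injectivity argument above holds on each such simplex. Dilation $x\mapsto h_Tx$ then produces exactly one factor $h_T$, because $\od$ scales like $h_T^{-1}$ relative to the $L^2$ norm of a form of the same degree.
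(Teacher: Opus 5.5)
Your proof is correct. The contraction description $\okappa_T\feta=\iota_{x-x_T}\feta$ (with $x_T$ the barycenter of $T$) does make the Koszul space invariant under affine pullback. The power counting gives $\|\fmu\|_{L^2\Lambda^k(T)}\leqslant \hat C\,(\|B\|\|B^{-1}\|)^k\|B\|\,\|\od^k\fmu\|_{L^2\Lambda^{k+1}(T)}$, as required. The reduction of \eqref{eq:pimude} to \eqref{eq:pimud} in degree $n-k$ via $\star$ is right, and so is your correction that the norm on the right of \eqref{eq:pimude} should be $L^2\Lambda^{k-1}$.

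The paper does not prove this lemma; it quotes it from \cite{Zhang.S2026IMA}. Its companion Lemma~\ref{lem:pih} is proved by a direct computation on $T$. Here that direct route is much shorter, and its key ingredient is already in your first paragraph. You have $\od^k\okappa_T\feta=(k+1)\feta$ for constant $\feta$. Since $|x-x_T|\leqslant h_T$ on $T$, you also have the pointwise bound $|\okappa_T\feta|=|\iota_{x-x_T}\feta|\leqslant h_T|\feta|$. Hence
\[
\|\okappa_T\feta\|_{L^2\Lambda^k(T)}\leqslant h_T|T|^{1/2}|\feta|=\frac{h_T}{k+1}\,\|\od^k\okappa_T\feta\|_{L^2\Lambda^{k+1}(T)}.
\]
This gives the constant $1/(k+1)$, independent even of the shape regularity of $T$. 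It needs no reference element, no pullback bookkeeping, and no compactness.

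Your scaling argument does buy generality. It needs only injectivity of $\od^k$ on a finite-dimensional space compatible with the affine maps (or, in your fallback, with translations and dilations). So it would also apply to local spaces that lack such an explicit identity. The price is a shape-dependent constant and the technical steps you yourself flagged as the main obstacle.
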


\begin{lemma}\label{lem:pih}
There exists a constant $C_{k,n}$, depending on the regularity of $T$, such that
\begin{equation}\label{eq:hfhde}
\|\fmu\|_{L^2\Lambda^k(T)}\leqslant C_{k,n}h_T\|\odelta_k\fmu\|_{L^2\Lambda^{k-1}(T)},\ \ \mbox{for}\ \fmu\in \mathcal{H}^2_{\odelta}\Lambda^k(T),
\end{equation}
and
\begin{equation}\label{eq:hfhd}
\|\fmu\|_{L^2\Lambda^k(T)}\leqslant C_{k,n}h_T\|\od^k\fmu\|_{L^2\Lambda^{k+1}(T)},\ \ \mbox{for}\ \fmu\in \mathcal{H}^2_{\od}\Lambda^k(T).
\end{equation}
\end{lemma}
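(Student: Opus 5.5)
The plan is a direct coefficient estimate on the single simplex $T$, rather than a pull-back to a reference simplex. The centered coordinates $\tilde{x}^j$ and the constants $c^{\ixalpha_j}$ depend on $T$ in a way that is not compatible with affine pull-back of forms, so the natural scaling argument is awkward. Instead, I will bound $\|\fmu\|$ from above and $\|\odelta_k\fmu\|$ from below, both in terms of the coefficient vector $(a_{\ixalpha})$, using only shape regularity: $|T|\sim h_T^n$ and $T$ contains a ball of radius $\rho_T\geqslant c\,h_T$. It suffices to prove \eqref{eq:hfhde}. Since $\tilde\fmu^{\ixalpha}_{\od,T}$ coincides up to sign with $\star\,\tilde\fmu^{\ixalpha^{\mathsf C}}_{\odelta,T}$ (the sum runs over the indices complementary to $\ixalpha$) and $\od^k=\pm\star\odelta_{n-k}\star$, we get $\mathcal{H}^2_{\od}\Lambda^k(T)=\star\mathcal{H}^2_{\odelta}\Lambda^{n-k}(T)$. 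As $\star$ is an $L^2$ isometry, \eqref{eq:hfhd} then follows from \eqref{eq:hfhde} with $k$ replaced by $n-k$.

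\textbf{Step 1 (upper bound for $\|\fmu\|$).} Write $\fmu=\sum_{\ixalpha}a_{\ixalpha}\tilde\fmu^{\ixalpha}_{\odelta,T}$. The coefficient of $\dx^{\ixalpha}$ is $a_{\ixalpha}$ times a sum of $k$ quadratics $(\tilde{x}^{\ixalpha_j})^2-c^{\ixalpha_j}$. Each such quadratic is bounded on $T$ by $2h_T^2$, because $|\tilde{x}^j|\leqslant h_T$ on $T$ and $0\leqslant c^{\ixalpha_j}\leqslant h_T^2$. Hence $\|\fmu\|_{L^2\Lambda^k(T)}\leqslant C\,h_T^2\,|T|^{1/2}\,|a|$.

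\textbf{Step 2 (lower bound for $\|\odelta_k\fmu\|$).} By the Remark above, $\odelta_k\fmu=(-1)^n2\,\okappa_T\big(\sum_{\ixalpha}a_{\ixalpha}\dx^{\ixalpha}\big)$. The coefficients of $\okappa_T(\sum a_{\ixalpha}\dx^{\ixalpha})$ are linear forms $\ell_\beta(\tilde{x})$ with zero mean over $T$. Their gradients are given by a fixed linear map $a\mapsto(\nabla\ell_\beta)_\beta$ that does not depend on $T$, since $\okappa_T$ and $\okappa$ differ only by the translation $x\mapsto\tilde{x}$. This map is injective, because $\od^{k-1}\okappa_T c=k\,c$. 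So there is $c_0=c_0(k,n)>0$ with $\sum_\beta|\nabla\ell_\beta|^2\geqslant c_0|a|^2$. For a single linear form $\ell$, $T$ contains a ball $B$ of radius $\rho_T$, and the variance of $\ell$ over $B$ is bounded below by $c\,\rho_T^2|\nabla\ell|^2$. This gives
\[
\int_T\ell^2\geqslant\int_B(\ell-\bar\ell_B)^2\geqslant c\,\rho_T^2|B|\,|\nabla\ell|^2\geqslant c\,h_T^2|T|\,|\nabla\ell|^2,
\]
using shape regularity in the last step. Summing over $\beta$ yields $\|\odelta_k\fmu\|_{L^2\Lambda^{k-1}(T)}\geqslant c\,h_T|T|^{1/2}|a|$.

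\textbf{Step 3.} Dividing Step 1 by Step 2 gives \eqref{eq:hfhde}, with a constant depending only on $k$, $n$ and the shape-regularity constant.

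The only point that needs some care is Step 2. I need uniform injectivity of the coefficient-to-gradient map, and the observation that translation by the centroid does not affect gradients settles it. The variance-on-a-ball bound is then what converts shape regularity into the factor $h_T$ in the lower bound.
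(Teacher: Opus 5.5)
Your proof is correct and follows the same coefficient-comparison strategy as the paper: both show $\|\fmu\|_{L^2\Lambda^k(T)}\lesssim h_T^2|T|^{1/2}|a|$ and $\|\odelta_k\fmu\|_{L^2\Lambda^{k-1}(T)}\gtrsim h_T|T|^{1/2}|a|$, and then divide. The only differences are in the tools. The paper gets the upper bound from the $H^1$ Poincar\'e inequality, and the lower bound from the standard inverse inequality applied to the exact identity $\od^{k-1}\odelta_k\fmu=2k(-1)^n\sum_{\ixalpha}a_{\ixalpha}\dx^{\ixalpha}$, which gives an explicit constant. You instead use a pointwise bound, an inscribed-ball variance estimate (a self-contained inverse inequality for affine functions), and only the qualitative injectivity of the $T$-independent gradient map. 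Your Hodge-star reduction also spells out the step the paper covers with ``similarly'' for \eqref{eq:hfhd}.
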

\begin{proof}
For $\displaystyle\fmu=\sum_{\ixve\in\mathbb{IX}_{k,n}}C_{\ixve}\tilde\fmu_{\odelta,T}^{\ixve}$, by $H^1$ Poincar\'e inequality,
$$
\|\fmu\|_{L^2\Lambda^k(T)}^2\leqslant C_{k,n}h_T^2\|\sum_{\ixve\in\mathbb{IX}_{k,n}}C_{\ixve}\sum_{1\leqslant j\leqslant k}\nabla(\tilde{x}^{\ixve_j})^2\dx^{\ixve_1}\wedge\dx^{\ixve_2}\wedge\dots\wedge\dx^{\ixve_k}\|_{L^2\Lambda^k(T)}^2\leqslant C_{k,n}h_T^4|T|\sum_{\ixve\in\mathbb{IX}_{k,n}}C_{\ixve}^2.
$$
Note that $\odelta_k\fmu=2(-1)^n\sum_{\ixve\in\mathbb{IX}_{k,n}}C_{\ixve}\okappa_T(\dx^{\ixve_1}\wedge\dots\wedge\dx^{\ixve_k})$ and further $\od^{k-1}\odelta_k\fmu=2k(-1)^n\sum_{\ixve\in\mathbb{IX}_{k,n}}C_{\ixve}\dx^{\ixve_1}\wedge\dots\wedge\dx^{\ixve_k}.$ It follows by the inverse inequality that
$$
4k^2h_T^2|T|\sum_{\ixve\in\mathbb{IX}_{k,n}}C_{\ixve}^2=h_T^2\|\od^{k-1}\odelta_k\fmu\|_{L^2\Lambda^k(T)}^2\leqslant C_{k,n}\|\odelta_k\fmu\|_{L^2\Lambda^{k-1}(T)}^2.
$$
The proof of \eqref{eq:hfhde} is thus completed. Similarly can \eqref{eq:hfhd} be proved. 
\end{proof}

\begin{lemma}\label{lem:hfreq}
There exists a constant $C_{k,n}$, depending on the regularity of $T$, such that
\begin{equation}
\|\fmu\|_{L^2\Lambda^k(T)}\leqslant C_{k,n}h_T\|\odelta_k\fmu\|_{L^2\Lambda^{k-1}(T)},\ \ \mbox{for}\ \fmu\in \okappa^\delta_T(\mathcal{P}_0\Lambda^{k-1}(T))+\mathcal{H}^2_{\odelta}\Lambda^k(T),
\end{equation}
and
\begin{equation}
\|\fmu\|_{L^2\Lambda^k(T)}\leqslant C_{k,n}h_T\|\od^k\fmu\|_{L^2\Lambda^{k+1}(T)},\ \ \mbox{for}\ \fmu\in \okappa_T(\mathcal{P}_0\Lambda^{k+1}(T))+\mathcal{H}^2_{\od}\Lambda^k(T).
\end{equation}
\end{lemma}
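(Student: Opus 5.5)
The statement is a Poincaré inequality on finite-dimensional spaces, so the plan is to reduce it to the two single-piece estimates already available and then control the cross term. The reduction uses two facts:
\begin{itemize}
\item by Lemma~\ref{lem:pikappa}, each piece $\okappa^\delta_T(\mathcal{P}_0\Lambda^{k-1}(T))$ and $\mathcal{H}^2_{\odelta}\Lambda^k(T)$ satisfies the bound separately, and by Lemma~\ref{lem:pih} so does the other;
\item $\odelta_k$ is injective on the sum, since it maps $\okappa^\delta_T(\mathcal{P}_0\Lambda^{k-1})$ bijectively onto $\mathcal{P}_0\Lambda^{k-1}$ and $\mathcal{H}^2_{\odelta}\Lambda^k(T)$ bijectively onto $\okappa_T(\mathcal{P}_0\Lambda^k(T))$.
\end{itemize}

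Write $\fmu=\fmu_1+\fmu_2$ with $\fmu_1\in\okappa^\delta_T(\mathcal{P}_0\Lambda^{k-1}(T))$ and $\fmu_2\in\mathcal{H}^2_{\odelta}\Lambda^k(T)$. Then $\odelta_k\fmu_1\in\mathcal{P}_0\Lambda^{k-1}(T)$ is constant, while $\odelta_k\fmu_2\in\okappa_T(\mathcal{P}_0\Lambda^k(T))$. Every coefficient of $\okappa_T(\cdot)$ is a linear combination of the centered coordinates $\tilde x^j$, each of which has zero mean on $T$. So $\okappa_T(\mathcal{P}_0\Lambda^k(T))$ is $L^2\Lambda^{k-1}(T)$-orthogonal to the constants $\mathcal{P}_0\Lambda^{k-1}(T)$; this is the same orthogonality behind $\mathcal{P}^-_1\Lambda^{k-1}(T)=\mathcal{P}_0\Lambda^{k-1}(T)\opp\okappa_T(\mathcal{P}_0\Lambda^k(T))$. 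Hence
\[
\|\odelta_k\fmu\|^2_{L^2\Lambda^{k-1}(T)}=\|\odelta_k\fmu_1\|^2_{L^2\Lambda^{k-1}(T)}+\|\odelta_k\fmu_2\|^2_{L^2\Lambda^{k-1}(T)}.
\]

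By the triangle inequality and \eqref{eq:pimude}, \eqref{eq:hfhde},
\[
\|\fmu\|\leqslant\|\fmu_1\|+\|\fmu_2\|\leqslant C_{k,n}h_T\bigl(\|\odelta_k\fmu_1\|+\|\odelta_k\fmu_2\|\bigr)\leqslant \sqrt2\,C_{k,n}h_T\|\odelta_k\fmu\|.
\]
This is the first claimed inequality. The second follows the same way: apply the Hodge star, or argue directly. $\od^k$ maps $\okappa_T(\mathcal{P}_0\Lambda^{k+1})$ onto constants and $\mathcal{H}^2_{\od}\Lambda^k(T)$ onto $\okappa^\delta_T(\mathcal{P}_0\Lambda^k(T))$. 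The coefficients of $\okappa^\delta_T(\cdot)=\star\okappa_T\star(\cdot)$ are again combinations of the $\tilde x^j$, hence mean-zero, so the orthogonality holds. We then use \eqref{eq:pimud} and \eqref{eq:hfhd}.

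The only step needing care is this orthogonality of the two image spaces, which rests on the centering $\int_T\tilde x^j=0$ built into $\okappa_T$ and $\okappa^\delta_T$; nothing else is required. If one wanted to avoid it, the fallback would be to use injectivity of $\odelta_k$ on the sum with a scaling and compactness argument on the reference simplex, giving the constant's dependence on shape regularity only. The orthogonality route is cleaner and explicit, so that is the one I would use.
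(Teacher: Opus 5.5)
Your proof is correct and is essentially the paper's argument: the paper likewise observes that the images of the two summands under $\odelta_k$ (respectively $\od^k$) are $L^2$-orthogonal and then invokes Lemmas~\ref{lem:pikappa} and~\ref{lem:pih}. Your explanation of that orthogonality, via the mean-zero centered coordinates $\tilde x^j$ built into $\okappa_T$ and $\okappa^\delta_T$, together with the resulting $\sqrt2$ constant, simply spells out what the paper leaves implicit.
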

\begin{proof}
Note that $\odelta_k(\okappa^\delta_T(\mathcal{P}_0\Lambda^{k-1}(T)))$ and $\odelta_k(\mathcal{H}^2_{\odelta}\Lambda^k(T))$ are orthogonal, and $\od^k(\okappa_T(\mathcal{P}_0\Lambda^{k+1}(T)))$ and  $\od^k(\mathcal{H}^2_{\od}\Lambda^k(T))$ are orthogonal. The lemma follows by Lemmas \ref{lem:pikappa} and \ref{lem:pih}. 
\end{proof}
Denote by $\mathbf{P}^k_{0,T}$ the $L^2$ projection onto $\mathcal{P}_0\Lambda^k(T)$.  
\begin{lemma}\label{lem:hfonT}
There exists a constant $C_{k,n}$, depending on the regularity of $T$, such that
\begin{multline}\label{eq:hfhde+}
\|\fmu\|_{L^2\Lambda^k(T)}\leqslant C_{k,n}h_T(\|\odelta_k\fmu\|_{L^2\Lambda^{k-1}(T)}+\|\od^k\fmu\|_{L^2\Lambda^{k-1}(T)}),
\\ 
\mbox{for}\ \fmu\in \okappa^\delta_T(\mathcal{P}_0\Lambda^{k-1}(T))+\okappa_T(\mathcal{P}_0\Lambda^{k+1}(T))+\mathcal{H}^2_{\odelta}\Lambda^k(T)+\mathcal{H}^2_{\od}\Lambda^k(T),
\end{multline}
and
\begin{multline}\label{eq:hfhd+}
\|\fmu-\mathbf{P}^k_{0,T}\fmu\|_{L^2\Lambda^k(T)}\leqslant C_{k,n}h_T(\|\odelta_k\fmu\|_{L^2\Lambda^{k-1}(T)}+\|\od^k\fmu\|_{L^2\Lambda^{k-1}(T)}),
\\ 
\mbox{for}\ \fmu\in \mathcal{P}_0\Lambda^k(T)+\okappa^\delta_T(\mathcal{P}_0\Lambda^{k-1}(T))+\okappa_T(\mathcal{P}_0\Lambda^{k+1}(T))+\mathcal{H}^2_{\odelta}\Lambda^k(T)+\mathcal{H}^2_{\od}\Lambda^k(T).
\end{multline}
\end{lemma}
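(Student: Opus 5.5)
The plan is to reduce \eqref{eq:hfhde+} to the two one-sided estimates of Lemma~\ref{lem:hfreq}. The reduction rests on orthogonality of the images under $\od^k$ and $\odelta_k$ of the four summands. Write $\fmu=\fmu_1+\fmu_2$ with
\[
\fmu_1\in\okappa^\delta_T(\mathcal{P}_0\Lambda^{k-1}(T))+\mathcal{H}^2_{\odelta}\Lambda^k(T),\qquad
\fmu_2\in\okappa_T(\mathcal{P}_0\Lambda^{k+1}(T))+\mathcal{H}^2_{\od}\Lambda^k(T).
\]
Then $\od^k\fmu_1=0$, since $\od^k\tilde\fmu^{\ixalpha}_{\odelta,T}=0$ and $\od^k\okappa^\delta_T(\mathcal{P}_0\Lambda^{k-1})=0$. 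The latter is the $\star$-dual of $\odelta\okappa_T\mathcal{P}_0=0$; one can also check directly that $\okappa^\delta_T$ of a constant form is closed. Symmetrically, $\odelta_k\fmu_2=0$. Hence $\od^k\fmu=\od^k\fmu_2$ and $\odelta_k\fmu=\odelta_k\fmu_1$. Lemma~\ref{lem:hfreq} then gives $\|\fmu_1\|\le Ch_T\|\odelta_k\fmu\|$ and $\|\fmu_2\|\le Ch_T\|\od^k\fmu\|$, and the triangle inequality yields \eqref{eq:hfhde+}. The norm on $\od^k\fmu$ in the statement should read $L^2\Lambda^{k+1}$; this is a typo and does not affect the argument.

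One subtlety needs care. The decomposition $\fmu=\fmu_1+\fmu_2$ need not be unique, since the summands might intersect; for $k$ and $n-k$ comparable, $\mathcal{H}^2_{\od}$ and $\mathcal{H}^2_{\odelta}$ could share elements. This is harmless: any choice of splitting works, because the argument only uses the kernel relations $\od^k\fmu_1=0$ and $\odelta_k\fmu_2=0$, which hold for every element of the respective sums. So I fix one splitting and proceed.

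For \eqref{eq:hfhd+}, write $\fmu=\fmu_0+\fmu'$ with $\fmu_0\in\mathcal{P}_0\Lambda^k(T)$ and $\fmu'$ in the four-term sum. Every element of $\okappa_T(\cdot)$, $\okappa^\delta_T(\cdot)$ and $\mathcal{H}^2$ has mean zero on $T$: the centered coordinates $\tilde x^j$ and the shifted quadratics $(\tilde x^j)^2-c^j$ integrate to zero. Hence $\mathbf{P}^k_{0,T}\fmu'=0$ and $\mathbf{P}^k_{0,T}\fmu=\fmu_0$, so $\fmu-\mathbf{P}^k_{0,T}\fmu=\fmu'$. Since $\fmu_0$ is annihilated by both $\od^k$ and $\odelta_k$, the operators applied to $\fmu$ and to $\fmu'$ agree, and \eqref{eq:hfhd+} follows from \eqref{eq:hfhde+} applied to $\fmu'$.

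The only step requiring verification is the kernel claim: $\od^k$ kills $\okappa^\delta_T(\mathcal{P}_0\Lambda^{k-1})$ and $\odelta_k$ kills $\okappa_T(\mathcal{P}_0\Lambda^{k+1})$. I would check this by Hodge-star duality from the identity $\odelta\,\okappa\,(\text{const})=0$, which in turn follows from a direct computation of the divergence of the linear coefficients. The Remark computations establish the corresponding facts for $\mathcal{H}^2$. I expect this bookkeeping to be the main, albeit mild, obstacle; everything else is immediate from Lemma~\ref{lem:hfreq}.
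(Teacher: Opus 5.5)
Your argument is correct and is essentially the paper's proof. You split $\fmu$ into a part in $\okappa^\delta_T(\mathcal{P}_0\Lambda^{k-1}(T))+\mathcal{H}^2_{\odelta}\Lambda^k(T)$, killed by $\od^k$, and a part in $\okappa_T(\mathcal{P}_0\Lambda^{k+1}(T))+\mathcal{H}^2_{\od}\Lambda^k(T)$, killed by $\odelta_k$, apply Lemma~\ref{lem:hfreq} to each, and for \eqref{eq:hfhd+} strip off the zero-mean-free constant part, spelling out what the paper compresses into ``similarly''. Your non-uniqueness caveat is moot: Lemma~\ref{lem:hfreq} itself forces the two partial sums to intersect trivially, since a common element would be annihilated by both $\od^k$ and $\odelta_k$ — and, as you say, the argument never uses uniqueness anyway.
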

\begin{proof}
For $\fmu\in \okappa^\delta_T(\mathcal{P}_0\Lambda^{k-1}(T))+\okappa_T(\mathcal{P}_0\Lambda^{k+1}(T))+\mathcal{H}^2_{\odelta}\Lambda^k(T)+\mathcal{H}^2_{\od}\Lambda^k(T)$, decompose $\fmu=\fmu_1+\fmu_2$, such that $\fmu_1\in \okappa^\delta_T(\mathcal{P}_0\Lambda^{k-1}(T))+\mathcal{H}^2_{\odelta}\Lambda^k(T)$ and $\fmu_2\in \okappa_T(\mathcal{P}_0\Lambda^{k+1}(T))+\mathcal{H}^2_{\od}\Lambda^k(T)$, then $\od^k\fmu_1=0$ and $\odelta_k\fmu_2=0$. Then \eqref{eq:hfhde+} follows by Lemma \ref{lem:hfreq}. Similarly can \eqref{eq:hfhd+} be proved. 
\end{proof}

Now we introduce the notations below for short:
\begin{equation}\label{eq:notaringpdd}
\mathring{\mathbf{P}}_{\odelta\times\od}^k(T):=\N(\oT^*_k,\mathbf{P}_{\odelta\times\od}^k(T)),\ \ \mbox{and},\ \ \mathring{\mathbf{P}}_{\odelta\times\od}^k(\mathcal{G}_h):=\prod_{T\in\mathcal{G}_h}\mathring{\mathbf{P}}_{\odelta\times\od}^k(T).
\end{equation}

\begin{lemma}[Helmholtz-type decomposition]\label{lem:htd}
For $1\leqslant k\leqslant n-1$, orthogonal in $L^2\Lambda^k$,
\begin{equation}\label{eqn:hdp0}
\mathcal{P}_0\Lambda^k(\mathcal{G}_h)=\R(\oT^*_{k,h},\fW^{*,\rm nc}_{h0}\Lambda^{k+1}\times\fW_h\Lambda^{k-1})\opp \N(\oT_{k,h},\fV^{\od\cap\mathring{\odelta}}_h\Lambda^k),
\end{equation}
\begin{equation}\label{eqn:hdpdd}
\mathring{\mathbf{P}}_{\odelta\times\od}^k(\mathcal{G}_h)=\N(\oT^*_{k,h},\fW^{*,\rm nc}_{h0}\Lambda^{k+1}\times\fW_h\Lambda^{k-1})\opp \R(\oT_{k,h},\fV^{\od\cap\mathring{\odelta}}_h\Lambda^k).
\end{equation}
\end{lemma}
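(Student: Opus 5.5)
For \eqref{eqn:hdp0}, I will use Lemma \ref{lem:hoddecconst} together with Theorem \ref{thm:disharm}. Take $(\feta_h,\ftau_h)\in\fW^{*,\rm nc}_{h0}\Lambda^{k+1}\times\fW_h\Lambda^{k-1}$. On each cell, $\odelta_{k+1}\feta_h$ and $\od^{k-1}\ftau_h$ are constant by \eqref{eq:n=r=c}, so
\[
\R(\oT^*_{k,h},\fW^{*,\rm nc}_{h0}\Lambda^{k+1}\times\fW_h\Lambda^{k-1})=\R(\odelta_{k+1,h},\fW^{*,\rm nc}_{h0}\Lambda^{k+1})+\R(\od^{k-1},\fW_h\Lambda^{k-1})
\]
lies in $\mathcal{P}_0\Lambda^k(\mathcal{G}_h)$. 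By Lemma \ref{lem:hoddecconst}, the two summands are orthogonal, and
\[
\mathcal{P}_0\Lambda^k(\mathcal{G}_h)=\R(\od^{k-1},\fW_h\Lambda^{k-1})\opp\hf_h\Lambda^k\opp\R(\odelta_{k+1,h},\fW^{*,\rm nc}_{h0}\Lambda^{k+1}).
\]
Theorem \ref{thm:disharm} identifies $\hf_h\Lambda^k$ with $\N(\oT_{k,h},\fV^{\od\cap\mathring{\odelta}}_h\Lambda^k)$, which gives \eqref{eqn:hdp0} directly. I will also check the orthogonality from the definition \eqref{eq:deffems}: for $\fmu_h$ in the kernel, the adjoint identity gives $\langle\fmu_h,\oT^*_{k,h}(\feta_h,\ftau_h)\rangle=\langle\oT_{k,h}\fmu_h,(\feta_h,\ftau_h)\rangle=0$.

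For \eqref{eqn:hdpdd}, the inclusions come first. By Remark \ref{rem:kernel=range}, on each cell $\oT_k(\pddeL^k(T))=\N(\oT^*_k,\mathbf{P}_{\odelta\times\od}^k(T))$, so $\R(\oT_{k,h},\fV^{\od\cap\mathring{\odelta}}_h\Lambda^k)\subset\mathring{\mathbf{P}}_{\odelta\times\od}^k(\mathcal{G}_h)$. The kernel of $\oT^*_{k,h}$ on the partner space is trivially contained in $\mathring{\mathbf{P}}^k_{\odelta\times\od}(\mathcal{G}_h)$. Orthogonality again follows from the adjoint identity in \eqref{eq:deffems}: if $\oT^*_{k,h}(\feta_h,\ftau_h)=0$, then $\langle\oT_{k,h}\fmu_h,(\feta_h,\ftau_h)\rangle=\langle\fmu_h,0\rangle=0$.

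The main obstacle is spanning, and I will handle it by a dimension count. Write $\mathbf{X}_h:=\fW^{*,\rm nc}_{h0}\Lambda^{k+1}\times\fW_h\Lambda^{k-1}$. I will show that any $(\feta,\ftau)\in\mathring{\mathbf{P}}^k_{\odelta\times\od}(\mathcal{G}_h)$ orthogonal to $\R(\oT_{k,h},\fV^{\od\cap\mathring{\odelta}}_h\Lambda^k)$ lies in $\mathbf{X}_h$. Such an element has $\oT^*_{k,h}(\feta,\ftau)=0$ cellwise, so for every $\fmu_h\in\fV^{\od\cap\mathring{\odelta}}_h\Lambda^k$,
\[
\langle\oT_{k,h}\fmu_h,(\feta,\ftau)\rangle-\langle\fmu_h,\oT^*_{k,h}(\feta,\ftau)\rangle=0.
\]
The characterization stated right after \eqref{eq:deffems}, which is based on Remark \ref{rem:cell1-1}, then places $(\feta,\ftau)$ in $\mathbf{X}_h$. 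This characterization is the real content of this step, and I take it as given from the displayed identity. Since $\oT^*_{k,h}(\feta,\ftau)=0$, the element lies in $\N(\oT^*_{k,h},\mathbf{X}_h)$. Hence the orthogonal complement of the range inside $\mathring{\mathbf{P}}^k_{\odelta\times\od}(\mathcal{G}_h)$ equals that kernel.

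If I prefer not to rely on that displayed characterization, I will argue by dimensions instead. The required identity is
\[
\dim\mathring{\mathbf{P}}^k_{\odelta\times\od}(\mathcal{G}_h)=\dim\N(\oT^*_{k,h},\mathbf{X}_h)+\dim\fV^{\od\cap\mathring{\odelta}}_h\Lambda^k-\dim\hf_h\Lambda^k.
\]
I would verify it by viewing $\fV^{\od\cap\mathring{\odelta}}_h\Lambda^k$ as the annihilator of $\mathbf{X}_h$ under the cellwise nondegenerate pairing of Remark \ref{rem:cell1-1}. Then $\dim\fV^{\od\cap\mathring{\odelta}}_h\Lambda^k=\dim\pddeL^k(\mathcal{G}_h)-\dim\mathbf{X}_h$. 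I would combine this with rank–nullity for $\oT^*_{k,h}$ on $\mathbf{X}_h$, whose range is identified through \eqref{eqn:hdp0}, and with the cellwise dimension equalities in Remark \ref{rem:kernel=range}.
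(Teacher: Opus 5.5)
Your proposal is correct and matches the paper's proof. For \eqref{eqn:hdpdd}, the paper's ``analogous'' argument is exactly yours: an element of $\mathring{\mathbf{P}}_{\odelta\times\od}^k(\mathcal{G}_h)$ orthogonal to $\R(\oT_{k,h},\fV^{\od\cap\mathring{\odelta}}_h\Lambda^k)$ satisfies the adjoint identity against all of $\fV^{\od\cap\mathring{\odelta}}_h\Lambda^k$, so it lies in $\fW^{*,\rm nc}_{h0}\Lambda^{k+1}\times\fW_h\Lambda^{k-1}$ by the characterization following \eqref{eq:deffems}; your dimension count is a valid substitute for that double-annihilator step. For \eqref{eqn:hdp0}, the paper argues directly that a piecewise constant form orthogonal to $\R(\oT^*_{k,h},\fW^{*,\rm nc}_{h0}\Lambda^{k+1}\times\fW_h\Lambda^{k-1})$ satisfies the defining identities trivially, whereas you route through Lemma~\ref{lem:hoddecconst} and Theorem~\ref{thm:disharm}, which rest on the same observation.
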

\begin{proof}
Clearly, $\mathcal{P}_0\Lambda^k(\mathcal{G}_h)\supset\R(\oT^*_{k,h},\fW^{*,\rm nc}_{h0}\Lambda^{k+1}\times\fW_h\Lambda^{k-1})\opp \N(\oT_{k,h},\fV^{\od\cap\mathring{\odelta}}_h\Lambda^k).$ On the other hand, given $\fmu_h\in \mathcal{P}_0\Lambda^k(\mathcal{G}_h)\omp \R(\oT^*_{k,h},\fW^{*,\rm nc}_{h0}\Lambda^{k+1}\times\fW_h\Lambda^{k-1})$, it holds for $\left(\begin{array}{c}\feta_h \\ \ftau_h\end{array}\right)\in \fW^{*,\rm nc}_{h0}\Lambda^{k+1}\times\fW_h\Lambda^{k-1}$ that $\left\langle\fmu_h,\oT^*_{k,h}\left(\begin{array}{c}\feta_h \\ \ftau_h\end{array}\right)\right\rangle_{L^2\Lambda^k}=0=\left\langle\oT_{k,h}\fmu_h,\left(\begin{array}{c}\feta_h \\ \ftau_h\end{array}\right)\right\rangle$; then it follows that $\fmu_h\in \fV^{\od\cap\mathring{\odelta}}_h\Lambda^k$ and further $\N(\oT_{k,h},\fV^{\od\cap\mathring{\odelta}}_h\Lambda^k)$. Hence \eqref{eqn:hdp0} holds. The proof of \eqref{eqn:hdpdd} is analogous. 
\end{proof}

\subsection{Proof of Theorem \ref{thm:pifes}} 
Firstly, following \cite{Zhang.S2026IMA}, we introduce the notion {\bf Poincar\'e inequality's criterion} of $(\oT,\xD)$. Let $\xX$ and $\yY$ be two Hilbert spaces. For $(\oT,\xD):\xX\to \yY$ a closed  operator, denote 
$$
\xD^{\boldsymbol \lrcorner_{\oT}}:=\left\{\xv\in \xD:\langle \xv,\xw\rangle_\xX=0,\ \forall\,\xw\in \mathcal{N}(\oT,\xD)\right\}.
$$ 
The {\bf Poincar\'e inequality's criterion} of $(\oT,\xD)$ is defined as
\begin{equation}\label{eq:deficr}
\mathsf{pic}(\oT,\xD):=\left\{\begin{array}{rl}
\displaystyle \sup_{0\neq\xv\in \xD^{\boldsymbol \lrcorner_{\oT}}}\frac{\|\xv\|_\xX}{\|\oT\xv\|_\yY},&\mbox{if}\ \xD^{\boldsymbol \lrcorner_{\oT}}\neq\left\{0\right\};
\\
0,&\mbox{if}\ \xD^{\boldsymbol \lrcorner_{\oT}}=\left\{0\right\}.
\end{array}\right.
\end{equation}

If $\icr(\oT,\xD)$ is finite, then the Poincar\'e inequality holds for $(\oT,\xD)$, and $\icr(\oT,\xD)$ is precisely the best constant. 

\begin{lemma}(\cite{Arnold.D;Falk.R;Winther.R2006acta}\cite{Zhang.S2026IMA})\label{lem:piwfs}
There exists a constant $C_{k,n}$, depending on the regularity of $\mathcal{G}_h$, such that 
$$
\icr(\od^k,\fW_h\Lambda^k)\leqslant C_{k,n},\ \ \mbox{and},\ \ \icr(\od^k,\fW_{h0}\Lambda^k)\leqslant C_{k,n},
$$
and 
$$
\icr(\od^k_h,\fW^{\rm nc}_h\Lambda^k)\leqslant C_{k,n},\ \ \mbox{and},\ \ \icr(\od^k_h,\fW^{\rm nc}_{h0}\Lambda^k)\leqslant C_{k,n}.
$$
\end{lemma}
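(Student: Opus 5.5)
The statement collects four uniform discrete Poincar\'e inequalities. I would prove the two conforming ones by the standard FEEC argument with bounded commuting projections. I would then reduce the two nonconforming ones to the conforming ones by the adjoint relation in the definition of $\fW^{\rm nc}_h\Lambda^k$ and $\fW^{\rm nc}_{h0}\Lambda^k$, together with the discrete Helmholtz decomposition of Theorem~\ref{thm:dishd}.

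\emph{Conforming spaces.} Let $\fmu_h\in\fW_h\Lambda^k$ be orthogonal to $\N(\od^k,\fW_h\Lambda^k)$.
\begin{itemize}
\item By the continuous Helmholtz decomposition, choose $\fvarrho\in H\Lambda^k$ orthogonal to $\N(\od^k,H\Lambda^k)$ with $\od^k\fvarrho=\od^k\fmu_h$. The continuous Poincar\'e inequality gives $\|\fvarrho\|_{L^2\Lambda^k}\leqslant C\|\od^k\fmu_h\|_{L^2\Lambda^{k+1}}$.
\item Let $\pi_h$ be the smoothed cochain projection of Arnold--Falk--Winther, which is uniformly $L^2$-bounded and commutes with $\od$. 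Then $\od^k(\pi_h\fvarrho-\fmu_h)=\pi_h\od^k(\fvarrho-\fmu_h)=0$, so $\pi_h\fvarrho-\fmu_h\in\N(\od^k,\fW_h\Lambda^k)$.
\item Since $\fmu_h$ is orthogonal to this kernel, $\|\fmu_h\|_{L^2\Lambda^k}\leqslant\|\pi_h\fvarrho\|_{L^2\Lambda^k}\leqslant C\|\fvarrho\|_{L^2\Lambda^k}\leqslant C\|\od^k\fmu_h\|_{L^2\Lambda^{k+1}}$.
\end{itemize}
For $\fW_{h0}\Lambda^k$ the argument is identical, using the version of the projection that preserves homogeneous boundary conditions. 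By the Hodge star, the same bounds hold for $(\odelta_k,\fW^*_h\Lambda^k)$ and $(\odelta_k,\fW^*_{h0}\Lambda^k)$.

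\emph{Nonconforming spaces.} Let $\fmu_h\in\fW^{\rm nc}_h\Lambda^k$ be orthogonal to $\N(\od^k_h,\fW^{\rm nc}_h\Lambda^k)$. Write $\fmu_h=\fmu_0+\fmu_\kappa$ cellwise, with $\fmu_0\in\mathcal{P}_0\Lambda^k(\mathcal{G}_h)$ and $\fmu_\kappa\in\okappa_h(\mathcal{P}_0\Lambda^{k+1}(\mathcal{G}_h))$; the two parts are $L^2$-orthogonal on each cell. Since $\od^k_h\fmu_h=\od^k_h\fmu_\kappa$, Lemma~\ref{lem:pikappa} gives $\|\fmu_\kappa\|_{L^2\Lambda^k}\leqslant Ch\|\od^k_h\fmu_h\|_{L^2\Lambda^{k+1}}$. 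It remains to bound $\fmu_0$.
\begin{itemize}
\item Because $\od$ is injective on the $\okappa_T$ part, $\N(\od^k_h,\fW^{\rm nc}_h\Lambda^k)\subset\mathcal{P}_0\Lambda^k(\mathcal{G}_h)$.
\item The kernel is piecewise constant and $\fmu_\kappa$ is cellwise orthogonal to constants, so $\fmu_0$ is itself orthogonal to $\N(\od^k_h,\fW^{\rm nc}_h\Lambda^k)$.
\item By the second line of Theorem~\ref{thm:dishd}, $\fmu_0=\odelta_{k+1}\feta_h$ for some $\feta_h\in\fW^*_{h0}\Lambda^{k+1}$, which may be taken orthogonal to $\N(\odelta_{k+1},\fW^*_{h0}\Lambda^{k+1})$.
\end{itemize}
Then
\[
\|\fmu_0\|^2_{L^2\Lambda^k}=\langle\fmu_h,\odelta_{k+1}\feta_h\rangle_{L^2\Lambda^k}=\langle\od^k_h\fmu_h,\feta_h\rangle_{L^2\Lambda^{k+1}}\leqslant\|\od^k_h\fmu_h\|_{L^2\Lambda^{k+1}}\,C\|\odelta_{k+1}\feta_h\|_{L^2\Lambda^k}.
\]
The first equality uses orthogonality of $\fmu_\kappa$ to constants. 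The second is the defining adjoint relation of $\fW^{\rm nc}_h\Lambda^k$. The final factor uses the conforming Poincar\'e inequality for $\fW^*_{h0}\Lambda^{k+1}=\star\fW_{h0}\Lambda^{n-k-1}$ from the first part. Dividing by $\|\fmu_0\|_{L^2\Lambda^k}=\|\odelta_{k+1}\feta_h\|_{L^2\Lambda^k}$ gives the bound. For $\fW^{\rm nc}_{h0}\Lambda^k$ one uses the other decomposition in Theorem~\ref{thm:dishd} and the conforming bound on $\fW^*_h\Lambda^{k+1}$.

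\emph{Main obstacle.} The only nontrivial ingredient is the existence of uniformly bounded commuting projections in the conforming step, which I take from Arnold--Falk--Winther. The nonconforming reduction is purely algebraic once the kernel is identified as piecewise constant.
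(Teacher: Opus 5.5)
Your proof is correct. The paper gives no proof of this lemma and only cites Arnold--Falk--Winther and Zhang. Your conforming step is exactly the standard Arnold--Falk--Winther bounded-cochain-projection argument. Your nonconforming step follows the same duality template the paper uses for Theorem~\ref{thm:pifes}: you split off the $\okappa_h$ part via Lemma~\ref{lem:pikappa}, then bound the piecewise-constant part by testing the adjoint relation against the $\feta_h\in\fW^*_{h0}\Lambda^{k+1}$ supplied by Theorem~\ref{thm:dishd}, using the conforming Poincar\'e constant of $\star\fW_{h0}\Lambda^{n-k-1}$. You also note that $\langle\fmu_\kappa,\odelta_{k+1}\feta_h\rangle=0$ because $\odelta_{k+1}\feta_h$ is piecewise constant, which removes the extra $\|\fmu_h^{\boldsymbol\perp}\|$ term that the paper carries in its proof of Theorem~\ref{thm:pifes}.
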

Throughout, the subscript $\cdot_h$ denotes the piecewise operation on $\mathcal{G}_h$.
\begin{remark}
By Lemma \ref{lem:piwfs}, it follows by definition that 
$$
\icr(\odelta_k,\fW^*_h\Lambda^k)\leqslant C_{n-k,n},\ \ \mbox{and},\ \ \icr(\odelta_k,\fW^*_{h0}\Lambda^k)\leqslant C_{n-k,n},
$$
and
$$
\icr(\odelta_{k,h},\fW^{*,\rm nc}_h\Lambda^k)\leqslant C_{n-k,n},\ \ \mbox{and},\ \ \icr(\odelta_{k,h},\fW^{*,\rm nc}_{h0}\Lambda^k)\leqslant C_{n-k,n}.
$$
Without ambiguity, we do not distinguish the generic constants $C_{n-k,n}$ and $C_{k,n}$ in the sequel. 
\end{remark}

\begin{remark}
By Lemma \ref{lem:hoddecconst}, $\R(\odelta_{k+1,h},\fW^{*,\rm nc}_{h0}\Lambda^{k+1})$ is orthogonal to $\R(\od^{k-1}_h,\fW_h\Lambda^{k-1})$. Therefore, $\N(\oT^*_{k,h},\fW^{*,\rm nc}_{h0}\Lambda^{k+1}\times \fW_h\Lambda^{k-1})=\N(\odelta_{k+1,h},\fW^{*,\rm nc}_{h0}\Lambda^{k+1}) \times \N(\od^{k-1}_h,\fW_h\Lambda^{k-1})$. Further, 
\begin{equation}\label{eq:pictstar}
\icr(\oT^*_{k,h},\fW^{*,\rm nc}_{h0}\Lambda^{k+1}\times \fW_h\Lambda^{k-1})=\max\left( \icr(\odelta_{k+1,h},\fW^{*,\rm nc}_{h0}\Lambda^{k+1}), \icr(\od^{k-1}_h,\fW_h\Lambda^{k-1}) \right)\leqslant C_{k,n}.
\end{equation}
\end{remark}

\paragraph{\bf Proof of Theorem~\ref{thm:pifes}.} First, decompose $\fV^{\od\cap\mathring{\odelta}}_h\Lambda^k= (\fV^{\od\cap\mathring{\odelta}}_h\Lambda^k)^{\boldsymbol \lrcorner}\opp\hf_h\Lambda^k.$ Now, given $\fmu_h\in (\fV^{\od\cap\mathring{\odelta}}_h\Lambda^k)^{\boldsymbol \lrcorner}$, decompose further $\fmu_h=\mathring{\fmu}_h+\fmu_h^{\boldsymbol \perp}$, such that, with $\mathring{\fmu}_h\in \mathcal{P}_0\Lambda^k(\mathcal{G}_h)$ and $\fmu_h^{\boldsymbol \perp}\in \pddeL^k(\mathcal{G}_h)$ being orthogonal to $\mathcal{P}_0\Lambda^k(\mathcal{G}_h)$, i.e.\ $\displaystyle \fmu_h^{\boldsymbol \perp}\in\prod_{T\in\mathcal{G}_h}\left[\okappa^\delta_T(\mathcal{P}_0\Lambda^{k-1}(T))+\okappa_T(\mathcal{P}_0\Lambda^{k+1}(T))+\mathcal{H}^2_{\od\cap\odelta}\Lambda^k(T)\right]$. It follows that $\mathring{\fmu}_h=\fmu_h-\fmu_h^{\boldsymbol \perp}$ is orthogonal to $\hf_h\Lambda^k$, and hence $\mathring{\fmu}_h\in \R(\oT^*_{k,h},\fW^{*,\rm nc}_{h0}\Lambda^{k+1}\times\fW_h\Lambda^{k-1})$ by Lemma \ref{lem:htd}. Therefore,
\begin{multline*}
\|\mathring{\fmu}_h\|_{L^2\Lambda^k}
=
\sup_{\left(\begin{array}{c}\feta_h \\ \ftau_h\end{array}\right)\in \left(\fW^{*,\rm nc}_{h0}\Lambda^{k+1}\times\fW_h\Lambda^{k-1}\right)^{\boldsymbol \lrcorner}}
\frac{\left\langle \mathring{\fmu}_h,\oT^*_{k,h}\left(\begin{array}{c}\feta_h \\ \ftau_h\end{array}\right)\right\rangle_{L^2\Lambda^k}}{\left\|\oT^*_{k,h}\left(\begin{array}{c}\feta_h \\ \ftau_h\end{array}\right)\right\|_{L^2\Lambda^k}} 
\\
=
\sup_{\left(\begin{array}{c}\feta_h \\ \ftau_h\end{array}\right)\in \left(\fW^{*,\rm nc}_{h0}\Lambda^{k+1}\times\fW_h\Lambda^{k-1}\right)^{\boldsymbol \lrcorner}}
\frac{\left\langle \fmu_h^{\boldsymbol \perp},\oT^*_{k,h}\left(\begin{array}{c}\feta_h \\ \ftau_h\end{array}\right)\right\rangle_{L^2\Lambda^k}-\left\langle \oT_{k,h}\fmu_h^{\boldsymbol \perp},\left(\begin{array}{c}\feta_h \\ \ftau_h\end{array}\right)\right\rangle}{\left\|\oT^*_{k,h}\left(\begin{array}{c}\feta_h \\ \ftau_h\end{array}\right)\right\|_{L^2\Lambda^k}}
\\
\leqslant 
\| \fmu_h^{\boldsymbol \perp}\|_{L^2\Lambda^k}+\|\oT_{k,h}\fmu_h\|_{L^2\Lambda^{k+1}\times L^2\Lambda^{k-1}}  \icr(\oT^*_{k,h},\fW^{*,\rm nc}_{h0}\Lambda^{k+1}\times\fW_h\Lambda^{k-1}).
\end{multline*}
By Lemma \ref{lem:hfonT}, $\|\fmu_h^{\boldsymbol \perp}\|_{L^2\Lambda^k}\leqslant Ch(\|\od^k_h\fmu_h^{\boldsymbol \perp}\|_{L^2\Lambda^{k+1}}+\|\odelta_{k,h}\fmu_h^{\boldsymbol \perp}\|_{L^2\Lambda^{k-1}})$. Hence, by \eqref{eq:pictstar},
$$
\|\fmu_h\|_{L^2\Lambda^k}\leqslant ch\|\oT_{k,h}\fmu_h\|_{L^2\Lambda^{k+1}\times L^2\Lambda^{k-1}}+C_{k,n}\|\oT_{k,h}\fmu_h\|_{L^2\Lambda^{k+1}\times L^2\Lambda^{k-1}}\leqslant C_{k,n}\|\oT_{k,h}\fmu_h\|_{L^2\Lambda^{k+1}\times L^2\Lambda^{k-1}}.
$$
The proof is completed. \qed

\subsection{Proof of Theorem \ref{thm:csbasis}}\label{sec:csbasis-proof}

Given a triangle $T$, define the cell-wise interpolator 
\begin{equation}
\mathbb{I}^{\od\cap\odelta}_T: H\Lambda^k\cap H^*\Lambda^k\to\mathbf{P}_{\od\cap\odelta}\Lambda^k(T)
\end{equation}
such that
\begin{multline}
\langle\od^k(\mathbb{I}^{\od\cap\odelta}_T\fmu),\feta\rangle_{L^2\Lambda^{k+1}(T)}+\langle\odelta_k(\mathbb{I}^{\od\cap\odelta}_T\fmu),\ftau\rangle_{L^2\Lambda^{k-1}(T)}-\langle\mathbb{I}^{\od\cap\odelta}_T\fmu,(\odelta_{k+1}\feta+\od^{k-1}\ftau)\rangle_{L^2\Lambda^k(T)}
\\
=\langle\od^k\fmu,\feta\rangle_{L^2\Lambda^{k+1}(T)}+\langle\odelta_k\fmu,\ftau\rangle_{L^2\Lambda^{k-1}(T)}-\langle\fmu,(\odelta_{k+1}\feta+\od^{k-1}\ftau)\rangle_{L^2\Lambda^k(T)}\quad \forall\,\left(\begin{array}{c}\feta \\ \ftau\end{array}\right)\in \mathbf{P}_{\odelta\times\od}^k(T).
\end{multline}

Further, define two partial interpolators, by
\begin{equation}
\mathbb{I}^{\od\backslash\odelta}_T: H\Lambda^k\cap H^*\Lambda^k\to\mathbf{P}_{\od\cap\odelta}\Lambda^k(T)
\end{equation}
such that
\begin{itemize}
\item for $\forall\,\feta\in\mathcal{P}^{*,-}_1\Lambda^{k+1}(T),$
$$
\langle\od^k(\mathbb{I}^{\od\backslash\odelta}_T\fmu),\feta\rangle_{L^2\Lambda^{k+1}(T)}-\langle\mathbb{I}^{\od\backslash\odelta}_T\fmu,\odelta_{k+1}\feta\rangle_{L^2\Lambda^k(T)}
=\langle\od^k\fmu,\feta\rangle_{L^2\Lambda^{k+1}(T)}-\langle\fmu,\odelta_{k+1}\feta\rangle_{L^2\Lambda^k(T)}, 
$$
\item and for $\forall\ \ftau\in \mathcal{P}^-_1\Lambda^{k-1}(T)$, 
$$
\langle\odelta_k(\mathbb{I}^{\od\backslash\odelta}_T\fmu),\ftau\rangle_{L^2\Lambda^{k-1}(T)}-\langle\mathbb{I}^{\od\backslash\odelta}_T\fmu,\od^{k-1}\ftau\rangle_{L^2\Lambda^k(T)}=0, 
$$
\end{itemize}
and by
\begin{equation}
\mathbb{I}^{\odelta\backslash\od}_T: H\Lambda^k\cap H^*\Lambda^k\to\mathbf{P}_{\od\cap\odelta}\Lambda^k(T)
\end{equation}
such that
\begin{itemize}
\item for $\forall\ \ftau\in \mathcal{P}^{*,-}_1\Lambda^{k+1}(T).$
$$
\langle\od^k(\mathbb{I}^{\odelta\backslash\od}_T\fmu),\ftau\rangle_{L^2\Lambda^{k-1}(T)}-\langle\mathbb{I}^{\odelta\backslash\od}_T\fmu,\odelta_{k+1}\ftau\rangle_{L^2\Lambda^k(T)}=0,
$$
\item and for $\forall\,\feta\in\mathcal{P}^-_1\Lambda^{k-1}(T)$,
$$
\langle\odelta_k(\mathbb{I}^{\odelta\backslash\od}_T\fmu),\feta\rangle_{L^2\Lambda^{k+1}(T)}-\langle\mathbb{I}^{\odelta\backslash\od}_T\fmu,\od^{k-1}\feta\rangle_{L^2\Lambda^k(T)}
=\langle\odelta_k\fmu,\feta\rangle_{L^2\Lambda^{k+1}(T)}-\langle\fmu,\od^{k-1}\feta\rangle_{L^2\Lambda^k(T)}.
$$
\end{itemize}
Evidently, $\mathbb{I}^{\od\cap\odelta}_T$ is well-posed by Remark \ref{rem:kernel=range}, and $\mathbb{I}^{\od\backslash\odelta}_T$ and $\mathbb{I}^{\odelta\backslash\od}_T$ are both well-posed by \eqref{eq:n=r=c}.
\begin{remark} 
$\mathbb{I}^{\od\backslash\odelta}_T$ is bijective from $\mathcal{P}^-_1\Lambda^k(T)$ onto $\R(\mathbb{I}^{\od\backslash\odelta}_T,\mathcal{P}^-_1\Lambda^k(T))$ and $\mathbb{I}^{\odelta\backslash\od}_T$ is bijective from $\mathcal{P}^{*,-}_1\Lambda^k(T)$ onto $\R(\mathbb{I}^{\odelta\backslash\od}_T, \mathcal{P}^{*,-}_1\Lambda^k(T))$. However, neither $\mathbb{I}^{\od\backslash\odelta}_T$ or $\mathbb{I}^{\odelta\backslash\od}_T$ is projective on $\mathcal{P}_0\Lambda^k=\mathcal{P}^-_1\Lambda^k(T)\cap \mathcal{P}^{*,-}_1\Lambda^k(T)$. 
\end{remark}
\begin{remark}
It holds that $\mathbb{I}^{\od\cap\odelta}_T=\mathbb{I}^{\od\backslash\odelta}_T+\mathbb{I}^{\odelta\backslash\od}_T$ and $\mathbf{P}_{\od\cap\odelta}\Lambda^k(T) = \R(\mathbb{I}^{\od\backslash\odelta}_T,\mathcal{P}^-_1\Lambda^k(T)) \oplus \R(\mathbb{I}^{\odelta\backslash\od}_T, \mathcal{P}^{*,-}_1\Lambda^k(T))$. 
\end{remark}

Further, define two global partial interpolators, by
\begin{equation}
\mathbb{I}^{\od\backslash\odelta}_h: \prod_{T\in\mathcal{G}_h}H\Lambda^k\cap H^*\Lambda^k(T)\to \prod_{T\in\mathcal{G}_h}\mathbf{P}_{\od\cap\odelta}\Lambda^k(T)
\end{equation}
such that
\begin{equation}
(\mathbb{I}^{\od\backslash\odelta}_h\fmu_h)_T=\mathbb{I}^{\od\backslash\odelta}_T(\fmu_h|_T),\ \forall\,T\in\mathcal{G}_h,
\end{equation}
and by
\begin{equation}
\mathbb{I}^{\odelta\backslash\od}_h: \prod_{T\in\mathcal{G}_h}H\Lambda^k\cap H^*\Lambda^k(T)\to\prod_{T\in\mathcal{G}_h}\mathbf{P}_{\od\cap\odelta}\Lambda^k(T),
\end{equation}
such that
\begin{equation}
(\mathbb{I}^{\odelta\backslash\od}_h\fmu_h)_T=\mathbb{I}^{\odelta\backslash\od}_T(\fmu_h|_T),\ \forall\,T\in\mathcal{G}_h. 
\end{equation}

\begin{remark}
$\mathbb{I}^{\od\backslash\odelta}_h$ and $\mathbb{I}^{\odelta\backslash\od}_h$ are injective on $\fW_h\Lambda^k$ and $\fW^{*,\rm nc}_{h0}\Lambda^k$, respectively. Though, neither $\mathbb{I}^{\od\backslash\odelta}_h$ or $\mathbb{I}^{\odelta\backslash\od}_h$ is projective on $\fW_h\Lambda^k\cap \fW^{*,\rm nc}_{h0}\Lambda^k\subset\mathcal{P}_0\Lambda^k(\mathcal{G}_h)$.
\end{remark}

\begin{theorem}\label{thm:decvddelta}
$\fV^{\od\cap\mathring{\odelta}}_h\Lambda^k=\R(\mathbb{I}^{\od\backslash\odelta}_h,\fW_h\Lambda^k) \oplus \R(\mathbb{I}^{\odelta\backslash\od}_h, \fW^{*,\rm nc}_{h0}\Lambda^k). $
\end{theorem}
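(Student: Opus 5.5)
The plan is to prove the two inclusions separately, then check that the sum is direct. Throughout, the adjoint pairing is split into its two halves. Define the local defect functionals: for $\fmu\in\prod_T(H\Lambda^k\cap H^*\Lambda^k)(T)$,
\[
D^{\od}_T(\fmu;\feta):=\langle\od^k\fmu,\feta\rangle_{L^2\Lambda^{k+1}(T)}-\langle\fmu,\odelta_{k+1}\feta\rangle_{L^2\Lambda^k(T)},
\]
\[
D^{\odelta}_T(\fmu;\ftau):=\langle\odelta_k\fmu,\ftau\rangle_{L^2\Lambda^{k-1}(T)}-\langle\fmu,\od^{k-1}\ftau\rangle_{L^2\Lambda^k(T)}.
\]
By definition \eqref{eq:deffems}, $\fmu_h\in\pddeL^k(\mathcal{G}_h)$ lies in $\fV^{\od\cap\mathring{\odelta}}_h\Lambda^k$ exactly when two conditions hold: $\sum_TD^{\od}_T(\fmu_h;\feta_h)=0$ for all $\feta_h\in\fW^{*,\rm nc}_{h0}\Lambda^{k+1}$, and $\sum_TD^{\odelta}_T(\fmu_h;\ftau_h)=0$ for all $\ftau_h\in\fW_h\Lambda^{k-1}$. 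The interpolators are designed so that
\[
D^{\od}_T(\mathbb{I}^{\od\backslash\odelta}_T\fmu;\cdot)=D^{\od}_T(\fmu;\cdot),\qquad D^{\odelta}_T(\mathbb{I}^{\od\backslash\odelta}_T\fmu;\cdot)=0,
\]
and symmetrically $\mathbb{I}^{\odelta\backslash\od}_T$ copies the $\odelta$-defect and kills the $\od$-defect.

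\textbf{Inclusion $\supset$.} Take $\fmu_h\in\fW_h\Lambda^k$. Its $\odelta$-defects vanish cellwise after interpolation, so the second condition holds trivially. For the first condition, $\sum_TD^{\od}_T(\mathbb{I}^{\od\backslash\odelta}_h\fmu_h;\feta_h)=\sum_TD^{\od}_T(\fmu_h;\feta_h)$. This vanishes for $\feta_h\in\fW^{*,\rm nc}_{h0}\Lambda^{k+1}$ by the defining adjoint relation of $\fW^{*,\rm nc}_{h0}\Lambda^{k+1}$ tested against $\fW_h\Lambda^k$ (by symmetry of the pairing, this is the same as the dual characterization of $\fW_h\Lambda^k$). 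The case $\fmu_h\in\fW^{*,\rm nc}_{h0}\Lambda^k$ is symmetric. There the $\od$-defects vanish, and $\sum_TD^{\odelta}_T(\fmu_h;\ftau_h)=0$ for $\ftau_h\in\fW_h\Lambda^{k-1}$ is precisely the definition of $\fW^{*,\rm nc}_{h0}\Lambda^k$.

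\textbf{Inclusion $\subset$.} I would split $\fmu_h\in\fV^{\od\cap\mathring{\odelta}}_h\Lambda^k$ cellwise using the Remark $\pddeL^k(T)=\R(\mathbb{I}^{\od\backslash\odelta}_T,\mathcal{P}^-_1\Lambda^k(T))\oplus\R(\mathbb{I}^{\odelta\backslash\od}_T,\mathcal{P}^{*,-}_1\Lambda^k(T))$. This gives $\fmu_h=\mathbb{I}^{\od\backslash\odelta}_h\fsigma_h+\mathbb{I}^{\odelta\backslash\od}_h\fvarrho_h$ with $\fsigma_h\in\mathcal{P}^-_1\Lambda^k(\mathcal{G}_h)$ and $\fvarrho_h\in\mathcal{P}^{*,-}_1\Lambda^k(\mathcal{G}_h)$, both unique because each partial interpolator is bijective on its domain. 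The $\od$-defects of $\fmu_h$ then equal those of $\fsigma_h$, so $\sum_TD^{\od}_T(\fsigma_h;\feta_h)=0$ for all $\feta_h\in\fW^{*,\rm nc}_{h0}\Lambda^{k+1}$. By the dual characterization of $\fW_h\Lambda^k$, this gives $\fsigma_h\in\fW_h\Lambda^k$. Likewise, the $\odelta$-defects give $\fvarrho_h\in\fW^{*,\rm nc}_{h0}\Lambda^k$ directly from its definition.

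\textbf{Directness.} On each cell the two ranges intersect trivially, so the global sum is direct as well.

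\textbf{Main obstacle.} The delicate point is the cellwise decomposition Remark. I must make sure the split of $\fmu_h|_T$ really produces preimages in $\mathcal{P}^-_1\Lambda^k(T)$ and $\mathcal{P}^{*,-}_1\Lambda^k(T)$ carrying exactly the $\od$- and $\odelta$-defects of $\fmu_h$. Equivalently, the defect map $\fmu\mapsto(D^{\od}_T(\fmu;\cdot),D^{\odelta}_T(\fmu;\cdot))$ must be injective on $\pddeL^k(T)$, which is Remark~\ref{rem:cell1-1}. Also, the $\od$-defect functionals of $\mathcal{P}^-_1\Lambda^k(T)$ must realize all admissible functionals. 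I would check this by dimension counting: $\dim\pddeL^k(T)=\dim\mathcal{P}^-_1\Lambda^k+\dim\mathcal{P}^{*,-}_1\Lambda^k$, which offsets the non-projectivity on $\mathcal{P}_0\Lambda^k$. I would then verify the "fixes the appropriate defect" identities using \eqref{eq:n=r=c}.
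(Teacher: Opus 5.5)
Your proposal is correct and follows essentially the same route as the paper. The inclusion $\supset$ comes from the defect-copying and defect-killing properties of the two partial interpolators. The inclusion $\subset$ comes from splitting each $\fmu_h|_T$ according to $\pddeL^k(T)=\R(\mathbb{I}^{\od\backslash\odelta}_T,\mathcal{P}^-_1\Lambda^k(T))\oplus\R(\mathbb{I}^{\odelta\backslash\od}_T,\mathcal{P}^{*,-}_1\Lambda^k(T))$, pulling back through the cellwise inverses, and then invoking the dual characterization of $\fW_h\Lambda^k$ and the definition of $\fW^{*,\rm nc}_{h0}\Lambda^k$.

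Your explicit defect functionals also make transparent a step the paper passes over quickly: why the two adjoint constraints decouple into its sets $I_1$ and $I_2$.
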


\begin{proof}
Clearly, $\R(\mathbb{I}^{\od\backslash\odelta}_h,\fW_h\Lambda^k)\subset \fV^{\od\cap\mathring{\odelta}}_h\Lambda^k$ and $\R(\mathbb{I}^{\odelta\backslash\od}_h, \fW^{*,\rm nc}_{h0}\Lambda^k)\subset \fV^{\od\cap\mathring{\odelta}}_h\Lambda^k$. Hence 
$$
\fV^{\od\cap\mathring{\odelta}}_h\Lambda^k\supset\R(\mathbb{I}^{\od\backslash\odelta}_h,\fW_h\Lambda^k) \oplus \R(\mathbb{I}^{\odelta\backslash\od}_h, \fW^{*,\rm nc}_{h0}\Lambda^k).
$$
On the other hand,
$$
\begin{array}{rl}
&\fV^{\od\cap\mathring{\odelta}}_h\Lambda^k
\\
=&\left\{\fmu_h\in\pddeL^k(\mathcal{G}_h):\ \left\langle\oT_{k,h}\fmu_h,\left(\begin{array}{c}\feta_h\\ \ftau_h\end{array}\right)\right\rangle = \left\langle \fmu_h,\oT^*_{k,h}\left(\begin{array}{c}\feta_h\\ \ftau_h\end{array}\right)\right\rangle_{L^2\Lambda^k},\ \forall\,\left(\begin{array}{c}\feta_h\\ \ftau_h\end{array}\right)\in \fW^{*,\rm nc}_{h0}\Lambda^{k+1}\times \fW_h\Lambda^{k-1}\right\}
\\
=&\displaystyle \left\{\fmu_h\in\prod_{T\in\mathcal{G}_h}\left(\R\left(\mathbb{I}^{\od\backslash\odelta}_T,\mathcal{P}^-_1\Lambda^k(T)\right) \oplus \R\left(\mathbb{I}^{\odelta\backslash\od}_T, \mathcal{P}^{*,-}_1\Lambda^k(T)\right)\right):\right.\ 
\\
&\qquad\qquad \left.\left\langle\oT_{k,h}\fmu_h,\left(\begin{array}{c}\feta_h\\ \ftau_h\end{array}\right)\right\rangle = \left\langle \fmu_h,\oT^*_{k,h}\left(\begin{array}{c}\feta_h\\ \ftau_h\end{array}\right)\right\rangle_{L^2\Lambda^k},\ \forall\,\left(\begin{array}{c}\feta_h\\ \ftau_h\end{array}\right)\in \fW^{*,\rm nc}_{h0}\Lambda^{k+1}\times \fW_h\Lambda^{k-1}\right\}
\\
=&\displaystyle \left\{\fmu_h\in\prod_{T\in\mathcal{G}_h}\left(\R\left(\mathbb{I}^{\od\backslash\odelta}_T,\mathcal{P}^-_1\Lambda^k(T)\right)\right):\langle\od^k_h\fmu_h,\feta_h\rangle_{L^2\Lambda^{k+1}}=\langle\fmu_h,\odelta_{k+1,h}\feta_h\rangle_{L^2\Lambda^k},\ \forall\,\feta_h\in\fW^{*,\rm nc}_{h0}\Lambda^{k+1}\right\}
\\
&\displaystyle \bigoplus\left\{\fmu_h\in\prod_{T\in\mathcal{G}_h}\left(\R\left(\mathbb{I}^{\odelta\backslash\od}_T, \mathcal{P}^{*,-}_1\Lambda^k(T)\right)\right): \ \ \langle\odelta_{k,h}\fmu_h,\ftau_h\rangle_{L^2\Lambda^{k-1}}=\langle\fmu_h,\od^{k-1}\ftau_h\rangle_{L^2\Lambda^k},\ \forall\,\ftau_h\in\fW_h\Lambda^{k-1}\right\}
\\
:= &\displaystyle I_1\oplus I_2. 
\end{array}
$$
Denote by $\breve{\mathbb{I}}^{\od\backslash\odelta}_T$ the inverse of $\mathbb{I}^{\od\backslash\odelta}_T$ from $\R(\mathbb{I}^{\od\backslash\odelta}_T,\mathcal{P}^-_1\Lambda^k(T))$ onto $\mathcal{P}^-_1\Lambda^k(T)$, and $\breve{\mathbb{I}}^{\od\backslash\odelta}_h$ the piecewise combination of $\breve{\mathbb{I}}^{\od\backslash\odelta}_T$. Then given $\fmu_h\in I_1$,  it follows that 
\begin{multline*}
\sum_{T\in\mathcal{G}_h}\langle\od^k_h\breve{\mathbb{I}}^{\od\backslash\odelta}_h\fmu_h,\feta_h\rangle_{L^2\Lambda^{k+1}(T)}-\langle\breve{\mathbb{I}}^{\od\backslash\odelta}_h\fmu_h,\odelta_{k+1,h}\feta_h\rangle_{L^2\Lambda^k(T)}
\\
=\sum_{T\in\mathcal{G}_h}\langle\od^k_h\fmu_h,\feta_h\rangle_{L^2\Lambda^{k+1}(T)}-\langle\fmu_h,\odelta_{k+1,h}\feta_h\rangle_{L^2\Lambda^k(T)}=0,\ \ \forall\,\feta_h\in\fW^{*,\rm nc}_{h0}\Lambda^{k+1};
\end{multline*}
namely $\breve{\mathbb{I}}^{\od\backslash\odelta}_h\fmu_h\in \fW_h\Lambda^k$, and $\fmu_h\in \R(\mathbb{I}^{\od\backslash\odelta}_h,\fW_h\Lambda^k)$. Hence, $I_1\subset \R(\mathbb{I}^{\od\backslash\odelta}_h,\fW_h\Lambda^k)$. Similarly $I_2\subset \R(\mathbb{I}^{\odelta\backslash\od}_h, \fW^{*,\rm nc}_{h0}\Lambda^k)$. It follows that $\fV^{\od\cap\mathring{\odelta}}_h\Lambda^k\subset\R(\mathbb{I}^{\od\backslash\odelta}_h,\fW_h\Lambda^k) \oplus \R(\mathbb{I}^{\odelta\backslash\od}_h, \fW^{*,\rm nc}_{h0}\Lambda^k)$, and the proof is completed. 
\end{proof}

\paragraph{\bf Proof of Theorem \ref{thm:csbasis}} It suffices to note that, given a set of linearly dependent basis functions of $\fW_h\Lambda^k$ and a set of $\fW^{*,\rm nc}_{h0}\Lambda^k$, a set of linearly dependent basis functions of $\fV^{\od\cap\mathring{\odelta}}_h\Lambda^k$ follows. Actually, for example, given a set of basis functions $\{\feta{}_h^j\}$ of $\fW_h\Lambda^k$, the support of $\mathbb{I}^{\od\backslash\odelta}_h\feta{}_h^j$ does not exceed the support of $\feta{}_h^j$, and we only have to show that $\{\mathbb{I}^{\od\backslash\odelta}_h\feta{}_h^j\}$ are linearly independent. For this, provided that a linear combination of all these functions $\left\{\mathbb{I}^{\od\backslash\odelta}_h\feta{}_h^j\right\}$ vanishes everywhere, as $\mathbb{I}^{\od\backslash\odelta}_h$ is piecewise bijective, it follows that the same combination of $\left\{\feta{}_h^j\right\}$ vanishes everywhere. Therefore, by Theorem \ref{thm:decvddelta}, an explicit set of basis functions of $\fV^{\od\cap\mathring{\odelta}}_h\Lambda^k$, each supported on no more than two cells, are constructed. 
\qed

%
%
%
\section{Proof of Theorem \ref{thm:convprimsch}}
\label{sec:fescheme}

For the ease of theoretical analysis below, we introduce two auxiliary formulations, namely

\begin{itemize}
\item find $\fomega\in H\Lambda^k(\Omega)\cap H^*_0\Lambda^k(\Omega)$ and $\fvartheta\in\hf\Lambda^k$, such that, for any $\fmu\in H\Lambda^k(\Omega)\cap H^*_0\Lambda^k(\Omega)$ and $\fvarsigma\in \hf\Lambda^k$,
\begin{equation}\label{eq:modelhloriexpa}
\left\{
\begin{array}{rl}
\langle\fomega,\fvarsigma\rangle_{L^2\Lambda^k}&=0,
\\  
\langle\fvartheta,\fmu\rangle_{L^2\Lambda^k}+\langle\od^k\fomega,\od^k\fmu\rangle_{L^2\Lambda^{k+1}}+\langle\odelta_k\fomega,\odelta_k\fmu\rangle_{L^2\Lambda^{k-1}}&=\langle\ff,\fmu\rangle_{L^2\Lambda^k};
\end{array}\right.
\end{equation}

\item find $(\fomega_h,\fvartheta_h)\in \fV^{\od\cap\mathring{\odelta}}_h\Lambda^k \times \hf_h\Lambda^k$, such that, for any $\fvarsigma_h\in\hf_h\Lambda^k$ and $\fmu_h\in \fV^{\od\cap\mathring{\odelta}}_h\Lambda^k$, 
\begin{equation}\label{eq:modelhldis}
\left\{
\begin{array}{rl}
\langle\fomega_h,\fvarsigma_h\rangle_{L^2\Lambda^k}&=0, 
\\  
\langle\fvartheta_h,\fmu_h\rangle_{L^2\Lambda^k}+\langle\od^k_h\fomega_h,\od^k_h\fmu_h\rangle_{L^2\Lambda^{k+1}}+\langle\odelta_{k,h}\fomega_h,\odelta_{k,h}\fmu_h\rangle_{L^2\Lambda^{k-1}}&=\langle\ff,\mathbf{P}^k_h\fmu_h\rangle_{L^2\Lambda^k}.
\end{array}\right.
\end{equation}
\end{itemize}
One readily verifies that \eqref{eq:modelhloriexpa} is equivalent to \eqref{eq:modelhlori}, and \eqref{eq:modelhldisonefield} is equivalent to \eqref{eq:modelhldis}. 

\begin{lemma}\label{lem:Pk-load}
Let $(\fomega_h,\fvartheta_h)$ solve \eqref{eq:modelhldis} and let $(\grave\fomega_h,\grave\fvartheta_h)$ solve the same system with $\langle\ff,\fmu_h\rangle_{L^2\Lambda^k}$ in place of $\langle\ff,\mathbf{P}^k_h\fmu_h\rangle_{L^2\Lambda^k}$ on the right-hand side. Then
\[
\|\fomega_h-\grave\fomega_h\|_{L^2\Lambda^k}+\|\oT_{k,h}(\fomega_h-\grave\fomega_h)\|_{L^2\Lambda^{k+1}\times L^2\Lambda^{k-1}}
\leqslant C h\|\ff\|_{L^2\Lambda^k}.
\]
\end{lemma}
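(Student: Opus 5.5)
The plan is to subtract the two discrete systems and run a stability argument for the difference. Set $\fvarrho_h:=\fomega_h-\grave\fomega_h\in\fV^{\od\cap\mathring{\odelta}}_h\Lambda^k$ and $\fzeta_h:=\fvartheta_h-\grave\fvartheta_h\in\hf_h\Lambda^k$. Then $\langle\fvarrho_h,\fvarsigma_h\rangle_{L^2\Lambda^k}=0$ for all $\fvarsigma_h\in\hf_h\Lambda^k$, and for every $\fmu_h\in\fV^{\od\cap\mathring{\odelta}}_h\Lambda^k$,
\[
\langle\fzeta_h,\fmu_h\rangle_{L^2\Lambda^k}+\langle\oT_{k,h}\fvarrho_h,\oT_{k,h}\fmu_h\rangle=\langle\ff,\mathbf{P}^k_h\fmu_h-\fmu_h\rangle_{L^2\Lambda^k}.
\]

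First I bound $\fzeta_h$. Test with $\fmu_h=\fzeta_h$, which is admissible by Theorem~\ref{thm:disharm}, since $\hf_h\Lambda^k=\N(\oT_{k,h},\fV^{\od\cap\mathring{\odelta}}_h\Lambda^k)\subset\fV^{\od\cap\mathring{\odelta}}_h\Lambda^k$. Because $\oT_{k,h}\fzeta_h=0$ and $\fzeta_h$ is piecewise constant, so that $\mathbf{P}^k_h\fzeta_h=\fzeta_h$, the right-hand side vanishes. Hence $\|\fzeta_h\|^2=0$ and $\fzeta_h=0$.

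Next I bound $\fvarrho_h$ by testing with $\fmu_h=\fvarrho_h$. This gives
\[
\|\oT_{k,h}\fvarrho_h\|^2=\langle\ff,\mathbf{P}^k_h\fvarrho_h-\fvarrho_h\rangle\leqslant\|\ff\|\,\|\fvarrho_h-\mathbf{P}^k_h\fvarrho_h\|.
\]
The key local estimate is
\[
\|\fvarrho_h-\mathbf{P}^k_h\fvarrho_h\|_{L^2\Lambda^k}\leqslant Ch\|\oT_{k,h}\fvarrho_h\|,
\]
which I obtain cellwise from \eqref{eq:hfhd+} in Lemma~\ref{lem:hfonT}. It applies because $\pddeL^k(T)$ is contained in $\mathcal{P}_0\Lambda^k(T)+\okappa^\delta_T(\mathcal{P}_0\Lambda^{k-1}(T))+\okappa_T(\mathcal{P}_0\Lambda^{k+1}(T))+\mathcal{H}^2_{\odelta}\Lambda^k(T)+\mathcal{H}^2_{\od}\Lambda^k(T)$, as each $\tilde\fmu^{\ixalpha}_{\od\cap\odelta,T}$ is a combination of $\tilde\fmu^{\ixalpha}_{\od,T}$ and $\tilde\fmu^{\ixalpha}_{\odelta,T}$. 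Summing over cells with shape regularity, and dividing through, yields $\|\oT_{k,h}\fvarrho_h\|\leqslant Ch\|\ff\|$.

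Finally, since $\fvarrho_h\perp\hf_h\Lambda^k$ and $\fvarrho_h\in\fV^{\od\cap\mathring{\odelta}}_h\Lambda^k$, the discrete Poincaré inequality of Theorem~\ref{thm:pifes} gives $\|\fvarrho_h\|\leqslant C\|\oT_{k,h}\fvarrho_h\|\leqslant Ch\|\ff\|$, which completes the estimate.

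The only delicate point is the local estimate. One must confirm that Lemma~\ref{lem:hfonT} covers the enriched space, i.e.\ that its constant depends only on shape regularity. It must also be applied to the full $\fvarrho_h|_T$, including its constant part, which is exactly what the second inequality \eqref{eq:hfhd+} handles. Everything else is routine.
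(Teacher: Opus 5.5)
Your proof is correct and follows the paper's route. You subtract the two systems, test with $\fomega_h-\grave\fomega_h$, and use \eqref{eq:hfhd+} of Lemma~\ref{lem:hfonT} to bound $\|(\mathbf{I}-\mathbf{P}^k_h)(\fomega_h-\grave\fomega_h)\|$ by $Ch\|\oT_{k,h}(\fomega_h-\grave\fomega_h)\|$, while making explicit two steps the paper's one-line proof leaves implicit: that the harmonic multipliers coincide, and that the $L^2\Lambda^k$ part of the bound comes from the discrete Poincar\'e inequality of Theorem~\ref{thm:pifes}.
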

\begin{proof}
The difference $\fomega_h-\grave\fomega_h$ satisfies $\langle\oT_{k,h}(\fomega_h-\grave\fomega_h),\oT_{k,h}\fmu_h\rangle=\langle\ff,(\mathbf{I}-\mathbf{P}^k_h)\fmu_h\rangle_{L^2\Lambda^k}$ for all $\fmu_h\in\fV^{\od\cap\mathring{\odelta}}_h\Lambda^k$ orthogonal to $\hf_h\Lambda^k$. Then Lemma \ref{lem:hfonT} yields the bound.
\end{proof}

\subsection{Auxiliary mixed formulations and their connections}
\label{sec:errorest}

We introduce several auxiliary problems in mixed formulations.

\begin{enumerate}[(A)]
\item Denote by $\mathbf{P}_{\rppdd}$ the $L^2$ projection onto $\rppdd$, which is a combination of the piecewise projection to $\mathring{\mathbf{P}}_{\odelta\times\od}^k(T)$. We consider an auxiliary problem: find $(\overline{\fomega}_h, \overline{\fzeta}_h, \overline{\fsigma}_h, \overline{\fvartheta}_h)\in \mathcal{P}_0\Lambda^k(\mathcal{G}_h) \times \fW^{*,\rm nc}_{h0}\Lambda^{k+1}\times \fW_h\Lambda^{k-1} \times \hf_h\Lambda^k$, such that, for $(\overline{\fmu}_h, \overline{\feta}_h, \overline{\ftau}_h, \overline{\fvarsigma}_h)\in \mathcal{P}_0\Lambda^k(\mathcal{G}_h) \times \fW^{*,\rm nc}_{h0}\Lambda^{k+1}\times \fW_h\Lambda^{k-1} \times \hf_h\Lambda^k$,
\begin{equation}\label{eq:equimodelhldis}
\mbox{\bf (AP-I)}\ \ \left\{
\begin{array}{lccl}
&&\langle\overline{\fomega}_h,\overline{\fvarsigma}_h\rangle_{L^2\Lambda^k}&=0, 
\\
&\left\langle\mathbf{P}_{\rppdd}\left(\begin{array}{c}\overline{\fzeta}_h \\ \overline{\fsigma}_h \end{array}\right),\left(\begin{array}{c}\overline{\feta}_h \\ \overline{\ftau}_h \end{array}\right)\right\rangle
&-\left\langle\overline{\fomega}_h,\oT^*_{k,h}\left(\begin{array}{c}\overline{\feta}_h \\ \overline{\ftau}_h \end{array}\right)\right\rangle_{L^2\Lambda^k}&=0,
\\
\langle\overline{\fvartheta}_h,\overline{\fmu}_h\rangle_{L^2\Lambda^k}&-\left\langle\overline{\fmu}_h,\oT^*_{k,h}\left(\begin{array}{c}\overline{\fzeta}_h \\ \overline{\fsigma}_h \end{array}\right)\right\rangle_{L^2\Lambda^k}&&=\langle\ff,\overline{\fmu}_h\rangle_{L^2\Lambda^k}.
\end{array}
\right.
\end{equation}

\item find $(\tilde{\fomega}_h,\tilde{\fzeta}_h,\tilde{\fsigma}_h,\tilde{\fvartheta}_h)\in \mathcal{P}_0\Lambda^k(\mathcal{G}_h)\times \mathbf{W}^{*,\rm nc}_{h0}\Lambda^{k+1}\times \mathbf{W}_h\Lambda^{k-1}\times\hf_h\Lambda^k$, such that, for $(\tilde{\fmu}_h,\tilde{\feta}_h,\tilde{\ftau}_h,\tilde{\fvarsigma}_h)\in \mathcal{P}_0\Lambda^k(\mathcal{G}_h)\times \mathbf{W}^{*,\rm nc}_{h0}\Lambda^{k+1}\times \mathbf{W}_h\Lambda^{k-1}\times\hf_h\Lambda^k$, 
\begin{equation}\label{eq:mixedhodgedis}
\mbox{\bf (AP-II)}\ \ \left\{
\begin{array}{ccccl}
&&&\langle\tilde{\fomega}_h,\tilde{\fvarsigma}_h\rangle_{L^2\Lambda^k}&=0
\\
&\langle \mathbf{P}_h^{k+1}\tilde{\fzeta}_h,\tilde{\feta}_h\rangle_{L^2\Lambda^{k+1}}&&-\langle\tilde{\fomega}_h,\odelta_{k+1,h}\tilde{\feta}_h\rangle_{L^2\Lambda^k}&=0
\\
&&\langle \mathbf{P}_h^{k-1}\tilde{\fsigma}_h,\tilde{\ftau}_h\rangle_{L^2\Lambda^{k-1}}&-\langle\tilde{\fomega}_h,\od^{k-1}\tilde{\ftau}_h\rangle_{L^2\Lambda^k}&=0
\\
\langle\tilde{\fvartheta}_h,\tilde{\fmu}_h\rangle_{L^2\Lambda^k}&+\langle \odelta_{k+1,h}\tilde{\fzeta}_h,\tilde{\fmu}_h\rangle_{L^2\Lambda^k}&+\langle \od^{k-1}\tilde{\fsigma}_h,\tilde{\fmu}_h \rangle_{L^2\Lambda^k}&&=\langle \ff,\tilde{\fmu}_h\rangle_{L^2\Lambda^k}
\end{array}.
\right.
\end{equation}

\item Find $(\hat\fvartheta_h,\hat\fsigma_h,\hat\fomega_h)\in \hf_h\Lambda^k\times \fW_h\Lambda^{k-1}\times\fW_h\Lambda^k$, such that, for $(\hat{\fvarsigma}_h, \hat{\ftau}_h, \hat{\fmu}_h)\in \hf_h\Lambda^k\times \fW_h\Lambda^{k-1}\times\fW_h\Lambda^k$
\begin{equation}\label{eq:classdisnq}
\mbox{\bf (AP-III)}\ \ \left\{
\begin{array}{cccl}
&&\langle\hat\fomega_h,\hat{\fvarsigma}_h\rangle_{L^2\Lambda^k}&=0
\\
&\langle\mathbf{P}_h^{k-1}\hat\fsigma_h,\mathbf{P}_h^{k-1}\hat{\ftau}_h\rangle_{L^2\Lambda^{k-1}}&-\langle\hat\fomega_h,\od^{k-1}\hat{\ftau}_h\rangle_{L^2\Lambda^k}&=0
\\
\langle\hat\fvartheta,\hat{\fmu}_h\rangle_{L^2\Lambda^k}&+\langle\od^{k-1}\hat\fsigma_h,\hat{\fmu}_h\rangle_{L^2\Lambda^k}&+\langle\od^k\hat\fomega_h,\od^k\hat{\fmu}_h\rangle_{L^2\Lambda^{k+1}}&=\langle\ff,\mathbf{P}_h^k\hat{\fmu}_h\rangle_{L^2\Lambda^k}
\end{array}
\right.
\end{equation}

\item Find $(\breve\fvartheta_h,\breve\fsigma_h,\breve\fomega_h)\in \hf_h\Lambda^k\times \fW_h\Lambda^{k-1}\times\fW_h\Lambda^k$, such that, for $\breve{\fvarsigma}_h\in\,\hf_h\Lambda^k$, $\breve{\ftau}_h\in\fW_h\Lambda^{k-1}$ and $\breve{\fmu}_h\in\fW_h\Lambda^k$,
\begin{equation}\label{eq:classdis}
\mbox{\bf (AP-IV)}\ \ \left\{
\begin{array}{cccll}
&&\langle\breve\fomega_h,\breve{\fvarsigma}_h\rangle_{L^2\Lambda^k}&=0
\\
&\langle\breve\fsigma_h,\breve{\ftau}_h\rangle_{L^2\Lambda^{k-1}}&-\langle\breve\fomega_h,\od^{k-1}\breve{\ftau}_h\rangle_{L^2\Lambda^k}&=0
\\
\langle\breve\fvartheta,\breve{\fmu}_h\rangle_{L^2\Lambda^k}&+\langle\od^{k-1}\breve\fsigma_h,\breve{\fmu}_h\rangle_{L^2\Lambda^k}&+\langle\od^k\breve\fomega_h,\od^k\breve{\fmu}_h\rangle_{L^2\Lambda^{k+1}}&=\langle\ff,\breve{\fmu}_h\rangle_{L^2\Lambda^k}.
\end{array}
\right.
\end{equation}

\end{enumerate}

\begin{lemma}\label{lem:p2d}
The problem \eqref{eq:equimodelhldis} is well-posed. Further, let $(\fomega_h,\fvartheta_h)$ be the solution of \eqref{eq:modelhldis}. Then
\begin{equation}\label{eq:equality}
\oT_{k,h}\fomega_h=\mathbf{P}_{\rppdd}\left(\begin{array}{c}\overline{\fzeta}_h \\ \overline{\fsigma}_h\end{array}\right),\ \ \ \mbox{and}\ \ \overline{\fomega}_h=\mathbf{P}^k_h\fomega_h. 
\end{equation}
\end{lemma}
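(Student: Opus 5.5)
The plan is to verify \eqref{eq:equality} by building a solution of \eqref{eq:equimodelhldis} directly from the primal solution $(\fomega_h,\fvartheta_h)$ of \eqref{eq:modelhldis}. Well-posedness will then follow from uniqueness, since \eqref{eq:equimodelhldis} is a square finite-dimensional linear system.

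\emph{Uniqueness.} Take $\ff=0$.
\begin{itemize}
\item Since $\hf_h\Lambda^k\subset\mathcal{P}_0\Lambda^k(\mathcal{G}_h)$, we may test the third equation with $\overline{\fmu}_h=\overline{\fvartheta}_h$. By Lemma~\ref{lem:htd}, $\R(\oT^*_{k,h},\fW^{*,\rm nc}_{h0}\Lambda^{k+1}\times\fW_h\Lambda^{k-1})\perp\hf_h\Lambda^k$, so this gives $\overline{\fvartheta}_h=0$.
\item The third equation then says $\oT^*_{k,h}(\overline{\fzeta}_h,\overline{\fsigma}_h)^T\perp\mathcal{P}_0\Lambda^k(\mathcal{G}_h)$. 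This range lies in $\mathcal{P}_0\Lambda^k(\mathcal{G}_h)$, so it vanishes. Hence $(\overline{\fzeta}_h,\overline{\fsigma}_h)$ is in the kernel, and this kernel is contained in $\rppdd$ because $\oT^*_{k,h}$ acts cellwise.
\item The projection in the second equation therefore acts as the identity. Testing with $(\overline{\feta}_h,\overline{\ftau}_h)=(\overline{\fzeta}_h,\overline{\fsigma}_h)$ gives $\|(\overline{\fzeta}_h,\overline{\fsigma}_h)\|^2=0$.
\item Finally, the second equation gives $\overline{\fomega}_h\perp\R(\oT^*_{k,h},\cdot)$, and the first gives $\overline{\fomega}_h\perp\hf_h\Lambda^k$. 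By \eqref{eqn:hdp0} together with Theorem~\ref{thm:disharm}, $\overline{\fomega}_h=0$.
\end{itemize}

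\emph{Construction.} We set $\overline{\fomega}_h:=\mathbf{P}^k_h\fomega_h$.
\begin{itemize}
\item The first equation holds because $\hf_h\Lambda^k$ is piecewise constant, so $\langle\mathbf{P}^k_h\fomega_h,\fvarsigma_h\rangle=\langle\fomega_h,\fvarsigma_h\rangle=0$.
\item Testing \eqref{eq:modelhldis} with $\fmu_h\in\hf_h\Lambda^k$ kills the $\oT_{k,h}$ terms (Theorem~\ref{thm:disharm}) and shows $\fvartheta_h=\mathbf{P}_{\hf_h\Lambda^k}\ff$. We put $\overline{\fvartheta}_h:=\fvartheta_h$.
\item Then $\mathbf{P}^k_h\ff-\overline{\fvartheta}_h$ is orthogonal to $\hf_h\Lambda^k$. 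By \eqref{eqn:hdp0} it lies in $\R(\oT^*_{k,h},\fW^{*,\rm nc}_{h0}\Lambda^{k+1}\times\fW_h\Lambda^{k-1})$. So there is a pair $(\overline{\fzeta}_h,\overline{\fsigma}_h)$ satisfying the third equation, unique up to an element $\boldsymbol{\kappa}$ of $\N(\oT^*_{k,h},\fW^{*,\rm nc}_{h0}\Lambda^{k+1}\times\fW_h\Lambda^{k-1})$.
\item For the second equation, note that $\oT^*_{k,h}$ of the test pair is piecewise constant. Together with $\fomega_h\in\fV^{\od\cap\mathring{\odelta}}_h\Lambda^k$ this gives
\[
\left\langle\overline{\fomega}_h,\oT^*_{k,h}\begin{pmatrix}\overline{\feta}_h\\ \overline{\ftau}_h\end{pmatrix}\right\rangle
=\left\langle\fomega_h,\oT^*_{k,h}\begin{pmatrix}\overline{\feta}_h\\ \overline{\ftau}_h\end{pmatrix}\right\rangle
=\left\langle\oT_{k,h}\fomega_h,\begin{pmatrix}\overline{\feta}_h\\ \overline{\ftau}_h\end{pmatrix}\right\rangle .
\]
It therefore suffices to prove $\mathbf{P}_{\rppdd}(\overline{\fzeta}_h,\overline{\fsigma}_h)^T=\oT_{k,h}\fomega_h$.
\end{itemize}
For this we use \eqref{eqn:hdpdd}, noting that $\oT_{k,h}\fomega_h\in\R(\oT_{k,h},\fV^{\od\cap\mathring{\odelta}}_h\Lambda^k)\subset\rppdd$ by Remark~\ref{rem:kernel=range}. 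We split $\mathbf{P}_{\rppdd}(\overline{\fzeta}_h,\overline{\fsigma}_h)^T$ into its $\N(\oT^*_{k,h},\cdot)$ part and its $\R(\oT_{k,h},\fV^{\od\cap\mathring{\odelta}}_h\Lambda^k)$ part.
\begin{itemize}
\item The kernel part is an element of $\N(\oT^*_{k,h},\fW^{*,\rm nc}_{h0}\Lambda^{k+1}\times\fW_h\Lambda^{k-1})$, which lies in $\rppdd$. Adding $\boldsymbol{\kappa}$ shifts this part by exactly $\boldsymbol{\kappa}$, so choosing $\boldsymbol{\kappa}$ appropriately makes it vanish.
\item For the range part, take any $\fmu_h\in\fV^{\od\cap\mathring{\odelta}}_h\Lambda^k$. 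The adjoint identity, the third equation and the second line of \eqref{eq:modelhldis} (with $\fvartheta_h=\overline{\fvartheta}_h$) give
\[
\left\langle\oT_{k,h}\fmu_h,\begin{pmatrix}\overline{\fzeta}_h\\ \overline{\fsigma}_h\end{pmatrix}\right\rangle
=\left\langle\fmu_h,\oT^*_{k,h}\begin{pmatrix}\overline{\fzeta}_h\\ \overline{\fsigma}_h\end{pmatrix}\right\rangle
=\langle\mathbf{P}^k_h\fmu_h,\overline{\fvartheta}_h-\ff\rangle
=\langle\oT_{k,h}\fomega_h,\oT_{k,h}\fmu_h\rangle ,
\]
up to the sign conventions in \eqref{eq:equimodelhldis}.
\item Hence the range part equals $\oT_{k,h}\fomega_h$, which gives \eqref{eq:equality}.
\end{itemize}

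The main obstacle I expect is the sign bookkeeping between the third equation of \eqref{eq:equimodelhldis} and the primal equation, which must yield $+\oT_{k,h}\fomega_h$ rather than $-\oT_{k,h}\fomega_h$. If the signs come out reversed, I would flip the sign of the chosen $(\overline{\fzeta}_h,\overline{\fsigma}_h)$ consistently. A second point to check is that the kernel and range components in \eqref{eqn:hdpdd} are orthogonal and both lie in $\rppdd$, so that the correction by $\boldsymbol{\kappa}$ leaves the range component unchanged.
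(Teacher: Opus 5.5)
Your proof is correct for the intended sign convention (see the second paragraph), but it takes a different route from the paper's.

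\textbf{Comparison with the paper's proof.} For well-posedness, the paper verifies Brezzi's conditions. On the kernel, $\oT^*_{k,h}$ annihilates the pair, so $\mathbf{P}_{\rppdd}$ acts as the identity and gives coercivity. The inf-sup condition comes from the Hodge decomposition of $\mathcal{P}_0\Lambda^k(\mathcal{G}_h)$ and the uniform Poincar\'e inequalities on $\fW_h\Lambda^{k-1}$ and $\fW^{*,\rm nc}_{h0}\Lambda^{k+1}$. This gives $h$-independent stability. Your zero-data uniqueness argument for a square system gives only invertibility on each fixed mesh. That is enough for \eqref{eq:equality}, but not for the $\mathcal{O}(h)$ perturbation comparisons that follow (Lemma~\ref{lem:d12d2} onward).

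For the identification, the paper does not solve the third equation and then repair the kernel component. It extends the primal equation from $\fV^{\od\cap\mathring{\odelta}}_h\Lambda^k$ to all of $\ppdd$ with a Lagrange multiplier $(\fzeta_h,\fsigma_h)\in\fW^{*,\rm nc}_{h0}\Lambda^{k+1}\times\fW_h\Lambda^{k-1}$. The paper only asserts this; the multiplier exists and is unique because $\fV^{\od\cap\mathring{\odelta}}_h\Lambda^k$ is the annihilator of that product under the nondegenerate cellwise pairing of Remark~\ref{rem:cell1-1}. Testing with $\mathcal{P}_0\Lambda^k(\mathcal{G}_h)$ gives the third equation. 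Testing with all of $\ppdd$, whose image under $\oT_{k,h}$ is all of $\rppdd$, gives the projection identity in one step, with no kernel ambiguity. Your route replaces the multiplier argument with the two decompositions \eqref{eqn:hdp0} and \eqref{eqn:hdpdd}. Since \eqref{eqn:hdpdd} rests on the same double-annihilator duality (the dual characterization stated after \eqref{eq:deffems}), the underlying ingredients are equivalent.

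\textbf{The sign you flagged.} Write $\overline{\boldsymbol z}_h$ for $(\overline{\fzeta}_h,\overline{\fsigma}_h)$ and take the third equation of \eqref{eq:equimodelhldis} literally, with its minus sign. For $\fmu_h\in\fV^{\od\cap\mathring{\odelta}}_h\Lambda^k$, your range computation then gives
$\langle\oT_{k,h}\fmu_h,\mathbf{P}_{\rppdd}\overline{\boldsymbol z}_h\rangle=\langle\fmu_h,\overline{\fvartheta}_h-\mathbf{P}^k_h\ff\rangle=-\langle\oT_{k,h}\fomega_h,\oT_{k,h}\fmu_h\rangle$.
Hence $\mathbf{P}_{\rppdd}\overline{\boldsymbol z}_h=-\oT_{k,h}\fomega_h$, and the second equation then forces $\overline{\fomega}_h=-\mathbf{P}^k_h\fomega_h$.

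Your fallback of flipping the sign of $\overline{\boldsymbol z}_h$ is not available. The third equation pins down $\oT^*_{k,h}\overline{\boldsymbol z}_h$, and you have just proved the solution is unique. The correct resolution is that the third equation should read $+\langle\overline{\fmu}_h,\oT^*_{k,h}\overline{\boldsymbol z}_h\rangle$. This matches \eqref{eq:mixedhodgedis} and the paper's own line $\oT^*_{k,h}(\fzeta_h,\fsigma_h)=\mathbf{P}^k_h\ff-\mathbf{P}_{\hf_h}\ff$. With that sign, your argument yields \eqref{eq:equality} verbatim.

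The paper's proof contains the same slip: its displayed identity on $\mathcal{P}_0\Lambda^k(\mathcal{G}_h)$ actually yields $\oT^*_{k,h}(\fzeta_h,\fsigma_h)=\fvartheta_h-\mathbf{P}^k_h\ff$. So this is a sign typo in the auxiliary system, not a defect of your approach.
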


\begin{proof}
We are to verify Brezzi's conditions. Firstly, for any $\overline{\fvarsigma}_h\in \hf_h\Lambda^k$ and $\left(\begin{array}{c}\overline{\feta}_h \\ \overline{\ftau}_h\end{array}\right)\in\fW^{*,\rm nc}_{h0}\Lambda^{k+1}\times \fW_h\Lambda^{k-1}$, such that $\langle\overline{\fvarsigma}_h,\overline{\fmu}_h\rangle_{L^2\Lambda^k}=\left\langle\overline{\fmu}_h,\oT^*_{k,h}\left(\begin{array}{c}\overline{\feta}_h \\ \overline{\ftau}_h\end{array}\right)\right\rangle_{L^2\Lambda^k}$, $\forall\,\overline{\fmu}_h\in \mathcal{P}_0\Lambda^k(\mathcal{G}_h)$, it holds that $\overline{\fvarsigma}_h=0$, $\od^{k-1}\overline{\ftau}_h=0$ and $\odelta_{k+1,h}\overline{\feta}_h=0$, and further $\overline{\ftau}_h\in\mathcal{P}_0\Lambda^{k-1}(\mathcal{G}_h)$ and $\overline{\feta}_h\in\mathcal{P}_0\Lambda^{k+1}(\mathcal{G}_h)$. Therefore, $\left\langle\mathbf{P}_{\rppdd}\left(\begin{array}{c}\overline{\feta}_h \\ \overline{\ftau}_h\end{array}\right),\left(\begin{array}{c}\overline{\feta}_h \\ \overline{\ftau}_h\end{array}\right)\right\rangle=\left\langle\left(\begin{array}{c}\overline{\feta}_h \\ \overline{\ftau}_h\end{array}\right),\left(\begin{array}{c}\overline{\feta}_h \\ \overline{\ftau}_h\end{array}\right)\right\rangle$, and the coercivity follows. On the other hand, given any $\overline{\fmu}_h\in\mathcal{P}_0\Lambda^k(\mathcal{G}_h)$, by the Hodge decomposition of $\mathcal{P}_0\Lambda^k(\mathcal{G}_h)$ and the Poincar\'e inequalities on $\fW_h\Lambda^{k-1}$ and $\fW^{*,\rm nc}_{h0}\Lambda^{k+1}$, there exists a $\overline{\fvarsigma}_h\in \hf_h\Lambda^k$, a $\overline{\ftau}_h \in\fW_h\Lambda^{k-1}$ and an $\overline{\feta}_h\in\fW^{*,\rm nc}_{h0}\Lambda^{k+1}$, such that $\overline{\fmu}_h=\overline{\fvarsigma}_h+\od^{k-1}\overline{\ftau}_h+\odelta_{k+1,h}\overline{\feta}_h$, and $\|\overline{\ftau}_h\|_{L^2\Lambda^{k-1}}+\|\overline{\feta}_h\|_{L^2\Lambda^{k+1}}\leqslant C\|\overline{\fmu}_h\|_{L^2\Lambda^k}$. Then Brezzi's conditions for \eqref{eq:equimodelhldis} are verified. For any $\ff\in L^2\Lambda^k$, the system \eqref{eq:equimodelhldis} admits a unique solution. 

As $(\fomega_h,\fvartheta_h)$ is the solution of \eqref{eq:modelhldis}, with $\left(\begin{array}{c}\fzeta_h \\ \fsigma_h\end{array}\right)\in\fW^{*,\rm nc}_{h0}\Lambda^{k+1}\times \fW_h\Lambda^{k-1}$, it holds that 
\begin{multline}
\langle\fvartheta_h,\fmu_h\rangle_{L^2\Lambda^k}+
\left\langle\oT_{k,h}\fomega_h,\oT_{k,h}\fmu_h\right\rangle
\\
+\left\langle\oT_{k,h}\fmu_h,\left(\begin{array}{c}\fzeta_h \\ \fsigma_h\end{array}\right)\right\rangle-\left\langle\fmu_h,\oT^*_{k,h}\left(\begin{array}{c}\fzeta_h \\ \fsigma_h\end{array}\right)\right\rangle_{L^2\Lambda^k}=\langle\mathbf{P}^k_h\ff,\fmu_h\rangle_{L^2\Lambda^k}, \forall\,\fmu_h\in \ppdd.
\end{multline}
Then, for $\fmu_h\in\mathcal{P}_0\Lambda^k(\mathcal{G}_h)$,
$$
\left\langle\fvartheta_h,\fmu_h\right\rangle_{L^2\Lambda^k}-\left\langle \fmu_h,\oT^*_{k,h}\left(\begin{array}{c}\fzeta_h \\ \fsigma_h\end{array}\right)\right\rangle=\langle\ff,\fmu_h\rangle_{L^2\Lambda^k}.
$$
Namely 
$$
\oT^*_{k,h}\left(\begin{array}{c}\fzeta_h \\ \fsigma_h\end{array}\right)=\mathbf{P}^k_h\ff-\mathbf{P}_{\hf_h}\ff,\ \mathbf{P}_{\hf_h}\ \mbox{being\ the\ projection\ to}\ \hf_h.
$$
It follows then
$$
\oT_{k,h}\fomega_h=\mathbf{P}_{\rppdd}\left(\begin{array}{c}\fzeta_h \\ \fsigma_h\end{array}\right).
$$
Noting that
$$
\left\langle\oT_{k,h}\fomega_h,\left(\begin{array}{c}\feta_h \\ \ftau_h\end{array}\right)\right\rangle=\left\langle\fomega_h,\oT^*_{k,h}\left(\begin{array}{c}\feta_h \\ \ftau_h\end{array}\right)\right\rangle_{L^2\Lambda^k},\ \forall\,\left(\begin{array}{c}\feta_h \\ \ftau_h\end{array}\right)\in\fW^{*,\rm nc}_{h0}\Lambda^{k+1}\times \fW_h\Lambda^{k-1},
$$
we have further
$$
\left\langle\mathbf{P}_{\rppdd}\left(\begin{array}{c}\fzeta_h \\ \fsigma_h\end{array}\right),\left(\begin{array}{c}\feta_h \\ \ftau_h\end{array}\right)\right\rangle = \left\langle\fomega_h,\oT^*_{k,h}\left(\begin{array}{c}\feta_h \\ \ftau_h\end{array}\right)\right\rangle_{L^2\Lambda^k},\ \ \ \forall\,\left(\begin{array}{c}\feta_h \\ \ftau_h\end{array}\right)\in\fW^{*,\rm nc}_{h0}\Lambda^{k+1}\times \fW_h\Lambda^{k-1}.
$$
Therefore, $(\mathbf{P}_h^k\fomega_h,\fvartheta_h,\fsigma_h,\fzeta_h)$ solves \eqref{eq:equimodelhldis}, and \eqref{eq:equality} follows. The proof is completed. 
\end{proof}

\begin{remark}\label{rem:p2d}
It follows easily that 
$$
\|\fomega_h-\overline{\fomega}_h\|_{L^2\Lambda^k}+ \|\od^k_h\fomega_h-\overline{\fzeta}_h\|_{L^2\Lambda^{k+1}}+ \|\odelta_{k,h}\fomega_h-\overline{\fsigma}_h\|_{L^2\Lambda^{k-1}}\leqslant Ch\|\ff\|_{L^2\Lambda^k}.
$$
\end{remark}

By the same virtue, we can prove the lemmas below. 

\begin{lemma}\label{lem:d12d2}
The system \eqref{eq:mixedhodgedis} is well-posed. Further, let $(\overline{\fomega}_h,\overline{\fzeta}_h,\overline{\fsigma}_h,\overline{\fvartheta}_h)$ and $(\tilde{\fomega}_h,\tilde{\fzeta}_h,\tilde{\fsigma}_h,\tilde{\fvartheta}_h)$ be the solutions of \eqref{eq:equimodelhldis} and \eqref{eq:mixedhodgedis}, respectively. Then $\overline{\fvartheta}_h=\tilde{\fvartheta}_h$, and 
\begin{equation}
\|\overline{\fomega}_h-\tilde{\fomega}_h\|_{L^2\Lambda^k}+\|\overline{\fzeta}_h-\tilde{\fzeta}_h\|_{\odelta_{k+1,h}}+\|\overline{\fsigma}_h-\tilde{\fsigma}_h\|_{\od^{k-1}}\leqslant Ch\|\ff\|_{L^2\Lambda^k}. 
\end{equation}
\end{lemma}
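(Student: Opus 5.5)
The plan is to treat \eqref{eq:mixedhodgedis} exactly as \eqref{eq:equimodelhldis} was treated in Lemma~\ref{lem:p2d}, and then to regard the difference of the two discrete solutions as the solution of a perturbed saddle point problem. Write $\mathbf{P}_0:=\mathbf{P}^{k+1}_h\times\mathbf{P}^{k-1}_h$. System \eqref{eq:mixedhodgedis} has the same block structure as \eqref{eq:equimodelhldis}, with the single change that the $(2,2)$-block $\langle\mathbf{P}_{\rppdd}\cdot,\cdot\rangle$ is replaced by $\langle\mathbf{P}_0\cdot,\cdot\rangle$. The off-diagonal operator $\oT^*_{k,h}$ and the harmonic constraint are identical in the two systems.

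\textbf{Step 1: well-posedness of \eqref{eq:mixedhodgedis}.} The inf-sup condition is literally the same as in Lemma~\ref{lem:p2d}. Any $\overline{\fmu}_h\in\mathcal{P}_0\Lambda^k(\mathcal{G}_h)$ splits, by the Hodge decomposition of Lemma~\ref{lem:hoddecconst}, as $\overline{\fvarsigma}_h+\od^{k-1}\overline{\ftau}_h+\odelta_{k+1,h}\overline{\feta}_h$. The controls on $\overline{\feta}_h,\overline{\ftau}_h$ then come from \eqref{eq:pictstar}.

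For coercivity, consider the kernel: pairs $(\feta_h,\ftau_h)$ with $\oT^*_{k,h}(\feta_h,\ftau_h)\perp\mathcal{P}_0\Lambda^k(\mathcal{G}_h)$, together with $\fvarsigma_h=0$. Such pairs satisfy $\odelta_{k+1,h}\feta_h=0$ and $\od^{k-1}\ftau_h=0$. By \eqref{eq:n=r=c} both components are then piecewise constant, so $\mathbf{P}_0$ acts as the identity on them. Coercivity in the full norm $\|\feta_h\|+\|\odelta_{k+1,h}\feta_h\|+\|\ftau_h\|+\|\od^{k-1}\ftau_h\|$ follows.

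\textbf{Step 2: $\overline{\fvartheta}_h=\tilde{\fvartheta}_h$.} Test the last equation of each system with $\overline{\fmu}_h\in\hf_h\Lambda^k$. By Lemma~\ref{lem:hoddecconst}, $\hf_h\Lambda^k$ is orthogonal to $\R(\oT^*_{k,h},\fW^{*,\rm nc}_{h0}\Lambda^{k+1}\times\fW_h\Lambda^{k-1})$. Hence both multipliers equal $\mathbf{P}_{\hf_h\Lambda^k}\ff$. Testing next with the orthogonal complement gives
\[
\oT^*_{k,h}(\overline{\fzeta}_h,\overline{\fsigma}_h)=\oT^*_{k,h}(\tilde{\fzeta}_h,\tilde{\fsigma}_h)=-(\mathbf{P}^k_h\ff-\mathbf{P}_{\hf_h\Lambda^k}\ff).
\]
Therefore $\|\odelta_{k+1,h}\overline{\fzeta}_h\|+\|\od^{k-1}\overline{\fsigma}_h\|\leqslant\|\ff\|$, and the same bound holds for the tilde variables.

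\textbf{Step 3: the error equation.} Subtract the two systems. The differences $(\overline{\fomega}_h-\tilde{\fomega}_h,\overline{\fzeta}_h-\tilde{\fzeta}_h,\overline{\fsigma}_h-\tilde{\fsigma}_h,0)$ solve the well-posed system \eqref{eq:mixedhodgedis} with zero data, except for one residual in the second and third rows:
\[
\Big\langle(\mathbf{P}_0-\mathbf{P}_{\rppdd})\begin{pmatrix}\overline{\fzeta}_h\\ \overline{\fsigma}_h\end{pmatrix},\begin{pmatrix}\feta_h\\ \ftau_h\end{pmatrix}\Big\rangle .
\]
By Remark~\ref{rem:kernel=range}, $\mathring{\mathbf{P}}^k_{\odelta\times\od}(T)$ is $\mathcal{P}_0\Lambda^{k+1}(T)\times\mathcal{P}_0\Lambda^{k-1}(T)$ plus a span of pairs of the form $(\okappa^\delta_T\cdot,\okappa_T\cdot)$. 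Consequently $\mathbf{P}_{\rppdd}-\mathbf{P}_0$ maps into $\okappa^\delta_h(\mathcal{P}_0\Lambda^k)\times\okappa_h(\mathcal{P}_0\Lambda^k)$. Its value on $(\overline{\fzeta}_h,\overline{\fsigma}_h)$ therefore only involves the non-constant parts $(\mathbf{I}-\mathbf{P}^{k+1}_h)\overline{\fzeta}_h$ and $(\mathbf{I}-\mathbf{P}^{k-1}_h)\overline{\fsigma}_h$. These are controlled by Lemma~\ref{lem:pikappa}:
\[
\|(\mathbf{I}-\mathbf{P}^{k+1}_h)\overline{\fzeta}_h\|+\|(\mathbf{I}-\mathbf{P}^{k-1}_h)\overline{\fsigma}_h\|\leqslant Ch\big(\|\odelta_{k+1,h}\overline{\fzeta}_h\|+\|\od^{k-1}\overline{\fsigma}_h\|\big)\leqslant Ch\|\ff\|,
\]
the last inequality by Step 2. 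The residual functional is thus bounded by $Ch\|\ff\|$ in the $L^2$ dual norm of the $(\feta_h,\ftau_h)$ slot. The Brezzi stability of Step 1 then yields the claimed estimate in the $L^2\times\odelta_{k+1,h}\times\od^{k-1}$ norms.

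\textbf{Main obstacle.} The delicate point is the precise identification of $\mathbf{P}_{\rppdd}-\mathbf{P}_0$ on $\mathcal{P}^{*,-}_1\Lambda^{k+1}\times\mathcal{P}^-_1\Lambda^{k-1}$. The enrichment pairs couple the $\okappa^\delta$ part of $\feta$ with the $\okappa$ part of $\ftau$, so this projection is not simply a componentwise projection. I would first verify orthogonality of $\mathcal{P}_0$ to the coupled pairs on each cell, using the centering of $\okappa_T$. This shows that the difference of the projections annihilates piecewise constants and is bounded in $L^2$ by the non-constant part, which is exactly what the argument in Step 3 requires.
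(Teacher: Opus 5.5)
Your argument is correct and follows the route the paper has in mind; the paper only asserts that the lemma follows ``by the same virtue'' as Lemma~\ref{lem:p2d}. That route is the same Brezzi conditions for both systems plus an $\mathcal{O}(h)$ perturbation of the $(\feta_h,\ftau_h)$-block, and you make the perturbation precise by observing that $\mathbf{P}_{\rppdd}-\mathbf{P}^{k+1}_h\times\mathbf{P}^{k-1}_h$ is the $L^2$ projection onto the mean-zero coupled $\okappa$-pairs, then using Lemma~\ref{lem:pikappa} together with $\|\oT^*_{k,h}(\overline{\fzeta}_h,\overline{\fsigma}_h)\|_{L^2\Lambda^k}\leqslant\|\ff\|_{L^2\Lambda^k}$.

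One caveat: as printed, the last rows of \eqref{eq:equimodelhldis} and \eqref{eq:mixedhodgedis} carry opposite signs in front of the $\oT^*_{k,h}$-term. Your Step~2 identity $\oT^*_{k,h}(\overline{\fzeta}_h,\overline{\fsigma}_h)=\oT^*_{k,h}(\tilde{\fzeta}_h,\tilde{\fsigma}_h)$, and indeed the lemma itself, holds only after that sign is made consistent (otherwise $\tilde{\fomega}_h\approx-\overline{\fomega}_h$). You should state this normalization explicitly rather than calling the off-diagonal blocks identical.
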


\begin{lemma}\label{lem:d22d3}
The system \eqref{eq:classdisnq} is well-posed. Further, let $(\tilde{\fomega}_h,\tilde{\fzeta}_h,\tilde{\fsigma}_h,\tilde{\fvartheta}_h)$ and $(\hat\fvartheta_h,\hat\fsigma_h,\hat\fomega_h)$ be the solutions of \eqref{eq:mixedhodgedis} and \eqref{eq:classdisnq}, respectively. Then 
$$
\hat{\fvartheta}_h=\tilde{\fvartheta}_h,\ \ \tilde{\fomega}_h=\mathbf{P}^k_h\hat{\fomega}_h,\ \ \tilde{\fsigma}_h=\hat{\fsigma}_h,\ \ \mbox{and}\ \od^k\hat{\fomega}_h=\mathbf{P}^{k+1}_h\tilde{\fzeta}_h.
$$
\end{lemma}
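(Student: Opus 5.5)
The plan is to avoid a direct Brezzi-type verification for \eqref{eq:classdisnq}: the mass term $\langle\mathbf{P}_h^{k-1}\hat\fsigma_h,\mathbf{P}_h^{k-1}\hat\ftau_h\rangle$ is only semidefinite on $\fW_h\Lambda^{k-1}$. Since \eqref{eq:classdisnq} is a square finite-dimensional linear system, I will instead show two things. First, an explicit solution can be built from the solution of \eqref{eq:mixedhodgedis}, which is well-posed by Lemma~\ref{lem:d12d2}. Second, the homogeneous problem has only the trivial solution. Uniqueness then gives well-posedness and, at the same time, forces the stated identities.

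\emph{Construction.} Let $(\tilde\fomega_h,\tilde\fzeta_h,\tilde\fsigma_h,\tilde\fvartheta_h)$ solve \eqref{eq:mixedhodgedis}.
\begin{enumerate}[(i)]
\item On each $T$, use $\mathcal{P}^-_1\Lambda^k(T)=\mathcal{P}_0\Lambda^k(T)\opp\okappa_T(\mathcal{P}_0\Lambda^{k+1}(T))$ and the fact that $\od^k$ maps the $\okappa_T$-part bijectively onto constants. This gives a unique $\hat\fomega_h\in\mathcal{P}^-_1\Lambda^k(\mathcal{G}_h)$ with $\mathbf{P}^k_h\hat\fomega_h=\tilde\fomega_h$ and $\od^k_h\hat\fomega_h=\mathbf{P}^{k+1}_h\tilde\fzeta_h$.
\item For $\feta_h\in\fW^{*,\rm nc}_{h0}\Lambda^{k+1}$, the form $\odelta_{k+1,h}\feta_h$ is piecewise constant, so the second equation of \eqref{eq:mixedhodgedis} gives $\langle\od^k_h\hat\fomega_h,\feta_h\rangle=\langle\tilde\fomega_h,\odelta_{k+1,h}\feta_h\rangle=\langle\hat\fomega_h,\odelta_{k+1,h}\feta_h\rangle$.
\item By the dual characterization of $\fW_h\Lambda^k$ through $\fW^{*,\rm nc}_{h0}\Lambda^{k+1}$, step (ii) yields $\hat\fomega_h\in\fW_h\Lambda^k$.
\item Set $\hat\fsigma_h:=\tilde\fsigma_h$ and $\hat\fvartheta_h:=\tilde\fvartheta_h$.
\end{enumerate}

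\emph{Checking the three equations.}
\begin{enumerate}[(i)]
\item The first equation holds because elements of $\hf_h\Lambda^k$ are piecewise constant, so $\langle\hat\fomega_h,\hat\fvarsigma_h\rangle=\langle\tilde\fomega_h,\hat\fvarsigma_h\rangle=0$.
\item The second equation holds because $\od^{k-1}\hat\ftau_h$ is piecewise constant and $\langle\mathbf{P}^{k-1}_h\tilde\fsigma_h,\hat\ftau_h\rangle=\langle\mathbf{P}^{k-1}_h\tilde\fsigma_h,\mathbf{P}^{k-1}_h\hat\ftau_h\rangle$.
\item For the third equation, test the last line of \eqref{eq:mixedhodgedis} with $\tilde\fmu_h=\mathbf{P}^k_h\hat\fmu_h$. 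The terms with $\tilde\fvartheta_h$ and $\od^{k-1}\tilde\fsigma_h$ are piecewise constant, so there $\mathbf{P}^k_h\hat\fmu_h$ can be replaced by $\hat\fmu_h$.
\item The remaining term uses the adjoint relation between $\fW_h\Lambda^k$ and $\fW^{*,\rm nc}_{h0}\Lambda^{k+1}$, together with the fact that $\od^k\hat\fmu_h$ is piecewise constant:
$$\langle\odelta_{k+1,h}\tilde\fzeta_h,\mathbf{P}^k_h\hat\fmu_h\rangle=\langle\odelta_{k+1,h}\tilde\fzeta_h,\hat\fmu_h\rangle=\langle\tilde\fzeta_h,\od^k\hat\fmu_h\rangle=\langle\mathbf{P}^{k+1}_h\tilde\fzeta_h,\od^k\hat\fmu_h\rangle=\langle\od^k\hat\fomega_h,\od^k\hat\fmu_h\rangle.$$
\end{enumerate}
So $(\hat\fvartheta_h,\hat\fsigma_h,\hat\fomega_h)$ solves \eqref{eq:classdisnq}.

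\emph{Uniqueness.} This is the step requiring the most care, because of the semidefinite mass term. Take $\ff=0$.
\begin{enumerate}[(i)]
\item Test the second equation with $\hat\ftau_h=\hat\fsigma_h$ and the third with $\hat\fmu_h=\hat\fomega_h$, using $\hat\fvartheta_h\perp\hat\fomega_h$. Adding gives $\|\mathbf{P}^{k-1}_h\hat\fsigma_h\|^2+\|\od^k\hat\fomega_h\|^2=0$.
\item With $\od^k\hat\fomega_h=0$, the third equation says $\hat\fvartheta_h+\od^{k-1}\hat\fsigma_h\perp\fW_h\Lambda^k$. Both summands lie in $\fW_h\Lambda^k$ and are mutually orthogonal, so $\hat\fvartheta_h=0$ and $\od^{k-1}\hat\fsigma_h=0$.
\item A Whitney form with vanishing $\od^{k-1}$ is piecewise constant, so $\hat\fsigma_h=\mathbf{P}^{k-1}_h\hat\fsigma_h=0$.
\item Finally, $\hat\fomega_h\in\N(\od^k,\fW_h\Lambda^k)=\R(\od^{k-1},\fW_h\Lambda^{k-1})\opp\hf_h\Lambda^k$. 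The second equation makes it orthogonal to the first summand and the first equation to the second, so $\hat\fomega_h=0$.
\end{enumerate}

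Hence \eqref{eq:classdisnq} is well-posed. Its unique solution is the constructed triple, which gives $\hat\fvartheta_h=\tilde\fvartheta_h$, $\hat\fsigma_h=\tilde\fsigma_h$, $\mathbf{P}^k_h\hat\fomega_h=\tilde\fomega_h$ and $\od^k\hat\fomega_h=\mathbf{P}^{k+1}_h\tilde\fzeta_h$.
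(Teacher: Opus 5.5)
Your proof is correct. Its second half has the same mechanism as the proof of Lemma~\ref{lem:p2d}, to which the paper refers (``by the same virtue'') in place of a written proof. That mechanism is: transport the solution of \eqref{eq:mixedhodgedis} into a candidate triple, check the three equations of \eqref{eq:classdisnq}, and conclude by uniqueness. The checks use that $\odelta_{k+1,h}\tilde\fzeta_h$, $\od^{k-1}\tilde\fsigma_h$, $\tilde\fvartheta_h$ and $\od^k\hat\fmu_h$ are piecewise constant, together with the adjoint identities characterizing $\fW_h\Lambda^k$ and $\fW^{*,\rm nc}_{h0}\Lambda^{k+1}$.

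Where you differ is in how well-posedness is obtained. The paper's template verifies Brezzi's conditions. You instead use that \eqref{eq:classdisnq} is a square finite-dimensional system and prove injectivity directly, from three facts:
\begin{itemize}
\item the energy identity $\|\mathbf{P}^{k-1}_h\hat\fsigma_h\|^2+\|\od^k\hat\fomega_h\|^2=0$;
\item the splitting $\N(\od^k,\fW_h\Lambda^k)=\R(\od^{k-1},\fW_h\Lambda^{k-1})\opp\hf_h\Lambda^k$;
\item the identity $\N(\od^{k-1},\mathcal{P}^-_1\Lambda^{k-1})=\mathcal{P}_0\Lambda^{k-1}$.
\end{itemize}

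The Brezzi route yields an $h$-uniform stability bound for \eqref{eq:classdisnq} as a standalone fact. However, it must handle the degenerate mass term $\langle\mathbf{P}^{k-1}_h\cdot,\mathbf{P}^{k-1}_h\cdot\rangle$, for instance via $\|\ftau_h-\mathbf{P}^{k-1}_h\ftau_h\|\leqslant Ch\|\od^{k-1}\ftau_h\|$ from Lemma~\ref{lem:pikappa}; the paper leaves this point implicit.

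Your route avoids inf-sup constants entirely. It also needs only the existence, not the uniqueness, of a solution of \eqref{eq:mixedhodgedis}. Bounds for \eqref{eq:classdisnq} in terms of $\|\ff\|$ are then inherited from \eqref{eq:mixedhodgedis} rather than proved directly, through the explicit formulas $\hat\fsigma_h=\tilde\fsigma_h$ and $\hat\fomega_h=\tilde\fomega_h+\frac{1}{k+1}\okappa_h\mathbf{P}^{k+1}_h\tilde\fzeta_h$.
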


\begin{lemma}\label{lem:d32d4}
The system \eqref{eq:classdis} is well-posed. Let $(\hat\fvartheta_h,\hat\fsigma_h,\hat\fomega_h)$ and $(\breve\fvartheta_h,\breve\fsigma_h,\breve\fomega_h)$ be the solutions of \eqref{eq:classdisnq} and \eqref{eq:classdis}, respectively. Then $\hat\fvartheta_h=\breve\fvartheta_h$, and
\begin{equation}
\|\hat\fsigma_h-\breve\fsigma_h\|_{\od^{k-1}}+\|\hat\fomega_h-\breve\fomega_h\|_{\od^{k+1}}\leqslant Ch\|\ff\|_{L^2\Lambda^k}.
\end{equation}
\end{lemma}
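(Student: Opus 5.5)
The plan is to treat \eqref{eq:classdisnq} as a perturbation of the classical mixed Whitney-form discretization \eqref{eq:classdis}. The two systems differ only in two places: the mass term $\langle\hat\fsigma_h,\hat\ftau_h\rangle$ is replaced by $\langle\mathbf{P}^{k-1}_h\hat\fsigma_h,\mathbf{P}^{k-1}_h\hat\ftau_h\rangle$, and the load $\langle\ff,\hat\fmu_h\rangle$ is replaced by $\langle\ff,\mathbf{P}^k_h\hat\fmu_h\rangle$. Both perturbations act only on the Koszul parts of Whitney forms, and those parts are $\mathcal{O}(h)$-small relative to the exterior derivative.

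\textbf{Step 1: well-posedness of \eqref{eq:classdis}.} This is the standard mixed formulation of the Hodge--Laplacian with harmonic forms, posed on the subcomplex $\fW_h\Lambda^{k-1}\to\fW_h\Lambda^k$. I will invoke the FEEC stability theory of \cite{Arnold.D;Falk.R;Winther.R2006acta,Arnold.D2018feec}. The uniform discrete Poincar\'e inequalities $\icr(\od^{k-1},\fW_h\Lambda^{k-1})\leqslant C$ and $\icr(\od^k,\fW_h\Lambda^k)\leqslant C$ from Lemma~\ref{lem:piwfs} give an $h$-uniform inf-sup constant for the full bilinear form in the norm $\|\ftau\|_{\od^{k-1}}+\|\fmu\|_{\od^k}+\|\fvarsigma\|_{L^2\Lambda^k}$.

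\textbf{Step 2: equality of the harmonic parts.} In both systems I test the third equation with $\fmu_h=\fvarsigma_h\in\hf_h\Lambda^k$. Three facts then apply:
\begin{itemize}
\item $\od^k\fvarsigma_h=0$;
\item $\langle\od^{k-1}\fsigma_h,\fvarsigma_h\rangle=0$, by the definition of $\hf_h\Lambda^k$;
\item $\mathbf{P}^k_h\fvarsigma_h=\fvarsigma_h$, since $\hf_h\Lambda^k\subset\mathcal{P}_0\Lambda^k(\mathcal{G}_h)$.
\end{itemize}
Hence $\hat\fvartheta_h=\mathbf{P}_{\hf_h\Lambda^k}\ff=\breve\fvartheta_h$.

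\textbf{Step 3: the perturbation estimate.} On each cell, a Whitney form $\ftau\in\mathcal{P}^-_1\Lambda^{k-1}(T)$ splits $L^2$-orthogonally into its constant part and a part in $\okappa_T(\mathcal{P}_0\Lambda^k(T))$. So $(\mathbf{I}-\mathbf{P}^{k-1}_h)\ftau$ is exactly the Koszul part, and Lemma~\ref{lem:pikappa} gives
\[
\|(\mathbf{I}-\mathbf{P}^{k-1}_h)\ftau\|_{L^2\Lambda^{k-1}}\leqslant Ch\|\od^{k-1}\ftau\|_{L^2\Lambda^k}.
\]
The same bound holds for $(\mathbf{I}-\mathbf{P}^k_h)\fmu$ with $\fmu\in\fW_h\Lambda^k$. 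Since $\langle\fsigma,\ftau\rangle-\langle\mathbf{P}\fsigma,\mathbf{P}\ftau\rangle=\langle(\mathbf{I}-\mathbf{P})\fsigma,(\mathbf{I}-\mathbf{P})\ftau\rangle$, the bilinear form of \eqref{eq:classdisnq} differs from that of \eqref{eq:classdis} by a term of size at most $Ch^2\|\fsigma\|_{\od^{k-1}}\|\ftau\|_{\od^{k-1}}$.

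\textbf{Step 4: well-posedness of \eqref{eq:classdisnq}.} This is the step I expect to be the main obstacle, because the perturbation argument is clean only for $h$ small. For $h\leqslant h_0$, Step~3 shows the inf-sup constant is preserved up to $Ch^2$, so the perturbed system is uniformly stable. For $h>h_0$ (finitely many mesh scales up to shape regularity), I would verify Brezzi's conditions directly.
\begin{itemize}
\item Coercivity: the problematic direction is $\fsigma_h$ with $\mathbf{P}^{k-1}_h\fsigma_h=0$. Then $\fsigma_h$ is purely Koszul on each cell, so $\|\fsigma_h\|\leqslant Ch\|\od^{k-1}\fsigma_h\|$. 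On the relevant kernel $\od^{k-1}\fsigma_h$ is controlled, so the form is still coercive.
\item Inf-sup: this is unchanged from \eqref{eq:classdis}, since the constraint terms $\langle\hat\fomega_h,\od^{k-1}\hat\ftau_h\rangle$ and $\langle\hat\fomega_h,\hat\fvarsigma_h\rangle$ are identical.
\end{itemize}
Equivalently, I would apply Brezzi theory to the form directly, with coercivity on the kernel $\{\fsigma:\od^{k-1}\fsigma\perp\fW_h\Lambda^k\text{-relevant}\}$ following from the Koszul estimate. This yields an $h$-uniform bound $\|\hat\fsigma_h\|_{\od^{k-1}}+\|\hat\fomega_h\|_{\od^k}\leqslant C\|\ff\|_{L^2\Lambda^k}$.

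\textbf{Step 5: the final error bound.} I subtract the two systems. The difference $(\hat\fvartheta_h-\breve\fvartheta_h,\ \hat\fsigma_h-\breve\fsigma_h,\ \hat\fomega_h-\breve\fomega_h)$ satisfies \eqref{eq:classdis} with two residual functionals on the right:
\[
\ftau\mapsto\langle(\mathbf{I}-\mathbf{P}^{k-1}_h)\hat\fsigma_h,(\mathbf{I}-\mathbf{P}^{k-1}_h)\ftau\rangle,
\qquad
\fmu\mapsto\langle\ff,(\mathbf{P}^k_h-\mathbf{I})\fmu\rangle.
\]
By Step~3 these have dual norms bounded by $Ch^2\|\hat\fsigma_h\|_{\od^{k-1}}$ and $Ch\|\ff\|_{L^2\Lambda^k}$ respectively. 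Step~1 stability, combined with the a priori bound from Step~4, then gives
\[
\|\hat\fsigma_h-\breve\fsigma_h\|_{\od^{k-1}}+\|\hat\fomega_h-\breve\fomega_h\|_{\od^k}\leqslant Ch\|\ff\|_{L^2\Lambda^k}.
\]
Here I read the norm written as $\|\cdot\|_{\od^{k+1}}$ in the statement as the graph norm $\|\cdot\|_{\od^k}$ on $\fW_h\Lambda^k$.
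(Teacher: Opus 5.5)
Your proposal is correct and follows the route the paper intends. The paper only says this lemma is proved ``by the same virtue'' as Lemma~\ref{lem:p2d}: stability of the mixed systems, then a comparison of solutions in which the $(\mathbf{I}-\mathbf{P}_h)$ perturbations are absorbed through Koszul-part estimates, as in Lemma~\ref{lem:Pk-load}. That is exactly your Steps 1--3 and 5.

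One simplification: you do not need the $h$-uniform stability of \eqref{eq:classdisnq} from Step~4, nor its unjustified ``finitely many mesh scales'' split. Testing the third equation of \eqref{eq:classdisnq} with $\hat\fmu_h=\od^{k-1}\hat\fsigma_h$ gives $\|\od^{k-1}\hat\fsigma_h\|_{L^2\Lambda^k}\leqslant\|\ff\|_{L^2\Lambda^k}$ directly, and this is the only a priori bound your Step~5 uses.
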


\begin{proposition}\label{prop:primal-mixed-chain}
Let $\ff\in L^2\Lambda^k(\Omega)$. Let $(\fomega_h,\fvartheta_h)$ be the solution of \eqref{eq:modelhldis}. Let $(\breve\fvartheta_h,\breve\fsigma_h,\breve\fomega_h)$ be the solution of  \eqref{eq:classdis}. Then there exists a constant $C>0$, independent of $h$, such that
\begin{equation}\label{eq:primal-mixed-chain}
\|\fomega_h-\breve\fomega_h\|_{L^2\Lambda^k}
+\left\|\oT_{k,h}\fomega_h-\left(\begin{array}{c}\od^k_h\breve\fomega_h \\ \breve\fsigma_h\end{array}\right)\right\|_{L^2\Lambda^{k+1}\times L^2\Lambda^{k-1}}
\leqslant C h\|\ff\|_{L^2\Lambda^k}.
\end{equation}
\end{proposition}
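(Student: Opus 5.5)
The plan is to chain the comparison results already established, passing through the auxiliary problems (AP-I)--(AP-IV) and using the triangle inequality at each step. Let $(\overline{\fomega}_h,\overline{\fzeta}_h,\overline{\fsigma}_h,\overline{\fvartheta}_h)$, $(\tilde{\fomega}_h,\tilde{\fzeta}_h,\tilde{\fsigma}_h,\tilde{\fvartheta}_h)$ and $(\hat\fvartheta_h,\hat\fsigma_h,\hat\fomega_h)$ denote the solutions of \eqref{eq:equimodelhldis}, \eqref{eq:mixedhodgedis} and \eqref{eq:classdisnq}. I would write
\[
\fomega_h-\breve\fomega_h=(\fomega_h-\overline{\fomega}_h)+(\overline{\fomega}_h-\tilde{\fomega}_h)+(\tilde{\fomega}_h-\hat\fomega_h)+(\hat\fomega_h-\breve\fomega_h),
\]
and similarly split the flux terms as
\[
\oT_{k,h}\fomega_h-\left(\begin{array}{c}\od^k_h\breve\fomega_h\\ \breve\fsigma_h\end{array}\right)
=\left(\begin{array}{c}\od^k_h\fomega_h-\overline{\fzeta}_h\\ \odelta_{k,h}\fomega_h-\overline{\fsigma}_h\end{array}\right)
+\left(\begin{array}{c}\overline{\fzeta}_h-\tilde{\fzeta}_h\\ \overline{\fsigma}_h-\tilde{\fsigma}_h\end{array}\right)
+\left(\begin{array}{c}\tilde{\fzeta}_h-\od^k\hat\fomega_h\\ \tilde{\fsigma}_h-\hat\fsigma_h\end{array}\right)
+\left(\begin{array}{c}\od^k\hat\fomega_h-\od^k\breve\fomega_h\\ \hat\fsigma_h-\breve\fsigma_h\end{array}\right).
\]

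The first term in each split is bounded by $Ch\|\ff\|_{L^2\Lambda^k}$ by Remark~\ref{rem:p2d}. That remark comes from Lemma~\ref{lem:p2d}, since $\oT_{k,h}\fomega_h$ is the piecewise $L^2$ projection onto $\rppdd$ of $(\overline{\fzeta}_h,\overline{\fsigma}_h)$ and $\overline{\fomega}_h=\mathbf{P}^k_h\fomega_h$. These projection errors are controlled by the local inverse and Poincar\'e estimates of Lemmas~\ref{lem:pikappa}, \ref{lem:pih}, \ref{lem:hfonT}, together with the stability $\|\oT_{k,h}\fomega_h\|\le C\|\ff\|$ supplied by Theorem~\ref{thm:pifes}.

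The second terms are bounded by Lemma~\ref{lem:d12d2}. The fourth terms are bounded by Lemma~\ref{lem:d32d4}, whose $\od$-graph-norm bound on $\hat\fomega_h-\breve\fomega_h$ gives both the $L^2$ difference and the $\od^k$ difference.

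The third term needs more care, since Lemma~\ref{lem:d22d3} only gives identities up to projection: $\tilde{\fsigma}_h=\hat\fsigma_h$ exactly, $\tilde{\fomega}_h=\mathbf{P}^k_h\hat\fomega_h$, and $\od^k\hat\fomega_h=\mathbf{P}^{k+1}_h\tilde{\fzeta}_h$.
\begin{itemize}
\item For the $\fomega$ component, I would bound $\|\hat\fomega_h-\mathbf{P}^k_h\hat\fomega_h\|$ on each cell by $Ch\|\od^k\hat\fomega_h\|$, because $\hat\fomega_h\in\fW_h\Lambda^k$ is piecewise in $\mathcal{P}_0\Lambda^k\oplus\okappa_T(\mathcal{P}_0\Lambda^{k+1})$ (Lemma~\ref{lem:pikappa}). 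I would then use stability of (AP-III), namely $\|\od^k\hat\fomega_h\|\le C\|\ff\|$, which follows from its well-posedness.
\item For the $\fzeta$ component, $\tilde{\fzeta}_h-\mathbf{P}^{k+1}_h\tilde{\fzeta}_h$ lies piecewise in $\okappa^\delta_T(\mathcal{P}_0\Lambda^k)$. Lemma~\ref{lem:pikappa} bounds it by $Ch\|\odelta_{k+1,h}\tilde{\fzeta}_h\|$, and $\|\odelta_{k+1,h}\tilde{\fzeta}_h\|\le C\|\ff\|$ by well-posedness of (AP-II).
\end{itemize}

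Summing the four contributions in each split gives \eqref{eq:primal-mixed-chain}. The constants depend only on the shape regularity, through the uniform Poincar\'e constants of Lemma~\ref{lem:piwfs} and Theorem~\ref{thm:pifes}.

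The main obstacle is ensuring that all the intermediate stability constants are $h$-independent. In particular, the bounds $\|\odelta_{k+1,h}\tilde{\fzeta}_h\|+\|\od^k\hat\fomega_h\|\le C\|\ff\|$ need the Brezzi inf-sup constants of (AP-II) and (AP-III) to be uniform. I would obtain this from the discrete Hodge decomposition (Lemma~\ref{lem:hoddecconst}) and the uniform bound \eqref{eq:pictstar}, exactly as in the proof of Lemma~\ref{lem:p2d}.
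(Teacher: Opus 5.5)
Your proposal is correct and follows the paper's proof: it uses the same chain (AP-I)$\to$(AP-II)$\to$(AP-III)$\to$(AP-IV) through Remark~\ref{rem:p2d} and Lemmas~\ref{lem:d12d2}, \ref{lem:d22d3}, \ref{lem:d32d4}, closed by the triangle inequality. The paper's ``summing these bounds'' leaves implicit how to handle the identities $\tilde\fomega_h=\mathbf{P}^k_h\hat\fomega_h$ and $\od^k\hat\fomega_h=\mathbf{P}^{k+1}_h\tilde\fzeta_h$, which hold only up to projection. Your explicit treatment of them via Lemma~\ref{lem:pikappa} and the $h$-uniform stability of (AP-II) and (AP-III) fills in that step correctly.
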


\begin{proof}
Let $(\overline{\fomega}_h,\overline{\fzeta}_h,\overline{\fsigma}_h,\overline{\fvartheta}_h)$, $(\tilde{\fomega}_h,\tilde{\fzeta}_h,\tilde{\fsigma}_h,\tilde{\fvartheta}_h)$ and $(\hat\fvartheta_h,\hat\fsigma_h,\hat\fomega_h)$ be the solutions of \eqref{eq:equimodelhldis}, \eqref{eq:mixedhodgedis} and \eqref{eq:classdisnq}, respectively. By Remark~\ref{rem:p2d} and Lemma~\ref{lem:d12d2}, $\overline{\fvartheta}_h=\tilde{\fvartheta}_h$ and
\begin{equation*}
\|\overline{\fomega}_h-\tilde{\fomega}_h\|_{L^2\Lambda^k}
+\|\overline{\fzeta}_h-\tilde{\fzeta}_h\|_{\odelta_{k+1,h}}
+\|\overline{\fsigma}_h-\tilde{\fsigma}_h\|_{\od^{k-1}}
\leqslant C h\|\ff\|_{L^2\Lambda^k}.
\end{equation*}
By Lemma~\ref{lem:d22d3}, $\hat\fvartheta_h=\tilde{\fvartheta}_h$, $\tilde{\fomega}_h=\mathbf{P}^k_h\hat{\fomega}_h$, $\tilde{\fsigma}_h=\hat{\fsigma}_h$, and $\od^k_h\hat{\fomega}_h=\mathbf{P}^{k+1}_h\tilde{\fzeta}_h$. By Lemma~\ref{lem:d32d4}, $\hat\fvartheta_h=\breve\fvartheta_h$ and
\begin{equation*}
\|\hat\fsigma_h-\breve\fsigma_h\|_{\od^{k-1}}+\|\hat\fomega_h-\breve\fomega_h\|_{\od^k}
\leqslant C h\|\ff\|_{L^2\Lambda^k}.
\end{equation*}
The estimate \eqref{eq:primal-mixed-chain} follows by summing these bounds and applying the triangle inequality.
\end{proof}

\begin{remark}\label{rem:primal-mixed-onefield}
Let $\fomega_h$ solve \eqref{eq:modelhldisonefield}. Since \eqref{eq:modelhldisonefield} and \eqref{eq:modelhldis} are equivalent, the same bound \eqref{eq:primal-mixed-chain} holds if $\fomega_h$ there is replaced by the solution of \eqref{eq:modelhldisonefield}.
\end{remark}

\subsection{Proof of Theorem \ref{thm:convprimsch}}
Corresponding to \eqref{eq:classdis}, an equivalent mixed formulation of the Hodge-Laplace problem reads: find $(\breve\fvartheta,\breve\fsigma,\breve\fomega)\in \hf\Lambda^k\times H\Lambda^{k-1}\times H\Lambda^k$ such that 
\begin{equation}\label{eq:classicalmix}
\left\{
\begin{array}{cccll}
&&\langle\breve\fomega,\fvarsigma\rangle_{L^2\Lambda^k}&=0&\forall\,\fvarsigma\in\,\hf\Lambda^k
\\
&\langle\breve\fsigma,\ftau\rangle_{L^2\Lambda^{k-1}}&-\langle\breve\fomega,\od^{k-1}\ftau\rangle_{L^2\Lambda^k}&=0&\forall\,\ftau\in H\Lambda^{k-1}
\\
\langle\breve\fvartheta,\fmu\rangle_{L^2\Lambda^k}&+\langle\od^{k-1}\breve\fsigma,\fmu_h\rangle_{L^2\Lambda^k}&+\langle\od^k\breve\fomega,\od^k\fmu\rangle_{L^2\Lambda^{k+1}}&=\langle\ff,\fmu\rangle_{L^2\Lambda^k}&\forall\,\fmu\in H\Lambda^k,
\end{array}
\right.
\end{equation}
Note that \eqref{eq:classicalmix} is the classical mixed formulation of the Hodge-Laplace problem and \eqref{eq:classdis} is the corresponding discretization of \eqref{eq:classicalmix}. 

\begin{lemma}\label{lem:classicalmix}\cite[(7.17), (7.30) and Theorem 7.10]{Arnold.D;Falk.R;Winther.R2006acta}
Let $(\breve\fvartheta,\breve\fsigma,\breve\fomega)$ and $(\breve\fvartheta_h,\breve\fsigma_h,\breve\fomega_h)$ be the solutions of \eqref{eq:classicalmix} and \eqref{eq:classdis}, respectively. Then, 
\begin{multline*}
\|\breve\fsigma-\breve\fsigma_h\|_{\od^{k-1}}+\|\breve\fomega-\breve\fomega_h\|_{\od^k}+\|\breve\fvartheta-\breve\fvartheta_h\|_{L^2\Lambda^k}
\\
\leqslant C(\inf_{\ftau_h\in\fW_h\Lambda^{k-1}}\|\breve\fsigma-\ftau_h\|_{\od^{k-1}}+\inf_{\fmu_h\in\fW_h\Lambda^k}\|\breve\fomega-\fmu_h\|_{\od^k}+\inf_{\fvarsigma_h\in\hf_h\Lambda^k}\|\breve\fvartheta-\fvarsigma_h\|_{L^2\Lambda^k}+h\|\ff\|_{L^2\Lambda^k});
\end{multline*}
and further, with $\Omega$ being $s$-regular,
$$
\|\breve\fsigma-\breve\fsigma_h\|_{L^2\Lambda^k}+\|\breve\fomega-\breve\fomega_h\|_{\od^k}+\|\breve\fvartheta-\breve\fvartheta_h\|_{L^2\Lambda^k}\leqslant Ch^s\|\ff\|_{L^2\Lambda^k}.
$$
\end{lemma}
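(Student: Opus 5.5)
The plan is to reduce the statement to the abstract theory of \cite{Arnold.D;Falk.R;Winther.R2006acta} and check that its hypotheses hold for the discretization \eqref{eq:classdis}, rather than to re-derive anything. First I will identify \eqref{eq:classicalmix} with the standard mixed Hodge--Laplace problem posed on the domain complex $H\Lambda^{k-1}\to H\Lambda^k\to H\Lambda^{k+1}$ with harmonic forms $\hf\Lambda^k$. I will then identify \eqref{eq:classdis} with its Galerkin discretization on the Whitney subcomplex $\fW_h\Lambda^{k-1}\to\fW_h\Lambda^k\to\fW_h\Lambda^{k+1}$, using $\hf_h\Lambda^k$ as discrete harmonic forms. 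The required structural input is the existence of bounded cochain projections $\pi_h$ onto the Whitney complex (the smoothed projections of \cite{Arnold.D;Falk.R;Winther.R2006acta}), uniformly in $h$ on shape-regular meshes. Together with Lemma \ref{lem:piwfs}, this gives a uniform discrete Poincar\'e inequality on $\fW_h\Lambda^{k-1}$ and $\fW_h\Lambda^k$.

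With these ingredients, uniform stability of \eqref{eq:classdis} follows from the inf--sup argument for the mixed Hodge--Laplacian: for a trial triple I will construct a test triple from the discrete Hodge decomposition of $\breve\fomega_h$ and from $\od^{k-1}\breve\fsigma_h$. The quasi-optimal bound then follows from the usual variational-crime analysis, because the discrete harmonic space $\hf_h\Lambda^k$ is not contained in $\hf\Lambda^k$. The nonconformity is controlled by the gap $\|(I-\mathbf{P}_{\hf})\fvarsigma_h\|\leqslant\|(I-\pi_h)\mathbf{P}_{\hf}\fvarsigma_h\|$ for $\fvarsigma_h\in\hf_h\Lambda^k$. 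This yields the first estimate with the three best-approximation terms, which is (7.17) in the cited work. The extra term $h\|\ff\|$ will be attributed to the part of $\ff$ not captured by the approximation terms, namely the consistency error that arises when $\od^{k-1}\breve\fsigma$ and $\ff$ are only in $L^2$. I will absorb it via $\|(I-\mathbf{P}_h^k)\ff\|_{H^{-1}}\lesssim h\|\ff\|$ combined with the orthogonality of $\mathcal{P}_0$-projections against $\od\fW_h$.

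For the $s$-regular case, I will use the regularity assumption in the form
\[
\|\breve\fomega\|_{H^s}+\|\od^k\breve\fomega\|_{H^s}+\|\breve\fsigma\|_{H^s}+\|\breve\fvartheta\|_{H^s}\leqslant C\|\ff\|_{L^2\Lambda^k},
\]
together with the approximation property $\|(I-\pi_h)\fmu\|\leqslant Ch^s\|\fmu\|_{H^s}$. Commutativity $\od\pi_h=\pi_h\od$ then handles $\|\od^k(\breve\fomega-\pi_h\breve\fomega)\|$. The harmonic term is bounded by the gap estimate, giving $\mathcal{O}(h^s)$.

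The main obstacle is the component $\od^{k-1}\breve\fsigma$, which lies only in $L^2$ and lacks $H^s$ regularity. The $\od^{k-1}$-norm error of $\breve\fsigma$ therefore cannot be bounded by interpolation alone. I will follow the improved estimate (7.30) and Theorem 7.10 of \cite{Arnold.D;Falk.R;Winther.R2006acta}. The first step is to write $\od^{k-1}\breve\fsigma$ as $\mathbf{P}_{\mathfrak{B}}(\ff-\breve\fvartheta)$, the projection onto the range of $\od^{k-1}$. The second step is to note that $\pi_h$ reproduces its piecewise-constant projection. Only the claimed bound on $\|\breve\fsigma-\breve\fsigma_h\|_{L^2}$, and not the full $\od$-norm, is then needed at rate $h^s$, which matches the displayed statement.
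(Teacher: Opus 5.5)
Your overall route matches the paper, which gives no argument beyond the citation of AFW: identify \eqref{eq:classicalmix} and \eqref{eq:classdis} as the standard mixed Hodge--Laplace problem and its Galerkin discretization on the Whitney subcomplex, then apply the quasi-optimal and improved estimates with bounded cochain projections. However, your account of the first estimate has a genuine gap.

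First, the source you give for $h\|\ff\|_{L^2\Lambda^k}$ does not exist. Scheme \eqref{eq:classdis} is conforming and its load is exact, $\langle\ff,\breve\fmu_h\rangle_{L^2\Lambda^k}$. The projection $\mathbf{P}^k_h$ appears only in \eqref{eq:classdisnq} (and in the primal scheme), and its effect is handled in Lemma~\ref{lem:d32d4}. So there is no load-consistency error here, and $\|(I-\mathbf{P}^k_h)\ff\|_{H^{-1}}\lesssim h\|\ff\|$ has nothing to act on.

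Second, the one real variational crime, $\hf_h\Lambda^k\not\subset\hf\Lambda^k$, does not collapse into the three best-approximation terms. For $\fvarsigma_h\in\hf_h\Lambda^k$ one has
\[
\langle\breve\fomega,\fvarsigma_h\rangle_{L^2\Lambda^k}=\langle(I-\pi_h)\mathbf{P}_{\mathfrak B}\breve\fomega,(I-\mathbf{P}_{\hf})\fvarsigma_h\rangle_{L^2\Lambda^k},
\]
where $\mathbf{P}_{\mathfrak B}$ is the $L^2$ projection onto $\R(\od^{k-1},H\Lambda^{k-1})$. This follows from three facts: $\breve\fomega-\mathbf{P}_{\mathfrak B}\breve\fomega\perp\N(\od^k,H\Lambda^k)$; $\pi_h\mathbf{P}_{\mathfrak B}\breve\fomega\in\od^{k-1}\fW_h\Lambda^{k-1}\perp\hf_h\Lambda^k$; and $\R(\od^{k-1},H\Lambda^{k-1})\perp\hf\Lambda^k$. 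Combined with your gap inequality, this yields an extra term $\mu\,\|(I-\pi_h)\mathbf{P}_{\mathfrak B}\breve\fomega\|_{L^2\Lambda^k}$, where $\mu:=\sup_{0\neq r\in\hf\Lambda^k}\|(I-\pi_h)r\|/\|r\|$. This is precisely the additional term in the AFW quasi-optimal estimate.

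The $h\|\ff\|$ in the statement must come from bounding this product, and that needs regularity you never invoke. In general $\mu$ is not $O(h)$: on the perforated or re-entrant domains of Section~\ref{sec:numer}, harmonic forms lie only in $H^s$ with $s<1$. One way to close the gap uses $H^{1/2}$-regularity. The harmonic forms lie in $H\Lambda^k\cap H^*_0\Lambda^k$. So does $\mathbf{P}_{\mathfrak B}\breve\fomega$, since $\od^k\mathbf{P}_{\mathfrak B}\breve\fomega=0$ and $\odelta_k\mathbf{P}_{\mathfrak B}\breve\fomega=\odelta_k\breve\fomega$, with norm $\lesssim\|\ff\|_{L^2\Lambda^k}$. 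On Lipschitz domains this space embeds into $H^{1/2}\Lambda^k$. Hence $\mu\lesssim h^{1/2}$ and $\|(I-\pi_h)\mathbf{P}_{\mathfrak B}\breve\fomega\|\lesssim h^{1/2}\|\ff\|$, and the product is $\lesssim h\|\ff\|$. Under $s$-regularity the product is $O(h^{2s})$, which is $\leqslant Ch$ when $s\geqslant 1/2$.

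For the $s$-regular bound, your proposal works only because it defers to the improved estimates; your own two steps do not deliver it. The identities $\od^{k-1}\breve\fsigma=\mathbf{P}_{\mathfrak B}\ff$ and $\od^{k-1}\breve\fsigma_h=$ the $L^2$ projection of $\od^{k-1}\breve\fsigma$ onto $\od^{k-1}\fW_h\Lambda^{k-1}$ only give $\|\od^{k-1}(\breve\fsigma-\breve\fsigma_h)\|\leqslant\|\ff\|$, with no rate. They do not remove the best-approximation error of $\od^{k-1}\breve\fsigma$ from the bounds on the other components, where the quasi-optimal estimate carries it with an $O(1)$ constant. What removes it is the duality argument behind the improved estimates: there this quantity appears only multiplied by $O(h^s)$ factors from the regularity of the continuous solution operator. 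Only then does the crude bound by $\|\ff\|$ suffice. That part of your plan is therefore a citation, like the paper's, rather than an argument.
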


\paragraph{\bf Proof of Theorem \ref{thm:convprimsch}}
Let $\fomega$ and $\fomega_h$ be the solutions of \eqref{eq:modelhlori} and \eqref{eq:modelhldisonefield}, respectively. Let $(\breve\fvartheta,\breve\fsigma,\breve\fomega)$ and $(\breve\fvartheta_h,\breve\fsigma_h,\breve\fomega_h)$ be the solutions of \eqref{eq:classicalmix} and \eqref{eq:classdis}, respectively. Then $\fomega=\breve\fomega$. By Remark~\ref{rem:primal-mixed-onefield} and Proposition~\ref{prop:primal-mixed-chain},
\begin{equation}\label{eq:dispdae}
\|\fomega_h-\breve{\fomega}_h\|_{L^2\Lambda^k}
+\left\|\oT_{k,h}\fomega_h-\left(\begin{array}{c}\od^k_h\breve{\fomega}_h \\ \breve{\fsigma}_h\end{array}\right)\right\|_{L^2\Lambda^{k+1}\times L^2\Lambda^{k-1}}
\leqslant C h\|\ff\|_{L^2\Lambda^k}.
\end{equation}
Therefore,
\begin{equation*}
\|\fomega-\fomega_h\|_{L^2\Lambda^k}
\leqslant \|\fomega-\breve{\fomega}_h\|_{L^2\Lambda^k}+\|\breve{\fomega}_h-\fomega_h\|_{L^2\Lambda^k}
\leqslant \|\fomega-\breve{\fomega}_h\|_{L^2\Lambda^k}+Ch\|\ff\|_{L^2\Lambda^k}.
\end{equation*}
Moreover,
\begin{multline*}
\|\od^k\fomega-\od^k_h\fomega_h\|_{L^2\Lambda^{k+1}}+
\|\odelta_k\fomega-\odelta_{k,h}\fomega_h\|_{L^2\Lambda^{k-1}}
\leqslant 
\|\od^k\breve\fomega-\od^k\breve{\fomega}_h\|_{L^2\Lambda^{k+1}}
+
\|\breve{\fsigma}-\breve{\fsigma}_h\|_{L^2\Lambda^{k-1}}
\\
+
\|\od^k\breve{\fomega}_h-\od^k_h\hat{\fomega}_h\|_{L^2\Lambda^{k+1}}
+
\|\breve{\fsigma}_h-\hat{\fsigma}_h\|_{L^2\Lambda^{k-1}}
+
\|\od^k_h\hat{\fomega}_h-\tilde{\fzeta}_h\|_{L^2\Lambda^{k+1}}
+
\|\hat{\fsigma}_h-\tilde{\fsigma}_h\|_{L^2\Lambda^{k-1}}
\\
+
\|\tilde{\fzeta}_h-\overline{\fzeta}_h\|_{L^2\Lambda^{k+1}}
+
\|\tilde{\fsigma}_h-\overline{\fsigma}_h\|_{L^2\Lambda^{k-1}}
+
\|\overline{\fzeta}_h-\od^k_h\fomega_h\|_{L^2\Lambda^{k+1}}
+
\|\overline{\fsigma}_h-\odelta_{k,h}\fomega_h\|_{L^2\Lambda^{k-1}}
\\
\leqslant 
\|\od^k\breve\fomega-\od^k\breve{\fomega}_h\|_{L^2\Lambda^{k+1}}
+
\|\breve{\fsigma}-\breve{\fsigma}_h\|_{L^2\Lambda^{k-1}}
+Ch\|\ff\|_{L^2\Lambda^k},
\end{multline*}
where the last inequality follows from Remark~\ref{rem:p2d}, Lemmas~\ref{lem:d12d2}, \ref{lem:d22d3}, and~\ref{lem:d32d4}, and the identity $\fomega=\breve\fomega$ together with the link between \eqref{eq:modelhlori} and \eqref{eq:classicalmix}. The proof is completed by Lemma~\ref{lem:classicalmix}. \qed

\begin{remark}
The continuous primal and mixed Hodge--Laplace formulations are equivalent; therefore, an estimation of \eqref{eq:dispdae}-type, not necessarily identical, can be a necessary condition so that $\fomega_h$ converges to $\fomega$. This is why this indirect analysis routine is utilized here.
\end{remark}

%
%

%
%

%
%
%
\section{Numerical experiments}
\label{sec:numer}

\subsection{Overview}\label{sec:numer-overview}
In this section, we use the two- and three- dimensional $H(\dv)\cap H(\curl)$ problem to test the spaces $\fV^{\od\cap\mathring{\odelta}}_h\Lambda^k$, the specific formulations of which are given in Subsection~\ref{sec:dim-realizations}. To particularly illustrate the performance with respect to the complicated topology of the domain, we focus ourselves on eigenvalue problems and compare with standard mixed FEEC discretizations. Particularly, 
Subsection~\ref{sec:numer2d} treats $n=2$ and Subsections~\ref{sec:numer3d} treats $n=3$ (both including domains with nontrivial topology).

To keep a reasonable length of the paper, we report the manufactured-solution boundary-value tests in \AppendixRef{app:bvp-results}, and counterexamples using vector Lagrange and vector Crouzeix--Raviart elements are given in \AppendicesRef{app:lagrange-eigenvalues}{app:cr-eigenvalues}.

\subsection{Two-dimensional tests}\label{sec:numer2d}

For $\Omega\subset\mathbb{R}^2$ a polygon, we consider the following eigenvalue problem: find $(\lambda,\omega\not\equiv 0) \in \mathbb{R}\times H(\rot,\Omega) \cap H_0(\dv,\Omega)$ such that
\begin{equation}\label{primal2D}
(\dv\omega,\dv\tau) + (\rot\omega,\rot\tau) = \lambda (\omega,\tau), \forall \tau \in H(\rot,\Omega) \cap H_0(\dv,\Omega).
\end{equation}
The finite element space $\fV^{\rm r\mathring{d}}_h$ is defined in~\eqref{eq:deffems2d} (Subsection~\ref{sec:dim-realizations}); the cell-wise and locally supported global basis functions can be found in \AppendixRef{sec:app:basis-2d}. The primal discretization of \eqref{primal2D} is to find \((\lambda_h,\omega_h\not\equiv 0)\in \mathbb R\times \fV^{\rm r\mathring d}_h\), such that
\begin{equation}
(\dv_h\omega_h,\dv_h\tau_h)
+
(\rot_h\omega_h,\rot_h\tau_h)
=
\lambda_h(\omega_h,\tau_h),
\qquad
\forall\,\tau_h\in \fV^{\rm r\mathring d}_h .
\label{eq:2d-primal-discrete}
\end{equation}

To verify the computation results, we use the standard mixed formulation for reference: find $(\lambda,\sigma,u\not\equiv 0) \in \mathbb{R}\times H^1(\Omega)\times H(\rot,\Omega)$ such that
\begin{equation}\label{mix2D}
	\left\{
	\begin{array}{cccll}
		(\sigma,\tau)&-(u,\grad \tau)&=0&\forall\,\tau\in H^1(\Omega)
		\\
		(\grad\sigma,v)&+(\rot u ,\rot v)&=\lambda (u,v)&\forall\,v\in H(\rot,\Omega).
	\end{array}
	\right.
\end{equation}
Its classical discretization is to find
\[
(\lambda_h,\sigma_h,u_h\not\equiv 0)
\in
\mathbb R\times \mathbb{V}^1_h\times \mathbb{V}^{\rm Ned}_h,
\]
such that
\begin{equation}\label{eq:2d-mixed-discrete}
	\left\{
	\begin{aligned}
		(\sigma_h,\tau_h)-(u_h,\grad\tau_h)&=0,
		&& \forall\,\tau_h\in \mathbb{V}^1_h,\\
		(\grad\sigma_h,v_h)+(\rot u_h,\rot v_h)
		&=\lambda_h(u_h,v_h),
		&& \forall\,v_h\in \mathbb{V}^{\rm Ned}_h,
	\end{aligned}
	\right.
\end{equation}
where $\mathbb{V}^{\rm Ned}_h$ is the lowest-degree Nedelec element space for $H(\rot,\Omega)$. 

We compute the ten smallest eigenvalues using the primal formulation \eqref{eq:2d-primal-discrete} with the $\fV^{\rm r\mathring{d}}_h$ element and the mixed formulation \eqref{eq:2d-mixed-discrete} with the classical mixed element on three domains, namely
\begin{itemize}
\item Square domain: \(\Omega_{\rm S}=[0,1]^2\).
\item L-shaped domain: \(\Omega_{\rm L}=[0,1]^2\setminus\bigl((0.5,1)\times(0,0.5)\bigr)\); see Figure \ref{fig:Lshape}.
\item Square domain with an interior hole: \(\Omega_{\rm H}=[0,1]^2\setminus[0.5,0.75]^2\); see Figure \ref{fig:hole}. 
\end{itemize}

\begin{figure}[htbp]
	\centering
	
	\begin{minipage}[t]{0.49\textwidth}
		\centering
		\includegraphics[height=4cm]{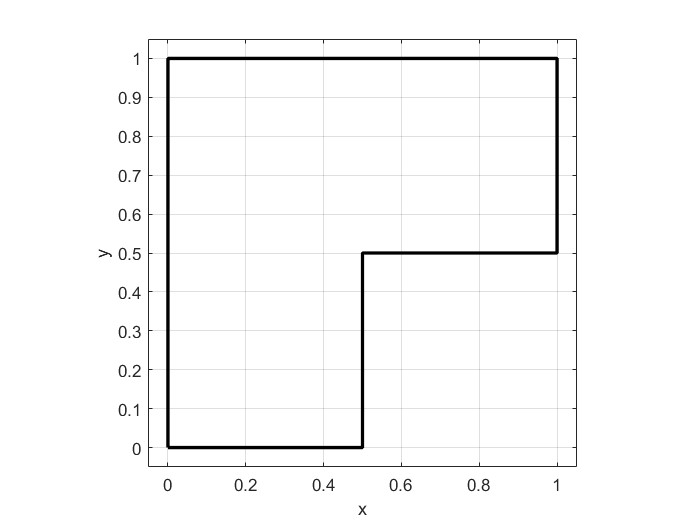}
		\caption{Computational domain for the L-shaped example.}
		\label{fig:Lshape}
	\end{minipage}
	\hfill
	\begin{minipage}[t]{0.49\textwidth}
		\centering
		\includegraphics[height=4cm]{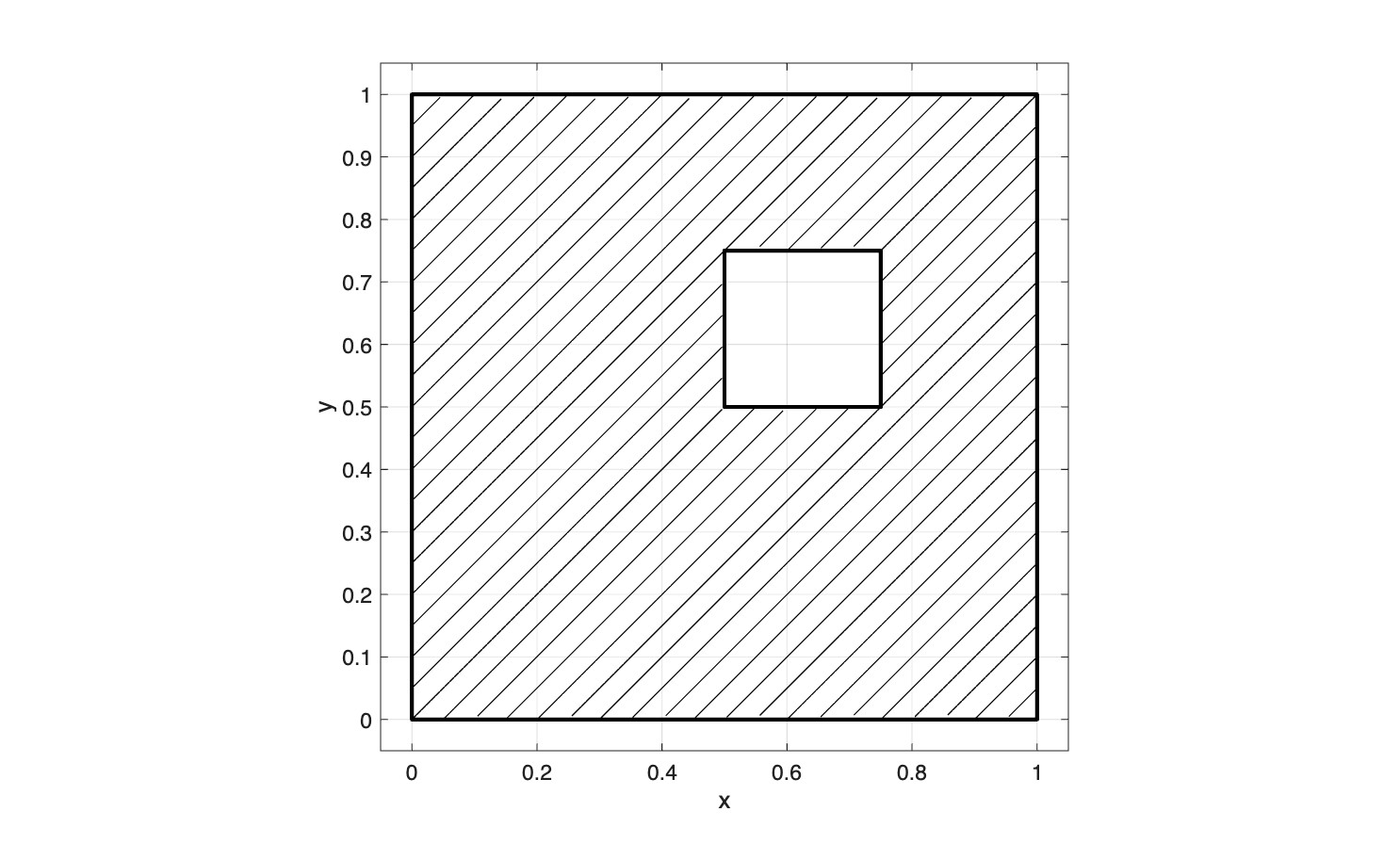}
		\caption{Computational domain for the square example with an interior hole.}
		\label{fig:hole}
	\end{minipage}
	
\end{figure}

Tables~\ref{tab:sq-primal-new}--\ref{tab:hole-mix} record the computed eigenvalues, where $L$ denotes the mesh level. The eigenvalues computed by \eqref{eq:2d-primal-discrete} agree closely with those computed by \eqref{eq:2d-mixed-discrete}. In particular, on the third domain \(\Omega_{\rm H}\), the first Betti number $\mathtt{b}_1$ is~1, and 0 appears as a simple eigenvalue for both \eqref{eq:2d-primal-discrete} and \eqref{eq:2d-mixed-discrete}. 

\begin{table}[htbp]
\centering
\small
\setlength{\tabcolsep}{3pt}
\begin{tabular}{|c|*{10}{c|}}
\hline
$L$ & $\lambda_h^1$ & $\lambda_h^2$ & $\lambda_h^3$ & $\lambda_h^4$ & $\lambda_h^5$ & $\lambda_h^6$ & $\lambda_h^7$ & $\lambda_h^8$ & $\lambda_h^9$ & $\lambda_h^{10}$ \\
\hline
1 & 10.188 & 10.188 & 18.982 & 20.516 & 44.578 & 44.578 & 45.776 & 45.776 & 55.394 & 55.394 \\ \hline
2 &  9.949 &  9.949 & 19.549 & 19.930 & 40.753 & 40.753 & 48.447 & 48.447 & 50.837 & 50.837 \\ \hline
3 &  9.889 &  9.889 & 19.692 & 19.787 & 39.796 & 39.796 & 49.122 & 49.122 & 49.718 & 49.718 \\ \hline
4 &  9.875 &  9.875 & 19.727 & 19.751 & 39.558 & 39.558 & 49.292 & 49.292 & 49.440 & 49.440 \\
\hline
\end{tabular}
\caption{Computed eigenvalues on \(\Omega_{\rm S}=[0,1]^2\), using \eqref{eq:2d-primal-discrete}.}
\label{tab:sq-primal-new}
\end{table}

\begin{table}[htbp]
\centering
\small
\setlength{\tabcolsep}{3pt}
\begin{tabular}{|c|*{10}{c|}}
\hline
$L$ & $\lambda_h^1$ & $\lambda_h^2$ & $\lambda_h^3$ & $\lambda_h^4$ & $\lambda_h^5$ & $\lambda_h^6$ & $\lambda_h^7$ & $\lambda_h^8$ & $\lambda_h^9$ & $\lambda_h^{10}$ \\
\hline
1 & 10.211 & 10.211 & 19.398 & 20.608 & 45.012 & 45.012 & 48.291 & 48.291 & 56.070 & 56.070 \\ \hline
2 &  9.954 &  9.954 & 19.655 & 19.952 & 40.843 & 40.843 & 49.100 & 49.100 & 50.977 & 50.977 \\ \hline
3 &  9.891 &  9.891 & 19.718 & 19.792 & 39.817 & 39.817 & 49.287 & 49.287 & 49.751 & 49.751 \\ \hline
4 &  9.875 &  9.875 & 19.734 & 19.752 & 39.563 & 39.563 & 49.333 & 49.333 & 49.449 & 49.449 \\
\hline
\end{tabular}
\caption{Computed eigenvalues on \(\Omega_{\rm S}=[0,1]^2\), using \eqref{eq:2d-mixed-discrete}.}
\label{tab:sq-mixed}
\end{table}

\begin{table}[htbp]
\centering
\small
\setlength{\tabcolsep}{3pt}
\begin{tabular}{|c|*{10}{c|}}
\hline
$L$ & $\lambda_h^1$ & $\lambda_h^2$ & $\lambda_h^3$ & $\lambda_h^4$ & $\lambda_h^5$ & $\lambda_h^6$ & $\lambda_h^7$ & $\lambda_h^8$ & $\lambda_h^9$ & $\lambda_h^{10}$ \\
\hline
1 & 6.412 & 14.621 & 34.269 & 44.578 & 44.578 & 51.021 & 55.628 & 58.114 & 67.036 & 91.817 \\ \hline
2 & 6.076 & 14.264 & 37.143 & 40.753 & 40.753 & 46.927 & 52.346 & 59.443 & 75.928 & 82.064 \\ \hline
3 & 5.966 & 14.169 & 38.075 & 39.796 & 39.796 & 45.900 & 50.863 & 60.444 & 78.197 & 79.722 \\ \hline
4 & 5.926 & 14.144 & 38.387 & 39.558 & 39.558 & 45.643 & 50.458 & 60.701 & 78.767 & 79.147 \\
\hline
\end{tabular}
\caption{Computed eigenvalues on $\Omega_{\rm L}$ (Figure~\ref{fig:Lshape}) using \eqref{eq:2d-primal-discrete}.}
\label{tab:L-primal-new}
\end{table}

\begin{table}[htbp]
\centering
\small
\setlength{\tabcolsep}{3pt}
\begin{tabular}{|c|*{10}{c|}}
\hline
$L$ & $\lambda_h^1$ & $\lambda_h^2$ & $\lambda_h^3$ & $\lambda_h^4$ & $\lambda_h^5$ & $\lambda_h^6$ & $\lambda_h^7$ & $\lambda_h^8$ & $\lambda_h^9$ & $\lambda_h^{10}$ \\
\hline
1 & 6.421 & 14.667 & 35.592 & 45.012 & 45.012 & 51.592 & 58.859 & 59.559 & 73.337 & 93.723 \\ \hline
2 & 6.078 & 14.275 & 37.520 & 40.843 & 40.843 & 47.047 & 52.495 & 60.436 & 77.594 & 82.432 \\ \hline
3 & 5.966 & 14.172 & 38.174 & 39.817 & 39.817 & 45.928 & 50.898 & 60.694 & 78.618 & 79.808 \\ \hline
4 & 5.926 & 14.145 & 38.412 & 39.563 & 39.563 & 45.651 & 50.467 & 60.764 & 78.872 & 79.169 \\
\hline
\end{tabular}
\caption{Computed eigenvalues on $\Omega_{\rm L}$ (Figure~\ref{fig:Lshape}) using \eqref{eq:2d-mixed-discrete}.}
\label{tab:L-mixed}
\end{table}

\begin{table}[htbp]
\centering
\small
\setlength{\tabcolsep}{3pt}
\begin{tabular}{|c|*{10}{c|}}
\hline
$L$ & $\lambda_h^1$ & $\lambda_h^2$ & $\lambda_h^3$ & $\lambda_h^4$ & $\lambda_h^5$ & $\lambda_h^6$ & $\lambda_h^7$ & $\lambda_h^8$ & $\lambda_h^9$ & $\lambda_h^{10}$ \\
\hline
1 & 0.000 & 8.707 & 8.931 & 19.531 & 33.867 & 41.637 & 45.949 & 53.379 & 53.732 & 55.952 \\ \hline
2 & 0.000 & 8.216 & 8.406 & 18.918 & 36.748 & 40.149 & 42.097 & 48.935 & 53.802 & 72.364 \\ \hline
3 & 0.000 & 8.049 & 8.225 & 18.732 & 35.954 & 37.763 & 42.731 & 49.967 & 52.093 & 60.796 \\ \hline
4 & 0.000 & 7.989 & 8.160 & 18.676 & 34.948 & 38.091 & 42.453 & 46.388 & 49.654 & 59.727 \\
\hline
\end{tabular}
\caption{Computed eigenvalues on $\Omega_{\rm H}$ (Figure~\ref{fig:hole}) using \eqref{eq:2d-primal-discrete}. The method can correctly capture the zero eigenvalue associated with the
topology of the domain.}
\label{tab:hole-primal-new}
\end{table}

\begin{table}[htbp]
\centering
\small
\setlength{\tabcolsep}{3pt}
\begin{tabular}{|c|*{10}{c|}}
\hline
$L$ & $\lambda_h^1$ & $\lambda_h^2$ & $\lambda_h^3$ & $\lambda_h^4$ & $\lambda_h^5$ & $\lambda_h^6$ & $\lambda_h^7$ & $\lambda_h^8$ & $\lambda_h^9$ & $\lambda_h^{10}$ \\
\hline
1 & 0.000 & 8.724 & 8.949 & 19.614 & 35.146 & 42.014 & 46.384 & 54.127 & 56.354 & 56.373 \\ \hline
2 & 0.000 & 8.219 & 8.410 & 18.937 & 36.788 & 37.128 & 41.756 & 48.474 & 51.643 & 58.173 \\ \hline
3 & 0.000 & 8.050 & 8.226 & 18.737 & 35.375 & 37.838 & 40.569 & 47.091 & 50.374 & 58.802 \\ \hline
4 & 0.000 & 7.989 & 8.160 & 18.677 & 34.951 & 38.105 & 40.255 & 46.722 & 50.049 & 59.025 \\
\hline
\end{tabular}
\caption{Computed eigenvalues on $\Omega_{\rm H}$ (Figure~\ref{fig:hole}) using \eqref{eq:2d-mixed-discrete}.}
\label{tab:hole-mix}
\end{table}

Overall, the proposed element gives eigenvalue approximations that are
consistent with those obtained from the mixed formulation on all tested
two-dimensional domains. In particular, the zero eigenvalue associated with the
topology of the multiply connected domain is correctly captured by both
methods.The present results therefore provide evidence that the proposed element is
well suited to the primal eigenvalue formulation considered here. The
agreement is observed not only on the simply connected square domain, but
also on the nonconvex L-shaped domain and on the multiply connected domain
with an interior hole. This is noteworthy in view of the known difficulty
of designing nonconforming primal discretizations for curl--curl type problems. For example, Brenner-Sun-Cui \cite{Brenner.S;Sung.L;Cui.J2008} observed that a naive
CR-type weakly continuous \(P_1\) discretization of the two-dimensional
curl--curl problem fails to converge unless additional
consistency terms controlling the tangential and normal jumps are included.

%
%

%
%

%
%
%
\subsection{Three-dimensional tests}\label{sec:numer3d}

Let $\Omega \subset \mathbb{R}^3$ be a polyhedral domain. 

\subsubsection{The case $H(\dv,\Omega)\cap H_0(\curl,\Omega)$}
In the first case, we consider eigenvalue problem associated with the space $H(\dv,\Omega)\cap H_0(\curl,\Omega)$. The primal weak formulation is to find $(\lambda,\fomega)\in \mathbb{R}\times H(\dv,\Omega)\cap H_0(\curl,\Omega)$ such that
\begin{equation}
(\dv\omega,\dv\tau) + (\curl\omega,\curl\tau) = \lambda(\omega,\tau),\forall\tau\in H(\dv,\Omega)\cap H_0(\curl,\Omega).
\label{eq:3d-primal-first}
\end{equation}
For $H(\dv,\Omega)\cap H_0(\curl,\Omega)$ the finite element space is $\fV^{\rm \mathring{c} \rm d}_h$ in~\eqref{eq:deffems3d} (see \AppendixRef{sec:app:basis-3d} for the cell-wise and locally supported global basis functions). The discrete primal weak formulation is to find
\[
(\lambda_h,\omega_h)\in 
\mathbb R\times \fV^{\mathring{\mathrm c}{\rm d}}_h,
\qquad \omega_h\ne0,
\]
such that
\begin{equation}
	(\dv_h\omega_h,\dv_h\tau_h)
	+
	(\curl_h\omega_h,\curl_h\tau_h)
	=
	\lambda_h(\omega_h,\tau_h),
	\qquad
	\forall\,\tau_h\in \fV^{\mathring{\mathrm c}{\rm d}}_h .
	\label{eq:3d-primal-first-discrete}
\end{equation}

The corresponding mixed formulation is to find $(\lambda,\sigma,u)\in \mathbb{R}\times H(\curl,\Omega) \times H(\dv,\Omega)$ such that
\begin{equation}\label{eq:3d-mixed}
\left\{
\begin{array}{cclll}
(\sigma,\tau)&-(u,\curl\tau)&=0&\forall\,\tau\in H(\curl,\Omega)
\\
(\curl\sigma,v)&+(\dv u ,\dv v)&=\lambda (u,v)&\forall\,v\in H(\dv,\Omega)
\end{array}
\right.
\end{equation}

The discrete mixed formulation is to find
\[
(\lambda_h,\sigma_h,u_h)\in
\mathbb R\times\mathbb{V}^{\rm Ned}_h\times\mathbb{V}^{\rm RT}_h,
\qquad u_h\ne0,
\]
such that
\begin{equation}\label{eq:3d-mixed-first-discrete}
	\left\{
	\begin{aligned}
		(\sigma_h,\tau_h)-(u_h,\curl\tau_h)&=0,
		&& \forall\,\tau_h\in \mathbb{V}^{\rm Ned}_h,\\
		(\curl\sigma_h,v_h)+(\dv u_h,\dv v_h)&=\lambda_h(u_h,v_h),
		&& \forall\,v_h\in \mathbb{V}^{\rm RT}_h .
	\end{aligned}
	\right.
\end{equation}

We compute the eigenvalues on a representative non-simply connected domain $\Omega_1 = [0,1]^3\setminus
\bigl((0.25,0.5)\times(0.25,0.5)\times[0,1]\bigr)$(see Figure~\ref{fig:3d-through-domain}) and compare them with those obtained from the mixed formulation on the same meshes. Unless otherwise stated, the smallest ten eigenvalues are reported; the results are presented in Tables~\ref{tab:3d-through-new-first}--\ref{tab:3d-through-mixed-first}.



\begin{figure}[htbp]
	\centering
	
	\begin{minipage}[t]{0.49\textwidth}
		\centering
		\includegraphics[height=4cm]{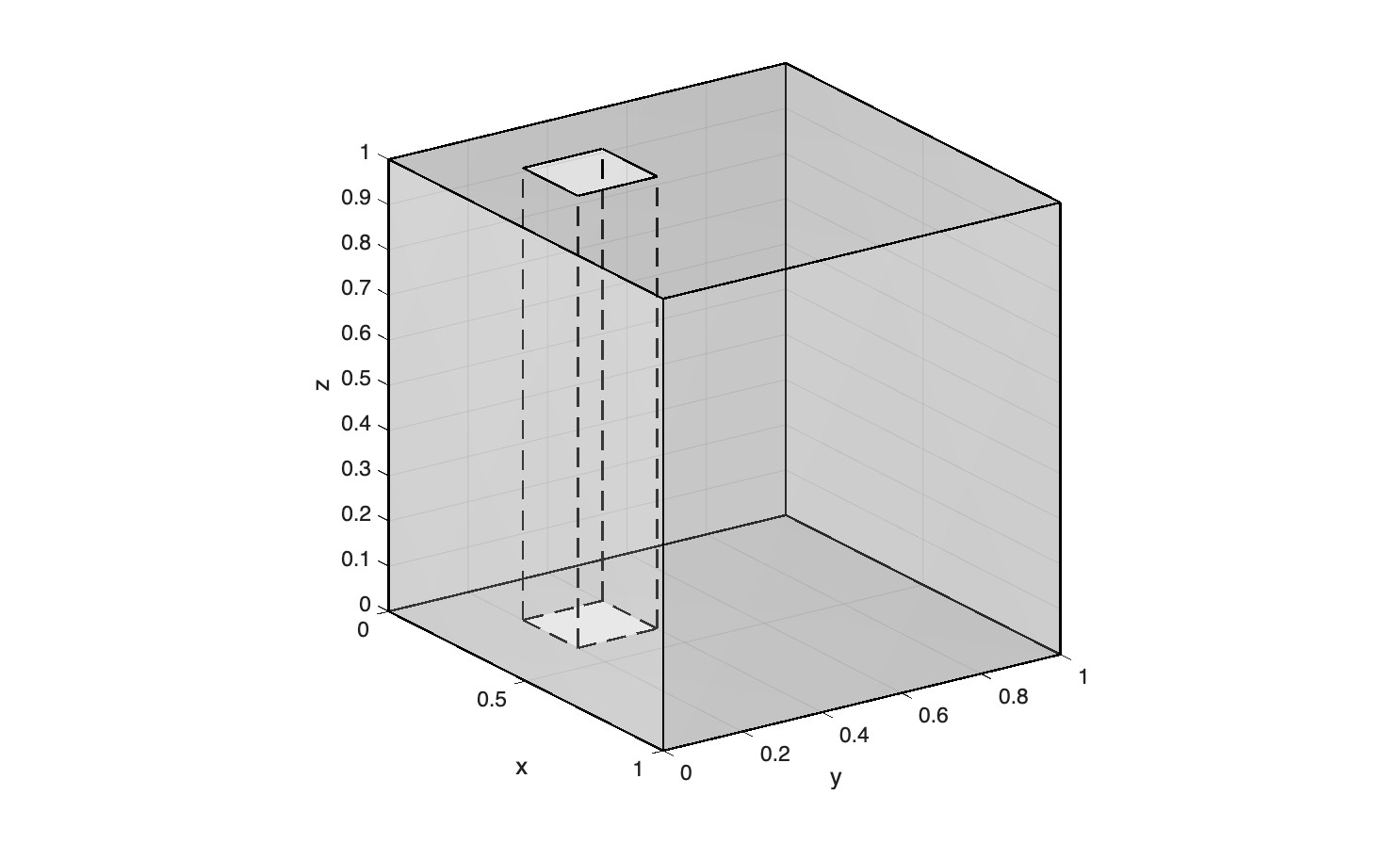}
		\caption{Computational domain \(\Omega_1\): the cube with a rectangular through-hole.}
		\label{fig:3d-through-domain}
	\end{minipage}
	\hfill
	\begin{minipage}[t]{0.49\textwidth}
		\centering
		\includegraphics[height=4cm]{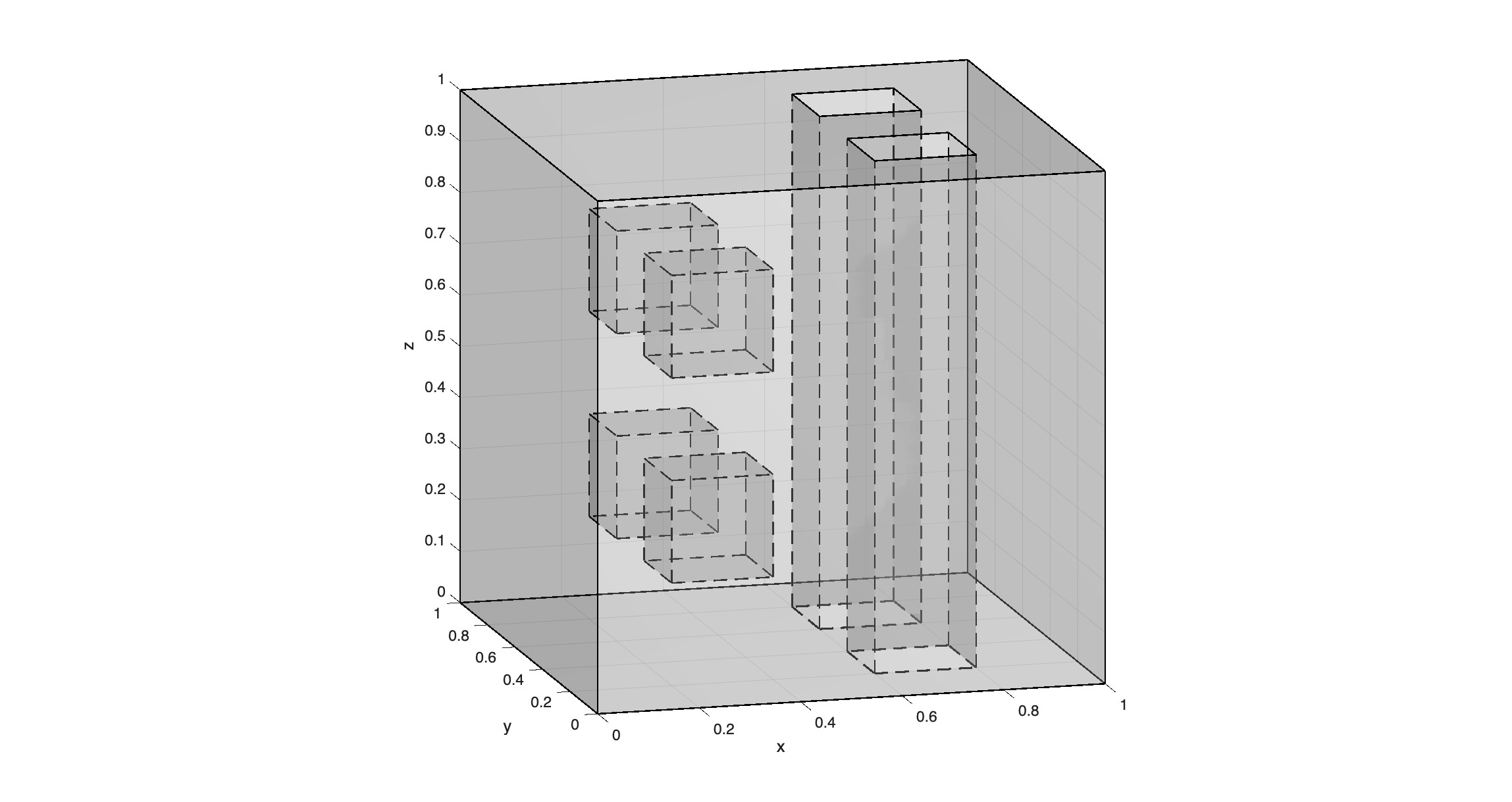}
		\caption{Computational domain \(\Omega_2\): the cube with four enclosed rectangular cavities and two through-holes.}
		\label{fig:3d-complex-domain}
	\end{minipage}
	
\end{figure}

%

\begin{table}[htbp]
	\centering
	\small
	\setlength{\tabcolsep}{3pt}
	\begin{tabular}{|c|*{10}{c|}}
		\hline
		\(L\) & \(\lambda_h^1\) & \(\lambda_h^2\) & \(\lambda_h^3\) & \(\lambda_h^4\) & \(\lambda_h^5\) & \(\lambda_h^6\) & \(\lambda_h^7\) & \(\lambda_h^8\) & \(\lambda_h^9\) & \(\lambda_h^{10}\)\\
		\hline
		1 & 9.139 & 18.149 & 18.443 & 28.730 & 33.144 & 33.664 & 41.078 & 43.122 & 44.284 & 44.695 \\ \hline
		2 & 9.602 & 17.967 & 18.150 & 28.632 & 36.483 & 37.620 & 45.080 & 45.776 & 46.381 & 46.735 \\ \hline
		3 & 9.774 & 17.879 & 18.047 & 28.562 & 37.588 & 38.907 & 44.883 & 47.269 & 47.345 & 47.363 \\ \hline
		4 & 9.834 & 17.845 & 18.011 & 28.535 & 38.007 & 39.288 & 44.734 & 47.380 & 47.520 & 47.869 \\
		\hline
	\end{tabular}
	\caption{Computed eigenvalues on \(\Omega_1\) shown in Figure~\ref{fig:3d-through-domain}, using the primal weak formulation \eqref{eq:3d-primal-first-discrete}.}
	\label{tab:3d-through-new-first}
\end{table}

\begin{table}[htbp]
	\centering
	\small
	\setlength{\tabcolsep}{3pt}
	\begin{tabular}{|c|*{10}{c|}}
		\hline
		\(L\) & \(\lambda_h^1\) & \(\lambda_h^2\) & \(\lambda_h^3\) & \(\lambda_h^4\) & \(\lambda_h^5\) & \(\lambda_h^6\) & \(\lambda_h^7\) & \(\lambda_h^8\) & \(\lambda_h^9\) & \(\lambda_h^{10}\)\\
		\hline
		1 & 9.200 & 18.419 & 18.613 & 29.282 & 33.983 & 34.524 & 44.736 & 45.095 & 45.181 & 45.894 \\ \hline
		2 & 9.618 & 18.032 & 18.193 & 28.765 & 36.726 & 37.864 & 45.417 & 46.772 & 46.945 & 46.990 \\ \hline
		3 & 9.778 & 17.895 & 18.058 & 28.595 & 37.652 & 38.969 & 44.965 & 47.390 & 47.411 & 47.552 \\ \hline
		4 & 9.835 & 17.849 & 18.014 & 28.543 & 38.024 & 39.304 & 44.754 & 47.410 & 47.537 & 47.895 \\
		\hline
	\end{tabular}
	\caption{Computed eigenvalues on \(\Omega_1\) shown in Figure~\ref{fig:3d-through-domain}, using the mixed formulation \eqref{eq:3d-mixed-first-discrete}.}
	\label{tab:3d-through-mixed-first}
\end{table}


\paragraph{\bf Convergence of the discrepancies between the two formulations.}

To further quantify the agreement between the proposed element and the mixed
formulation, we compare the corresponding numerical eigenvalues and
eigenvectors on \(\Omega_1\).

For the eigenvalue comparison, we consider the first ten computed eigenvalues and compute
\[
\bigl|\lambda_i^{\rm mix}-\lambda_i^{\rm new}\bigr|,
\qquad
i = 1,\dots, 10
\]
where \(\lambda_i^{\rm mix}\) and \(\lambda_i^{\rm new}\) denote the corresponding
eigenvalues computed by the mixed formulation and the proposed element,
respectively. The corresponding log--log plots are shown in
Figures~\ref{fig:value-omega1}.

For the eigenvector comparison, we consider the eigenvectors associated with the first ten computed eigenvalues on
\(\Omega_1\). The errors are measured in the \(L^2\) norm and in the combined
\(H(\dv)\)--\(H(\curl)\) norm,
\[
\|u_i^{\rm mix}-u_i^{\rm new}\|_{L^2},
\qquad
|u_i^{\rm mix}-u_i^{\rm new}|_{H(\dv)+H(\curl)},
\qquad
i = 1,\dots,10 .
\]
The results are displayed in
Figures~\ref{fig:eig10-l2}, \ref{fig:eig10-divcurl}.


\begin{figure}[htbp]
	\centering
	\hspace{-2.0em}%
	\begin{subfigure}[t]{0.32\textwidth}
		\makebox[\linewidth][l]{%
			\hspace{-2em}\includegraphics[width=1.5\linewidth]{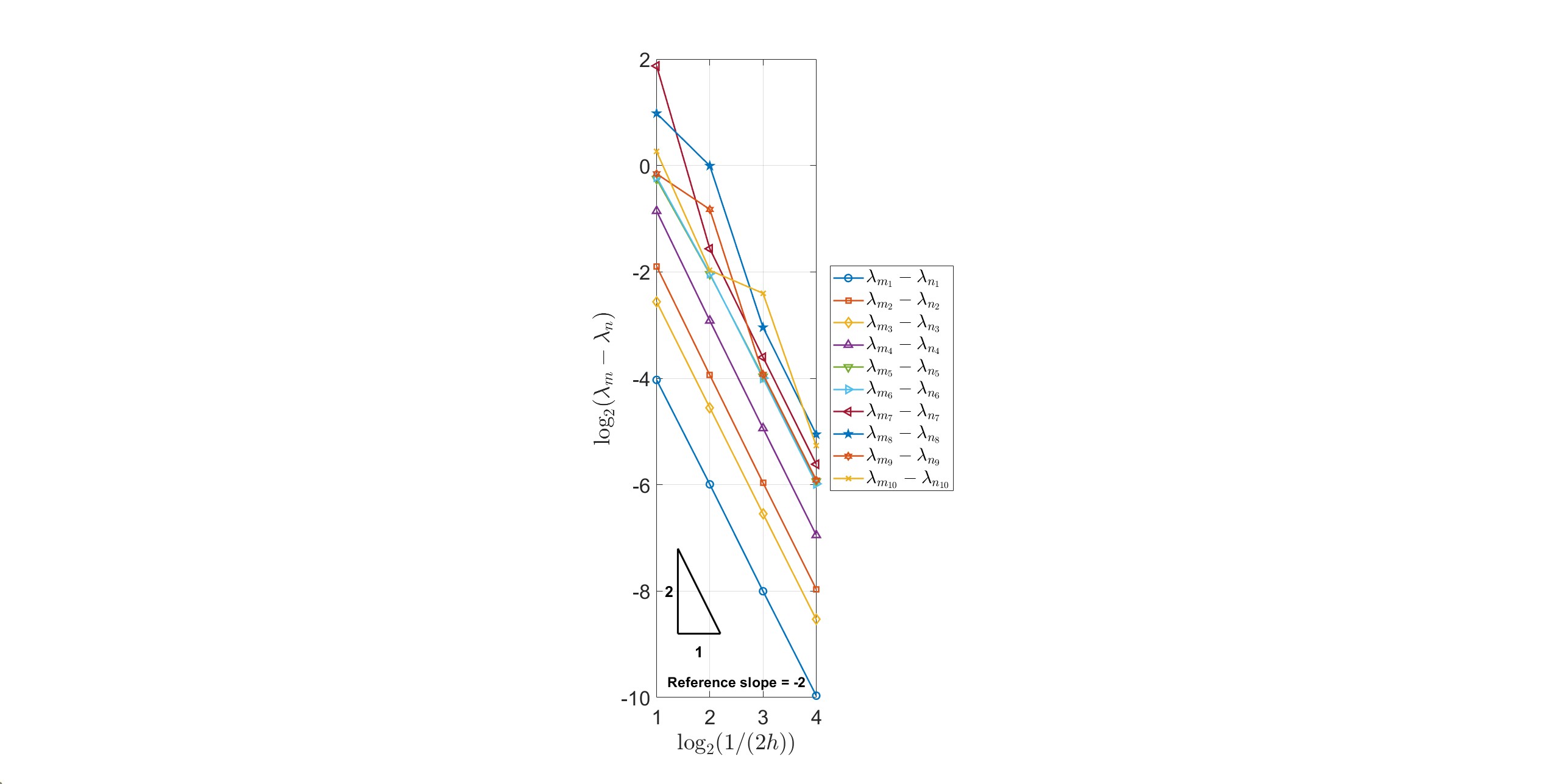}%
		}
		\caption{}
		\label{fig:value-omega1}
	\end{subfigure}%
	\hspace{-0.2em}%
	\begin{subfigure}[t]{0.32\textwidth}
		\makebox[\linewidth][l]{%
			\hspace{-1.0em}\includegraphics[width=1.3\linewidth]{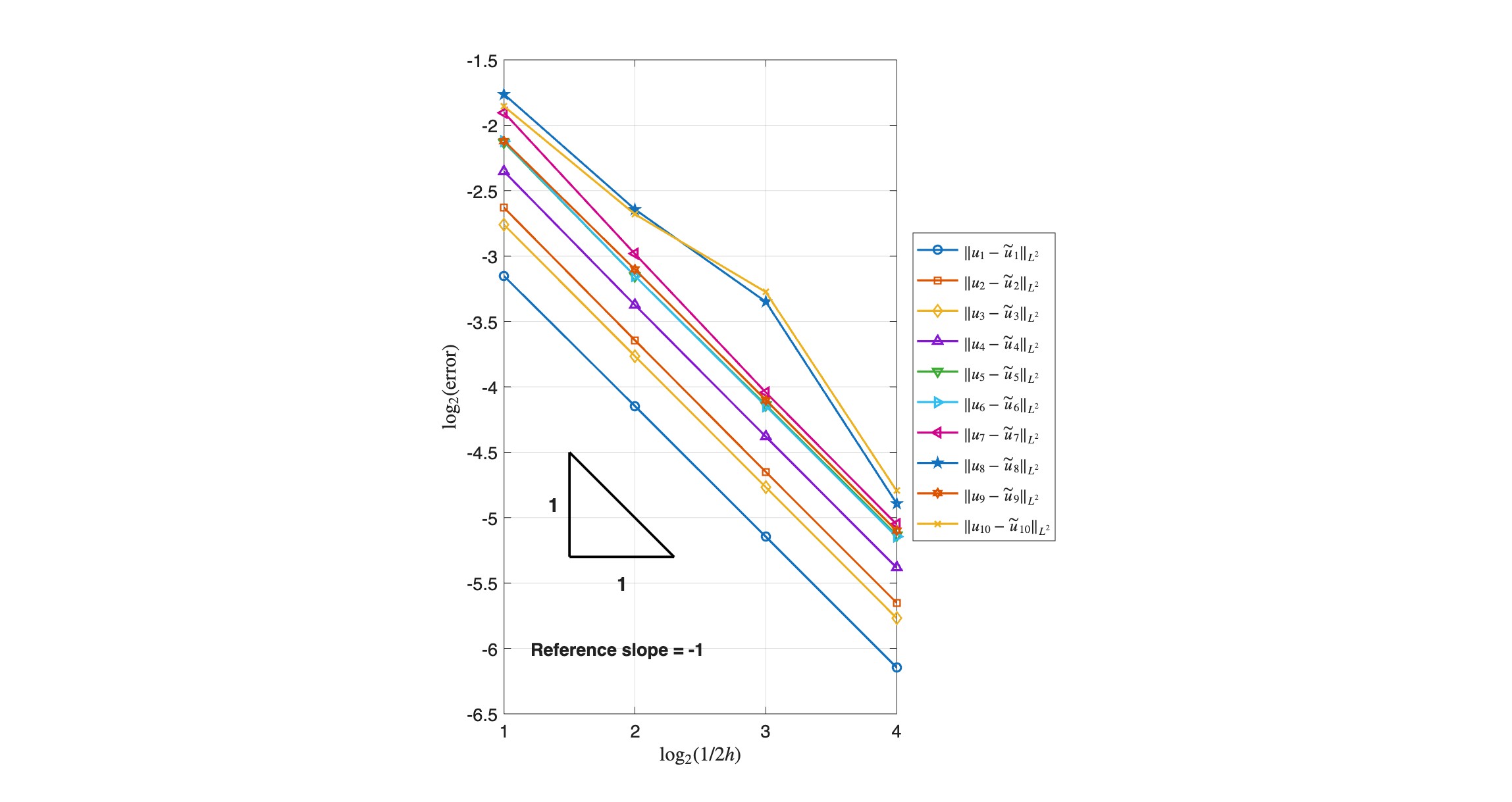}%
		}
		\caption{}
		\label{fig:eig10-l2}
	\end{subfigure}%
	\hspace{-0.4em}%
	\begin{subfigure}[t]{0.32\textwidth}
		\makebox[\linewidth][l]{%
			\hspace{-1.0em}\includegraphics[width=1.3\linewidth]{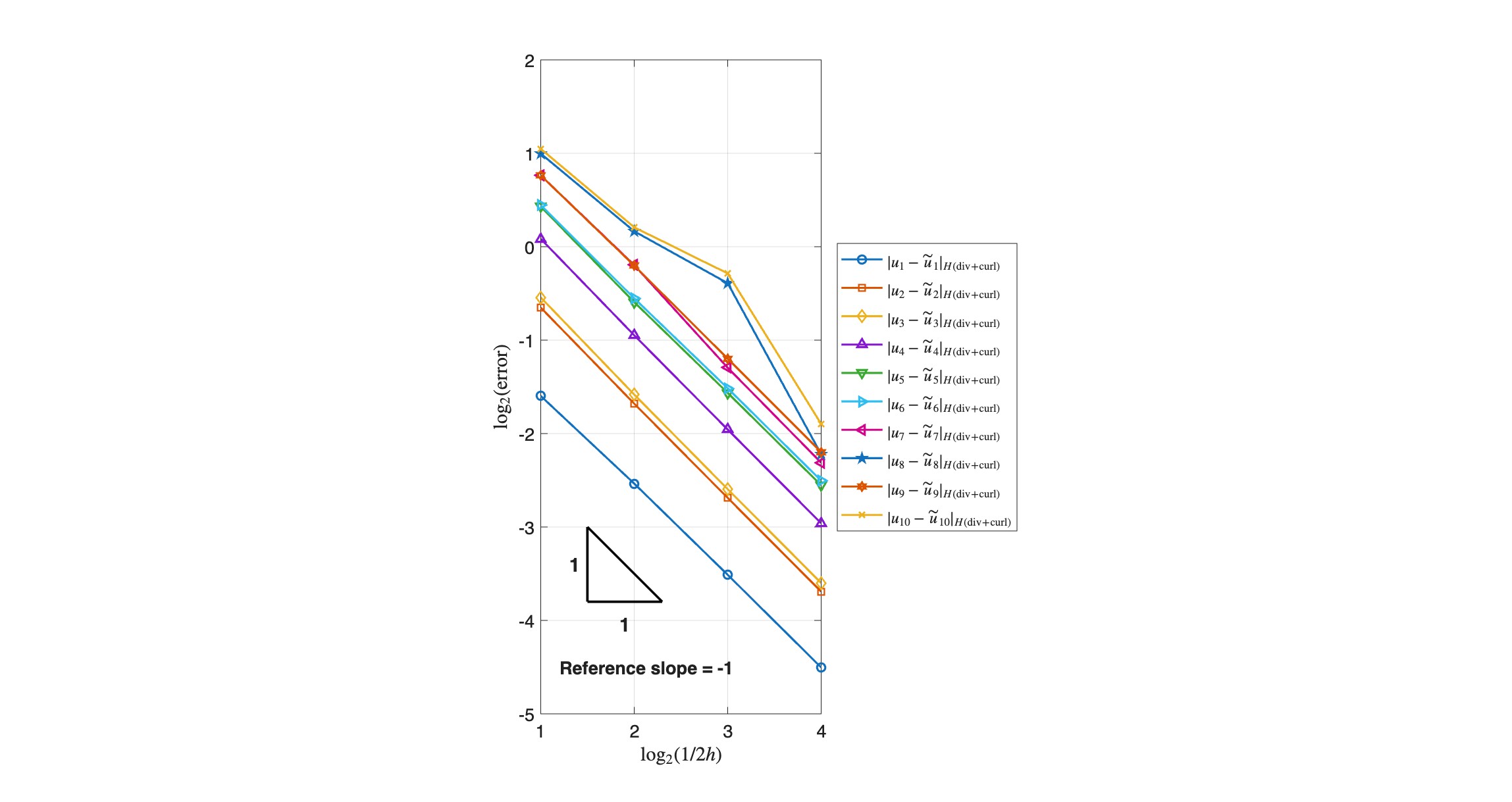}%
		}
		\caption{}
		\label{fig:eig10-divcurl}
	\end{subfigure}
	
	\caption{%
		(a) Log--log plots of the differences between the first ten eigenvalues computed by the mixed formulation and the proposed element on \(\Omega_1\); 
		(b) Errors between the first ten matched eigenvectors computed by the mixed formulation and the proposed element in the \(L^2\) norm on \(\Omega_1\); 
		(c) Errors between the first ten matched eigenvectors computed by the mixed formulation and the proposed element in the combined \(H(\dv)\)--\(H(\curl)\) norm on \(\Omega_1\).}
	\label{fig:omega1-eig-errors}
\end{figure}

The log--log plots show that the discrepancies between the two formulations
decrease as the mesh is refined. This provides an additional consistency check:
not only do the two methods produce close eigenvalue tables on each fixed mesh,
but their eigenvalue and eigenvector approximations also become increasingly
consistent under mesh refinement.

%

%
\subsubsection{Influence of topology associated with the boundary condition}
\label{sec:twobc}

For comparison, we also consider the space $H(\curl,\Omega)\cap H_0(\dv,\Omega)$. It corresponds to $H\Lambda^1\cap H^*_0\Lambda^1$, and leads to Hodge-Laplace problems on $\Lambda^1$. The first Betti number $\mathtt{b}_1$ influence its computation. 

We consider the eigenvalue problem: find $(\lambda,\omega)\in \mathbb{R}\times H(\curl,\Omega)\cap H_0(\dv,\Omega)$ such that
\begin{equation}
	(\dv\fomega,\dv\fmu) + (\curl\fomega,\curl\fmu) = \lambda(\omega,\fmu),\forall\fmu\in H(\curl,\Omega)\cap H_0(\dv,\Omega)
	\label{eq:3d-primal-second}
\end{equation}

To compare and illustrate the influence of the boundary condition, we just make slight modification on $\fV^{\mathring{\mathrm c}{\rm d}}_h$, rather than define directly the space $\fV^{\od\cap\mathring{\odelta}}_h\Lambda^1$. Denote by $\mathbb{V}^{\rm Ned}_{h0}$ the Nedelec element space on $\mathcal{T}_h$ with homogeneous boundary condition and by $V^{\rm CR}_{h}$ the Crouzeix-Raviart element space on $\mathcal{T}_h$. The finite element space reads:
\begin{multline}
	\fV^{\rm c\mathring{\mathrm d} }_h:=\Big\{\fmu_h\in P_{\rm rd}(\mathcal{G}_h):
	(\dv_h\fmu_h,\ftau_h)=-(\fmu_h,\nabla_h\ftau_h),\ \forall\,\ftau_h\in \mathbb{V}^{\rm CR}_h, 
	\\
	\mbox{and}\ \  
	(\rot_h\fmu_h,\feta_h)=\langle\fmu_h,\curl\feta_h),\ \forall\,\feta_h\in V^{\rm Ned}_{h0} \Big\}.
\end{multline}
We remark here that the homogeneous boundary condition for the adjoint partner is transferred from the Crouzeix-Raviart element space side to the Nedelec element space side compared to $\fV^{\mathring{\mathrm c}{\rm d}}_h$. 

The discrete primal weak formulation is to find
\[
(\lambda_h,\omega_h)\in 
\mathbb R\times \fV^{\rm c \mathring{\rm d} }_h,
\qquad \omega_h\ne0,
\]
such that
\begin{equation}
	(\dv_h\fomega_h,\dv_h\fmu_h)
	+
	(\curl_h\fomega_h,\curl_h\fmu_h)
	=
	\lambda_h(\fomega_h,\fmu_h),
	\qquad
	\forall\,\fmu_h\in \fV^{{\rm c}\mathring{\mathrm d}}_h .
	\label{eq:3d-primal-second-discrete}
\end{equation}

The corresponding mixed formulation is to find $(\lambda,\sigma,u)\in \mathbb{R}\times H^1(\Omega) \times H(\curl,\Omega)$ such that
\begin{equation}\label{eq:3d-mixed-second}
	\left\{
	\begin{array}{cclll}
		(\sigma,\tau)&-(u,\grad\tau)&=0&\forall\,\tau\in H^1(\Omega)
		\\
		(\grad\sigma,v)&+(\curl u ,\curl v)&=\lambda (u,v)&\forall\,v\in H(\curl,\Omega)
	\end{array}
	\right.
\end{equation}

The discrete mixed formulation is to find
\[
(\lambda_h,\sigma_h,u_h)\in
\mathbb R\times\mathbb{V}^1_{h}\times\mathbb{V}^{\rm Ned}_{h},
\qquad u_h\ne0,
\]
such that
\begin{equation}\label{eq:3d-mixed-second-discrete}
	\left\{
	\begin{aligned}
		(\sigma_h,\tau_h)-(u_h,\grad\tau_h)&=0,
		&& \forall\,\tau_h\in \mathbb{V}^1_{h},\\
		(\grad\sigma_h,v_h)+(\curl u_h,\curl v_h)&=\lambda_h(u_h,v_h),
		&& \forall\,v_h\in \mathbb{V}^{\rm Ned}_{h} .
	\end{aligned}
	\right.
\end{equation}



%
%


 For this type of boundary condition, the number of zero eigenvalues is determined by the first Betti number $\mathtt{b}_1$ of the domain.

To examine the influence of boundary conditions on the numerical results, we consider $\Omega_2
= [0,1]^3\setminus
\left[
(0.2,0.4)\times Y\times Y
\;\cup\;
(0.6,0.8)\times Y\times(0,1)
\right],Y = (0.2,0.4)\cup(0.6,0.8)$. This domain combines cavity-type and handle-type topological features, and is illustrated in Figure~\ref{fig:3d-complex-domain}. Its first Betti number $\mathtt{b}_1=2$ and second Betti number $\mathtt{b}_2=4$. Tables~\ref{tab:3d-complex-new-first} and~\ref{tab:3d-complex-mixed-first} report the computed eigenvalues under the boundary condition \(H_0(\rot,\Omega)\cap H(\dv,\Omega)\), while the results under the boundary condition \(H_0(\dv,\Omega)\cap H(\rot,\Omega)\) are presented in Tables~\ref{New Element, six hole, second bdc}--\ref{Mixed Element, six hole, second bdc}.

\begin{table}[htbp]
	\centering
	\small
	\setlength{\tabcolsep}{3pt}
	\begin{tabular}{|c|*{10}{c|}}
		\hline
		\(L\) & \(\lambda_h^1\) & \(\lambda_h^2\) & \(\lambda_h^3\) & \(\lambda_h^4\) & \(\lambda_h^5\) & \(\lambda_h^6\) & \(\lambda_h^7\) & \(\lambda_h^8\) & \(\lambda_h^9\) & \(\lambda_h^{10}\)\\
		\hline
		1 & 0.000 & 0.000 & 0.000 & 0.000 & 9.124 & 9.140 & 17.248 & 17.381 & 26.886 & 27.005 \\ \hline
		2 & 0.000 & 0.000 & 0.000 & 0.000 & 9.407 & 9.513 & 16.579 & 16.698 & 26.023 & 26.193 \\ \hline
		3 & 0.000 & 0.000 & 0.000 & 0.000 & 9.492 & 9.674 & 16.242 & 16.359 & 25.367 & 25.813 \\ \hline
		4 & 0.000 & 0.000 & 0.000 & 0.000 & 9.515 & 9.734 & 16.094 & 16.213 & 25.036 & 25.651 \\
		\hline
	\end{tabular}
	\caption{Computed eigenvalues on \(\Omega_2\) shown in Figure~\ref{fig:3d-complex-domain}, using the primal weak formulation \eqref{eq:3d-primal-first-discrete}.}
	\label{tab:3d-complex-new-first}
\end{table}

\begin{table}[htbp]
	\centering
	\small
	\setlength{\tabcolsep}{3pt}
	\begin{tabular}{|c|*{10}{c|}}
		\hline
		\(L\) & \(\lambda_h^1\) & \(\lambda_h^2\) & \(\lambda_h^3\) & \(\lambda_h^4\) & \(\lambda_h^5\) & \(\lambda_h^6\) & \(\lambda_h^7\) & \(\lambda_h^8\) & \(\lambda_h^9\) & \(\lambda_h^{10}\)\\
		\hline
		1 & 0.000 & 0.000 & 0.000 & 0.000 & 9.162 & 9.179 & 17.343 & 17.537 & 27.162 & 27.351 \\ \hline
		2 & 0.000 & 0.000 & 0.000 & 0.000 & 9.417 & 9.523 & 16.604 & 16.731 & 26.092 & 26.269 \\ \hline
		3 & 0.000 & 0.000 & 0.000 & 0.000 & 9.494 & 9.676 & 16.248 & 16.367 & 25.384 & 25.830 \\ \hline
		4 & 0.000 & 0.000 & 0.000 & 0.000 & 9.515 & 9.734 & 16.096 & 16.215 & 25.040 & 25.655 \\
		\hline
	\end{tabular}
	\caption{Computed eigenvalues on \(\Omega_2\) shown in Figure~\ref{fig:3d-complex-domain}, using the mixed formulation \eqref{eq:3d-mixed-first-discrete}.}
	\label{tab:3d-complex-mixed-first}
\end{table}

\begin{table}[htbp]
	\begin{tabular}{|c|c|c|c|c|c|c|c|c|c|c|}
		\hline
		L & $\lambda_h^1$ & $\lambda_h^2$ & $\lambda_h^3$ & $\lambda_h^4$ & $\lambda_h^5$ &$\lambda_h^6$ & $\lambda_h^7$ & $\lambda_h^8$ & $\lambda_h^9$ & $\lambda_h^{10}$\\
		\hline
		1 & 0.000 & 0.000 & 6.958 & 7.338 & 8.507 & 8.736 & 8.973 & 13.233 & 13.417 & 16.041 \\
		\hline
		2 & 0.000 & 0.000 & 7.491 & 7.767 & 9.122 & 9.252 & 9.385 & 14.783 & 14.840 & 16.796 \\
		\hline
		3 & 0.000 & 0.000 & 7.711 & 7.954 & 9.353 & 9.367 & 9.627 & 15.477 & 15.580 & 17.100 \\
		\hline
		4 & 0.000 & 0.000 & 7.799 & 8.031 & 9.390 & 9.464 & 9.717 & 15.784 & 15.899 & 17.218 \\
		\hline
	\end{tabular}
	\caption{Computed eigenvalues on \(\Omega_2\) shown in Figure~\ref{fig:3d-complex-domain}, using the primal weak formulation \eqref{eq:3d-primal-second-discrete}.}
	\label{New Element, six hole, second bdc}
\end{table}

\begin{table}[htbp]
	\begin{tabular}{|c|c|c|c|c|c|c|c|c|c|c|}
		\hline
		L & $\lambda_h^1$ & $\lambda_h^2$ & $\lambda_h^3$ & $\lambda_h^4$ & $\lambda_h^5$ &$\lambda_h^6$ & $\lambda_h^7$ & $\lambda_h^8$ & $\lambda_h^9$ & $\lambda_h^{10}$\\
		\hline
		1 & 0.000 & 0.000 & 8.825 & 8.974 & 9.162 & 9.179 & 9.889 & 17.343 & 17.537 & 19.520 \\
		\hline
		2 & 0.000 & 0.000 & 8.302 & 8.489 & 9.417 & 9.523 & 9.605 & 16.604 & 16.731 & 18.126 \\
		\hline
		3 & 0.000 & 0.000 & 8.042 & 8.250 & 9.488 & 9.494 & 9.676 & 16.248 & 16.367 & 17.597 \\
		\hline
		4 & 0.000 & 0.000 & 7.930 & 8.148 & 9.441 & 9.515 & 9.734 & 16.096 & 16.215 & 17.404 \\
		\hline
	\end{tabular}
	\caption{Computed eigenvalues on \(\Omega_2\) shown in Figure~\ref{fig:3d-complex-domain}, using the mixed formulation \eqref{eq:3d-mixed-second-discrete}.}
	\label{Mixed Element, six hole, second bdc}
\end{table}

As in the cases of \(\Omega_1\), we further compare the
numerical eigenvalues and eigenvectors produced by the primal weak formulation \eqref{eq:3d-primal-first-discrete} and the mixed formulation \eqref{eq:3d-mixed-first-discrete}. The positive eigenvalues among the first
ten computed eigenvalues, together with the associated eigenvectors, are
considered in the comparison. The corresponding results are presented in
Figure~\ref{fig:omega2-eig-errors}.

\begin{figure}[htbp]
	\centering
	\hspace{-2.0em}%
	\begin{subfigure}[t]{0.32\textwidth}
		\makebox[\linewidth][l]{%
			\hspace{-1.5em}\includegraphics[width=1.3\linewidth]{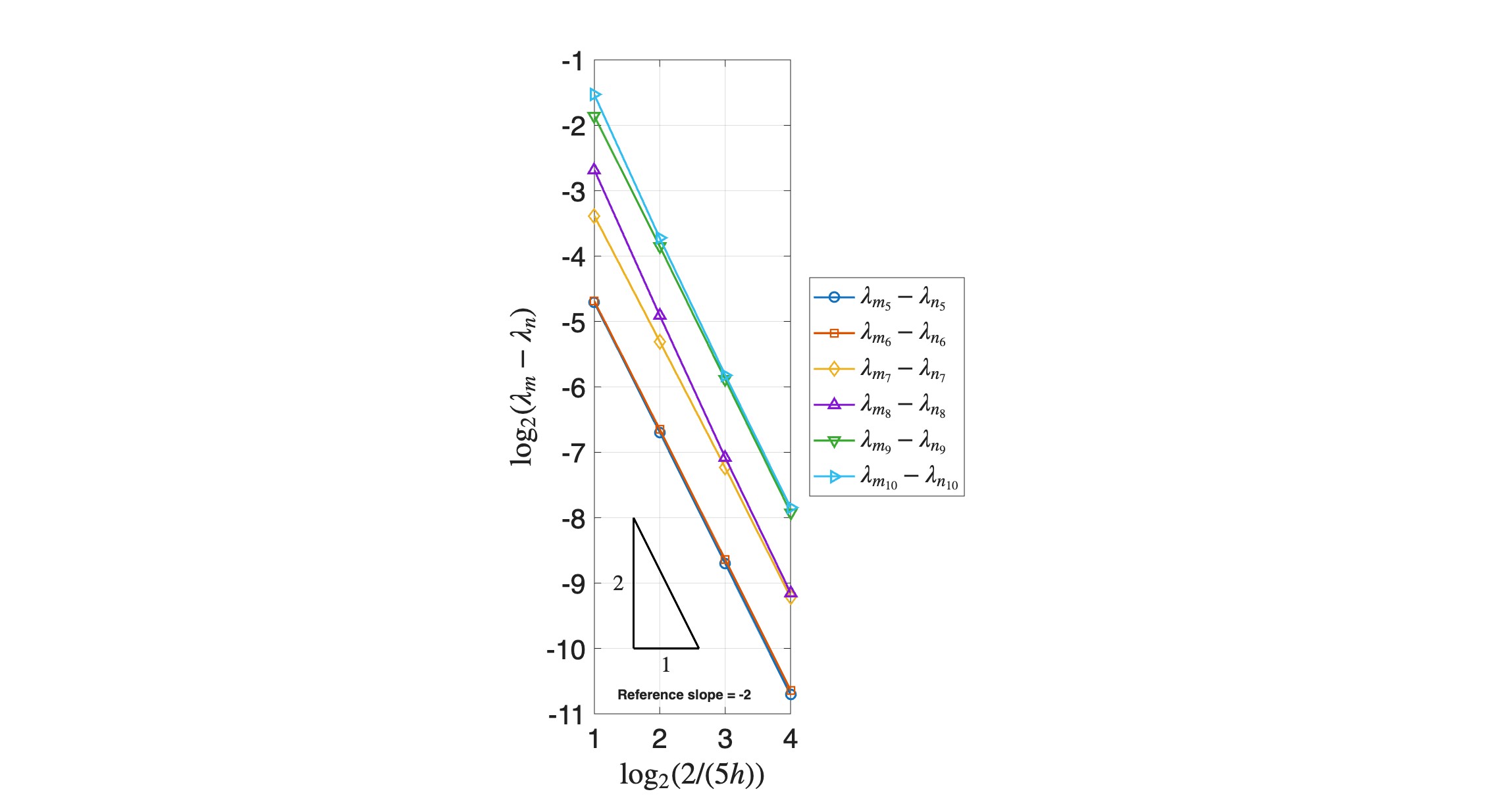}%
		}
		\caption{}
		\label{fig:value-omega2}
	\end{subfigure}%
	\hspace{-0.2em}%
	\begin{subfigure}[t]{0.32\textwidth}
		\makebox[\linewidth][l]{%
			\hspace{-1.0em}\includegraphics[width=1.3\linewidth]{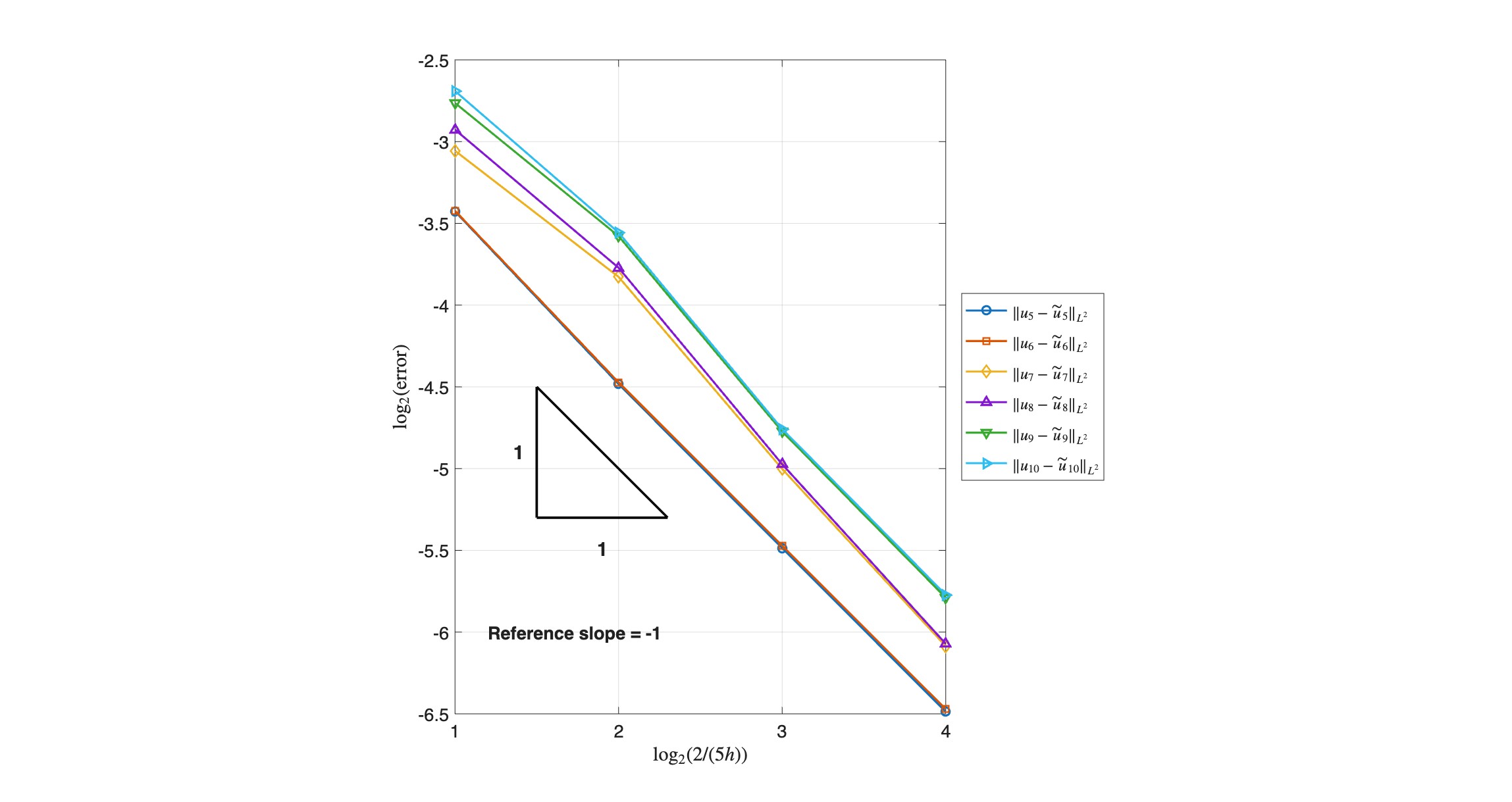}%
		}
		\caption{}
		\label{fig:eig10-l2-omega2}
	\end{subfigure}%
	\hspace{-0.4em}%
	\begin{subfigure}[t]{0.32\textwidth}
		\makebox[\linewidth][l]{%
			\hspace{-1.0em}\includegraphics[width=1.3\linewidth]{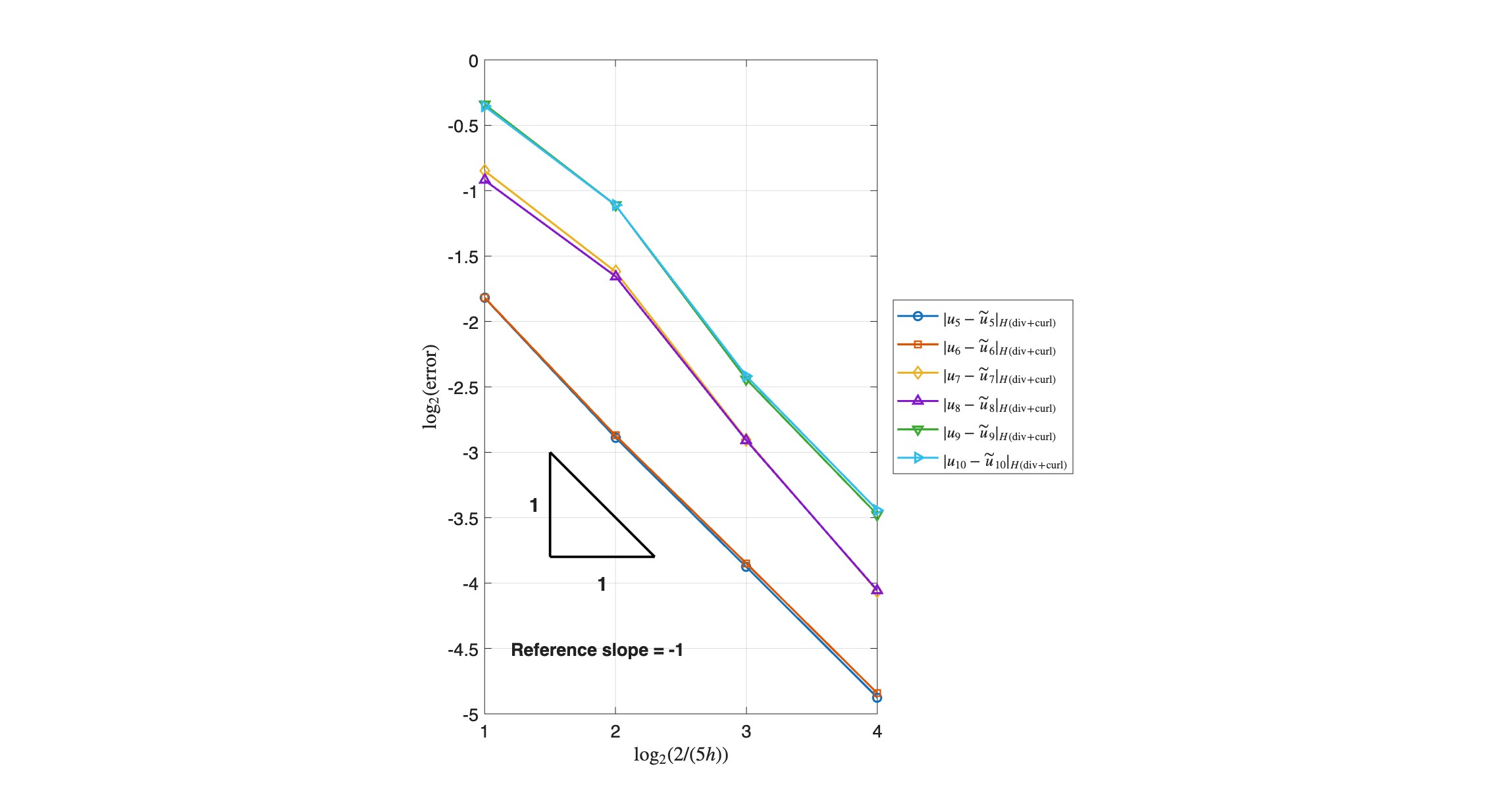}%
		}
		\caption{}
		\label{fig:eig10-divcurl-omega2}
	\end{subfigure}
	
	\caption{%
		(a) Log--log plots of the differences between the first six positive eigenvalues computed by the mixed formulation and the proposed element on \(\Omega_2\); 
		(b) Errors between the corresponding six matched eigenvectors computed by the mixed formulation and the proposed element in the \(L^2\) norm on \(\Omega_2\); 
		(c) Errors between the corresponding six matched eigenvectors computed by the mixed formulation and the proposed element in the combined \(H(\dv)\)--\(H(\curl)\) norm on \(\Omega_2\).}
	\label{fig:omega2-eig-errors}
\end{figure}

\paragraph{\bf Summary.}

For all tested three-dimensional domains, the eigenvalues computed by the proposed element agree well with those obtained from the mixed formulation. The agreement remains valid for domains with nontrivial topology. In particular, for the domain with multiple cavities and through-holes, the proposed element correctly captures the expected zero eigenvalues associated with the corresponding Betti number. It is also worth noting that, whenever zero eigenvalues occur, either simple or multiple, we further compared the corresponding zero eigenspaces computed by the proposed element and by the mixed element. The comparison shows that the two discretizations give exactly the same eigenspace associated with the eigenvalue 0, up to roundoff error. This demonstrates that the proposed element preserves the relevant topological structure of the continuous problem.

\section{Concluding remarks}
\label{sec:conc}

In this paper, we constructed a unified nonconforming primal finite element framework for the Hodge-Laplace problem on $H\Lambda^k\cap H^*_0\Lambda^k$, for any $n\geqslant 2$ and $1\leqslant k\leqslant n-1$ on simplicial meshes. The construction is based on adjoint continuity: instead of imposing trace continuity directly, the space is defined by discrete adjoint identities against Whitney-type test spaces. This construction preserves the key FEEC structures needed for topology-sensitive problems. In particular, discrete harmonic forms are recovered as the joint kernel of the discrete exterior derivative and codifferential, the discrete Poincar\'e inequality holds uniformly in $h$, and the resulting primal scheme is well posed. By linking the primal scheme to a classical mixed FEEC discretization, we obtain the error estimate in Theorem~\ref{thm:convprimsch}, including the $\mathcal{O}(h^s)$ rate on $s$-regular domains. The numerical eigenvalue tests in Section~\ref{sec:numer} (including domains with nontrivial topology) are consistent with the theoretical structure and with mixed-method reference results.

A systematic higher-order extension that preserves the same adjoint continuity and topological robustness will still be of interest, particularly for smoother solutions and hp/adaptive settings.  For the topology-sensitive domains emphasized here, however, the error rate is fundamentally constrained by solution regularity: on $s$-regular domains, the dominant behavior is $\mathcal{O}(h^s)$ regardless of polynomial enrichment beyond degree one. In this sense, the present low-degree construction is already aligned with the regularity-limited regime addressed in Theorem~\ref{thm:convprimsch} as well as Theorem 7.10 of \cite{Arnold.D;Falk.R;Winther.R2006acta}. At the same time, the space remains non-Ciarlet type in nature, so basis construction and implementation rely on the adjoint-constraint framework rather than standard nodal continuity machinery. On the computational side, robust preconditioners and conditioning estimates adapted to the discrete Hodge decomposition are needed for large-scale solvers. It is also natural to investigate whether the same adjoint-construction strategy can be transferred to other coupled systems, such as Stokes, Maxwell, and linear elasticity.

\appendix

\section*{Guide to the appendices}
Appendices~\ref{app:lagrange-eigenvalues} and~\ref{app:cr-eigenvalues} document eigenvalue counterexamples with vector Lagrange and vector Crouzeix--Raviart elements; they supplement the proposed-element and mixed results in Section~\ref{sec:numer}.
Appendix~\ref{sec:ncbasisfunction} illustrates the adjoint partner spaces $\fW^{*,\rm nc}_h\Lambda^k$ used in~\eqref{eq:deffems}; Appendices~\ref{sec:app:basis-2d} and~\ref{sec:app:basis-3d} give explicit basis constructions for the proposed trial space $\fV^{\od\cap\mathring{\odelta}}_h\Lambda^1$ and $\fV^{\od\cap\mathring{\odelta}}_h\Lambda^2$ in two and three dimensions, respectively. Appendix~\ref{app:bvp-results} reports manufactured-solution boundary-value tests on $[0,1]^2$ and $[0,1]^3$, including log--log plots of mesh-refinement errors.

\section{Eigenvalue results obtained by the vector Lagrange element}
\label{app:lagrange-eigenvalues}

This appendix collects supplementary eigenvalue results for the
linear vector Lagrange element, to be compared with the proposed-element and mixed results in Section~\ref{sec:numer}. 

\subsection{Two-dimensional vector Lagrange discretization and results}
\label{app:lagrange-eigenvalues-2d}

In two dimensions, the vector Lagrange space is used as a replacement for the
proposed space \(\fV^{\rm r\mathring d}_h\) in
\eqref{eq:2d-primal-discrete}, which discretizes the primal formulation
\eqref{primal2D}.

Let
\[
S_{h,2D}^{\rm Lag}
=
\{v_h\in H^1(\Omega): v_h|_T\in \mathbb P_1(T),
\ \forall T\in\mathcal{G}_h\}.
\]
For the two-dimensional primal formulation \eqref{primal2D}, we define
\[
\boldsymbol V_{h,2D}^{\rm Lag}
=
[S_{h,2D}^{\rm Lag}]^2
\cap
\bigl(H(\rot,\Omega)\cap H_0(\dv,\Omega)\bigr).
\]
The corresponding vector Lagrange discretization is to find
\[
(\lambda_h,\omega_h)\in
\mathbb R\times \boldsymbol V_{h,2D}^{\rm Lag},
\qquad \omega_h\ne0,
\]
such that
\begin{equation}
	(\dv\omega_h,\dv\tau_h)
	+
	(\rot\omega_h,\rot\tau_h)
	=
	\lambda_h(\omega_h,\tau_h),
	\qquad
	\forall\,\tau_h\in \boldsymbol V_{h,2D}^{\rm Lag}.
	\label{eq:lagrange-2d-primal-discrete}
\end{equation}

The corresponding two-dimensional eigenvalue results are reported below.

\begin{table}[htbp]
	\centering
	\small
	\setlength{\tabcolsep}{3pt}
	\begin{tabular}{|c|*{10}{c|}}
		\hline
		$L$ & $\lambda_h^1$ & $\lambda_h^2$ & $\lambda_h^3$ & $\lambda_h^4$ & $\lambda_h^5$ & $\lambda_h^6$ & $\lambda_h^7$ & $\lambda_h^8$ & $\lambda_h^9$ & $\lambda_h^{10}$ \\
		\hline
		1 & 10.211 & 10.211 & 20.608 & 20.608 & 45.012 & 45.012 & 56.070 & 56.070 & 56.070 & 56.070 \\ \hline
		2 &  9.954 &  9.954 & 19.952 & 19.952 & 40.843 & 40.843 & 50.977 & 50.977 & 50.977 & 50.977 \\ \hline
		3 &  9.891 &  9.891 & 19.792 & 19.792 & 39.817 & 39.817 & 49.751 & 49.751 & 49.751 & 49.751 \\ \hline
		4 &  9.875 &  9.875 & 19.752 & 19.752 & 39.563 & 39.563 & 49.449 & 49.449 & 49.449 & 49.449 \\
		\hline
	\end{tabular}
	\caption{Computed eigenvalues on \(\Omega=[0,1]^2\) using the vector Lagrange discretization \eqref{eq:lagrange-2d-primal-discrete}, corresponding to the two-dimensional primal formulation \eqref{primal2D}.}
	\label{tab:sq-primal-lagrange}
\end{table}

\begin{table}[htbp]
	\centering
	\small
	\setlength{\tabcolsep}{3pt}
	\begin{tabular}{|c|*{10}{c|}}
		\hline
		$L$ & $\lambda_h^1$ & $\lambda_h^2$ & $\lambda_h^3$ & $\lambda_h^4$ & $\lambda_h^5$ & $\lambda_h^6$ & $\lambda_h^7$ & $\lambda_h^8$ & $\lambda_h^9$ & $\lambda_h^{10}$ \\
		\hline
		1 & 17.418 & 17.418 & 45.012 & 45.012 & 52.760 & 52.760 & 80.195 & 80.195 & 93.723 & 93.723 \\ \hline
		2 & 15.731 & 15.731 & 40.843 & 40.843 & 47.644 & 47.644 & 67.110 & 67.110 & 82.432 & 82.432 \\ \hline
		3 & 15.026 & 15.026 & 39.817 & 39.817 & 46.282 & 46.282 & 63.522 & 63.522 & 79.808 & 79.808 \\ \hline
		4 & 14.664 & 14.664 & 39.563 & 39.563 & 45.870 & 45.870 & 62.209 & 62.209 & 79.169 & 79.169 \\
		\hline
	\end{tabular}
	\caption{Computed eigenvalues on the L-shaped domain shown in Figure~\ref{fig:Lshape} using the vector Lagrange discretization \eqref{eq:lagrange-2d-primal-discrete}, corresponding to the two-dimensional primal formulation \eqref{primal2D}.}
	\label{tab:L-primal-lagrange}
\end{table}

\begin{table}[htbp]
	\centering
	\small
	\setlength{\tabcolsep}{3pt}
	\begin{tabular}{|c|*{10}{c|}}
		\hline
		$L$ & $\lambda_h^1$ & $\lambda_h^2$ & $\lambda_h^3$ & $\lambda_h^4$ & $\lambda_h^5$ & $\lambda_h^6$ & $\lambda_h^7$ & $\lambda_h^8$ & $\lambda_h^9$ & $\lambda_h^{10}$ \\
		\hline
		1 & 19.407 & 19.407 & 39.473 & 39.473 & 52.977 & 52.977 & 71.814 & 71.814 & 89.737 & 89.737 \\ \hline
		2 & 17.451 & 17.451 & 31.855 & 31.855 & 46.680 & 46.680 & 56.952 & 56.952 & 74.032 & 74.032 \\ \hline
		3 & 16.594 & 16.594 & 28.731 & 28.731 & 44.767 & 44.767 & 53.076 & 53.076 & 69.231 & 69.231 \\ \hline
		4 & 16.136 & 16.136 & 27.107 & 27.107 & 44.057 & 44.057 & 51.652 & 51.652 & 67.177 & 67.177 \\
		\hline
	\end{tabular}
	\caption{Computed eigenvalues on the square domain with an interior hole shown in Figure~\ref{fig:hole} using the vector Lagrange discretization \eqref{eq:lagrange-2d-primal-discrete}, corresponding to the two-dimensional primal formulation \eqref{primal2D}.}
	\label{tab:hole-primal-lagrange}
\end{table}

\subsection{Three-dimensional vector Lagrange discretization and results}
\label{app:lagrange-eigenvalues-3d}

In three dimensions, we consider the primal formulation with respect to
\(H(\dv,\Omega)\cap H_0(\curl,\Omega)\). The vector Lagrange space is used as a replacement
for the proposed space \(\fV^{\mathring{\mathrm r}{\rm d}}_h\) in
\eqref{eq:3d-primal-first-discrete}, which discretizes the primal formulation
\eqref{eq:3d-primal-first}.

Let
\[
S_{h,3D}^{\rm Lag}
=
\{v_h\in H^1(\Omega): v_h|_T\in \mathbb P_1(T),
\ \forall T\in\mathcal{G}_h\}.
\]
For the three-dimensional primal formulation with respect to
\(H(\dv,\Omega)\cap H_0(\curl,\Omega)\), we define
\[
\boldsymbol V_{h,3D}^{\rm Lag}
=
[S_{h,3D}^{\rm Lag}]^3
\cap
\bigl(H(\dv,\Omega)\cap H_0(\curl,\Omega)\bigr).
\]
The corresponding vector Lagrange discretization is to find
\[
(\lambda_h,\omega_h)\in
\mathbb R\times \boldsymbol V_{h,3D}^{\rm Lag},
\qquad \omega_h\ne0,
\]
such that
\begin{equation}
	(\dv\omega_h,\dv\tau_h)
	+
	(\curl\omega_h,\curl\tau_h)
	=
	\lambda_h(\omega_h,\tau_h),
	\qquad
	\forall\,\tau_h\in \boldsymbol V_{h,3D}^{\rm Lag}.
	\label{eq:lagrange-3d-primal-first-discrete}
\end{equation}

The corresponding three-dimensional eigenvalue results are reported below.

\begin{table}[htbp]
	\centering
	\small
	\setlength{\tabcolsep}{3pt}
	\begin{tabular}{|c|*{10}{c|}}
		\hline
		\(L\) & \(\lambda_h^1\) & \(\lambda_h^2\) & \(\lambda_h^3\) & \(\lambda_h^4\) & \(\lambda_h^5\) & \(\lambda_h^6\) & \(\lambda_h^7\) & \(\lambda_h^8\) & \(\lambda_h^9\) & \(\lambda_h^{10}\)\\
		\hline
		1 & 22.838 & 22.838 & 22.838 & 37.293 & 37.293 & 37.293 & 62.453 & 62.453 & 62.453 & 71.457 \\ \hline
		2 & 20.503 & 20.503 & 20.503 & 31.509 & 31.509 & 31.509 & 52.609 & 52.609 & 52.609 & 54.575 \\ \hline
		3 & 19.930 & 19.930 & 19.930 & 30.084 & 30.084 & 30.084 & 50.165 & 50.165 & 50.165 & 50.630 \\ \hline
		4 & 19.787 & 19.787 & 19.787 & 29.728 & 29.728 & 29.728 & 49.552 & 49.552 & 49.552 & 49.667 \\
		\hline
	\end{tabular}
	\caption{Computed eigenvalues on \(\Omega=[0,1]^3\) using the vector Lagrange discretization \eqref{eq:lagrange-3d-primal-first-discrete}, corresponding to the three-dimensional primal formulation with respect to \(H(\dv,\Omega)\cap H_0(\curl,\Omega)\), namely \eqref{eq:3d-primal-first}.}
	\label{tab:3d-cube-lagrange}
\end{table}

\begin{table}[htbp]
	\centering
	\small
	\setlength{\tabcolsep}{3pt}
	\begin{tabular}{|c|*{10}{c|}}
		\hline
		\(L\) & \(\lambda_h^1\) & \(\lambda_h^2\) & \(\lambda_h^3\) & \(\lambda_h^4\) & \(\lambda_h^5\) & \(\lambda_h^6\) & \(\lambda_h^7\) & \(\lambda_h^8\) & \(\lambda_h^9\) & \(\lambda_h^{10}\)\\
		\hline
		1 & 22.721 & 23.634 & 25.790 & 28.346 & 36.700 & 40.694 & 56.825 & 57.045 & 63.791 & 68.227 \\ \hline
		2 & 13.030 & 18.518 & 20.541 & 22.443 & 30.541 & 33.227 & 35.167 & 38.375 & 46.014 & 49.681 \\ \hline
		3 & 4.634 & 12.735 & 15.612 & 18.327 & 21.557 & 26.625 & 28.629 & 30.702 & 31.754 & 35.672 \\ \hline
		4 & 1.341 & 3.708 & 9.613 & 9.784 & 9.933 & 14.384 & 14.404 & 16.443 & 20.643 & 21.309 \\
		\hline
	\end{tabular}
	\caption{Computed eigenvalues on \(\Omega_1\) shown in Figure~\ref{fig:3d-through-domain} using the vector Lagrange discretization \eqref{eq:lagrange-3d-primal-first-discrete}, corresponding to the three-dimensional primal formulation with respect to \(H(\dv,\Omega)\cap H_0(\curl,\Omega)\), namely \eqref{eq:3d-primal-first}.}
	\label{tab:3d-through-lagrange}
\end{table}

\begin{table}[htbp]
	\centering
	\small
	\setlength{\tabcolsep}{3pt}
	\begin{tabular}{|c|*{10}{c|}}
		\hline
		\(L\) & \(\lambda_h^1\) & \(\lambda_h^2\) & \(\lambda_h^3\) & \(\lambda_h^4\) & \(\lambda_h^5\) & \(\lambda_h^6\) & \(\lambda_h^7\) & \(\lambda_h^8\) & \(\lambda_h^9\) & \(\lambda_h^{10}\)\\
		\hline
		1 & 21.085 & 21.351 & 22.319 & 25.715 & 33.602 & 34.655 & 51.717 & 54.819 & 59.254 & 60.737 \\ \hline
		2 & 13.179 & 16.340 & 17.601 & 19.744 & 22.272 & 27.151 & 29.351 & 31.654 & 32.334 & 41.989 \\ \hline
		3 & 4.918 & 7.508 & 11.822 & 11.965 & 14.570 & 14.994 & 15.168 & 18.498 & 21.025 & 21.263 \\ \hline
		4 & 1.448 & 2.141 & 3.500 & 4.176 & 5.100 & 6.810 & 6.850 & 8.104 & 8.344 & 8.958 \\
		\hline
	\end{tabular}
	\caption{Computed eigenvalues on \(\Omega_2\) shown in Figure~\ref{fig:3d-complex-domain} using the vector Lagrange discretization \eqref{eq:lagrange-3d-primal-first-discrete}, corresponding to the three-dimensional primal formulation with respect to \(H(\dv,\Omega)\cap H_0(\curl,\Omega)\), namely \eqref{eq:3d-primal-first}.}
	\label{tab:3d-complex-lagrange}
\end{table}

\FloatBarrier
\subsection{Summary of the vector Lagrange element}
\label{app:lagrange-eigenvalues-summary}

The vector Lagrange element gives reasonable eigenvalue approximations on
convex domains, such as the square and the cube. However, this favorable
behavior does not persist on nonconvex or multiply connected domains, where
the computed spectra deviate significantly from the expected results. In
particular, on the perforated square \(\Omega_{\rm H}\) (Figure~\ref{fig:hole}), where the first Betti number is~1, the smallest computed eigenvalue at mesh level \(L=4\) is \(\lambda_h^1\approx 16.1\) (Table~\ref{tab:hole-primal-lagrange}), whereas both the proposed element and the mixed formulation in Section~\ref{sec:numer} correctly capture a simple zero eigenvalue (Tables~\ref{tab:hole-primal-new} and~\ref{tab:hole-mix}). More generally, the method does not reliably reproduce the zero eigenvalues
associated with the topology of the domain. Therefore, although the vector
Lagrange element may be effective on convex domains, it is not robust as a
general discretization for the eigenvalue problems considered here.

\section{Eigenvalue results obtained by the vector Crouzeix--Raviart element}
\label{app:cr-eigenvalues}

This appendix reports eigenvalue computations based on the vector
Crouzeix--Raviart element, to be compared with the proposed-element and mixed results in Section~\ref{sec:numer}. We remark that relevant numerical experiments concerning the Crouzeix-Raviart element were reported by \cite{Brenner.S;Cui.J;Li.F;Sung.L2008nm}. 

\subsection{Two-dimensional vector Crouzeix--Raviart discretizations and results}
\label{app:cr-eigenvalues-2d}

In two dimensions, the continuous space is
\[
\boldsymbol V_{2D}
=
H_0(\operatorname{div};\Omega)\cap H(\operatorname{rot};\Omega).
\]
Let \(\mathcal F_h^i\) and \(\mathcal F_h^b\) denote the sets of interior and
boundary edges of \(\mathcal{G}_h\), respectively. The scalar
Crouzeix--Raviart space is defined by
\[
CR_h
=
\left\{
v_h\in L^2(\Omega):
v_h|_T\in \mathbb P_1(T),\ \forall T\in\mathcal{G}_h,
\quad
\int_F \llbracket v_h\rrbracket\,ds=0,\ \forall F\in\mathcal F_h^i
\right\}.
\]
The two-dimensional vector Crouzeix--Raviart space used for
\(H_0(\operatorname{div};\Omega)\cap H(\operatorname{rot};\Omega)\) is
\[
\boldsymbol V_{h,2D}^{\rm CR}
=
\left\{
\boldsymbol v_h\in [CR_h]^2:
\int_F \boldsymbol v_h\cdot\boldsymbol n\,ds=0,
\ \forall F\in\mathcal F_h^b
\right\}.
\]

\subsubsection{Direct discretization}

The direct vector Crouzeix--Raviart discretization is to find
\[
(\lambda_h,\boldsymbol u_h)\in
\mathbb R\times \boldsymbol V_{h,2D}^{\rm CR},
\qquad
\boldsymbol u_h\ne\boldsymbol 0,
\]
such that
\begin{equation}
	a_{h,2D}(\boldsymbol u_h,\boldsymbol v_h)
	=
	\lambda_h(\boldsymbol u_h,\boldsymbol v_h),
	\qquad
	\forall\,\boldsymbol v_h\in \boldsymbol V_{h,2D}^{\rm CR},
	\label{eq:cr-2d-direct-discrete}
\end{equation}
where
\[
a_{h,2D}(\boldsymbol u_h,\boldsymbol v_h)
=
\sum_{T\in\mathcal{G}_h}
\int_T
\left(
\operatorname{div}\boldsymbol u_h\,\operatorname{div}\boldsymbol v_h
+
\operatorname{rot}\boldsymbol u_h\,\operatorname{rot}\boldsymbol v_h
\right).
\]
The corresponding two-dimensional eigenvalue results are listed below.

\begin{table}[htbp]
	\centering
	\small
	\setlength{\tabcolsep}{3pt}
	\begin{tabular}{|c|*{10}{c|}}
		\hline
		$L$ & $\lambda_h^1$ & $\lambda_h^2$ & $\lambda_h^3$ & $\lambda_h^4$ & $\lambda_h^5$ & $\lambda_h^6$ & $\lambda_h^7$ & $\lambda_h^8$ & $\lambda_h^9$ & $\lambda_h^{10}$ \\
		\hline
		1 & 19.568 & 19.568 & 38.797 & 38.797 & 76.056 & 76.056 & 94.284 & 94.284 & 94.284 & 94.284 \\ \hline
		2 & 19.697 & 19.697 & 39.309 & 39.309 & 78.271 & 78.271 & 97.629 & 97.629 & 97.629 & 97.629 \\ \hline
		3 & 19.729 & 19.729 & 39.436 & 39.436 & 78.787 & 78.787 & 98.431 & 98.431 & 98.431 & 98.431 \\ \hline
		4 & 19.737 & 19.737 & 39.468 & 39.468 & 78.915 & 78.915 & 98.630 & 98.630 & 98.630 & 98.630 \\
		\hline
	\end{tabular}
	\caption{Computed eigenvalues on \(\Omega=[0,1]^2\), using \eqref{eq:cr-2d-direct-discrete}.}
	\label{tab:sq-primal-cr}
\end{table}

\begin{table}[htbp]
	\centering
	\small
	\setlength{\tabcolsep}{3pt}
	\begin{tabular}{|c|*{10}{c|}}
		\hline
		$L$ & $\lambda_h^1$ & $\lambda_h^2$ & $\lambda_h^3$ & $\lambda_h^4$ & $\lambda_h^5$ & $\lambda_h^6$ & $\lambda_h^7$ & $\lambda_h^8$ & $\lambda_h^9$ & $\lambda_h^{10}$ \\
		\hline
		1 & 10.931 & 28.089 & 69.092 & 76.056 & 76.056 & 87.522 & 92.974 & 116.905 & 146.675 & 146.675 \\ \hline
		2 & 11.462 & 28.211 & 74.346 & 78.271 & 78.271 & 90.240 & 98.357 & 120.230 & 155.188 & 155.188 \\ \hline
		3 & 11.670 & 28.255 & 76.116 & 78.787 & 78.787 & 90.896 & 99.872 & 121.217 & 157.236 & 157.236 \\ \hline
		4 & 11.752 & 28.267 & 76.744 & 78.915 & 78.915 & 91.060 & 100.341 & 121.484 & 157.744 & 157.744 \\
		\hline
	\end{tabular}
	\caption{Computed eigenvalues on the L-shaped domain shown in Figure~\ref{fig:Lshape}, using \eqref{eq:cr-2d-direct-discrete}.}
	\label{tab:L-primal-cr}
\end{table}

\begin{table}[htbp]
	\centering
	\small
	\setlength{\tabcolsep}{3pt}
	\begin{tabular}{|c|*{10}{c|}}
		\hline
		$L$ & $\lambda_h^1$ & $\lambda_h^2$ & $\lambda_h^3$ & $\lambda_h^4$ & $\lambda_h^5$ & $\lambda_h^6$ & $\lambda_h^7$ & $\lambda_h^8$ & $\lambda_h^9$ & $\lambda_h^{10}$ \\
		\hline
		1 & 14.641 & 14.837 & 36.549 & 60.880 & 68.017 & 76.117 & 87.467 & 97.405 & 109.834 & 121.091 \\ \hline
		2 & 15.401 & 15.680 & 36.985 & 66.630 & 73.489 & 79.027 & 91.424 & 99.031 & 115.414 & 144.318 \\ \hline
		3 & 15.706 & 16.018 & 37.179 & 68.454 & 75.419 & 79.868 & 92.593 & 99.632 & 117.314 & 148.538 \\ \hline
		4 & 15.827 & 16.152 & 37.254 & 69.095 & 76.120 & 80.130 & 92.961 & 99.808 & 117.959 & 149.491 \\
		\hline
	\end{tabular}
	\caption{Computed eigenvalues on the square domain with an interior hole shown in Figure~\ref{fig:hole}, using \eqref{eq:cr-2d-direct-discrete}.}
	\label{tab:hole-primal-cr}
\end{table}

\subsubsection{\(\nabla\)-formulation}

The \(\nabla\)-formulation with the vector Crouzeix--Raviart element is to find
\[
(\lambda_h,\boldsymbol u_h)\in
\mathbb R\times \boldsymbol V_{h,2D}^{\rm CR},
\qquad
\boldsymbol u_h\ne\boldsymbol 0,
\]
such that
\begin{equation}
	b_{h,2D}(\boldsymbol u_h,\boldsymbol v_h)
	=
	\lambda_h(\boldsymbol u_h,\boldsymbol v_h),
	\qquad
	\forall\,\boldsymbol v_h\in \boldsymbol V_{h,2D}^{\rm CR},
	\label{eq:cr-2d-nabla-discrete}
\end{equation}
where
\[
b_{h,2D}(\boldsymbol u_h,\boldsymbol v_h)
=
\sum_{T\in\mathcal{G}_h}
\int_T \nabla\boldsymbol u_h:\nabla\boldsymbol v_h .
\]
The corresponding two-dimensional eigenvalue results are listed below.

\begin{table}[htbp]
	\centering
	\small
	\setlength{\tabcolsep}{3pt}
	\begin{tabular}{|c|*{10}{c|}}
		\hline
		$L$ & $\lambda_h^1$ & $\lambda_h^2$ & $\lambda_h^3$ & $\lambda_h^4$ & $\lambda_h^5$ & $\lambda_h^6$ & $\lambda_h^7$ & $\lambda_h^8$ & $\lambda_h^9$ & $\lambda_h^{10}$ \\
		\hline
		1 & 9.827 & 9.827 & 19.061 & 19.061 & 38.797 & 38.797 & 46.236 & 46.236 & 46.236 & 46.236 \\ \hline
		2 & 9.859 & 9.859 & 19.570 & 19.570 & 39.309 & 39.309 & 48.575 & 48.575 & 48.575 & 48.575 \\ \hline
		3 & 9.867 & 9.867 & 19.697 & 19.697 & 39.436 & 39.436 & 49.155 & 49.155 & 49.155 & 49.155 \\ \hline
		4 & 9.869 & 9.869 & 19.729 & 19.729 & 39.468 & 39.468 & 49.300 & 49.300 & 49.300 & 49.300 \\
		\hline
	\end{tabular}
	\caption{Computed eigenvalues on \(\Omega=[0,1]^2\), using \eqref{eq:cr-2d-nabla-discrete}.}
	\label{tab:sq-nabla-cr}
\end{table}

\begin{table}[htbp]
	\centering
	\small
	\setlength{\tabcolsep}{3pt}
	\begin{tabular}{|c|*{10}{c|}}
		\hline
		$L$ & $\lambda_h^1$ & $\lambda_h^2$ & $\lambda_h^3$ & $\lambda_h^4$ & $\lambda_h^5$ & $\lambda_h^6$ & $\lambda_h^7$ & $\lambda_h^8$ & $\lambda_h^9$ & $\lambda_h^{10}$ \\
		\hline
		1 & 12.358 & 12.358 & 38.797 & 38.797 & 42.589 & 42.589 & 52.532 & 52.532 & 68.016 & 68.016 \\ \hline
		2 & 12.996 & 12.996 & 39.309 & 39.309 & 44.472 & 44.472 & 57.086 & 57.086 & 76.242 & 76.242 \\ \hline
		3 & 13.402 & 13.402 & 39.436 & 39.436 & 45.075 & 45.075 & 58.794 & 58.794 & 78.280 & 78.280 \\ \hline
		4 & 13.664 & 13.664 & 39.468 & 39.468 & 45.304 & 45.304 & 59.602 & 59.602 & 78.788 & 78.788 \\
		\hline
	\end{tabular}
	\caption{Computed eigenvalues on the L-shaped domain shown in Figure~\ref{fig:Lshape}, using \eqref{eq:cr-2d-nabla-discrete}.}
	\label{tab:L-nabla-cr}
\end{table}

\begin{table}[htbp]
	\centering
	\small
	\setlength{\tabcolsep}{3pt}
	\begin{tabular}{|c|*{10}{c|}}
		\hline
		$L$ & $\lambda_h^1$ & $\lambda_h^2$ & $\lambda_h^3$ & $\lambda_h^4$ & $\lambda_h^5$ & $\lambda_h^6$ & $\lambda_h^7$ & $\lambda_h^8$ & $\lambda_h^9$ & $\lambda_h^{10}$ \\
		\hline
		1 & 12.797 & 12.797 & 16.934 & 16.934 & 40.379 & 40.379 & 43.852 & 43.852 & 52.459 & 52.459 \\ \hline
		2 & 13.765 & 13.765 & 19.564 & 19.564 & 41.688 & 41.688 & 46.869 & 46.869 & 57.918 & 57.918 \\ \hline
		3 & 14.377 & 14.377 & 21.333 & 21.333 & 42.333 & 42.333 & 48.195 & 48.195 & 60.565 & 60.565 \\ \hline
		4 & 14.763 & 14.763 & 22.515 & 22.515 & 42.699 & 42.699 & 48.907 & 48.907 & 62.059 & 62.059 \\
		\hline
	\end{tabular}
	\caption{Computed eigenvalues on the square domain with an interior hole shown in Figure~\ref{fig:hole}, using \eqref{eq:cr-2d-nabla-discrete}.}
	\label{tab:hole-nabla-cr}
\end{table}

\subsection{Three-dimensional vector Crouzeix--Raviart discretizations and results}
\label{app:cr-eigenvalues-3d}

In three dimensions, the continuous space is
\[
\boldsymbol V_{3D}
=
H(\operatorname{div};\Omega)\cap H_0(\operatorname{curl};\Omega).
\]
Let \(\mathcal F_h^i\) and \(\mathcal F_h^b\) denote the sets of interior and
boundary faces of \(\mathcal{G}_h\), respectively. The scalar
Crouzeix--Raviart space is defined by
\[
CR_h
=
\left\{
v_h\in L^2(\Omega):
v_h|_T\in \mathbb P_1(T),\ \forall T\in\mathcal{G}_h,
\quad
\int_F \llbracket v_h\rrbracket\,ds=0,\ \forall F\in\mathcal F_h^i
\right\}.
\]
The three-dimensional vector Crouzeix--Raviart space used for
\(H(\operatorname{div};\Omega)\cap H_0(\operatorname{curl};\Omega)\) is
\[
\boldsymbol V_{h,3D}^{\rm CR}
=
\left\{
\boldsymbol v_h\in [CR_h]^3:
\int_F \boldsymbol v_h\times\boldsymbol n\,ds=\boldsymbol 0,
\ \forall F\in\mathcal F_h^b
\right\}.
\]

\subsubsection{Direct discretization}

The direct vector Crouzeix--Raviart discretization is to find
\[
(\lambda_h,\boldsymbol u_h)\in
\mathbb R\times \boldsymbol V_{h,3D}^{\rm CR},
\qquad
\boldsymbol u_h\ne\boldsymbol 0,
\]
such that
\begin{equation}
	a_{h,3D}(\boldsymbol u_h,\boldsymbol v_h)
	=
	\lambda_h(\boldsymbol u_h,\boldsymbol v_h),
	\qquad
	\forall\,\boldsymbol v_h\in \boldsymbol V_{h,3D}^{\rm CR},
	\label{eq:cr-3d-direct-discrete}
\end{equation}
where
\[
a_{h,3D}(\boldsymbol u_h,\boldsymbol v_h)
=
\sum_{T\in\mathcal{G}_h}
\int_T
\left(
\operatorname{div}\boldsymbol u_h\,\operatorname{div}\boldsymbol v_h
+
(\nabla\times\boldsymbol u_h)\cdot(\nabla\times\boldsymbol v_h)
\right).
\]
The corresponding three-dimensional eigenvalue results are listed below.

\begin{table}[htbp]
	\centering
	\small
	\setlength{\tabcolsep}{3pt}
	\begin{tabular}{|c|*{10}{c|}}
		\hline
		\(L\) & \(\lambda_h^1\) & \(\lambda_h^2\) & \(\lambda_h^3\) & \(\lambda_h^4\) & \(\lambda_h^5\) & \(\lambda_h^6\) & \(\lambda_h^7\) & \(\lambda_h^8\) & \(\lambda_h^9\) & \(\lambda_h^{10}\)\\
		\hline
		1 & 23.835 & 23.835 & 26.341 & 36.609 & 36.609 & 46.154 & 53.202 & 53.202 & 60.000 & 65.876 \\ \hline
		2 & 20.434 & 23.349 & 23.349 & 28.762 & 28.762 & 30.438 & 32.339 & 32.339 & 33.718 & 33.718 \\ \hline
		3 & 22.066 & 26.198 & 26.198 & 35.663 & 35.663 & 51.299 & 52.055 & 52.055 & 59.355 & 59.355 \\ \hline
		4 & 22.503 & 26.994 & 26.994 & 37.274 & 37.274 & 58.071 & 58.830 & 58.830 & 65.948 & 65.948 \\
		\hline
	\end{tabular}
	\caption{Computed eigenvalues on \(\Omega=[0,1]^3\), using \eqref{eq:cr-3d-direct-discrete}.}
	\label{tab:3d-cube-cr-primal-first}
\end{table}

\begin{table}[htbp]
	\centering
	\small
	\setlength{\tabcolsep}{3pt}
	\begin{tabular}{|c|*{10}{c|}}
		\hline
		\(L\) & \(\lambda_h^1\) & \(\lambda_h^2\) & \(\lambda_h^3\) & \(\lambda_h^4\) & \(\lambda_h^5\) & \(\lambda_h^6\) & \(\lambda_h^7\) & \(\lambda_h^8\) & \(\lambda_h^9\) & \(\lambda_h^{10}\)\\
		\hline
		1 & 9.423 & 19.933 & 21.729 & 29.581 & 34.689 & 39.819 & 43.040 & 45.321 & 45.897 & 47.059 \\ \hline
		2 & 11.492 & 21.160 & 23.390 & 36.338 & 43.749 & 46.256 & 52.659 & 55.647 & 58.888 & 59.267 \\ \hline
		3 & 12.292 & 21.620 & 24.007 & 36.787 & 48.429 & 48.496 & 55.216 & 57.656 & 62.524 & 62.644 \\ \hline
		4 & 12.658 & 22.915 & 23.970 & 37.775 & 49.861 & 50.241 & 58.615 & 59.734 & 62.799 & 63.228 \\
		\hline
	\end{tabular}
	\caption{Computed eigenvalues on \(\Omega_1\) shown in Figure~\ref{fig:3d-through-domain}, using \eqref{eq:cr-3d-direct-discrete}.}
	\label{tab:3d-through-cr-primal-first}
\end{table}

\begin{table}[htbp]
	\centering
	\small
	\setlength{\tabcolsep}{3pt}
	\begin{tabular}{|c|*{10}{c|}}
		\hline
		\(L\) & \(\lambda_h^1\) & \(\lambda_h^2\) & \(\lambda_h^3\) & \(\lambda_h^4\) & \(\lambda_h^5\) & \(\lambda_h^6\) & \(\lambda_h^7\) & \(\lambda_h^8\) & \(\lambda_h^9\) & \(\lambda_h^{10}\)\\
		\hline
		1 & 9.102 & 9.563 & 17.980 & 19.241 & 27.206 & 30.054 & 31.158 & 32.072 & 37.370 & 37.417 \\ \hline
		2 & 11.143 & 11.218 & 19.042 & 20.559 & 29.944 & 31.842 & 39.702 & 41.147 & 41.472 & 42.476 \\ \hline
		3 & 11.882 & 12.043 & 19.541 & 21.175 & 31.164 & 32.585 & 41.897 & 42.714 & 43.185 & 45.981 \\ \hline
		4 & 12.237 & 12.403 & 20.551 & 21.201 & 31.961 & 33.436 & 43.797 & 43.842 & 44.084 & 47.260 \\
		\hline
	\end{tabular}
	\caption{Computed eigenvalues on \(\Omega_2\) shown in Figure~\ref{fig:3d-complex-domain}, using \eqref{eq:cr-3d-direct-discrete}.}
	\label{tab:3d-complex-cr-primal-first}
\end{table}

\subsubsection{\(\nabla\)-formulation}

The \(\nabla\)-formulation with the vector Crouzeix--Raviart element is to find
\[
(\lambda_h,\boldsymbol u_h)\in
\mathbb R\times \boldsymbol V_{h,3D}^{\rm CR},
\qquad
\boldsymbol u_h\ne\boldsymbol 0,
\]
such that
\begin{equation}
	b_{h,3D}(\boldsymbol u_h,\boldsymbol v_h)
	=
	\lambda_h(\boldsymbol u_h,\boldsymbol v_h),
	\qquad
	\forall\,\boldsymbol v_h\in \boldsymbol V_{h,3D}^{\rm CR},
	\label{eq:cr-3d-nabla-discrete}
\end{equation}
where
\[
b_{h,3D}(\boldsymbol u_h,\boldsymbol v_h)
=
\sum_{T\in\mathcal{G}_h}
\int_T \nabla\boldsymbol u_h:\nabla\boldsymbol v_h .
\]
The corresponding three-dimensional eigenvalue results are listed below.

\begin{table}[htbp]
	\centering
	\small
	\setlength{\tabcolsep}{3pt}
	\begin{tabular}{|c|*{10}{c|}}
		\hline
		\(L\) & \(\lambda_h^1\) & \(\lambda_h^2\) & \(\lambda_h^3\) & \(\lambda_h^4\) & \(\lambda_h^5\) & \(\lambda_h^6\) & \(\lambda_h^7\) & \(\lambda_h^8\) & \(\lambda_h^9\) & \(\lambda_h^{10}\)\\
		\hline
		1 & 16.917 & 16.917 & 16.917 & 25.090 & 25.090 & 25.090 & 28.268 & 28.268 & 28.268 & 28.935 \\ \hline
		2 & 19.000 & 19.000 & 19.000 & 28.383 & 28.383 & 28.383 & 43.302 & 43.302 & 43.967 & 43.967 \\ \hline
		3 & 19.552 & 19.552 & 19.552 & 29.295 & 29.295 & 29.295 & 47.774 & 47.774 & 47.967 & 47.967 \\ \hline
		4 & 19.692 & 19.692 & 19.692 & 29.530 & 29.530 & 29.530 & 48.951 & 48.951 & 49.001 & 49.001 \\
		\hline
	\end{tabular}
	\caption{Computed eigenvalues on \(\Omega=[0,1]^3\), using \eqref{eq:cr-3d-nabla-discrete}.}
	\label{tab:3d-cube-nabla-cr-first}
\end{table}

\begin{table}[htbp]
	\centering
	\small
	\setlength{\tabcolsep}{3pt}
	\begin{tabular}{|c|*{10}{c|}}
		\hline
		\(L\) & \(\lambda_h^1\) & \(\lambda_h^2\) & \(\lambda_h^3\) & \(\lambda_h^4\) & \(\lambda_h^5\) & \(\lambda_h^6\) & \(\lambda_h^7\) & \(\lambda_h^8\) & \(\lambda_h^9\) & \(\lambda_h^{10}\)\\
		\hline
		1 & 21.648 & 21.648 & 25.798 & 25.798 & 32.095 & 41.235 & 44.819 & 44.819 & 46.230 & 46.230 \\ \hline
		2 & 23.300 & 23.300 & 28.924 & 28.924 & 36.188 & 45.870 & 50.100 & 50.100 & 51.594 & 51.594 \\ \hline
		3 & 24.116 & 24.116 & 30.926 & 30.926 & 37.569 & 47.391 & 51.799 & 51.799 & 53.391 & 53.391 \\ \hline
		4 & 24.570 & 24.570 & 32.223 & 32.223 & 38.027 & 47.884 & 52.443 & 52.443 & 54.096 & 54.096 \\
		\hline
	\end{tabular}
	\caption{Computed eigenvalues on \(\Omega_1\) shown in Figure~\ref{fig:3d-through-domain}, using \eqref{eq:cr-3d-nabla-discrete}.}
	\label{tab:3d-through-nabla-cr-first}
\end{table}

\begin{table}[htbp]
	\centering
	\small
	\setlength{\tabcolsep}{3pt}
	\begin{tabular}{|c|*{10}{c|}}
		\hline
		\(L\) & \(\lambda_h^1\) & \(\lambda_h^2\) & \(\lambda_h^3\) & \(\lambda_h^4\) & \(\lambda_h^5\) & \(\lambda_h^6\) & \(\lambda_h^7\) & \(\lambda_h^8\) & \(\lambda_h^9\) & \(\lambda_h^{10}\)\\
		\hline
		1 & 29.254 & 30.565 & 31.455 & 32.531 & 41.934 & 43.680 & 44.674 & 45.642 & 51.309 & 54.097 \\ \hline
		2 & 34.945 & 37.077 & 37.370 & 39.956 & 53.783 & 54.480 & 55.317 & 58.571 & 59.404 & 63.649 \\ \hline
		3 & 38.923 & 41.335 & 41.482 & 44.644 & 59.485 & 59.846 & 64.118 & 65.595 & 68.301 & 69.247 \\ \hline
		4 & 41.618 & 44.104 & 44.210 & 47.533 & 62.806 & 63.072 & 67.109 & 71.168 & 71.907 & 72.432 \\
		\hline
	\end{tabular}
	\caption{Computed eigenvalues on \(\Omega_2\) shown in Figure~\ref{fig:3d-complex-domain}, using \eqref{eq:cr-3d-nabla-discrete}.}
	\label{tab:3d-complex-nabla-cr-first}
\end{table}

\FloatBarrier
\subsection{Summary of the vector Crouzeix--Raviart element}

The direct primal discretization using the vector Crouzeix--Raviart element
does not yield satisfactory eigenvalue approximations for the present problem.
The computed spectra show noticeable discrepancies from those obtained by the
proposed element and the mixed formulation. In contrast, the
\(\nabla\)-formulation with the vector Crouzeix--Raviart element gives
reasonable approximations on convex domains. However, this favorable behavior
does not persist uniformly on nonconvex or topologically more complicated
domains. For example, on \(\Omega_{\rm H}\) neither the direct discretization (Table~\ref{tab:hole-primal-cr}) nor the \(\nabla\)-formulation (Table~\ref{tab:hole-nabla-cr}) produces a zero eigenvalue, in contrast to Section~\ref{sec:numer}. These observations suggest that, although the \(\nabla\)-formulation
may be effective in the convex case, the direct use of the vector
Crouzeix--Raviart element in the primal formulation is not reliable as a
general discretization for the present eigenvalue problem.

%

%
%
%
\section{Illustration of basis functions of $\fW^{*,\rm nc}_h\Lambda^k$ in two and three dimensions}
\label{sec:ncbasisfunction}

This appendix illustrates the basis functions of nonconforming Whitney spaces $\fW^{*,\rm nc}_h\Lambda^k$ defined in \cite{Zhang.S2026IMA} used in the adjoint continuity constraints~\eqref{eq:deffems}. The two- and three-dimensional realizations are below; note that, the choice of the two cells of the supports of the basis functions is not unique, and they are not necessarily adjacent to each other. In two dimensions, $\mathbb{RT}^{\rm nc}_h$ coincides with the adjoint partner $\fW^{*,\rm nc}_{h0}\Lambda^1$ under the vector proxy used in~\eqref{eq:deffems2d}.
\subsection{$\fW^{*,\rm nc}_h\Lambda^1$ in 2D revisited}

Let $\mathbb{V}^1_h$ denote the continuous piecewise linear element space, and $\uV^{\rm RT}_h$ denote the Raviart-Thomas \cite{Raviart.P;Thomas.J1977} element space of lowest degree on $\mathcal{G}_h$. Denote $\mathbb{V}^1_{h0}:=\mathbb{V}^1_h\cap H^1_0(\Omega)$ and $\uV^{\rm RT}_{h0}:=\uV^{\rm RT}_h\cap H_0(\dv,\Omega)$.

On a triangle $T$, denote the space of the lowest-degree Raviart-Thomas shape functions by $\mathbb{RT}(T):={\rm span}\left\{\ualpha+\beta\ux:\ualpha\in\mathbb{R}^2,\beta\in\mathbb{R}\right\}$. Then
\begin{equation}\label{eq:nabladivdual}
	\mathcal{R}(\dv,\mathbb{RT}(T))=\mathbb{R}=\mathcal{N}(\nabla,P_1(T)),\ \ \ \mbox{and}\  \ \mathcal{N}(\dv,\mathbb{RT}(T))=\mathbb{R}^2=\mathcal{R}(\nabla,P_1(T)).
\end{equation}
Denote $\displaystyle\mathbb{RT}(\mathcal{G}_h):=\bigoplus_{T\in\mathcal{G}_h}E_T^\Omega\mathbb{RT}(T)$. We define the nonconforming finite element spaces
\begin{equation}\label{eq:2drtnc}
	\mathbb{RT}^{\rm nc}_h:=\left\{\utau{}_h\in \mathbb{RT}(\mathcal{G}_h):\sum_{T\in\mathcal{G}_h}(\utau{}_h,\nabla \xv_h)_T+(\dv\utau{}_h,\xv_h)_T=0,\ \forall\,\xv_h\in \mathbb{V}^1_{h0}\right\},
\end{equation}
and
\begin{equation}\label{eq:2drt0nc}
	\mathbb{RT}^{\rm nc}_{h0}:=\left\{\utau{}_h\in \mathbb{RT}(\mathcal{G}_h):\sum_{T\in\mathcal{G}_h}(\utau{}_h,\nabla \xv_h)_T+(\dv\utau{}_h,\xv_h)_T=0,\ \forall\,\xv_h\in \mathbb{V}^1_h\right\}.
\end{equation}
Figure~\ref{fig:globalbasis} shows a representative global basis function of $\mathbb{RT}^{\rm nc}_h$. 
\begin{figure}[htbp]
	\begin{center}
		\includegraphics[width=0.45\textwidth]{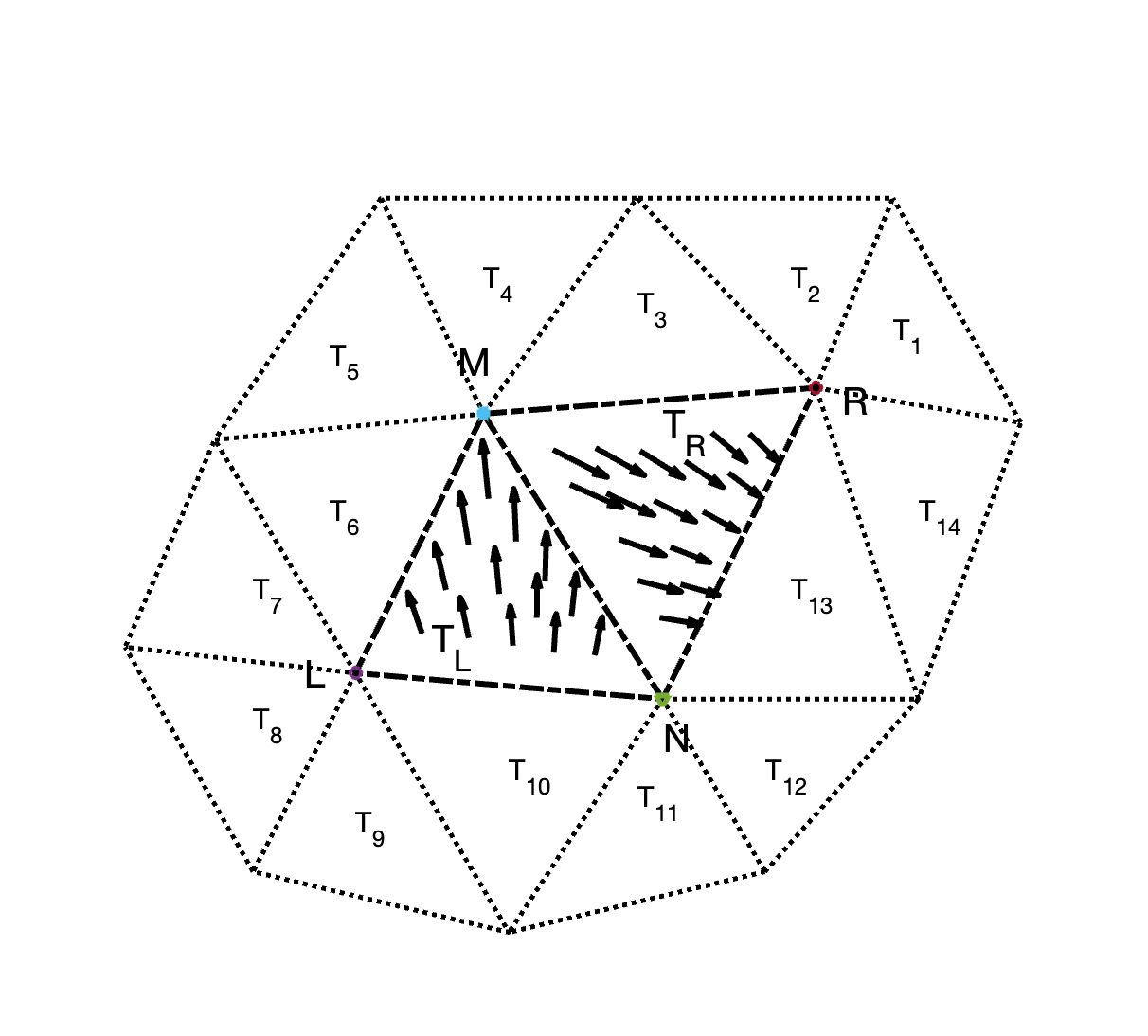}
	\end{center}
	\caption{Profile of a representative global basis function in $\mathbb{RT}^{\rm nc}_h$, supported on two cells $T_L$ and $T_R$. }\label{fig:globalbasis}
\end{figure}
%
%

%
%
\subsection{$\fW^{*,\rm nc}_h\Lambda^2$ in 3D revisited}

Let $\Omega\subset\mathbb{R}^3$ be a polyhedral domain, and let
$\mathcal{G}_h$ be a tetrahedral partition of $\Omega$. Under the
three-dimensional vector proxy, the lowest-order Whitney $1$-form space is
identified with the lowest-order Nedelec edge element space of the first kind.
We denote this space by $\boldsymbol V_h^{\rm Ned}$, and set
\[
\boldsymbol V_{h0}^{\rm Ned}
:=
\boldsymbol V_h^{\rm Ned}\cap H_0(\curl,\Omega).
\]

On a tetrahedron $T$, the corresponding local Nedelec shape-function space is
\[
\boldsymbol{\rm Ned}(T)
=
\left\{
\boldsymbol a+\boldsymbol b\times\boldsymbol x:
\boldsymbol a,\boldsymbol b\in\mathbb{R}^3
\right\}.
\]
Meanwhile, $\boldsymbol{\rm Ned}(T)$ represents $\mathcal{P}^{*,-}_1\Lambda^2(T)$ under the vector proxy, and $\odelta_2$ is essentially $\curl$. Namely, $\fW^{*,\rm nc}_h\Lambda^2$ is essentially a nonconforming $H(\curl)$ finite element space. In the sequel, we denote it by $\mathbb{Ned}^{\rm nc}_h$, and define it by
\begin{equation}\label{eq:3d-dual-nc-2form}
	\mathbb{Ned}^{\rm nc}_h
	:=
	\left\{
	\fmu_h\in \mathbb{Ned}(\mathcal{G}_h):
	\sum_{T\in\mathcal{G}_h}
	\left[
	(\curl\boldsymbol\mu_h,\boldsymbol\tau_h)_T
	-
	(\boldsymbol\mu_h,\curl\boldsymbol\tau_h)_T
	\right]
	=0,\ 
	\forall\,\boldsymbol\tau_h\in \boldsymbol V_{h0}^{\rm Ned}
	\right\}.
\end{equation}
Similarly, the space with the homogeneous boundary condition is defined by
\begin{equation}\label{eq:3d-dual-nc-2form-zero}
	\mathbb{Ned}^{\rm nc}_{h0}
	:=
	\left\{
	\fmu_h\in \mathbb{Ned}(\mathcal{G}_h):
	\sum_{T\in\mathcal{G}_h}
	\left[
	(\curl\boldsymbol\mu_h,\boldsymbol\tau_h)_T
	-
	(\boldsymbol\mu_h,\curl\boldsymbol\tau_h)_T
	\right]
	=0,\ 
	\forall\,\boldsymbol\tau_h\in \boldsymbol V_h^{\rm Ned}
	\right\}.
\end{equation}

Figure~\ref{fig:nc3D} shows a typical global basis function in
$\mathbb{Ned}^{\rm nc}_h$, whose support consists of two adjacent
tetrahedra sharing an edge. Again, we note that the two cells in the support are not necessarily adjacent to each other. 

\begin{figure}[htbp]
	\centering
	\includegraphics[width=\textwidth]{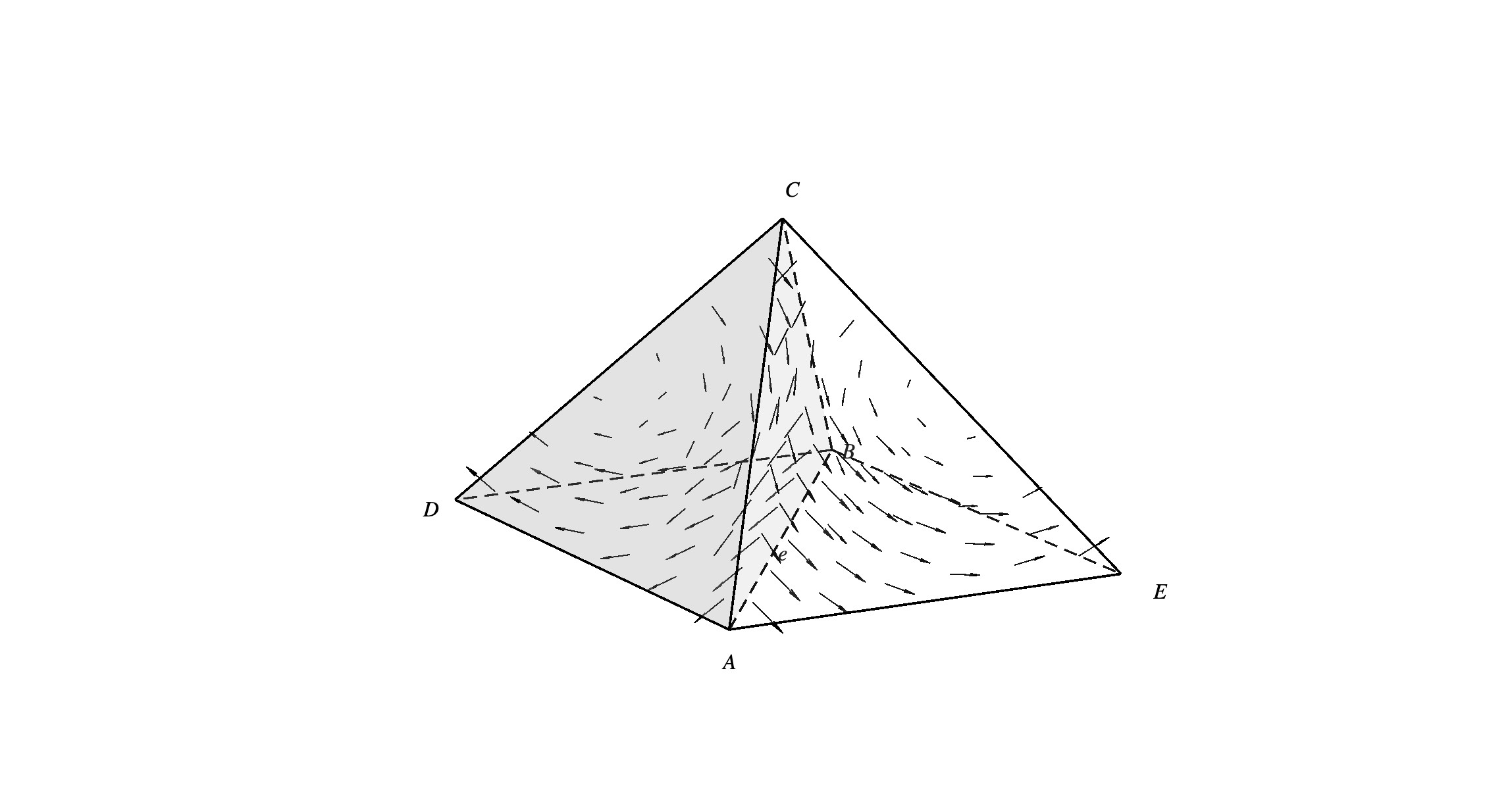}
	\caption{
		A global basis function of the three-dimensional dual nonconforming space
		$\mathbb{Ned}^{\rm nc}_h$. The support consists of two adjacent
		tetrahedra sharing the edge $e$.
	}
	\label{fig:nc3D}
\end{figure}

%

%
%
\section{Basis functions of the two-dimensional nonconforming space \eqref{eq:deffems2d} for $H(\rot,\Omega)\cap H_0(\dv,\Omega)$ }
\label{sec:app:basis-2d}

This section reports explicit cell-wise and patch-wise basis functions for $\fV^{\rm r\mathring{d}}_h=\fV^{\od\cap\mathring{\odelta}}_h\Lambda^1$ in two dimensions, supporting the proof of Theorem~\ref{thm:csbasis}.


Let
\[
A=\int_T \widetilde{x}^{\,2}\,d\boldsymbol{x},
\qquad
B=\int_T \widetilde{y}^{\,2}\,d\boldsymbol{x},
\qquad
C=\int_T \widetilde{x}\widetilde{y}\,d\boldsymbol{x}.
\]

Define
\[
\boldsymbol M_T(\widetilde x,\widetilde y)
=
\frac{1}{2(A+B)}
\begin{bmatrix}
	\widetilde x^{\,2}-\widetilde y^{\,2}
	+\dfrac{3B-A}{|T|}
	&
	-\dfrac{2C}{|T|}
	\\[2mm]
	-\dfrac{2C}{|T|}
	&
	-\widetilde x^{\,2}+\widetilde y^{\,2}
	+\dfrac{3A-B}{|T|}
\end{bmatrix},
\]
and
\[
\boldsymbol P_T(\widetilde x,\widetilde y)
=
\frac{1}{4(A+B)}
\begin{bmatrix}
	-\dfrac{2C}{|T|}
	&
	-\widetilde x^{\,2}+\widetilde y^{\,2}
	+\dfrac{3A-B}{|T|}
	\\[2mm]
	-\widetilde x^{\,2}+\widetilde y^{\,2}
	+\dfrac{A-3B}{|T|}
	&
	\dfrac{2C}{|T|}
\end{bmatrix}.
\]
Then the six local basis functions of \(\pddeL^1(T)\) are given by
\begin{equation}
	\begin{aligned}
		\fvarrho_i^T
		&=
		\frac{1}{2|T|}
		\binom{\widetilde x}{\widetilde y}
		+
		\boldsymbol M_T(\widetilde x,\widetilde y)
		\binom{\widetilde x_i}{\widetilde y_i},
		\\[2mm]
		\fvarsigma_i^T
		&=
		-\frac{1}{2|T|}
		\binom{\widetilde y}{-\widetilde x}
		+
		\boldsymbol P_T(\widetilde x,\widetilde y)
		\binom{\widetilde x_i}{\widetilde y_i},
		\qquad i=1,2,3.
	\end{aligned}
	\label{eq:local-basis-2d}
\end{equation}

Then, 
$$
P_{\rm rd}(T)={\rm span}\{\fvarrho^T_i,\fvarsigma^T_i\}_{i=1}^3
$$
and
\begin{equation}
	(\rot\fvarrho^T_i,1-2\lambda_j)_T-(\fvarrho^T_i,\curl (1-2\lambda_j))_T=0,\ \ \ (\dv\fvarrho^T_i,\lambda_j)_T+(\fvarrho^T_i,\nabla \lambda_j)_T=\delta_{ij},\ i,j=1:3
\end{equation}
and 
\begin{equation}
	(\rot\fvarsigma^T_i,1-2\lambda_j)_T-(\fvarsigma^T_i,\curl (1-2\lambda_j))_T=\delta_{ij},\ \ \ (\dv\fvarsigma^T_i,\lambda_j)_T+(\fvarsigma^T_i,\nabla \lambda_j)_T=0,\ i,j=1:3
\end{equation}

\begin{figure}[htbp]
	\centering
	
	\begin{subfigure}{0.48\textwidth}
		\centering
		\includegraphics[width=\textwidth]{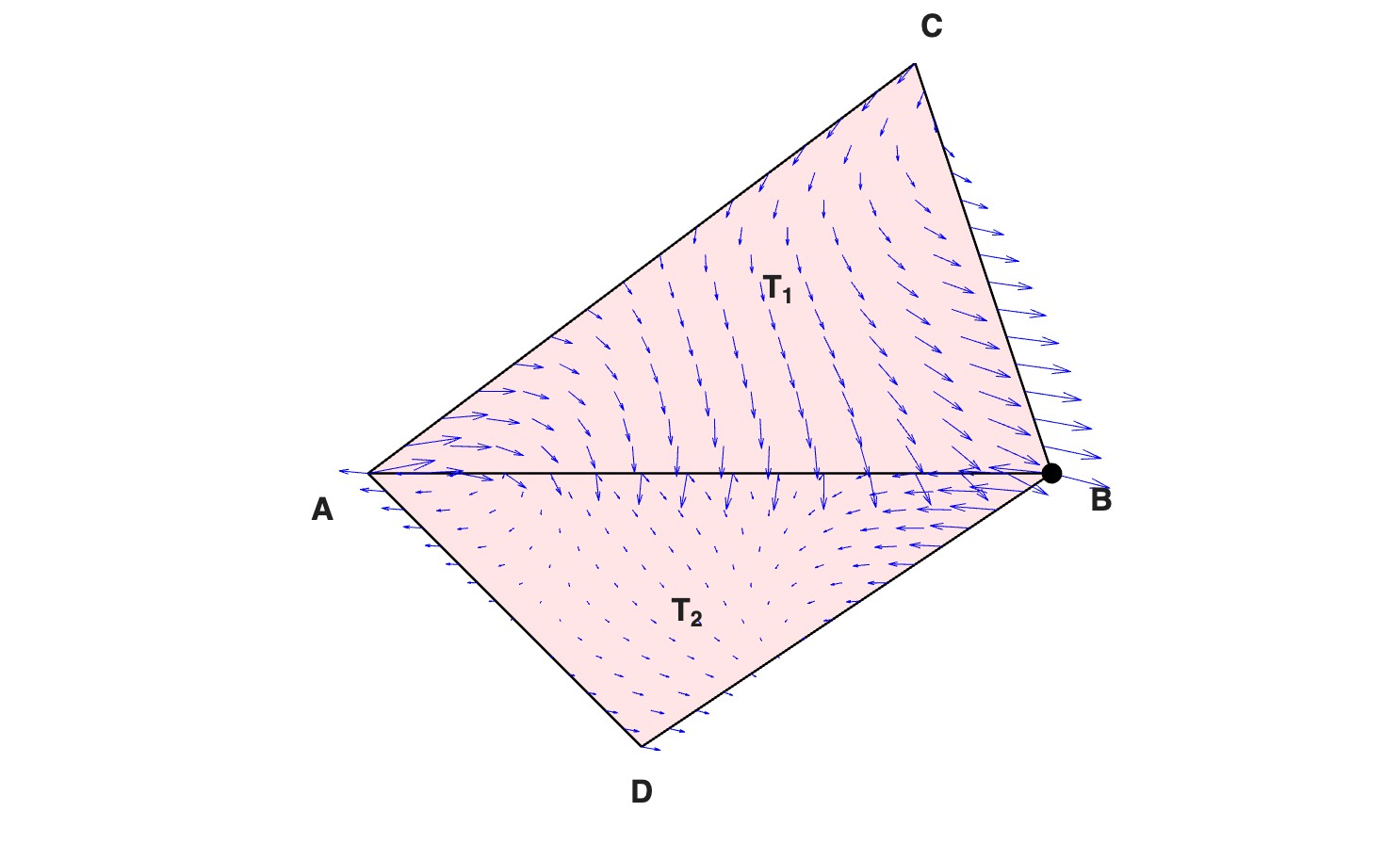}
		\caption{Point-associated global basis function $\fvarrho^T$}
		\label{fig:2Dpointbasis}
	\end{subfigure}
	\hfill
	\begin{subfigure}{0.48\textwidth}
		\centering
		\includegraphics[width=\textwidth]{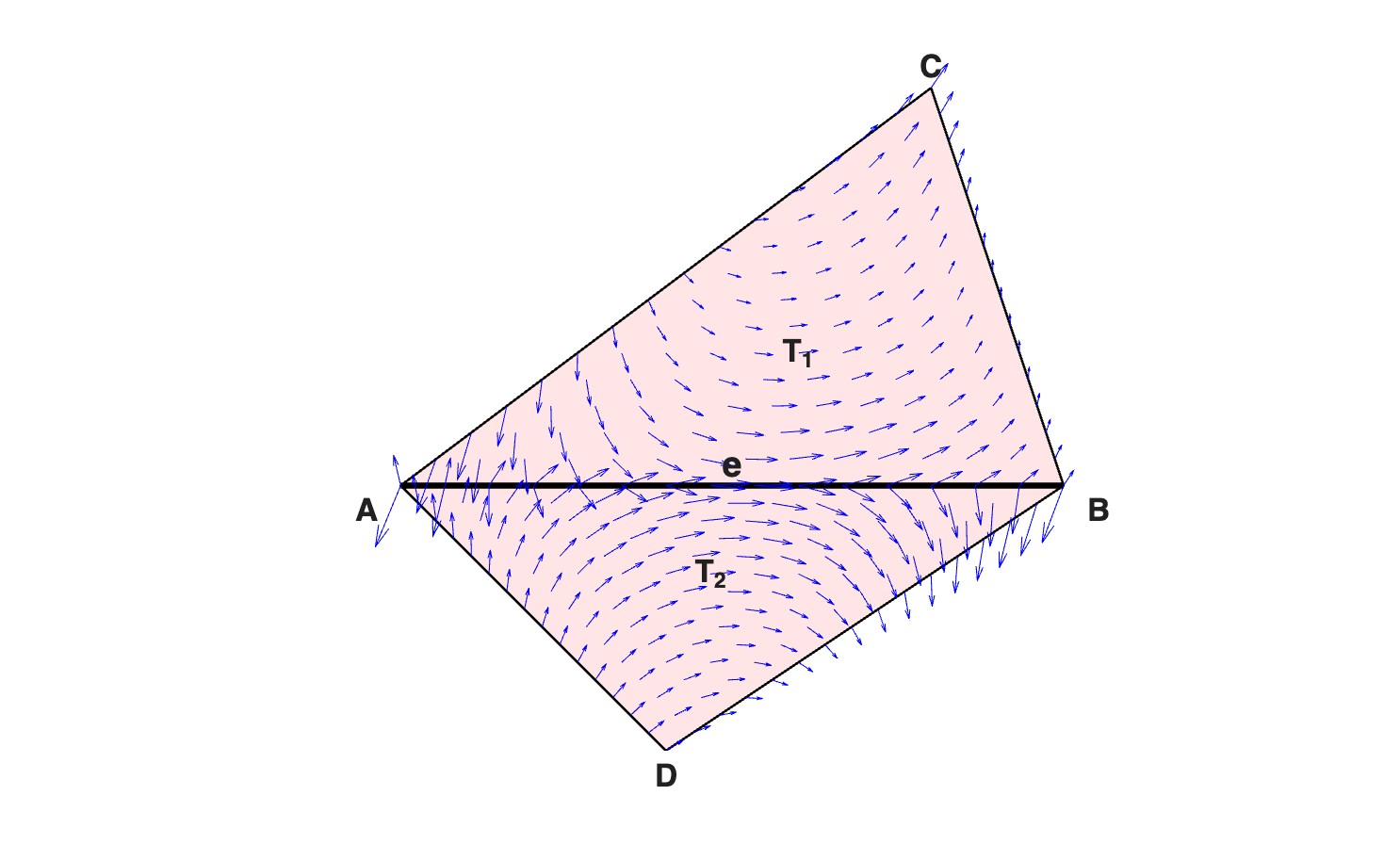}
		\caption{Edge-associated global basis functioin $\fvarsigma^T$}
		\label{fig:2Dedgebasis}
	\end{subfigure}
	
	\label{fig:2D-basis}
\end{figure}

\begin{figure}[htbp]
	\centering
	\includegraphics[width=\textwidth]{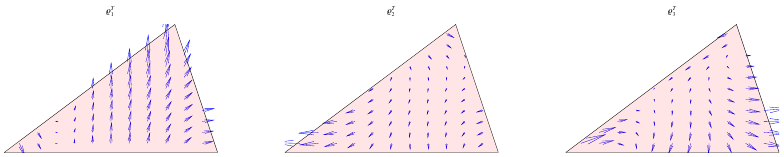}
	\caption{Local basis functions \(\fvarrho^T_1\)--\(\fvarrho^T_3\).}
	\label{fig:local-basis-varrho}
\end{figure}

\begin{figure}[htbp]
	\centering
	\includegraphics[width=\textwidth]{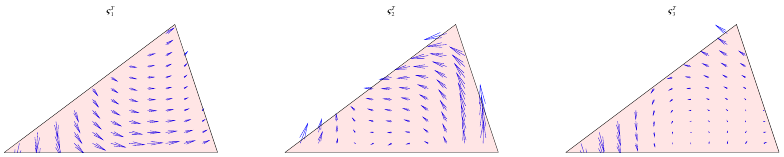}
	\caption{Local basis functions \(\fvarsigma^T_1\)--\(\fvarsigma^T_3\).}
	\label{fig:local-basis-varsigma}
\end{figure}

\begin{remark}
	The explicit basis functions $\fvarrho^T_i$ and $\fvarsigma^T_i$ can be written out by these 3 steps. 
	
	First, by Remark~\ref{rem:kernel=range}, we can write out
	$\omega_1 = \binom {\widetilde {x}} {\widetilde {y}}$,$\omega_2 = \binom {\widetilde {y}} {- \widetilde {x}}$, $\omega_3 = \binom {\widetilde {x} ^ {2} - \widetilde {y} ^ {2} + M} {0}$, $\omega_4=\binom {0} {\widetilde {x} ^ {2} - \widetilde {y} ^ {2} + M}$, $\omega_5=\binom {1} {0}$, $\omega_6=\binom {0} {1}$ and $(s_1,\eta_1)=\frac{1}{2|T|}\binom {1} {0}$, $(s_2,\eta_2)=-\frac{1}{2|T|}\binom {0} {1}$, $(s_3,\eta_3)=\frac{1}{2(A+B)}\binom {\widetilde {x}} {\widetilde {y}}$, $(s_4,\eta_4)=-\frac{1}{2(A+B)}\binom {\widetilde {y}} {-\widetilde {x}}$, $(s_5,\eta_5)=\frac{1}{|T|}(\frac {1}{2} \binom {\tilde {x}} {- \tilde {y}} + \frac {B - A}{2 (A + B)} \binom {\tilde {x}} {\tilde {y}} + \frac {C}{A + B} \binom {\tilde {y}} {- \tilde {x}})$, $(s_6,\eta_6)=\frac{1}{|T|}(\frac {1}{2} \binom {\tilde {y}} {\tilde {x}} + \frac {C}{A + B} \binom {\tilde {x}} {\tilde {y}} + \frac {A - B}{2 (A + B)} \binom {\tilde {y}} {- \tilde {x}})$, so that
	\begin{equation}
		(\dv\omega_i,s_j)_T+(\omega_i,\nabla s_j)_T+(\rot\omega_i,\eta_j)_T-(\omega_i,\curl\eta_j)_T=\delta_{i,j}.
	\end{equation}
	Set
	$$
	\left[
	\begin{array}{c}
		(s_1,\eta_1)
		\\
		(s_2,\eta_2)
		\\
		(s_3,\eta_3)
		\\
		(s_4,\eta_4)
		\\
		(s_5,\eta_5)
		\\
		(s_6,\eta_6)
	\end{array}
	\right]
	=
	\overline{M}
	\left[
	\begin{array}{c}
		(\lambda_1,0)
		\\
		(\lambda_2,0)
		\\
		(\lambda_3,0)
		\\
		(0,1-2\lambda_1)
		\\
		(0,1-2\lambda_2)
		\\
		(0,1-2\lambda_3)
	\end{array}
	\right]
	=
	\overline{M}\left[
	\begin{array}{c}
		(\tilde s_1,\tilde\eta_1)
		\\
		(\tilde s_2,\tilde\eta_2)
		\\
		(\tilde s_3,\tilde\eta_3)
		\\
		(\tilde s_4,\tilde\eta_4)
		\\
		(\tilde s_5,\tilde\eta_5)
		\\
		(\tilde s_6,\tilde\eta_6)
	\end{array}
	\right]
	$$
	and
	$$
	(\tilde\omega_1,\tilde\omega_2,\dots,\tilde\omega_6)=(\omega_1,\omega_2,\dots,\omega_6)\overline{M}.
	$$
	Then
	$$
	(\dv\tilde\omega_i,\tilde s_j)_T+(\tilde \omega_i,\nabla \tilde s_j)_T+(\rot\tilde \omega_i,\tilde\eta_j)_T-(\tilde\omega_i,\curl\tilde\eta_j)_T=\delta_{i,j}.
	$$ 
	
\end{remark}

%

%

\section{Basis functions of the three-dimensional nonconforming space \eqref{eq:deffems3d} for $H(\dv,\Omega)\cap H_0(\curl,\Omega)$}
\label{sec:app:basis-3d}

This section reports the explicit basis functions in three dimensions.

Let
\[
A = \int_T \widetilde{x}^{\,2}\,d\boldsymbol{x},
\qquad
B = \int_T \widetilde{y}^{\,2}\,d\boldsymbol{x},
\qquad
C = \int_T \widetilde{z}^{\,2}\,d\boldsymbol{x},
\]
and
\[
D = \int_T \widetilde{x}\widetilde{y}\,d\boldsymbol{x},
\qquad
E = \int_T \widetilde{x}\widetilde{z}\,d\boldsymbol{x},
\qquad
F = \int_T \widetilde{y}\widetilde{z}\,d\boldsymbol{x}.
\]
Set
\[
\mathsf{G}
=
\begin{pmatrix}
	4A+B+C & 3D & 3E\\
	3D & A+4B+C & 3F\\
	3E & 3F & A+B+4C
\end{pmatrix},
\qquad
\mathsf{R}
=
\mathsf{G}^{-1}.
\]
Let
\[
\widetilde{\boldsymbol{x}}
=
\begin{pmatrix}
	\widetilde{x}\\
	\widetilde{y}\\
	\widetilde{z}
\end{pmatrix},
\qquad
\widetilde{\boldsymbol{x}}_i
=
\begin{pmatrix}
	\widetilde{x}_i\\
	\widetilde{y}_i\\
	\widetilde{z}_i
\end{pmatrix},
\qquad i=1,\ldots,4.
\]
Define
\[
\mathsf{Q}_T(\widetilde{\boldsymbol{x}})
=
\begin{pmatrix}
	2\widetilde{x}^{\,2}-\widetilde{y}^{\,2}-\widetilde{z}^{\,2} & 0 & 0\\
	0 & 2\widetilde{y}^{\,2}-\widetilde{x}^{\,2}-\widetilde{z}^{\,2} & 0\\
	0 & 0 & 2\widetilde{z}^{\,2}-\widetilde{x}^{\,2}-\widetilde{y}^{\,2}
\end{pmatrix},
\]
and
\[
\mathsf{H}_T
=
\begin{pmatrix}
	7B+7C-2A & 0 & 0\\
	0 & 7A+7C-2B & 0\\
	0 & 0 & 7A+7B-2C
\end{pmatrix}.
\]
Define
\[
\mathsf{M}_T(\widetilde{\boldsymbol{x}})
=
-\frac{1}{3}\mathsf{Q}_T(\widetilde{\boldsymbol{x}})\mathsf{R}
-\frac{1}{9|T|}\mathsf{H}_T\mathsf{R}
+\frac{1}{9|T|}\mathsf{I}_3 .
\]
Then the four face-associated local basis functions are given by
\begin{equation}
	\fvarrho_i^T
	=
	\frac{1}{3|T|}
	\widetilde{\boldsymbol{x}}
	+
	\mathsf{M}_T(\widetilde{\boldsymbol{x}})
	\widetilde{\boldsymbol{x}}_i,
	\qquad i=1,\ldots,4 .
	\label{eq:rho-local-3d}
\end{equation}

For each edge \((i,j)\), \(1\leq i<j\leq4\), define
\[
\boldsymbol{d}_{ij}
=
\widetilde{\boldsymbol{x}}_j
-
\widetilde{\boldsymbol{x}}_i,
\qquad
\boldsymbol{\eta}_{ij}
=
\widetilde{\boldsymbol{x}}_i
\times
\widetilde{\boldsymbol{x}}_j .
\]
Define
\[
\mathsf{N}_T(\widetilde{\boldsymbol{x}})
=
\frac{1}{2}\mathsf{Q}_T(\widetilde{\boldsymbol{x}})\mathsf{R}
+
\frac{1}{6|T|}\mathsf{H}_T\mathsf{R}
-
\frac{2}{3|T|}\mathsf{I}_3 .
\]
The six edge-associated local basis functions are given by
\begin{equation}
	\fvarsigma_{ij}^T
	=
	\frac{1}{2|T|}
	\left(
	\boldsymbol{d}_{ij}
	\times
	\widetilde{\boldsymbol{x}}
	\right)
	+
	\mathsf{N}_T(\widetilde{\boldsymbol{x}})
	\boldsymbol{\eta}_{ij},
	\qquad 1\leq i<j\leq4 .
	\label{eq:sigma-local-3d}
\end{equation}

Then,
\begin{equation*}
	P_{\rm cd}(T)
	=
	\operatorname{span}
	\{
	\fvarrho_1^T,\fvarrho_2^T,\fvarrho_3^T,\fvarrho_4^T,
	\fvarsigma_{12}^T,\fvarsigma_{13}^T,\fvarsigma_{14}^T,
	\fvarsigma_{23}^T,\fvarsigma_{24}^T,\fvarsigma_{34}^T
	\}.
\end{equation*}

Let
\[
\mathcal E_T
=
\{(1,2),(1,3),(1,4),(2,3),(2,4),(3,4)\}.
\]
The local functions satisfy
\begin{equation}
	(\curl\fvarrho_i^T,N_{pq})
	-
	(\fvarrho_i^T,\curl N_{pq})
	=
	0,
	\qquad
	(\dv\fvarrho_i^T,1-2\lambda_k)
	+
	(\fvarrho_i^T,\nabla(1-2\lambda_k))
	=
	\delta_{ik},
	\label{eq:rho-dof-3d}
\end{equation}
for \(i,k=1,\ldots,4\) and \((p,q)\in\mathcal E_T\), and
\begin{equation}
	(\curl\fvarsigma_{ij}^T,N_{pq})
	-
	(\fvarsigma_{ij}^T,\curl N_{pq})
	=
	\delta_{(i,j),(p,q)},
	\qquad
	(\dv\fvarsigma_{ij}^T,1-2\lambda_k)
	+
	(\fvarsigma_{ij}^T,\nabla(1-2\lambda_k))
	=
	0,
	\label{eq:sigma-dof-3d}
\end{equation}
for \((i,j),(p,q)\in\mathcal E_T\) and \(k=1,\ldots,4\).

\begin{figure}[htbp]
	\centering
	\includegraphics[width=\textwidth]{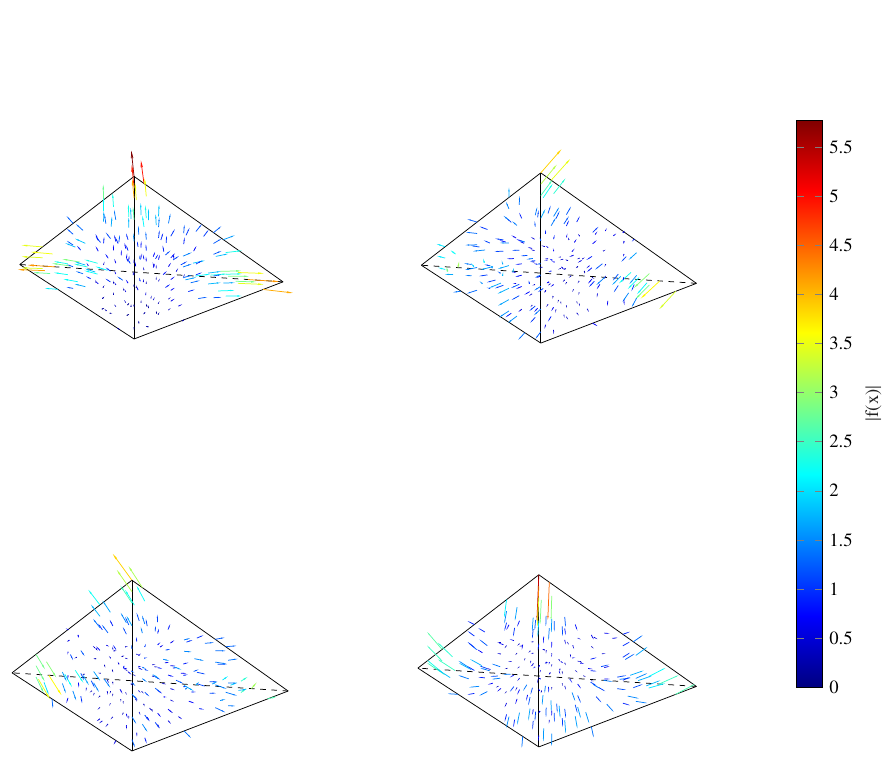}
	\caption{Local basis function \(\fvarrho_1^T\)--\(\fvarrho_4^T\)}
	\label{3Dbasis1}
\end{figure}

\begin{figure}[htbp]
	\centering
	\includegraphics[width=\textwidth]{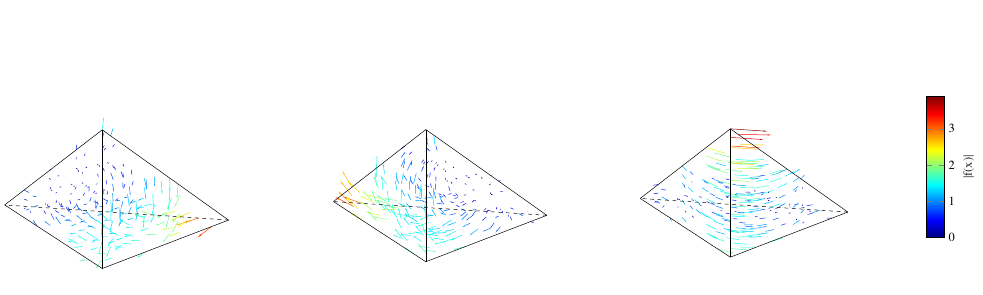}
	\caption{Local basis function \(\fvarsigma_{12}^T\), \(\fvarsigma_{13}^T\), and \(\fvarsigma_{14}^T\)}
	\label{3Dbasis2}
\end{figure}

\begin{figure}[htbp]
	\centering
	\includegraphics[width=\textwidth]{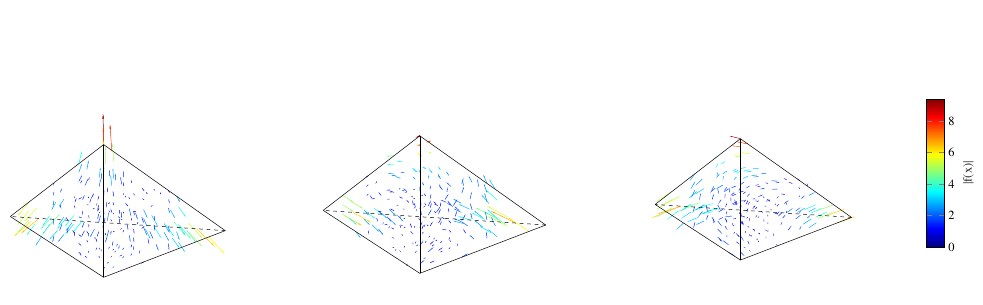}
	\caption{Local basis function \(\fvarsigma_{23}^T\), \(\fvarsigma_{24}^T\), and \(\fvarsigma_{34}^T\)}
	\label{f3Dbasis3}
\end{figure}

\begin{figure}[htbp]
	\centering
	
	\begin{minipage}[t]{0.48\textwidth}
		\centering
		\hspace*{-2cm}\includegraphics[width=1.8\linewidth]{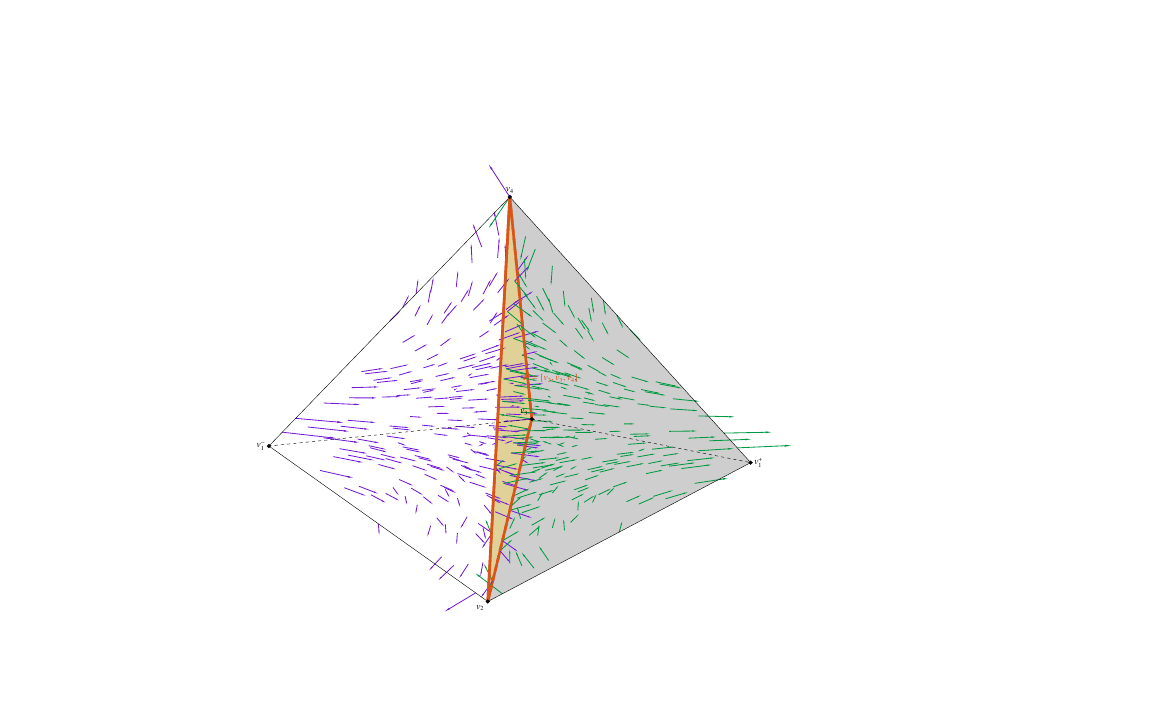}
		
		\vspace{-2mm}
		{\small (a) Face-associated global basis function $\fvarrho^T$}
	\end{minipage}
	\hspace{0.02\textwidth}
	\begin{minipage}[t]{0.48\textwidth}
		\centering
		\hspace*{-4.5cm}\includegraphics[width=2.5\linewidth]{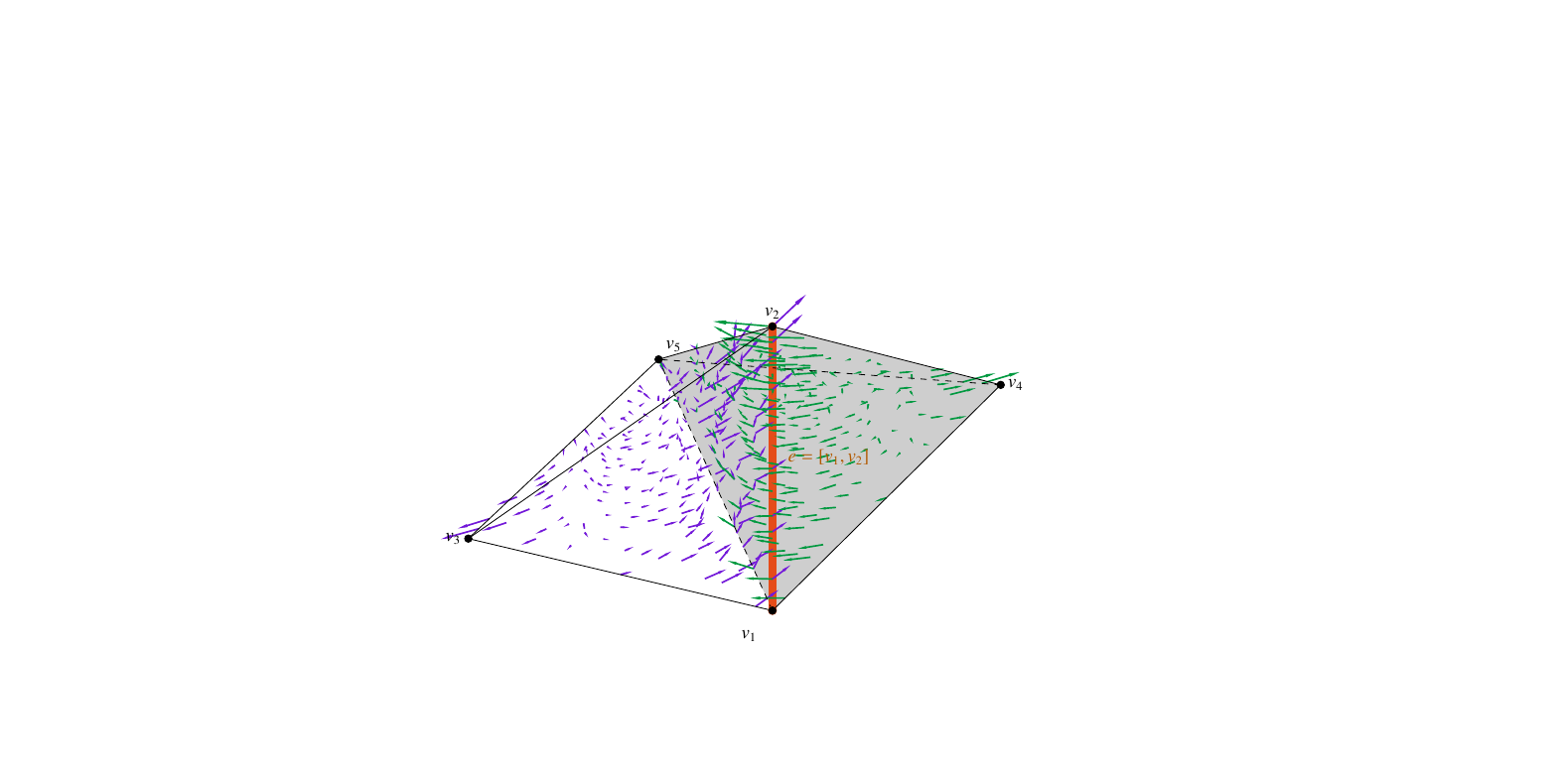}
		
		\vspace{-2mm}
		{\small (b) Edge-associated global basis function $\fvarsigma^T$}
	\end{minipage}
	
	\label{fig:3d_basis_functions}
\end{figure}

\begin{remark}
	The explicit basis functions $\fvarrho^T$ and $\fvarsigma^T$ can be constructed through the following three steps.
	
	Let $T$ be a tetrahedron. Denote by $\{\psi_j\}_{j=1}^4$ the Crouzeix--Raviart basis functions and by
	\[
	\{\tau_{12},\tau_{13},\tau_{14},\tau_{23},\tau_{24},\tau_{34}\}
	\]
	the Nedelec basis functions associated with the ordered edges
	\[
	\mathcal E_T=\{(12),(13),(14),(23),(24),(34)\}.
	\]
	We seek functions
	\[
	\widetilde{\omega}_1,\ldots,\widetilde{\omega}_4,\quad
	\widetilde{\omega}_{12},\widetilde{\omega}_{13},\widetilde{\omega}_{14},
	\widetilde{\omega}_{23},\widetilde{\omega}_{24},\widetilde{\omega}_{34}
	\in P_{\mathrm{rd}}(T),
	\]
	such that
	\begin{equation}
		\int_T \Big(
		\operatorname{div}\widetilde{\omega} \, \psi_j
		+ \widetilde{\omega} \cdot \nabla \psi_j
		\Big) dx
		=
		\begin{cases}
			\delta_{ij},
			& \widetilde{\omega}=\widetilde{\omega}_i,\quad i=1,\ldots,4, \\[1mm]
			0,
			& \widetilde{\omega}=\widetilde{\omega}_{mn},\quad (mn)\in\mathcal E_T,
		\end{cases}
		\qquad j=1,\ldots,4,
		\label{eq:dual-cr-condition}
	\end{equation}
	and
	\begin{equation}
		\int_T \Big(
		\operatorname{curl}\widetilde{\omega} \cdot \tau_{pq}
		- \widetilde{\omega} \cdot \operatorname{curl}\tau_{pq}
		\Big) dx
		=
		\begin{cases}
			0,
			& \widetilde{\omega}=\widetilde{\omega}_i,\quad i=1,\ldots,4, \\[1mm]
			\delta_{(mn),(pq)},
			& \widetilde{\omega}=\widetilde{\omega}_{mn},\quad (mn)\in\mathcal E_T,
		\end{cases}
		\qquad (pq)\in\mathcal E_T .
		\label{eq:dual-nedelec-condition}
	\end{equation}
	Here $\delta_{(mn),(pq)}$ denotes the Kronecker delta with respect to the ordered edge indices.
	
	We first introduce the following ordered basis of $P_{\mathrm{rd}}(T)$:
	\begin{align*}
		\omega_1 &=
		\begin{pmatrix}
			\widetilde{x} \\ \widetilde{y} \\ \widetilde{z}
		\end{pmatrix},
		\omega_2 =
		\begin{pmatrix}
			0 \\ -\widetilde{z} \\ \widetilde{y}
		\end{pmatrix},
		\omega_3 =
		\begin{pmatrix}
			\widetilde{z} \\ 0 \\ -\widetilde{x}
		\end{pmatrix},
		\omega_4 =
		\begin{pmatrix}
			-\widetilde{y} \\ \widetilde{x} \\ 0
		\end{pmatrix},
		\\[0.6em]
		\omega_5 &=
		\begin{pmatrix}
			2\widetilde{x}^2-(\widetilde{y}^2+\widetilde{z}^2)
			+\dfrac{B+C-2A}{V}
			\\ 0 \\ 0
		\end{pmatrix},
		\omega_6 =
		\begin{pmatrix}
			0 \\
			2\widetilde{y}^2-(\widetilde{x}^2+\widetilde{z}^2)
			+\dfrac{A+C-2B}{V}
			\\ 0
		\end{pmatrix},
		\\[0.6em]
		\omega_7 &=
		\begin{pmatrix}
			0 \\ 0 \\
			2\widetilde{z}^2-(\widetilde{x}^2+\widetilde{y}^2)
			+\dfrac{A+B-2C}{V}
		\end{pmatrix},
		\omega_8 =
		\begin{pmatrix}
			1 \\ 0 \\ 0
		\end{pmatrix},
		\omega_9 =
		\begin{pmatrix}
			0 \\ 1 \\ 0
		\end{pmatrix},
		\omega_{10} =
		\begin{pmatrix}
			0 \\ 0 \\ 1
		\end{pmatrix}.
	\end{align*}
	
	We also introduce the following ordered basis for the auxiliary product space:
	\begin{align*}
		(\psi,\eta)_1 &=
		\left(
		1,
		\begin{pmatrix}
			0 \\ 0 \\ 0
		\end{pmatrix}
		\right),
		&
		(\psi,\eta)_2 &=
		\left(
		0,
		\begin{pmatrix}
			1 \\ 0 \\ 0
		\end{pmatrix}
		\right),
		&
		(\psi,\eta)_3 &=
		\left(
		0,
		\begin{pmatrix}
			0 \\ 1 \\ 0
		\end{pmatrix}
		\right),
		&
		(\psi,\eta)_4 &=
		\left(
		0,
		\begin{pmatrix}
			0 \\ 0 \\ 1
		\end{pmatrix}
		\right),
		\\[0.6em]
		(\psi,\eta)_5 &=
		\left(
		2\widetilde{x},
		\begin{pmatrix}
			0 \\ -\widetilde{z} \\ \widetilde{y}
		\end{pmatrix}
		\right),
		&
		(\psi,\eta)_6 &=
		\left(
		2\widetilde{y},
		\begin{pmatrix}
			\widetilde{z} \\ 0 \\ -\widetilde{x}
		\end{pmatrix}
		\right),
		&
		(\psi,\eta)_7 &=
		\left(
		2\widetilde{z},
		\begin{pmatrix}
			-\widetilde{y} \\ \widetilde{x} \\ 0
		\end{pmatrix}
		\right),
		\\[0.6em]
		(\psi,\eta)_8 &=
		\left(
		2\widetilde{x},
		\begin{pmatrix}
			0 \\ \widetilde{z} \\ -\widetilde{y}
		\end{pmatrix}
		\right),
		&
		(\psi,\eta)_9 &=
		\left(
		2\widetilde{y},
		\begin{pmatrix}
			-\widetilde{z} \\ 0 \\ \widetilde{x}
		\end{pmatrix}
		\right),
		&
		(\psi,\eta)_{10} &=
		\left(
		2\widetilde{z},
		\begin{pmatrix}
			\widetilde{y} \\ -\widetilde{x} \\ 0
		\end{pmatrix}
		\right).
	\end{align*}
	
	For $\omega\in P_{\mathrm{rd}}(T)$ and $(\psi,\eta)$ in the above auxiliary product space, define
	\begin{equation}
		T\omega :=
		\bigl(
		\operatorname{div}\omega,
		\operatorname{curl}\omega
		\bigr),
		\qquad
		T^*(\psi,\eta)
		:=
		\nabla\psi-\operatorname{curl}\eta .
		\label{eq:T-and-adjoint}
	\end{equation}
	The associated bilinear form is given by
	\begin{equation}
		a\bigl(\omega,(\psi,\eta)\bigr)
		:=
		\bigl(\operatorname{div}\omega,\psi\bigr)_T
		+
		\bigl(\operatorname{curl}\omega,\eta\bigr)_T
		+
		\bigl(\omega,\nabla\psi-\operatorname{curl}\eta\bigr)_T .
		\label{eq:bilinear-form}
	\end{equation}
	
	The corresponding $L^2$ pairing matrix is
	\[
	\resizebox{\textwidth}{!}{$
		L_1 =
		\begin{bmatrix}
			3V&0&0&0&0&0&0&0&0&0 \\
			0&2V&0&0&0&0&0&0&0&0 \\
			0&0&2V&0&0&0&0&0&0&0 \\
			0&0&0&2V&0&0&0&0&0&0 \\
			0&0&0&0&8A+2B+2C&6D&6E&8A-2B-2C&10D&10E \\
			0&0&0&0&6D&8B+2A+2C&6F&10D&8B-2A-2C&10F \\
			0&0&0&0&6E&6F&8C+2A+2B&10E&10F&8C-2A-2B \\
			0&0&0&0&0&0&0&4V&0&0 \\
			0&0&0&0&0&0&0&0&4V&0 \\
			0&0&0&0&0&0&0&0&0&4V
		\end{bmatrix}.
		$}
	\]
	
	Its inverse is
	\[
	L_2 =
	\begin{bmatrix}
		\dfrac{1}{3V} & 0_{1\times 3} & 0_{1\times 3} & 0_{1\times 3} \\[2mm]
		0_{3\times 1} & \dfrac{1}{2V}I_3 & 0_{3\times 3} & 0_{3\times 3} \\[2mm]
		0_{3\times 1} & 0_{3\times 3} & \dfrac{1}{2}\mathsf{R}
		& \dfrac{M}{2}\mathsf{R}-\dfrac{5}{12V}I_3 \\[2mm]
		0_{3\times 1} & 0_{3\times 3} & 0_{3\times 3}
		& \dfrac{1}{4V}I_3
	\end{bmatrix}.
	\]
	Here \(I_3\) denotes the \(3\times3\) identity matrix,
	\[
	M=\frac{4(A+B+C)}{3V},
	\]
	and
	\[
	\mathsf{R}=(r_{ij})_{i,j=1}^{3}
	=
	\mathsf{G}^{-1},
	\]
	where
	\[
	\mathsf{G}
	=
	\begin{pmatrix}
		4A+B+C & 3D & 3E \\
		3D & A+4B+C & 3F \\
		3E & 3F & A+B+4C
	\end{pmatrix}.
	\]
	
	In the second step, the auxiliary product-space basis is represented in terms
	of the Crouzeix--Raviart and Nedelec basis functions. To write the corresponding
	representation matrix compactly, let
	\[
	\widetilde{\boldsymbol x}_i
	=
	(\widetilde{x}_i,\widetilde{y}_i,\widetilde{z}_i)^T,
	\qquad i=1,\ldots,4,
	\]
	and define
	\[
	\mathsf X
	=
	\begin{bmatrix}
		\widetilde{\boldsymbol x}_1&
		\widetilde{\boldsymbol x}_2&
		\widetilde{\boldsymbol x}_3&
		\widetilde{\boldsymbol x}_4
	\end{bmatrix}.
	\]
	For the ordered edges
	\[
	(12),(13),(14),(23),(24),(34),
	\]
	set
	\[
	\mathsf{\Delta}_{\rm e}
	=
	\begin{bmatrix}
		\widetilde{\boldsymbol x}_2-\widetilde{\boldsymbol x}_1&
		\widetilde{\boldsymbol x}_3-\widetilde{\boldsymbol x}_1&
		\widetilde{\boldsymbol x}_4-\widetilde{\boldsymbol x}_1&
		\widetilde{\boldsymbol x}_3-\widetilde{\boldsymbol x}_2&
		\widetilde{\boldsymbol x}_4-\widetilde{\boldsymbol x}_2&
		\widetilde{\boldsymbol x}_4-\widetilde{\boldsymbol x}_3
	\end{bmatrix},
	\]
	and
	\[
	\mathsf{\Xi}_{\rm e}
	=
	\begin{bmatrix}
		\widetilde{\boldsymbol x}_1\times\widetilde{\boldsymbol x}_2&
		\widetilde{\boldsymbol x}_1\times\widetilde{\boldsymbol x}_3&
		\widetilde{\boldsymbol x}_1\times\widetilde{\boldsymbol x}_4&
		\widetilde{\boldsymbol x}_2\times\widetilde{\boldsymbol x}_3&
		\widetilde{\boldsymbol x}_2\times\widetilde{\boldsymbol x}_4&
		\widetilde{\boldsymbol x}_3\times\widetilde{\boldsymbol x}_4
	\end{bmatrix}.
	\]
	Then the representation matrix is
	\[
	L_3
	=
	\begin{bmatrix}
		\boldsymbol 1_4^T & 0_{1\times 6} \\[1mm]
		0_{3\times 4} & \mathsf{\Delta}_{\rm e} \\[1mm]
		-\dfrac{2}{3}\mathsf X & \mathsf{\Xi}_{\rm e} \\[1mm]
		-\dfrac{2}{3}\mathsf X & -\mathsf{\Xi}_{\rm e}
	\end{bmatrix}.
	\]
	
	Finally, the desired basis functions are obtained by
	\begin{equation}
		(\widetilde{\omega}_1,\widetilde{\omega}_2,\widetilde{\omega}_3,\widetilde{\omega}_4,
		\widetilde{\omega}_{12},\widetilde{\omega}_{13},\widetilde{\omega}_{14},
		\widetilde{\omega}_{23},\widetilde{\omega}_{24},\widetilde{\omega}_{34})
		=
		(\omega_1,\omega_2,\ldots,\omega_{10})L_2^{\mathsf T}L_3 .
		\label{eq:constructed-basis}
	\end{equation}
\end{remark}

\section{Numerical results for boundary-value problems}
\label{app:bvp-results}

This appendix reports boundary-value experiments for the proposed element, supplementing the eigenvalue tests in Section~\ref{sec:numer}.
We consider manufactured solutions on $[0,1]^2$ and $[0,1]^3$ and monitor both their mesh-refinement errors in the log--log plots in Figure~\ref{fig:bvp-convergence}.

\subsection{Two-dimensional boundary-value problem}\label{app:bvp-2d}

We first consider the two-dimensional boundary-value problem in
\(H(\rot,\Omega)\cap H_0(\dv,\Omega)\). The continuous problem is to find
\(\omega \in H(\rot,\Omega)\cap H_0(\dv,\Omega)\) such that
\begin{equation}\label{bvp_primal2D}
	(\dv\omega,\dv\tau) + (\rot\omega,\rot\tau) = (f,\tau),
	\qquad
	\forall\,\tau \in H(\rot,\Omega)\cap H_0(\dv,\Omega).
\end{equation}
Here, in two dimensions, \(\rot\) denotes the scalar rotation of a vector
field.

The corresponding discrete problem is to find
\(\omega_h\in \fV^{\rm r\mathring d}_h\) such that
\begin{equation}\label{eq:2d-bvp-discrete}
	(\dv_h\omega_h,\dv_h\tau_h)
	+
	(\rot_h\omega_h,\rot_h\tau_h)
	=
	(f,\tau_h),
	\qquad
	\forall\,\tau_h\in \fV^{\rm r\mathring d}_h .
\end{equation}
Here \(\dv_h\) and \(\rot_h\) are understood elementwise.

To test the convergence behavior, we take \(\Omega=[0,1]^2\) and prescribe the
exact solution
\begin{equation*}
	\tilde u =
	\bigl(\sin(\pi x)\cos(\pi y),\,\cos(\pi x)\sin(\pi y)\bigr),
\end{equation*}
which gives
\begin{equation*}
	f =
	2\pi^2
	\bigl(\sin(\pi x)\cos(\pi y),\,\cos(\pi x)\sin(\pi y)\bigr).
\end{equation*}
\subsection{Three-dimensional boundary-value problem}\label{app:bvp-3d}

We next consider the three-dimensional boundary-value problem in
\(H(\dv,\Omega)\cap H_0(\curl,\Omega)\). The continuous problem is to find
\(\omega \in H(\dv,\Omega)\cap H_0(\curl,\Omega)\) such that
\begin{equation}\label{eq:3d-bvp}
	(\dv\omega,\dv\tau) + (\curl\omega,\curl\tau)
	= (f,\tau),
	\qquad
	\forall\,\tau\in H(\dv,\Omega)\cap H_0(\curl,\Omega).
\end{equation}

The corresponding discrete problem is to find
\(\omega_h\in \fV^{\mathring{\mathrm r}{\rm d}}_h\) such that
\begin{equation}\label{eq:3d-bvp-discrete}
	(\dv_h\omega_h,\dv_h\tau_h)
	+
	(\curl_h\omega_h,\curl_h\tau_h)
	=
	(f,\tau_h),
	\qquad
	\forall\,\tau_h\in \fV^{\mathring{\mathrm r}{\rm d}}_h .
\end{equation}
Here \(\dv_h\) and \(\curl_h\) are understood elementwise.

To test the convergence behavior, we take \(\Omega=[0,1]^3\) and prescribe the
exact solution
\[
\tilde u =
\bigl(
\cos(\pi x)\sin(\pi y)\sin(\pi z),\,
\cos(\pi y)\sin(\pi x)\sin(\pi z),\,
\cos(\pi z)\sin(\pi x)\sin(\pi y)
\bigr).
\]
Accordingly,
\[
f = 3\pi^2
\bigl(
\cos(\pi x)\sin(\pi y)\sin(\pi z),\,
\cos(\pi y)\sin(\pi x)\sin(\pi z),\,
\cos(\pi z)\sin(\pi x)\sin(\pi y)
\bigr).
\]
The corresponding log--log convergence plots are shown in
Figure~\ref{fig:bvp-convergence}.

\begin{figure}[htbp]
	\centering
	\begin{minipage}[t]{0.49\textwidth}
		\centering
		\includegraphics[width=\linewidth]{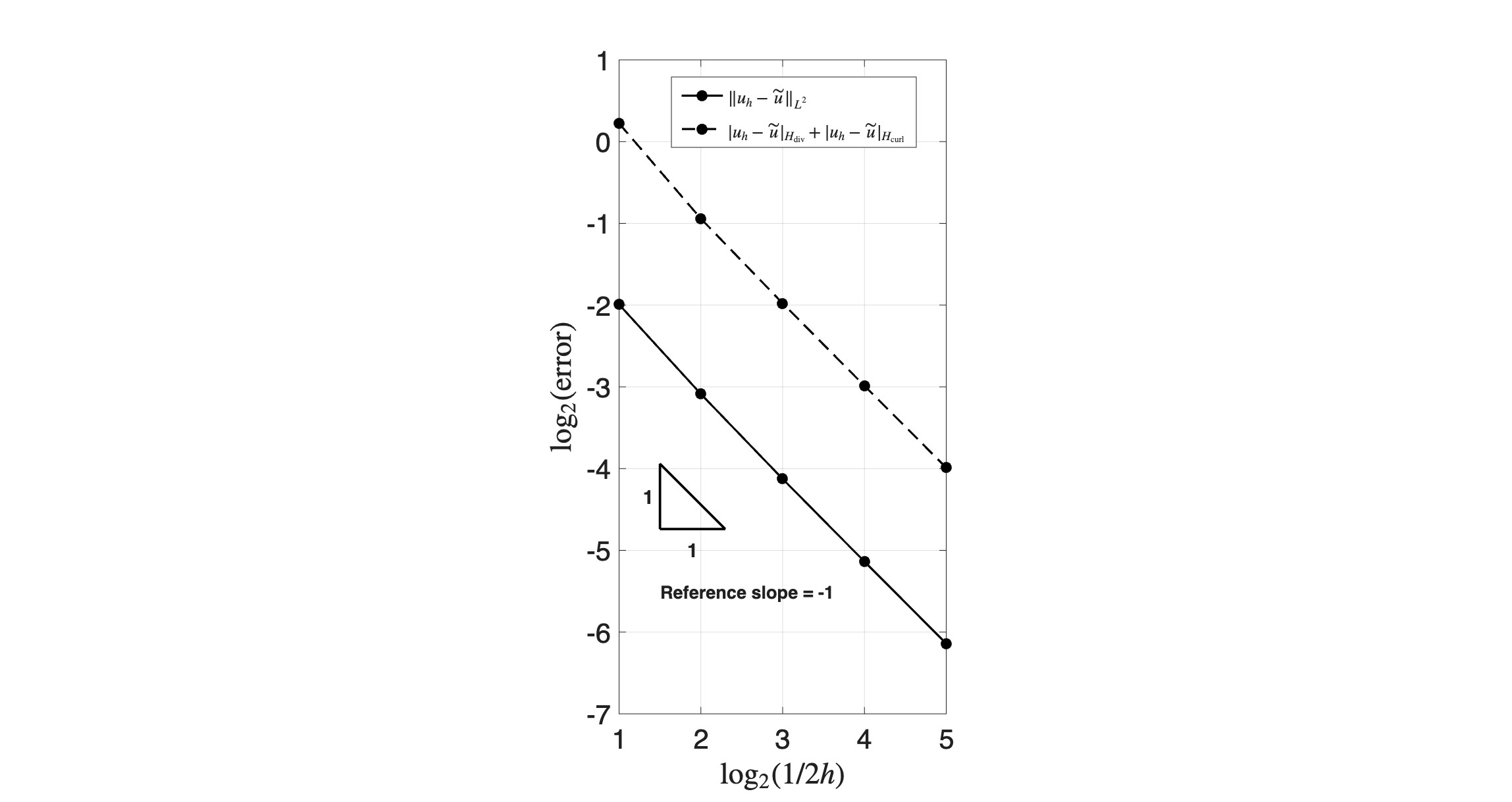}
	\end{minipage}%
	\hfill
	\begin{minipage}[t]{0.49\textwidth}
		\centering
		\includegraphics[width=\linewidth]{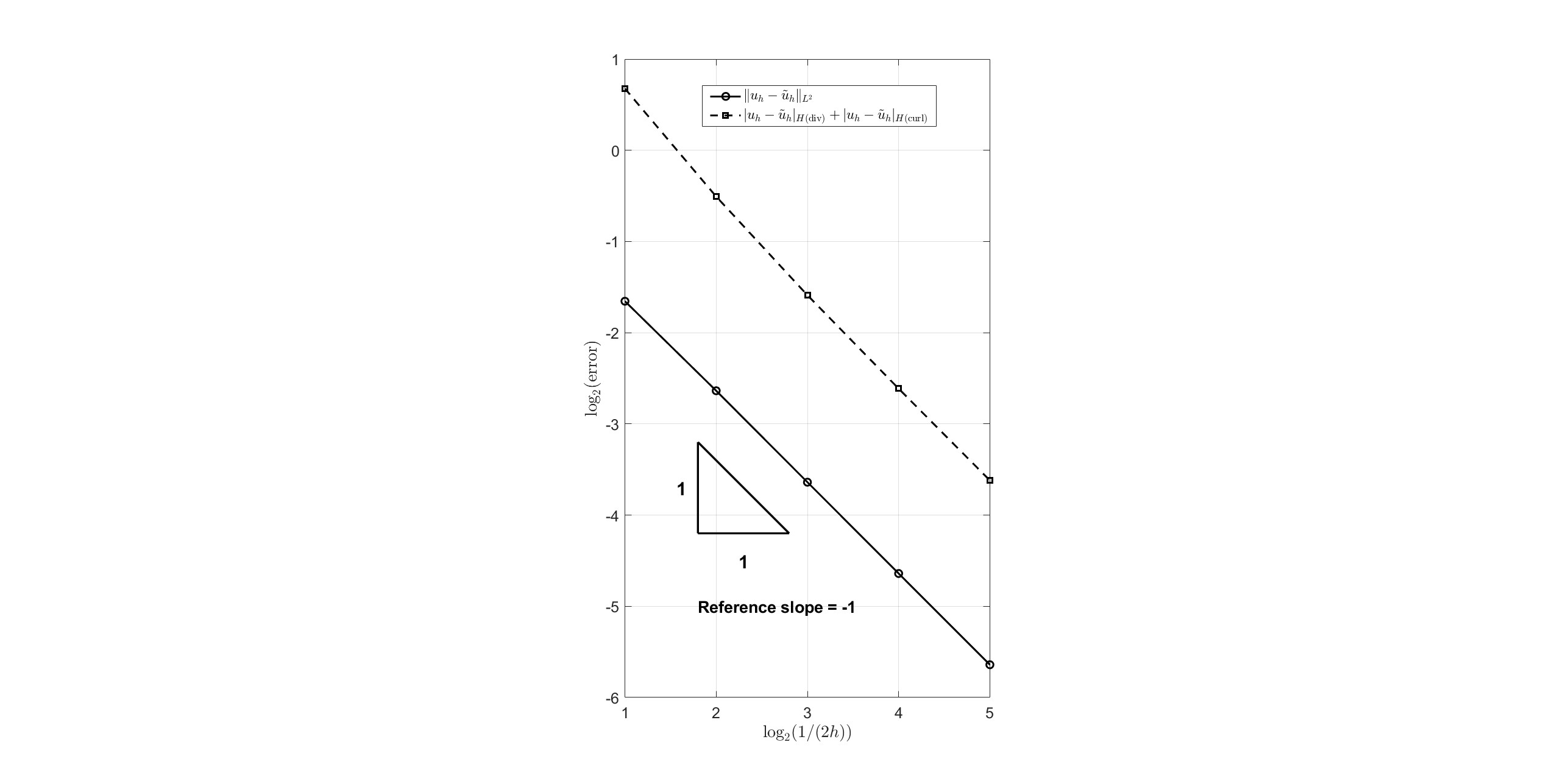}
	\end{minipage}
	\caption{Log--log mesh-refinement errors for manufactured boundary-value problems on $[0,1]^2$ \eqref{eq:2d-bvp-discrete} (left) and $[0,1]^3$ \eqref{eq:3d-bvp-discrete} (right). Solid curves: $\|u_h-\tilde u\|_{L^2}$; dashed curves: $|u_h-\tilde u|_{H_{\mathrm{div}}}+|u_h-\tilde u|_{H_{\mathrm{curl}}}$. Reference slope~$-1$.}
	\label{fig:bvp-convergence}
\end{figure}

\paragraph{\bf Summary}
In both dimensions, the error curves in Figure~\ref{fig:bvp-convergence} are parallel to the reference slope~$-1$, indicating first-order mesh convergence for the manufactured smooth solutions.

\end{document}